\date{}
\newtheorem{lemma}{Lemma}[section]
\newtheorem{corollary}[lemma]{Corollary}
\newtheorem{claim}[lemma]{Claim}
\newtheorem{theorem}[lemma]{Theorem}
\newtheorem{definition}[lemma]{Definition}
\newtheorem{conjecture}[lemma]{Conjecture}
\newtheorem{result}{A\hspace{-0.1cm}}
\theoremstyle{definition}
\global\long\def\E{\mathbb{E}}
\global\long\def\eps{\varepsilon}
\global\long\def\P{\mathbb{P}}
\newcommand{\Symdiff}{\Delta}
\title{Decompositions into spanning rainbow structures}
\author{R. Montgomery\thanks{School of Mathematics,
University of Birmingham,
Edgbaston,
Birmingham,
B15 2TT,
UK. r.h.montgomery@bham.ac.uk}
, A. Pokrovskiy\thanks{Department of Mathematics, ETH, 8092 Zurich, Switzerland. dr.alexey.pokrovskiy@gmail.com. Research supported in part by SNSF grant 200021-175573.}
, and B. Sudakov\thanks{Department of Mathematics, ETH, 8092 Zurich, Switzerland. benjamin.sudakov@math.ethz.ch.
Research supported in part by SNSF grant 200021-175573.}
}
\newcommand{\ad}{\mathrm{d}}
\newcommand{\GG}{\stackrel{\scriptscriptstyle{\text{\sc poly}}}{\gg}}
\newcommand{\LL}{\stackrel{\scriptscriptstyle{\text{\sc poly}}}{\ll}}
\newcommand{\Remark}[1]{}
\begin{document}

\maketitle
\begin{abstract}
A subgraph of an edge-coloured graph is called rainbow if all its edges have distinct colours. The study of rainbow subgraphs goes back more than two hundred years to the work of
Euler on Latin squares and has been the focus of extensive research ever since. {Euler posed a problem  equivalent to finding properly $n$-edge-coloured  complete bipartite graphs $K_{n,n}$ which can be decomposed into rainbow perfect matchings.} While there are proper edge-colourings of $K_{n,n}$ without even a single rainbow perfect matching,
the theme of this paper is to show that with some very weak additional constraints one can find many disjoint rainbow perfect matchings. In particular, we prove that if
some fraction of the colour classes have at most $(1-o(1)) n$ edges then one can nearly-decompose the edges of $K_{n,n}$ into edge-disjoint perfect rainbow matchings. As an application of this, we
establish in a very strong form a conjecture of Akbari and Alipour and asymptotically prove a conjecture of Barat and Nagy. Both these conjectures concern rainbow perfect matchings in edge-colourings of $K_{n,n}$ with quadratically many colours. The above result also has implications to some conjectures of Snevily about subsquares of multiplication tables of groups.

Finally, using our techniques, we also prove a number of results on near-decompositions of graphs into other rainbow structures like Hamiltonian cycles and spanning trees. Most notably, we prove that any properly coloured complete graph can be nearly-decomposed into spanning rainbow trees. This asymptotically proves the Brualdi-Hollingsworth and Kaneko-Kano-Suzuki conjectures which predict that a perfect decomposition should exist under the same assumptions.
\end{abstract}

\section{Introduction}
A \emph{Latin square} of order $n$ is an $ n \times n$  array filled with $n$ symbols such that each symbol appears once in every row and column.
A \emph{partial transversal} is a collection of cells of the Latin square which do not share the same row, column or symbol. A {\em transversal} is a partial transversal of order $n$.
Latin squares were introduced by Euler in the 18th century and are familiar to the layperson in the form of Sudoku puzzles, which, when completed, are Latin squares.
Another well known example of the Latin square is a multiplication table of any finite group. The study of Latin squares have applications both inside and outside mathematics, with connections to 2-dimensional permutations, design theory,  finite projective planes, and error correcting codes.

Euler was interested in \emph{orthogonal Latin squares}---a pair of $n\times n$ Latin squares $S$ and $T$ with the property that every pair of symbols $(i,j)$ occurs precisely once in the array.
This is equivalent to Latin squares which can be decomposed into disjoint transversals (see \cite{Euler, keedwell2015latin}). He conjectured that there exist $n\times n$ Latin squares with a decomposition into disjoint transversals if, and only if, $n\not\equiv 2\pmod 4$. When $n\not\equiv 2\pmod 4$ Euler himself constructed such Latin squares.
The ``$n=6$'' case stood open for over 100 years until it was proved by Tarry in 1901. The remaining cases ``$n\not\equiv 2\pmod 4$, $n\geq 10$'' were resolved in 1959 by Bose, Parker, and Shrikande~\cite{BCP}. Surprisingly, they showed that Euler's Conjecture was false for these values of $n$ by explicitly constructing Latin squares with a decomposition into disjoint transversals.

It is a hard problem to determine which Latin squares have transversals. This question is very difficult even in the case of multiplication tables of finite groups.
In 1955 Hall and Paige \cite{hall1955complete} conjectured that the multiplication table of a group $G$ has a transversal exactly if the $2$-Sylow subgroups of $G$ are trivial or non-cyclic.
It took 50 years to establish this conjecture and its proof is based on the classification of finite simple groups (see \cite{wilcox2009reduction} and the references therein).
The most famous open problem on transversals in general Latin squares is a conjecture of Ryser and Brualdi-Stein.
\begin{conjecture}[Ryser \cite{Ryser}, Brualdi-Stein \cite{Brualdi, Stein}]
\label{BRS}
Every $n\times n$ Latin square has a partial transversal of order $n-1$ and a full transversal if $n$ is odd.
\end{conjecture}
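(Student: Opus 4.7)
The approach is to translate the conjecture into a rainbow matching problem: an $n \times n$ Latin square $L$ corresponds to a proper $n$-edge-colouring of $K_{n,n}$ in which every colour class is a perfect matching, so that (partial) transversals of $L$ are exactly (partial) rainbow matchings in this colouring. I would attack the conjecture via an absorption argument combined with a semi-random extension, in the spirit of the paper's main near-decomposition theorem.

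First, I would build a small absorber. Randomly sample subsets $R$ of rows, $C$ of columns, and $\Sigma$ of symbols, each of size $\e n$, and work inside the sparse sub-array they induce. Using the probabilistic method together with a local switching argument, I would assemble a collection of rainbow ``absorbing gadgets'' into a single absorber $A$ with the following robustness property: for any deficiency set $D$ consisting of up to $\e^2 n$ remaining rows, columns and symbols, $A\cup D$ contains a rainbow matching that covers exactly $R\cup C\cup \Sigma\cup D$. Second, I would run a semi-random nibble on the Latin square with the cells of $A$ removed. Since removing $A$ only shrinks a small fraction of colour classes and by only $O(\e n)$ edges each, a refinement of the paper's main theorem applies and produces a rainbow matching of size $(1-o(1))n$ that is disjoint from $A$, leaving a deficiency $D$ of size $o(n)$. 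Finally, the absorber completes the transversal up to $n-1$ elements, and a separate constant-size parity argument handles the last element when $n$ is odd.

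The main obstacle is the first step. To prove the conjecture in full generality, the absorber must be constructible inside \emph{every} Latin square, with no hypothesis on the sizes of the colour classes, and it must remain robust against pathological examples such as the Cayley table of $\mathbb{Z}_{2k}$, which admits no transversal when $n$ is even. Striking the right balance -- enough flexibility in the absorber to incorporate an arbitrary deficiency set, but enough structural awareness to respect the intrinsic parity and algebraic constraints present in extremal Latin squares -- is the crux of the difficulty. This is the reason that, until very recently, even approaching the bound $n-1$ required sub-linear error terms such as $n - O(\log^2 n)$ or $n - O(\log n / \log\log n)$, and the parity-sensitive ``full transversal'' half of the conjecture for odd $n$ is likely to require a genuinely new idea beyond the absorption framework sketched above.
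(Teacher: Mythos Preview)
The statement you are attempting to prove is Conjecture~\ref{BRS}, which the paper presents as an \emph{open problem}, not as a result that is proved. The paper explicitly notes that the best known results toward this conjecture only give partial transversals of size $n - O(\log^2 n)$, and nowhere claims a proof; indeed, Theorem~\ref{Theorem_LatinSquare} requires the hypothesis that at most $(1-o(1))n$ symbols occur more than $(1-o(1))n$ times, precisely because genuine Latin squares (where every symbol occurs exactly $n$ times) are out of reach of the paper's methods. So there is no ``paper's own proof'' to compare your proposal against.

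As for the proposal itself, you have correctly identified the decisive gap: the absorber step. Your sketch requires that inside \emph{every} Latin square one can build a small robust absorber capable of completing any $o(n)$-size deficiency to a full (or size $n-1$) transversal. But this is essentially the content of the conjecture. The Cayley table of $\mathbb{Z}_{2k}$ shows that no absorber can always complete to a full transversal when $n$ is even, so your absorber must somehow detect and respect this parity obstruction while still working in all other cases --- and you offer no mechanism for this beyond a ``separate constant-size parity argument'' for odd $n$, which is exactly the missing idea. The nibble step is fine (and is what the paper does), but the absorber construction as stated is not a proof outline so much as a restatement of why the problem is hard.
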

\noindent
The best  results towards this conjecture are asymptotic and show that all Latin squares have partial transversals of size $n - o(n)$. Woolbright~\cite{woolbright78} and Brower, de Vries and Wieringa~\cite{BVW78} independently proved this with $o(n)=\sqrt n$. The error term was further improved by Hatami and Shor~\cite{hatami2008lower}, who showed that $o(n)=O(\log^2 n)$ suffices.

\emph{Generalized Latin squares} are $n\times n$ arrays filled with an {\em arbitrary} number of symbols such that  no symbol appears twice in the same row or column. They are natural extensions of Latin squares, and have also been extensively studied.
A familiar example of a generalized Latin square is a multiplication table between elements of two subsets of equal size in some group. It is generally believed that extra symbols in a Latin square should help to find transversals. The goal of this paper is to confirm that this is indeed the case. Moreover we show that, under some very weak additional conditions, a generalized Latin square has not only one but many disjoint transversals.

\begin{theorem}\label{Theorem_LatinSquare}
Let $S$ be a generalized Latin square with at most $(1-o(1))n$ symbols occurring more than $(1-o(1))n$ times. Then, $S$ has $(1-o(1))n$ pairwise disjoint transversals.
\end{theorem}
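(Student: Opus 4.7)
The generalized Latin square $S$ is naturally encoded as a proper edge-colouring $\chi$ of $K_{n,n}$: the two parts are indexed by the rows and columns of $S$, and the edge $ij$ gets the colour of cell $(i,j)$. Under this dictionary transversals become \emph{rainbow perfect matchings} and pairwise-disjointness of transversals becomes edge-disjointness of the matchings. Since the $n^2$ edges split into colour classes of size at most $n$, the hypothesis ``at most $(1-\eps)n$ symbols occur more than $(1-\eps)n$ times'' forces at least $\eps n$ colour classes to be \emph{light}, i.e.\ of size $\leq(1-\eps)n$. The theorem is therefore equivalent to: any properly edge-coloured $K_{n,n}$ with $\Omega(n)$ light colour classes can be nearly-decomposed into edge-disjoint rainbow perfect matchings.

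My plan is to construct the matchings iteratively. Write $G_0=K_{n,n}$ and, for $t\geq 1$, let $G_t=G_{t-1}\setminus M_t$, so that $G_t$ is $(n-t)$-regular and each colour class has shrunk by at most one per step. The engine of the iteration is a \emph{one-matching lemma} of the form: every proper edge-colouring of an almost-regular balanced bipartite graph on $2m$ vertices with $\Omega(m)$ colour classes of size $\leq(1-\delta)m$ contains a rainbow perfect matching. A naive application sustains the hypothesis only while $t$ is a small fraction of the original light-colour ``slack''; to push $t$ all the way to $(1-o(1))n$ I would set aside, before the iteration begins, a small random \emph{absorbing reservoir} of edges using predominantly light colours, and use it to repair the residual colour distribution so that the one-matching lemma's hypothesis remains valid throughout. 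Chernoff concentration for the reservoir, together with nibble-type concentration within each step, would then produce the required $(1-o(1))n$ edge-disjoint rainbow perfect matchings.

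The main obstacle, and the technical heart of the argument, is the one-matching lemma itself. A plain rainbow nibble produces only an almost-perfect rainbow matching, leaving $o(m)$ vertices unmatched; upgrading this to a perfect matching requires a \emph{rainbow absorber}, a carefully built subgraph $A$ with the property that for any small configuration of leftover vertices and any small set of still-available colours, $A$ contains a rainbow matching on exactly those vertices using exactly those colours. I would construct $A$ by first producing many local ``switching gadgets'' --- short rainbow alternating paths along which one can flip without creating new colour conflicts --- and then stitching them into a global structure via a probabilistic argument. The abundance of light colours is exactly what makes enough disjoint gadgets available, since each light colour is absent from many rows and columns and so offers many exchange options. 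Delivering the absorber robustly enough that it continues to function through its interaction with the outer iteration, while preserving edge-disjointness across all $(1-o(1))n$ extracted matchings, is where I expect the bulk of the technical work to lie.
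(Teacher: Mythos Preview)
Your translation to properly edge-coloured $K_{n,n}$ is correct and matches the paper; the theorem is deduced directly from Theorem~\ref{Theorem_Matchings}. But your mechanism for producing the matchings differs from the paper's, and the step you flag as the crux is in fact a gap. You want to extract one rainbow \emph{perfect} matching at a time and keep the ``many light colours'' hypothesis alive via a reservoir. The problem you identify is real: after removing $t$ matchings the ambient degree is $n-t$, but a large colour class that happened not to be used still has its original size, so the relative boundedness deteriorates. A reservoir of edges cannot repair this --- what is needed is that each large colour is actually consumed in almost every extracted matching, and a reservoir cannot retroactively insert a colour into an already-built matching without destroying edge-disjointness. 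Your proposal does not supply any mechanism that forces this.

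The paper's architecture is different in two essential ways. First, it converts the ``few large colours'' hypothesis into a clean \emph{global boundedness} condition by randomly thinning each large colour class and passing to a regular subgraph (Lemma~\ref{Lemma_regular_subgraph_few_large_colours_bipartite}), so one never has to track ``light colours'' again. Second --- and this is the idea you are missing --- the nibble (Lemma~\ref{Lemma_near_perfect_matching}) outputs a \emph{randomized} nearly-perfect rainbow matching in which every edge appears with probability $\approx(\delta n)^{-1}$. This uniform edge-probability, not any absorber, is what controls the colour classes: a supermartingale argument (Lemma~\ref{Lemma_martingale_colours_packing}) shows each colour class shrinks in proportion to the degree, so global boundedness persists through the entire iteration (Lemma~\ref{Lemma_Nearly_Rainbow_Decomposition}). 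Only \emph{after} all the nearly-perfect matchings are found are they completed to perfect matchings, and the completion is a simple $3$-for-$2$ switch against a reserved typical subgraph (Lemmas~\ref{Lemma_completion_matching}, \ref{Lemma_decomposition_completion_codegrees}) --- much lighter than the robust absorber you propose.
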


\noindent
All previous results that guaranteed transversals studied arrays which were very far from Latin squares. For example, Erd\H{o}s and Spencer~\cite{erdHos1991lopsided} showed that a transversal exists in any $n\times n$ array in which each symbol appears at most $n/16$ times. Furthermore,  Alon, Spencer and Tetali \cite{alon1995covering} found  many disjoint transversals in the case when each symbol appears $\delta n$ times, for some small but fixed $\delta>0$. On the other hand, our result shows that the only generalized Latin squares without transversals are small perturbations of Latin squares.

Theorem \ref{Theorem_LatinSquare} can be also used to attack several open problems on generalized Latin squares.
For example Akbari and Alipour conjectured the following.
\begin{conjecture}[Akbari and Alipour \cite{akbari2004transversals}]\label{ConjectureAkbariAlipour}
Every generalized Latin square with at least $n^2/2$ symbols has a transversal.
\end{conjecture}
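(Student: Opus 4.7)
The plan is to deduce Conjecture \ref{ConjectureAkbariAlipour} as an essentially immediate consequence of Theorem \ref{Theorem_LatinSquare}. The only work is to check that any generalized Latin square $S$ with at least $n^2/2$ distinct symbols automatically satisfies the hypothesis of Theorem \ref{Theorem_LatinSquare}: that at most $(1-o(1))n$ symbols appear more than $(1-o(1))n$ times.

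For this I would use a simple double-counting argument. Let $s \geq n^2/2$ denote the number of distinct symbols appearing in $S$, and for each symbol $i$ let $m_i$ be its multiplicity; note that $m_i \leq n$ since a symbol cannot repeat in a row. Because every cell is filled, $\sum_i m_i = n^2$, so
\[
\sum_i (m_i - 1) \;=\; n^2 - s \;\leq\; n^2/2.
\]
In particular, for any fixed $\eps > 0$, the number $k$ of symbols with $m_i > (1-\eps)n$ satisfies
\[
k\bigl((1-\eps)n - 1\bigr) \;\leq\; \sum_i (m_i - 1) \;\leq\; n^2/2,
\]
which yields $k \leq \bigl(\tfrac{1}{2} + o(1)\bigr)n$, comfortably below $(1-o(1))n$ once $\eps$ is small.

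Hence the hypothesis of Theorem \ref{Theorem_LatinSquare} is satisfied with room to spare, and for $n$ large we obtain not merely a single transversal but $(1-o(1))n$ pairwise disjoint transversals, which is the ``very strong form'' of the conjecture advertised in the abstract. The main obstacle is therefore not in this deduction at all but lies inside Theorem \ref{Theorem_LatinSquare} itself; any finitely many small-$n$ cases needed to cover the conjecture for \emph{every} $n$ can be handled separately, exploiting the fact that $s \geq n^2/2$ already forces the average multiplicity to be at most $2$, bringing classical results such as Erd\H{o}s--Spencer within easy reach.
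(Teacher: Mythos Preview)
Your main argument is correct and matches the paper's approach exactly: a simple counting shows that $\geq n^2/2$ symbols forces at most $(\tfrac12+o(1))n$ symbols to have multiplicity $>(1-\varepsilon)n$, after which Theorem~\ref{Theorem_LatinSquare} applies; the paper packages this as Lemma~\ref{Lemma_many_colours_implies_few_large_colours} and Corollary~\ref{Corollary_LatinSquare}, and, like you, only claims the conjecture for large $n$. One small caveat: your closing remark that Erd\H{o}s--Spencer handles small $n$ is not right---their result requires \emph{every} symbol to appear at most $n/16$ times, which is not implied by the average multiplicity being at most $2$---but since the paper does not address small $n$ either, this is peripheral.
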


\noindent
More generally Barat and Nagy~\cite{barat2017transversals} conjectured that under the same assumptions as above, any generalized Latin square should have a decomposition into disjoint transversals. Theorem~\ref{Theorem_LatinSquare} has implications for both of these conjectures. It is easy to show that in any generalized Latin square with  at least $\varepsilon n^2$ symbols at most $(1-\varepsilon/2)n$ symbols occur more than $(1-\varepsilon/2)n$ times (see Lemma~\ref{Lemma_many_colours_implies_few_large_colours}). Thus the following is a corollary of Theorem~\ref{Theorem_LatinSquare}.

\begin{corollary}\label{Corollary_LatinSquare}
For all $\varepsilon>0$ and sufficiently large n,
every generalized Latin square with at least $\varepsilon n^2$ symbols has $(1-\varepsilon)n$ pairwise disjoint transversals.
\end{corollary}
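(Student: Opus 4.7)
The plan is to reduce Corollary~\ref{Corollary_LatinSquare} directly to Theorem~\ref{Theorem_LatinSquare} via the elementary lemma already flagged in the excerpt, Lemma~\ref{Lemma_many_colours_implies_few_large_colours}. That lemma asserts that in any generalized Latin square with at least $\varepsilon n^2$ symbols, at most $(1-\varepsilon/2)n$ symbols can occur more than $(1-\varepsilon/2)n$ times. Once it is in place, Theorem~\ref{Theorem_LatinSquare} applies with both of its $o(1)$ quantities taken to be $\varepsilon/2$, and yields at least $(1-\varepsilon/2)n \geq (1-\varepsilon)n$ pairwise disjoint transversals for all sufficiently large $n$.

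The lemma itself is a one-line double count. Write $S$ for the number of distinct symbols appearing in the array and $L$ for the number of those symbols which occur more than $(1-\varepsilon/2)n$ times. Each symbol appears at most $n$ times (it cannot repeat in any row), and each of the remaining $S - L$ symbols appears at least once, so summing the occurrences of all symbols over the $n^2$ cells gives
\[
n^2 \;>\; L \cdot (1-\varepsilon/2)n \;+\; (S - L).
\]
Combined with $S \geq \varepsilon n^2$, this rearranges to $L\bigl((1-\varepsilon/2)n - 1\bigr) < (1-\varepsilon)n^2$. Since $(1-\varepsilon)/(1-\varepsilon/2) < 1-\varepsilon/2$, the bound $L \leq (1-\varepsilon/2)n$ follows for all sufficiently large $n$.

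There is no real obstacle at this stage: the substantive content of Corollary~\ref{Corollary_LatinSquare} lies entirely in Theorem~\ref{Theorem_LatinSquare}. The only point to verify is that the ``at least $\varepsilon n^2$ symbols'' hypothesis is strong enough to force the array to be genuinely far from having $n$ dominating near-$n$-frequency colour classes, which is exactly the structural input Theorem~\ref{Theorem_LatinSquare} requires.
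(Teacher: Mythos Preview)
Your proposal is correct and matches the paper's approach: the introduction explicitly derives Corollary~\ref{Corollary_LatinSquare} from Theorem~\ref{Theorem_LatinSquare} via Lemma~\ref{Lemma_many_colours_implies_few_large_colours}, and the paper's proof of that lemma (a contradiction argument counting edges outside the $(1-\varepsilon/2)n$ largest colour classes) is the same double count you give, just phrased contrapositively. The only cosmetic difference is that the paper's formal version (Corollary~\ref{Corollary_many_colours_perfect}) routes through Lemma~\ref{Lemma_PerfectMatching_Decomposition_Few_Large_Colours} with $\varepsilon'=\varepsilon/40$ rather than citing Theorem~\ref{Theorem_LatinSquare} directly, but this is the same argument.
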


\noindent
For large $n$, this establishes the conjecture of Akbari-Alipour  in a very strong form, showing that the bound of $n^2/2$ can be reduced to  $\varepsilon n^2$. It also proves asymptotically the Barat-Nagy conjecture, giving a near-decomposition of the generalized Latin square into transversals.

Theorem~\ref{Theorem_LatinSquare} has also some interesting implications for transversals in actual Latin squares.
Indeed, it is not hard to show that any Latin square contains many subsquares which satisfy the assumptions of Theorem~\ref{Theorem_LatinSquare}. In fact, a random $(1-o(1))n\times(1-o(1))n$ subsquare will have this property with high probability. Thus we have the following corollary.
\begin{corollary}
Let $S$ be a random $(1-o(1))n\times(1-o(1))n$ subsquare of an $n\times n$ Latin square $L$. With high probability, $S$ has a transversal.
\end{corollary}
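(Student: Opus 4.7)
The plan is to apply Theorem~\ref{Theorem_LatinSquare} to the random subsquare $S$ directly; the main task is to verify its hypothesis with high probability. Write $m = (1-\gamma)n$ with $\gamma = \gamma(n) = o(1)$, and realise $S = L[R,C]$, where $R, C \subseteq [n]$ are independent, uniformly random $m$-subsets. For each symbol $s$, let $\sigma_s \colon [n] \to [n]$ be the permutation defined by $L(r, \sigma_s(r)) = s$, and let $X_s = |\{r \in R : \sigma_s(r) \in C\}|$ count the appearances of $s$ in $S$.

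Conditional on $R$, the variable $X_s = |\sigma_s(R) \cap C|$ is hypergeometric with mean $m^2/n = (1-\gamma)m$, so standard Chernoff tails for the hypergeometric distribution give
\[
\Pr\bigl[X_s > (1-\gamma)m + t\bigr] \le \exp\bigl(-c\, t^2 / m\bigr)
\]
for an absolute constant $c > 0$. Setting $\delta = \gamma/2 = o(1)$ and $t = \gamma m/2$, and taking a union bound over the $n$ symbols,
\[
\Pr\bigl[\exists\, s : X_s > (1-\delta)m\bigr] \le n \exp\bigl(-c \gamma^2 m / 4\bigr) = o(1),
\]
provided $\gamma$ decays no faster than roughly $\sqrt{\log n / n}$, which I read as a mild implicit restriction on the ``$o(1)$''. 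Hence with high probability no symbol appears more than $(1-\delta)m$ times in $S$; in particular, zero symbols exceed this threshold, which is certainly at most $(1-\delta)m$. Therefore the hypothesis of Theorem~\ref{Theorem_LatinSquare}, applied to $S$ viewed as an $m \times m$ generalised Latin square, is satisfied with parameter $\delta = o(1)$.

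Theorem~\ref{Theorem_LatinSquare} then supplies $(1-o(1))m$ pairwise disjoint transversals of $S$, far more than the single transversal required. The only delicate step is the concentration argument, and the only real subtlety lies in the rate at which $\gamma \to 0$; for any sensible reading of ``$o(1)$'' (e.g.\ $\gamma \gg \sqrt{\log n / n}$) the argument above is routine, since the gap between $\E X_s = (1-\gamma)m$ and the threshold $(1-\gamma/2)m$ provides ample slack for a Chernoff-plus-union-bound to absorb.
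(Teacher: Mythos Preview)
Your proof is correct and follows exactly the approach the paper indicates (the paper states the corollary without an explicit proof, merely remarking that a random subsquare ``will have this property with high probability'' and then invoking Theorem~\ref{Theorem_LatinSquare}). Your hypergeometric concentration argument is the natural way to verify the hypothesis, and your caveat about the decay rate of $\gamma$ is appropriate: the quantitative form of Theorem~\ref{Theorem_LatinSquare} (Theorem~\ref{Theorem_Matchings}) itself requires $\varepsilon \ge n^{-\alpha}/\alpha$ for some absolute $\alpha>0$, so a polynomial lower bound on $\gamma$ is implicit in the statement anyway, and your restriction $\gamma \gg \sqrt{\log n/n}$ is comfortably inside this regime.
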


\noindent
This corollary reproves the result that Latin squares have partial transversals of size $n-o(n)$. However, it proves much more, that is, partial transversals of size $n-o(n)$ must \emph{be present almost everywhere} in the Latin square.

Our main theorem has additional applications to group theoretic problems and questions about rainbow structures in coloured graphs, which we discuss next.

\subsection*{Subsquares of multiplication tables}
{A natural way to obtain a generalized Latin square is to consider a subsquare $S$ of a multiplication table of a group $G$.
Snevily made the following general conjecture on transversals in subsquares of abelian groups.}
\begin{conjecture}[Snevily \cite{snevily1999cayley}] \label{Conjecture_Snevily}
{Let $S=A\times B$ be a  subsquare of the multiplication table of an abelian group $G$ defined by two $n$-element sets $A, B\subseteq G$.}
\begin{enumerate}[(i)]
\item If $G$ is an odd abelian group, then $S$ has a transversal.
\item If $G$ is an even cyclic group, then $S$ has no transversal only when both $A$ and $B$ are translates of the same even cyclic subgroup of $G$.
\end{enumerate}
\end{conjecture}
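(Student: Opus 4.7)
The plan is to derive (an asymptotic form of) both parts from Theorem~\ref{Theorem_LatinSquare}. In the subsquare $S=A\times B$, a symbol $g\in G$ appears exactly $|A\cap gB^{-1}|$ times, so Theorem~\ref{Theorem_LatinSquare} yields a transversal unless there are more than $(1-o(1))n$ symbols $g$ with $|A\cap gB^{-1}|\geq (1-o(1))n$. Call this alternative the \emph{structured case}, and handle it by additive combinatorics.

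In the structured case, for any two such $g_1,g_2$, inclusion--exclusion gives $|A\cap g_1B^{-1}\cap g_2B^{-1}|\geq (1-o(1))n$, hence $|B\cap (g_1^{-1}g_2)B|\geq (1-o(1))n$. Thus the set
\[
H=\{d\in G:|B\cap dB|\geq (1-o(1))n\}
\]
has size at least $(1-o(1))n$. Standard additive-combinatorial arguments (in an abelian $G$, near-stabilisers of a large set are close to a subgroup) show that $H$ is close to a subgroup $K\leq G$ with $|K|$ nearly $n$, and that $B$ is close to a single coset $bK$; by the symmetric roles of $A$ and $B$, $A$ is also close to a coset $aK$ of the same $K$.

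It remains to construct a transversal in this near-coset case. When $A=aK$ and $B=bK$ exactly, $S$ coincides, up to a global translate by $ab$, with the multiplication table of $K$, so a transversal of $S$ exists iff one of the multiplication table of $K$ exists. By the Hall--Paige theorem the latter holds precisely when the $2$-Sylow subgroup of $K$ is trivial or non-cyclic. When $G$ is odd abelian this holds for every subgroup $K\leq G$, giving (i); when $G$ is even cyclic, the only obstructing $K$ is the even cyclic subgroup of $G$, matching the exceptional configuration of (ii).

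The main obstacle is the stability step: one needs an inverse-type result robust enough that the $o(n)$ slack in ``many symbols appear nearly $n$ times'' propagates to an $o(n)$-sized symmetric difference $A\Symdiff aK$ and $B\Symdiff bK$. A secondary obstacle is bridging from near-cosets to exact cosets: the $o(n)$ error must be fixed by a local augmenting argument, either by running Theorem~\ref{Theorem_LatinSquare} on a suitably extended full Latin square and grafting the Hall--Paige transversal of $K$ onto it, or by modifying Hall--Paige-type constructions directly to tolerate $o(n)$ perturbations of the coset structure.
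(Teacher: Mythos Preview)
This statement is a \emph{conjecture}, not a theorem, and the paper does not prove it. The paper explicitly records that part (i) was resolved by Arsovski~\cite{arsovski2011proof} (after work of Alon and of Dasgupta--K\'arolyi--Serra--Szegedy), while part (ii) remains open. What the paper actually proves in this direction is the unconditional Corollary following Lemma~\ref{Fournier}: for \emph{any} group $G$ and any $n$-element $A,B\subseteq G$, either the subsquare $A\times B$ has $(1-o(1))n$ disjoint transversals, or $A$ and $B$ are each $o(n)$-close to cosets of a common subgroup. This is obtained by combining Theorem~\ref{Theorem_LatinSquare} with the Fournier/Green stability result (Lemma~\ref{Fournier}), and the paper stops there --- it does not attempt to resolve the near-coset case.

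Your proposal follows exactly this dichotomy, but then tries to push through the structured case to recover the full conjecture. There are two genuine gaps. First, even granting everything you wrote, the argument is intrinsically asymptotic: Theorem~\ref{Theorem_LatinSquare} needs $n$ large, so you would at best prove Snevily's conjecture for sufficiently large $n$, not as stated. Second, and more seriously, you explicitly identify the ``secondary obstacle'' --- passing from $|A\Symdiff aK|,|B\Symdiff bK|=o(n)$ to an actual transversal --- and then do not resolve it. The two suggested fixes (grafting a Hall--Paige transversal onto a Theorem~\ref{Theorem_LatinSquare} output, or perturbing Hall--Paige constructions) are plans, not arguments; in particular, when $A,B$ are genuine cosets of an obstructing even cyclic $K$ one must show there is \emph{no} transversal, and when they are $o(n)$-perturbations of such cosets one must show there \emph{is} one, and neither direction follows from what you have written. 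So this is a sketch of a research programme toward an asymptotic version of the conjecture, not a proof of the stated conjecture.
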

Here a ``translate of $A$'' means any set of the form $gA$ for $g\in V(G)$.
Part (i) of this conjecture has attracted a lot of attention. After work by Alon \cite{alon2000additive} and
Dasgupta, K\'arolyi, Serra and Szegedy \cite{dasgupta2001transversals}, it was solved by Arsovski~\cite{arsovski2011proof}. Part (ii) of Conjecture~\ref{Conjecture_Snevily} is still open.

Our work has implications for this conjecture, and for various generalizations for other groups and semigroups. Combining our Theorem~\ref{Theorem_LatinSquare} with the following lemma one can find not just one but many transversals in certain subsquares of multiplication tables.
\begin{lemma}
\label{Fournier}
Let $S=A\times B$ be a subsquare of the  multiplication table of a group $G$ defined by two $n$-element sets $A, B\subseteq G$.
Then, either $S$ has at most $(1-o(1))n$ symbols occurring more than $(1-o(1))n$ times or there is a subgroup $H$ of $G$ and elements $g,g' \in V(G)$ such that
$|A \Delta gH|=o(n)$ and $|B \Delta g'H|=o(n)$.
\end{lemma}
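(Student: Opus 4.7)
The plan is to extract, from the failure of the first alternative, a subgroup $H \leq G$ of order $(1-o(1))n$ such that both $A$ and $B$ lie within $o(n)$ of a left coset of $H$. Suppose there exist $\varepsilon = o(1)$ and a set $T$ of at least $(1-\varepsilon)n$ symbols each with multiplicity $r(c) := |A \cap cB^{-1}| \geq (1-\varepsilon)n$.

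First I would convert multiplicities into closeness of sets: for every $c \in T$, the sets $A$ and $cB^{-1}$ both have size $n$ and intersect in at least $(1-\varepsilon)n$ elements, so $|A \Delta cB^{-1}| \leq 2\varepsilon n$. Fix any $c_0 \in T$; for each $c \in T$, chaining $A \approx c_0 B^{-1}$ with $A \approx cB^{-1}$ gives $|cB^{-1} \Delta c_0 B^{-1}| \leq 4\varepsilon n$, and after inversion and a right translation this becomes $|B \Delta Bh^{-1}| \leq 4\varepsilon n$ for $h := c^{-1}c_0$. Hence $H_0 := \{c^{-1}c_0 : c \in T\}$ has size at least $(1-\varepsilon)n$ and every $h \in H_0$ approximately right-stabilizes $B$.

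The heart of the argument is to upgrade $H_0$ to a genuine subgroup. A short Fubini computation (sum $|B \cap Bh^{-1}| \geq (1-2\varepsilon)n$ over $h \in H_0$ and average over $b \in B$) produces some $b_0 \in B$ with $|B \Delta b_0 H_0| = O(\varepsilon n)$. Combining this with the approximate-stabilizer property forces $|H_0 h \Delta H_0| = O(\varepsilon n)$ for every $h \in H_0$; equivalently, $|\{(y,h) \in H_0^2 : yh \in H_0\}| \geq (1-o(1))|H_0|^2$, so $H_0$ is nearly closed under its own right action. Letting $H := \{g \in G : H_0 g = H_0\}$, a true subgroup of $G$, a standard combinatorial argument in the spirit of extracting an exact group from an approximate one of near-maximal density then shows $|H_0 \Delta H| = o(n)$; in particular $|H| = (1-o(1))n$ and $|B \Delta b_0 H| = o(n)$, so $B$ is close to the left coset $b_0 H$.

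For $A$, the relation $|A \Delta c_0 B^{-1}| \leq 2\varepsilon n$ together with $|B \Delta b_0 H| = o(n)$ yields $|A \Delta c_0 H b_0^{-1}| = o(n)$ (using $H^{-1} = H$). In the abelian case this is the left coset $(c_0 b_0^{-1}) H$, and we may take $g := c_0 b_0^{-1}$; in the non-abelian case one passes to the conjugate subgroup $b_0 H b_0^{-1}$ (of the same size) to express both $A$ and $B$ as approximate left cosets of a common subgroup. The main obstacle is the subgroup-extraction step: the $o(1)$ error terms must be propagated carefully through several triangle inequalities, and a quantitative approximate-group argument is needed to deduce that the near-invariance of $H_0$ under a set of size nearly $|H_0|$ itself forces coincidence with a true subgroup to within $o(n)$.
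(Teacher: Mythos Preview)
Your approach is correct in substance and, in fact, more detailed than what the paper offers. The paper does not prove this lemma at all: it attributes the statement to Fournier and to Green's lecture notes, and merely sketches that if more than $(1-o(1))n$ symbols each occur more than $(1-o(1))n$ times, then both $A$ and $B$ have multiplicative energy at least $(1-o(1))n^3$, whence Fournier's theorem (sets of near-maximal energy are $o(n)$-close to a coset) yields the conclusion, with an unspecified further argument that the two subgroups agree.

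Your argument is an explicit unpacking of exactly this. The passage from ``many popular symbols'' to ``$H_0$ approximately right-stabilises $B$'' is the combinatorial translation of ``$B$ has near-maximal energy'', and your acknowledged main obstacle --- extracting a genuine subgroup $H$ with $|H_0\Delta H|=o(n)$ from the near-closure of $H_0$ --- \emph{is} the content of the Fournier/99\%-Freiman theorem the paper invokes as a black box. So the two routes coincide; you simply open the box.

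One small point deserves more care than either you or the paper give it. In the non-abelian case, your conjugation trick does not quite land: from $B\approx b_0H$ and $A\approx c_0Hb_0^{-1}=(c_0b_0^{-1})(b_0Hb_0^{-1})$ you get $A$ as a left coset of $H':=b_0Hb_0^{-1}$, but $B\approx b_0H=H'b_0$ is then a \emph{right} coset of $H'$. To obtain both as left cosets of a single subgroup one can run the symmetric argument (the set $Tc_0^{-1}$ approximately left-stabilises $A$) to realise $A$ as a right coset of some subgroup $L$; comparing with $A\approx c_0Hb_0^{-1}$ forces $L$ to be $o(n)$-close to a conjugate of $H$, and since two genuine subgroups of size $(1-o(1))n$ that are $o(n)$-close must coincide (their intersection has index $1+o(1)$, hence index exactly $1$), one can then align the two descriptions. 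The paper's ``can further be shown to be the same subgroup'' is hiding the same wrinkle.
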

In other words, this lemma says that either a subsquare $S$ of a multiplication table is close to a translate of a subgroup, or it satisfies  the condition of Theorem~\ref{Theorem_LatinSquare}. In the latter case, we can use this theorem to nearly-decompose $S$ into disjoint transversals. Thus we have the following corollary which works in any group, not just finite or abelian groups.
 \begin{corollary}
Let $S=A\times B$ be a subsquare of the multiplication table of a group $G$ defined by two $n$-element sets $A, B\subseteq G$.
Then, one of the following holds.
\begin{itemize}
\item $S$ has $(1-o(1))n$ disjoint transversals.
\item There is a subgroup $H$ of $G$ and   elements $g, g'\in V(G)$ such that $|A \Delta gH|=o(n)$ and $|B \Delta g'H|=o(n)$.
\end{itemize}
\end{corollary}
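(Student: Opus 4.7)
The plan is to observe that this corollary is almost immediate from the two results already stated, namely Theorem~\ref{Theorem_LatinSquare} and Lemma~\ref{Fournier}, once we check that the subsquare $S = A \times B$ really is a generalized Latin square to which Theorem~\ref{Theorem_LatinSquare} applies. For this, I would index the rows of $S$ by the elements of $A$, the columns by the elements of $B$, and fill in the $(a,b)$-entry with the group product $ab \in G$. Two entries $ab$ and $ab'$ in the same row coincide only when $b=b'$ by left cancellation, and similarly for columns by right cancellation; hence no symbol is repeated in any row or column, and $S$ is a generalized Latin square on $n$ rows and $n$ columns (with symbols drawn from $G$).

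Next I would feed $S$ into Lemma~\ref{Fournier}. This gives a dichotomy: either $S$ has at most $(1-o(1))n$ symbols that occur more than $(1-o(1))n$ times, or there exist a subgroup $H \le G$ and elements $g, g' \in G$ with $|A \Symdiff gH| = o(n)$ and $|B \Symdiff g'H| = o(n)$. In the latter case, the second bullet of the corollary already holds and we are done.

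In the former case, $S$ satisfies exactly the hypothesis of Theorem~\ref{Theorem_LatinSquare}, so that theorem supplies $(1-o(1))n$ pairwise disjoint transversals of $S$, giving the first bullet. Putting the two branches together yields the stated alternative. The real content of the corollary therefore lies entirely in proving Lemma~\ref{Fournier} (a structural statement in the spirit of Kneser/Freiman: many high-multiplicity symbols in the product table $A \cdot B$ force $A$ and $B$ to be close to cosets of a common subgroup) and Theorem~\ref{Theorem_LatinSquare}, which is the deep decomposition result; the step from those to the corollary is just the combination above.
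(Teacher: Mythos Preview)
Your proposal is correct and matches the paper's approach exactly: the paper also derives the corollary by combining Lemma~\ref{Fournier} with Theorem~\ref{Theorem_LatinSquare}, noting that the former provides the dichotomy and the latter handles the branch where few symbols occur often. Your added verification that $S$ is a genuine generalized Latin square (via left and right cancellation in $G$) is a sensible detail the paper leaves implicit.
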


Lemma \ref{Fournier} is implicit in the work of Fournier \cite{fournier1977sharpness} and appears as Theorem 1.3.3 in the lecture notes of Green  \cite{GreenNotes}.
It is formulated in terms of multiplicative energy, which for a subset $A$ of group $G$ is the number of quadruples $a_1, a_2, b_1, b_2 \in A$ such that
$a_1a_2^{-1}= b_1b_2^{-1}$. It follows easily from the definitions that if $S$ has more than $(1-o(1))n$ symbols occurring more than $(1-o(1))n$ times, then both
$A$ and $B$ have energy at least $(1-o(1))n^3$ and therefore are very close to cosets of some subgroups, which can further be  shown to be the same subgroup.

\subsection*{Rainbow matchings, Hamiltonian paths and cycles}
Transversals in Latin squares are closely related to rainbow subgraphs of edge-coloured graphs.
Recall that an edge-coloured graph is properly coloured if no two edges of the same colour share a vertex.
A \emph{matching} in a graph is a set of disjoint edges. We call a subgraph of a graph \emph{rainbow} if all of its edges have different colours. There is a one-to-one correspondence between $n\times n$ generalized Latin squares and proper edge-colourings of the complete bipartite graph $K_{n,n}$. Indeed, given a generalized Latin square $S=(s_{ij})$ with $m$ symbols in total, associate with it an $m$-edge-colouring of $K_{n,n}$ by setting $V(K_{n,n})=\{x_1, \dots, x_n, y_1, \dots, y_n\}$ and letting the colour of the edge $(x_i,y_j)$ be $s_{ij}$. Notice that this colouring is proper, i.e., adjacent edges receive different colours. Therefore the study of transversals in generalized Latin squares is equivalent to the study of perfect rainbow matchings in proper edge-colourings of $K_{n,n}$.
Moreover, if $S$ is symmetric, i.e.\  $s_{ij}=s_{ji}$ for all $i$ and $j$, it also defines the proper edge-colouring of the complete $n$-vertex graph $K_n$ in which the edge $ij$ is coloured by $s_{ij}$. Since $S$ is symmetric each edge has a well-defined colour. {Under this second correspondence, transversals give rainbow maximum degree $2$ subgraphs of $K_n$.}

As explained above, partial transversals in the Latin square $S$ correspond to rainbow matchings in the corresponding edge-coloured $K_{n,n}$. Thus Conjecture \ref{BRS} is equivalent to the statement that any proper $n$-edge-colouring of $K_{n,n}$ contains a rainbow matching of size $n-1$. Theorem~\ref{Theorem_LatinSquare} then follows from the following statement.
\begin{theorem}\label{Theorem_Matchings}
There is an $\alpha>0$ so that the following holds for all $1>\varepsilon\geq  n^{-\alpha}/\alpha$. Let $K_{n,n}$ be properly coloured with at most $(1-\varepsilon)n$ colours having more than $(1-\varepsilon)n$ edges. Then, $K_{n,n}$ has $(1-\varepsilon)n$ edge-disjoint perfect rainbow matchings.
\end{theorem}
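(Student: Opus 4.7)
The plan is to construct the $(1-\varepsilon)n$ edge-disjoint rainbow perfect matchings iteratively, combining a random greedy (semi-random/nibble) procedure to extract most of them with an absorbing gadget to handle the leftover structure at the end. First I would identify the set $L$ of \emph{large} colours (those appearing on more than $(1-\varepsilon)n$ edges), which by hypothesis satisfies $|L|\le (1-\varepsilon)n$. Because each large colour class must appear in essentially every one of the target matchings, while small colour classes are used sporadically, these two types of colour require different treatments, and the separation $|L|\le(1-\varepsilon)n$ is precisely what will be exploited to give the random process slack.

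Next I would reserve, from a uniformly random sample of the edges, an \emph{absorber} $\mathcal{A}\subseteq K_{n,n}$ built from small rainbow gadgets: structures which admit many different rainbow perfect matchings on a constant-sized vertex set, so that any small collection of ``defect'' edges left at the end of the main process can be repaired by rerouting through $\mathcal{A}$. On $K_{n,n}\setminus \mathcal{A}$ I would then run a random greedy procedure that, at each step, selects a rainbow perfect matching in the current residual graph and removes it. The key invariant to track is that after $i$ iterations the residual graph is $(n-i)$-regular on each side and, more importantly, every colour $c$ has lost roughly $i\cdot d_c/n$ of its edges, where $d_c$ is its initial size. Concentration of vertex and colour usage under this random process would follow from nibble-type arguments now standard in rainbow problems, using Freedman's or Azuma's inequality on appropriate supermartingales.

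The principal obstacle is controlling the large colours throughout the iteration. Large colours have very little slack: since $d_c > (1-\varepsilon)n$, one can afford to lose only $O(\varepsilon n)$ edges of each such colour across all matchings combined, so the random process has almost no room to drift from its expected trajectory, and the usual ``ignore a $\delta$-fraction'' nibble arguments are too lossy. I expect this is where the absorber $\mathcal{A}$ becomes indispensable: it must be engineered so that, whenever the greedy process is about to misplace a large-colour edge, one can swap through $\mathcal{A}$ to correct the deficit while preserving both properness and rainbowness, and at the end of the process the final few matchings are produced almost entirely inside $\mathcal{A}$, using its flexibility to mop up all remaining large-colour edges exactly. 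Establishing the existence of such an absorber, and proving that it simultaneously survives the nibble and cleanly completes the $(1-\varepsilon)n$ matchings, will be the most delicate part of the argument; the quantitative bound $\varepsilon\ge n^{-\alpha}/\alpha$ in the statement is consistent with such a tightly coupled nibble-plus-absorber scheme, where the admissible error $\varepsilon$ is governed by the polynomial concentration bounds available at each stage.
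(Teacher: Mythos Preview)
Your proposal is plausible in spirit but takes a genuinely different route from the paper, and the part you flag as ``the most delicate'' is exactly where your sketch is too thin to be convincing.

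The paper does \emph{not} build an absorber or track the trajectory of large colours through the iteration. Instead it creates the missing slack \emph{upfront} by a regularization step. First, a small random set of colours is set aside to form a $(\gamma,p,n)$-typical graph $H$ (Lemma~\ref{Lemma_Random_Subgraph_Bipartite}). On the remaining graph, every edge of a large colour is deleted independently with a small probability, and then one passes to a spanning regular subgraph (Lemmas~\ref{Lemma_subgraph_few_large_colours} and~\ref{Lemma_Regular_subgraph_bipartite}); the point is that after this deletion \emph{every} colour has at most $(1-\varepsilon)n$ edges while the minimum degree is still $(1-O(\varepsilon^2))n$, so one obtains a globally $(1-\varepsilon)n$-bounded, $(\gamma,\delta,n)$-regular graph $G'$ with $\delta\ge 1-\varepsilon+9\varepsilon^2$. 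Now a nibble is iterated on $G'$ (Lemma~\ref{Lemma_Nearly_Rainbow_Decomposition}) to extract $(1-o(1))\delta n$ \emph{nearly}-perfect rainbow matchings; because of the artificial gap, global boundedness is preserved throughout by a straightforward supermartingale calculation (Lemma~\ref{Lemma_martingale_colours_packing}), with no colour-by-colour rescue needed. Finally each nearly-perfect matching is completed to a perfect one by simple $2$-for-$3$ edge switches into the reserved typical graph $H$ (Lemmas~\ref{Lemma_completion_matching} and~\ref{Lemma_decomposition_completion_codegrees}).

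Compared with your plan: the paper's ``absorber'' is just a random typical subgraph $H$ used for local switching, not a structured gadget; and the large-colour difficulty is dissolved by the upfront deletion rather than corrected on the fly. Your absorber idea might be made to work, but note that the constraint ``each large colour must appear in essentially every matching'' is global across $\Theta(n)$ matchings, and it is not obvious how constant-sized local gadgets enforce it; you would need to say concretely what the gadgets are and why their flexibility suffices to absorb a $\Theta(\varepsilon n)$ deficit per large colour simultaneously for $(1-\varepsilon)n$ large colours. The paper's regularize-then-nibble-then-switch scheme sidesteps this entirely.
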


\noindent
We can also find perfect rainbow matchings in graphs that are more general than $K_{n,n}$. Our proof works for all suitably pseudorandom properly coloured balanced bipartite graphs. See Lemma~\ref{Lemma_PerfectMatching_Decomposition_ExtraPseudorandom} for an example of such a result.

There is a lot of interest in studying rainbow structures in properly coloured complete graphs.
{Recall that transversals in symmetric generalized Latin  squares correspond to rainbow maximum degree $2$ subgraphs of properly coloured complete graphs.} Since paths and cycles are a special type of maximum degree $2$ subgraph,
there has been a focus on finding nearly spanning rainbow paths/cycles in properly coloured complete graphs.
For example, Andersen \cite{andersen1989hamilton} in 1989 conjectured that all properly coloured $K_n$ have a rainbow path of length $n-2$.
Hahn conjectured even more, that such a path can be found in any (not necessarily properly) coloured complete graph with at most $n/2-1$ edges of each colour (see~\cite{hahn1986path}). Hahn's conjecture was recently disproved by the second and third author \cite{pokrovskiy2017counterexample}, who showed that without the ``proper colouring'' assumption the graph might not have rainbow paths longer than $n-\Omega(\log n)$. Thus it makes sense to restrict ourselves to colourings which are proper.
The progress on Andersen's conjecture was slow, despite efforts by various researchers, e.g., see \cite{akbari2007rainbow, gyarfas2010rainbow, gyarfas2011long, gebauer2012rainbow, chen2015long}.
Until recently it was not even known how to find a rainbow path/cycle of length $(1-o(1))n$. This was proved by Alon and the second and third author \cite{alon2016random}, who showed that
any properly coloured $K_n$ contains a rainbow cycle with $n-O(n^{3/4})$ vertices.
Using our techniques one can say much more, i.e., we can nearly-decompose such a complete graph into long rainbow cycles.
This is a corollary of the following theorem.

\begin{theorem}\label{Theorem_Hamiltonian}
There is an $\alpha>0$ so that the following holds for all $1>\varepsilon\geq  n^{-\alpha}/\alpha$.
Let  $K_{n}$ be a properly coloured with at most $(1-\varepsilon)n$ colours having more than $(1-\varepsilon)n/2$ edges. Then, $K_{n}$ has $(1-\varepsilon)n/2$ edge-disjoint rainbow Hamiltonian cycles.
\end{theorem}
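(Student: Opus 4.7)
The plan is to adapt the iterative framework that yields Theorem~\ref{Theorem_Matchings} --- the near-decomposition of properly coloured $K_{n,n}$ into rainbow perfect matchings --- to the setting of Hamilton cycles in $K_n$. Since a full Hamiltonian decomposition of $K_n$ uses $(n-1)/2$ cycles, each with $n$ edges, the target $(1-\eps)n/2$ is a near-decomposition by edge count, and the natural route is to extract one rainbow Hamilton cycle at a time while controlling the pseudorandomness of the leftover properly coloured graph and of its colour-class sizes.

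Concretely, I would first set aside a sparse random ``absorbing reservoir'' $R \subseteq K_n$ which is well distributed both across the vertices and across the colour classes, and which has strong connectivity/absorption properties: for any almost-spanning rainbow structure with a small, prescribed set of missed vertices and a small set of forbidden colours, $R$ contains a rainbow completion. Second, in each iteration I would use a matching-based step: take a (quasi)random bipartition $V(K_n) = A \cup B$ with $|A|=|B|=n/2$, and apply the extra-pseudorandom variant of Theorem~\ref{Theorem_Matchings} promised by Lemma~\ref{Lemma_PerfectMatching_Decomposition_ExtraPseudorandom} to the bipartite subgraph, producing edge-disjoint rainbow perfect matchings in $K_n[A,B]$; combining one such matching with small rainbow matchings inside $A$ and inside $B$ yields a rainbow $2$-factor of $K_n$. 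Third, I would use $R$ via rainbow rotations/switchings to merge the components of each such $2$-factor into a single rainbow Hamilton cycle. Finally, after each extraction, I would verify that the hypothesis ``at most $(1-\eps)n$ colours have more than $(1-\eps)n/2$ edges'' is preserved up to lower-order error, using standard concentration inequalities to control the colour degree distribution in both the leftover graph and the reservoir.

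The main obstacle is the cycle-merging step, together with the iteration bookkeeping it forces. In the matching setting, a spanning rainbow $2$-regular structure would already close the argument; here the output must be a \emph{single} spanning cycle, so the components of each rainbow $2$-factor must be stitched together via rainbow swaps drawn from $R$ that preserve the colour-disjointness against all previously extracted cycles. The delicate point is to show that the cumulative depletion of $R$ over the $\Theta(n)$ rounds is lower order, so that its absorption property persists throughout, and that the (quasi)random bipartitions chosen in successive rounds remain compatible with the edges and colours already used. A secondary technical challenge is to verify that the bipartite matching decomposition lemma extends to the pseudorandom sub-instances that arise in each iteration, as well as to the setting of $K_n[A]$ and $K_n[B]$ where only small rainbow near-matchings are needed.
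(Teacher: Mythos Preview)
Your proposal has a genuine gap in the cycle-merging step, and it is more serious than you indicate. The $2$-factors you build --- a rainbow perfect matching across a bipartition $A\cup B$ together with perfect matchings inside $A$ and inside $B$ (these are not ``small'': they have $n/4$ edges each) --- carry no lower bound on cycle length, so a typical such $2$-factor has $\Theta(n)$ cycles. Merging the cycles of one $2$-factor then consumes $\Theta(n)$ reservoir edges, and over $(1-\varepsilon)n/2$ rounds this is $\Theta(n^2)$ edges, which no sparse reservoir $R$ can supply. The paper's device for avoiding this is the heart of the argument: partition both $V(K_n)$ and $C(K_n)$ into $k$ classes (with $k$ a large parameter), decompose each bipartite piece $G[V_a,V_b]$ restricted to colour class $C_c$ into rainbow perfect matchings via Lemma~\ref{Lemma_PerfectMatching_Decomposition_ExtraPseudorandom}, and then assemble matchings along a rainbow Hamiltonian decomposition of an auxiliary $K_k$ (Lemma~\ref{Lemma_2_Factor_Decomposition_Divisibility}). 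The resulting rainbow $2$-factors have all cycles of length $\geq k$, hence at most $n/k$ cycles each, and the total merging cost is $O(n^2/k)=o(n^2)$.

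A second structural difference: the paper does \emph{not} extract Hamilton cycles one at a time while re-verifying the colour hypothesis. Instead it reserves, once and for all, two colour-disjoint typical subgraphs $J_1,J_2$ by random colour selection (Lemma~\ref{Lemma_Random_Subgraph_General}), produces all the $2$-factors simultaneously in the remaining graph via the $k$-partition scheme (Lemma~\ref{Lemma_2_Factor_Decomposition}), and then converts all of them to Hamilton cycles using $J_2$ (Lemma~\ref{Lemma_decomposition_completion_Hamiltonian_Codegrees}). This sidesteps entirely the round-by-round bookkeeping you identify as delicate; the only iteration over $\Theta(n)$ rounds in the paper happens at the level of near-perfect matchings in a \emph{fixed} bipartite graph (Lemma~\ref{Lemma_Nearly_Rainbow_Decomposition}), where the martingale analysis is tractable.
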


\begin{corollary}\label{Corollary_Hamiltonian}
There is an $\alpha>0$ so that the following holds for all $1>\varepsilon\geq  n^{-\alpha}/\alpha$.
Given a properly coloured $K_{n}$ let $U$ be a random subset of $(1-\varepsilon)n$ vertices. Then, with high probability, the subgraph induced by $U$ has $(1-2\varepsilon)n/2$ edge-disjoint rainbow Hamiltonian cycles.
\end{corollary}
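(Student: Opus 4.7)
Set $m:=(1-\varepsilon)n$. The plan is to apply Theorem~\ref{Theorem_Hamiltonian} to the induced coloured graph $K_n[U]$---a copy of $K_m$---with the same parameter $\varepsilon$. A successful application yields $(1-\varepsilon)m/2=(1-\varepsilon)^2 n/2\geq(1-2\varepsilon)n/2$ edge-disjoint rainbow Hamiltonian cycles, as required. (The technical condition $\varepsilon\geq m^{-\alpha'}/\alpha'$ for some $\alpha'>0$ follows from $\varepsilon\geq n^{-\alpha}/\alpha$ after shrinking $\alpha$ slightly.) Everything therefore reduces to verifying, with high probability, the hypothesis: \emph{at most $(1-\varepsilon)m$ colours have more than $T:=(1-\varepsilon)m/2$ edges in $K_n[U]$.}

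For each colour $c$, let $e_c$ be its number of edges in $K_n$ (with $e_c\leq n/2$ by properness) and $X_c$ the corresponding count in $K_n[U]$. Since each colour class is a matching, $X_c$ is a sum of $e_c$ negatively correlated $\{0,1\}$-indicators with mean $\mu_c=e_c\,m(m-1)/(n(n-1))$. I would split colours into \emph{small}, with $e_c\leq(1-\delta)n/2$, and \emph{large}, with $e_c>(1-\delta)n/2$, for a suitable $\delta=\Theta(\varepsilon)$. For small $c$ the gap $T-\mu_c$ is $\Omega(\varepsilon m)$, so a Chernoff bound for negatively correlated indicators gives $\P(X_c>T)\leq\exp(-\Omega(\varepsilon^2 m))$; a union bound over the at most $\binom{n}{2}$ colours shows that with high probability no small colour belongs to $B(U):=\{c:X_c>T\}$, using $\varepsilon\geq n^{-\alpha}/\alpha$ for a suitably small $\alpha$.

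For large colours, a simple edge-count gives at most $(1+O(\delta))n$ of them. For each such $c$, the mean $\mu_c$ sits close to but strictly below $T$, while the standard deviation of $X_c$ is $O(\sqrt{\varepsilon n})$. A Gaussian-type estimate bounds $\P(X_c>T)$ by a constant strictly less than $1/2$, yielding $\E[|B(U)|]\leq (1-\varepsilon)m-\Omega(\varepsilon n)$ in the regime where $\varepsilon$ is bounded away from the critical value $1-1/\sqrt 2$ (for $\varepsilon$ closer to $1/2$, $(1-2\varepsilon)n/2$ is small and the statement is either vacuous or requires a separate, more delicate argument). Concentrating $|B(U)|$ around its mean at scale $o(\varepsilon n)$ then completes the proof.

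The main technical obstacle I expect is the concentration of $|B(U)|$. A naive vertex-exposure martingale has Lipschitz constant $\Theta(n)$---swapping one vertex of $U$ can flip up to $\Theta(n)$ of the indicators $\mathbf{1}[X_c>T]$---giving only $\exp(-O(1))$ tail bounds, far too weak. One instead needs a Talagrand-type inequality tailored to uniformly random $m$-subsets, or an argument exploiting the negative association of the events $\{X_c>T\}$ via the identity $\sum_c X_c=\binom{m}{2}$ coupled with the sparsity of individual colour classes, to obtain the required $o(\varepsilon n)$ concentration.
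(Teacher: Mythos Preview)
Your approach has a genuine gap that you yourself identify: the concentration of $|B(U)|$. But more importantly, the entire analysis of $|B(U)|$ is unnecessary, because you are missing a simple arithmetic observation that the paper exploits.

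The paper's route is via Lemma~\ref{Lemma_Random_Subgraph_General}~(c) (see also the informal statement A\ref{Result_random_subset} in the overview). Applied to the $(\gamma,1,n)$-typical, globally $(n/2)$-bounded graph $K_n$ with $p=1-\varepsilon$ and $\mu=1/2$, it says that with high probability $K_n[U]$ is simultaneously $(2\gamma,1,m)$-typical and globally $(1+\gamma)(1-\varepsilon)^2 n/2$-bounded. The point is that edges survive with probability $\approx(1-\varepsilon)^2$ while vertices survive with probability $(1-\varepsilon)$, so the global bound, measured relative to $m=(1-\varepsilon)n$, is
\[
(1+\gamma)(1-\varepsilon)^2\,\tfrac{n}{2}=(1+\gamma)(1-\varepsilon)\,\tfrac{m}{2}\leq \bigl(1-\tfrac{\varepsilon}{2}\bigr)\tfrac{m}{2}
\]
once $\gamma\ll\varepsilon$. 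In other words, with high probability \emph{no} colour in $K_n[U]$ has more than $(1-\varepsilon/2)m/2$ edges. The hypothesis of Theorem~\ref{Theorem_Hamiltonian} (applied with parameter $\varepsilon/2$ on the $m$-vertex graph) is then vacuously satisfied, yielding $(1-\varepsilon/2)m/2\geq(1-2\varepsilon)n/2$ edge-disjoint rainbow Hamiltonian cycles. Equivalently, one can feed the typicality and global bound directly into Lemma~\ref{Lemma_Hamiltonian_Decomposition_gap}.

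Your argument stalls precisely because you compare the global bound to the threshold $(1-\varepsilon)m/2$ with the \emph{same} $\varepsilon$; for colours of size $n/2$ the mean $\mu_c$ then sits exactly at $T$, not strictly below, and you are forced into a delicate second-moment count of threshold-crossing colours. Relaxing the threshold by any amount $\Theta(\varepsilon)$ pushes every colour strictly below it with room $\Theta(\varepsilon m)$, after which a single union-bounded Chernoff estimate (already contained in Lemma~\ref{Lemma_Random_Subgraph_General}~(c)) finishes the job.
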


\subsection*{Rainbow spanning trees}
In this paper we also study spanning rainbow trees in properly coloured complete graphs. Notice that a rainbow Hamiltonian path is a very special case of a rainbow tree. Because of this one expects the results which hold for rainbow spanning trees to be stronger than ones for paths. For example, every properly coloured $K_n$ contains a rainbow spanning tree (a star at any vertex is rainbow), whereas it is known that there are proper edge-colourings of $K_n$ without rainbow Hamiltonian paths. In fact, much more is probably true. It was conjectured by a number of authors that properly coloured complete graphs should always have decompositions into spanning rainbow trees.
\begin{conjecture}[Brualdi and Hollingsworth, \cite{brualdi1996multicolored}]\label{Conjecture_Brualdi_Hollongsworth}
Every properly $(2n-1)$-coloured $K_{2n}$ can be decomposed into edge-disjoint rainbow spanning trees.
\end{conjecture}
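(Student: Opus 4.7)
Since the exact form of Conjecture~\ref{Conjecture_Brualdi_Hollongsworth} appears out of reach of current techniques, I would first aim for the asymptotic version: a near-decomposition of any properly $(2n-1)$-edge-coloured $K_{2n}$ into $(1-o(1))n$ edge-disjoint rainbow spanning trees. A useful preliminary observation is that in such a colouring each colour class is forced to be a perfect matching, so a rainbow spanning subgraph corresponds to a choice of one edge from each of the $2n-1$ perfect matchings, and we want $n$ such choices partitioning the edge set.

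My plan follows the absorption-plus-semi-random paradigm used for Theorems~\ref{Theorem_Matchings} and \ref{Theorem_Hamiltonian}. Fix a small $\varepsilon>0$. First, randomly reserve an $\varepsilon$-fraction of the edges in each colour class to form an \emph{absorbing reservoir} $R$; standard concentration yields that $K_{2n}\setminus R$ remains quasirandom, with each vertex incident to $(1\pm o(1))(1-\varepsilon)(2n-1)$ remaining edges and each colour class still a near-perfect matching. Second, iteratively extract rainbow forests $F_1,F_2,\dots,F_{(1-2\varepsilon)n}$ from $K_{2n}\setminus R$, where at stage $t$ one grows $F_t$ in the leftover graph by adding edges that each introduce a new colour and merge two components, in a semi-random (R\"odl-nibble style) manner; using martingale concentration one maintains that after removing $F_1,\dots,F_{t-1}$ the leftover is still quasirandom enough for the nibble to continue, and that each $F_t$ ends up covering all but $o(n)$ vertices and using all but $o(n)$ colours. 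Third, use $R$ to attach the leftover vertices via edges of the leftover colours, completing each $F_t$ to a spanning rainbow tree $T_t$.

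The main obstacle is the third step. For perfect matchings, absorption reduces to short alternating-path swaps, and for Hamiltonian cycles a well-developed absorbing machinery exists (as in \cite{alon2016random}); for spanning trees, however, any absorber must preserve global connectivity, so a local edge swap can easily disconnect the tree unless the gadget is designed with care. I would therefore pre-build inside $R$ small subgraphs with enough local flexibility that, via short switching operations, any prescribed set of $o(n)$ deficient colour--vertex pairs can be attached to the main forest while simultaneously keeping the rainbow and tree properties intact across all $(1-o(1))n$ trees built in parallel; designing such tree absorbers, and arguing that $R$ contains sufficiently many of them in the right colour pattern, is the core technical difficulty. Closing the final $o(n)$ trees exactly, which would give the full Brualdi--Hollingsworth statement, seems to require genuinely new ideas beyond this absorption scheme.
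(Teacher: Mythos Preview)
First, note that Conjecture~\ref{Conjecture_Brualdi_Hollongsworth} is stated in the paper as an open conjecture; the paper proves only the asymptotic version (Theorem~\ref{Theorem_Trees}), so your decision to aim for $(1-o(1))n$ edge-disjoint rainbow spanning trees is exactly right.

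Your overall three-step outline (reserve, build near-spanning rainbow objects, complete) is in the same spirit as the paper, but the two middle pieces differ substantially, and your version of the second step has a real gap.

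\textbf{Where the paper goes a different route.} The paper never runs a nibble that directly grows rainbow \emph{forests}. Instead it builds up through a hierarchy: nibble for nearly-perfect rainbow \emph{matchings} in balanced bipartite graphs (Lemma~\ref{Lemma_near_perfect_matching}, iterated in Lemma~\ref{Lemma_Nearly_Rainbow_Decomposition}), then perfect matchings, then rainbow $2$-factors, then rainbow Hamiltonian cycles (Lemma~\ref{Lemma_Hamiltonian_Decomposition_gap}). The crucial trick for the Brualdi--Hollingsworth setting is that in a $(2n-1)$-colouring of $K_{2n}$ every colour class has exactly $n$ edges, so there is \emph{no} gap between global boundedness and average degree and none of the above lemmas apply directly. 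The paper fixes this by passing to a random vertex set $S$ of size $(1-\varepsilon)(2n)$: by Lemma~\ref{Lemma_Random_Subgraph_General}(c) the induced graph $K_{2n}[S]$ is globally $(1-2\varepsilon)|S|/2$-bounded, and now Lemma~\ref{Lemma_Hamiltonian_Decomposition_gap} yields a near-decomposition of $K_{2n}[S]$ into rainbow Hamiltonian paths. These paths are the near-spanning trees; the completion step then attaches the $o(n)$ vertices in $V\setminus S$ one at a time via the switching Lemmas~\ref{Lemma_Free_Edges_On_Tree}--\ref{Lemma_Many_Trees_Completion}, using a reserved typical subgraph $H$ (obtained by random edge selection, Lemma~\ref{Lemma_Erdos_Renyi_Subgraph}) rather than pre-built absorbers.

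\textbf{Why your second step is problematic.} Your plan is to ``grow $F_t$ by adding edges that each introduce a new colour and merge two components, in a semi-random (R\"odl-nibble style) manner''. This is not a standard nibble: the R\"odl nibble produces matchings in (hyper)graphs, and its analysis relies on the independence structure of a matching. A process that insists each new edge merges two components is inherently global and history-dependent; it is not clear how to formulate this as a nibble with the requisite concentration, nor how to keep the leftover quasirandom enough to iterate $(1-o(1))n$ times when each $F_t$ must use essentially every colour (so there is no slack in colours to exploit). The paper sidesteps this entirely by manufacturing slack via the random-subset trick and then using the well-understood matching nibble inside bipartite pieces. Your absorber worry in step three is legitimate, but the paper's solution there is also different: rather than designing gadgets in advance, it performs at most three edge-switches per missing vertex per tree (Lemma~\ref{Lemma_Tree_Extend_By_One_Vertex}), controlling interference between trees through the degree condition (iv) in Lemma~\ref{Lemma_Many_Trees_Completion}.
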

\begin{conjecture}[Kaneko, Kano, and Suzuki, \cite{kaneko2003three}]\label{Conjecture_Kaneko_Kano_Suzuki}
Every properly coloured $K_{n}$ contains $\lfloor n/2\rfloor$ edge-disjoint rainbow spanning trees.
\end{conjecture}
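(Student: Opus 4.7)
The plan is to try to close the $o(n)$ gap between the paper's asymptotic result and the exact bound $\lfloor n/2\rfloor$ by combining the asymptotic decomposition machinery with an absorption argument. At the outset, reserve a small random subgraph $R \subseteq K_n$ (roughly a $1/\log n$ fraction of edges) chosen so that each colour class is represented in $R$ proportionally to its size, and so that $R$ is ``flexible'': for any vertex degree-sequence $(d_v)_{v\in V}$ with $\sum d_v = 2(n-1)$ and $d_v$ small, $R$ contains a rainbow spanning tree realising those degrees. Such pseudorandomness of $R$ can be ensured by a standard concentration argument.

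Next, apply the paper's techniques (the ones behind Theorem~\ref{Theorem_Hamiltonian} and its tree analogue) to nearly-decompose $K_n \setminus R$ into $(1-\varepsilon)n/2$ edge-disjoint rainbow spanning trees, tracking carefully which colours and edges remain. The goal is that the leftover graph $L := (K_n \setminus R)\setminus \bigcup_i T_i$, together with the unused portion of $R$, is quasirandom with the expected colour-degree distribution at every vertex. The remaining task is to build the final $\lfloor n/2\rfloor - (1-\varepsilon)n/2 = \Theta(\varepsilon n)$ rainbow spanning trees inside $L\cup R$, and this is where the absorber $R$ is used: we iteratively build one more tree per step, using $R$ to patch up local imbalances (vertices whose residual degree in a given colour is too low or too high) via leaf-swaps with already-constructed trees.

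The main obstacle is the very last $O(1)$ trees. By then the remaining edge set is essentially determined up to a small permutation, so finding a rainbow spanning tree inside it is a rigid problem equivalent to finding a transversal in a small generalised Latin square derived from the leftover colouring; this is morally as hard as Conjecture~\ref{BRS} itself. For colourings that are genuine $1$-factorisations of $K_n$ with exactly $n-1$ colour classes each of size $n/2$, the $\lfloor n/2\rfloor$ trees must \emph{exactly} partition $E(K_n)$, leaving no slack whatsoever. I would try to handle this rigid endgame by a global rotation argument: rather than treating the final trees independently, modify the already-built $T_i$'s via short augmenting-path-style swaps (trading an edge of $T_i$ for an edge of the leftover of the same colour), until the residual colour pattern admits a direct greedy completion. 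Making this rotation argument close the \emph{entire} constant gap — rather than just a subconstant fraction of it — is the step I expect to require genuinely new ideas beyond what the asymptotic proof supplies, and explains why the paper settles for the asymptotic statement rather than the full conjecture.
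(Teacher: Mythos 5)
You have not given a proof of the statement, and indeed no proof is expected: the statement you were asked about is the Kaneko--Kano--Suzuki conjecture itself, which remains open. The paper does not prove it either; what the paper proves (Theorem~\ref{Theorem_Trees}) is the asymptotic version, producing $(1-\varepsilon)n/2$ edge-disjoint spanning rainbow trees, and the authors explicitly describe their result as ``an asymptotic version of the Brualdi-Hollingsworth and Kaneko-Kano-Suzuki conjectures.'' So your submission is, at best, a discussion of why the conjecture is hard, not a proof of it, and you say as much yourself in the final sentence.

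The diagnosis in your sketch is broadly sound. The absorption/reservoir idea and iterated leaf-swaps are natural tools, and the paper's own completion machinery (Lemmas~\ref{Lemma_Tree_Extend_By_One_Vertex} and~\ref{Lemma_Many_Trees_Completion}) is precisely a ``switch-and-augment'' argument of the kind you propose, used to grow $(1-o(1))n/2$ nearly-spanning paths into $(1-o(1))n/2$ spanning trees. But the slack built into that machinery (the $\beta n$ buffer in property (iii), the $\eps n$ margin in $|S|$, the $o(n)$ unused colour classes) is exactly what evaporates when you try to go all the way to $\lfloor n/2\rfloor$ trees. In the $1$-factorization case every edge must be used, every tree omits exactly one colour, and there is no room for the random reservoir $R$ you want to set aside, since its edges would also have to be assigned. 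You correctly flag this endgame rigidity as ``morally as hard as Conjecture~\ref{BRS}''; that is the concrete missing ingredient, and it is why the conjecture is still a conjecture. Nothing in the paper, nor in your proposal, supplies a way to discharge the last $\Theta(\eps n)$ (let alone the last $O(1)$) trees without slack.

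One further technical caution: even setting aside the endgame, your claim that a random reservoir $R$ with about a $1/\log n$ fraction of edges ``contains a rainbow spanning tree realising those degrees'' for any suitable low-degree sequence is not a standard concentration fact, and would itself require proof; a rainbow spanning tree in a random sparse subgraph with prescribed degree sequence is a nontrivial structure, and showing its existence uniformly over many degree sequences is close in difficulty to the problems you are trying to reduce to it. If you want to pursue this line, that step would need to be isolated and proved rather than asserted.
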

These conjectures attracted a lot of attention from various researchers (see, e.g., \cite{akbari2007multicolored, carraher2016edge, fu2016number}) who showed how to find several disjoint  spanning rainbow trees. The best known results for these problem guarantee the existence of $\varepsilon n$ edge-disjoint rainbow trees (see \cite{horn2018rainbow} for Conjecture~\ref{Conjecture_Brualdi_Hollongsworth} and \cite{balogh2017rainbow,pokrovskiy2017linearly} for Conjecture~\ref{Conjecture_Kaneko_Kano_Suzuki}).
Developing our results on Hamiltonian cycles, we are able  to improve this and show that one can find  $(1-o(1))n$ disjoint spanning  rainbow trees.

\begin{theorem}\label{Theorem_Trees}
There is an $\alpha>0$ so that the following holds for all $1>\varepsilon\geq  n^{-\alpha}/\alpha$.
Every properly coloured $K_n$ has $(1-\varepsilon)n/2$ edge-disjoint spanning rainbow trees.
\end{theorem}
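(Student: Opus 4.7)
The plan is to reduce Theorem~\ref{Theorem_Trees} to Corollary~\ref{Corollary_Hamiltonian} by inflating each rainbow Hamiltonian cycle on a large random subset of $V(K_n)$ into a spanning rainbow tree via pendant attachments. Set $\varepsilon' = \varepsilon/2$ and pick a uniformly random subset $U \subseteq V(K_n)$ of size $(1-\varepsilon')n$, writing $W = V(K_n) \setminus U$ so $|W| = \varepsilon n/2$. Applied to $K_U$, Corollary~\ref{Corollary_Hamiltonian} yields, with high probability, a family of $t := (1-2\varepsilon')n/2 = (1-\varepsilon)n/2$ edge-disjoint rainbow Hamiltonian cycles $C_1, \ldots, C_t$ in $K_U$.

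Next, I would convert each $C_i$ into a spanning rainbow tree $T_i$ of $K_n$ as follows: delete an edge of $C_i$ to obtain a Hamiltonian path $P_i$ on $U$, and attach every $w \in W$ to $U$ by a single bipartite edge $f^{(i)}_w$. Then $T_i := P_i \cup \{f^{(i)}_w : w \in W\}$ is a tree on $V(K_n)$ with $(|U|-1) + |W| = n-1$ edges. Since the $P_i$'s are edge-disjoint and lie inside $K_U$, while every attachment lies in the bipartite graph between $U$ and $W$, the trees $T_1, \ldots, T_t$ will be edge-disjoint provided the attachments are. It therefore remains to choose the attachments so that, for each $i$, the colours of $P_i$ and of $\{f^{(i)}_w\}_{w \in W}$ are pairwise distinct (so that $T_i$ is rainbow), and so that, across different $i$, no bipartite edge is used twice.

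For the attachment step I would, for each $i$ in turn, introduce an auxiliary bipartite \emph{choice graph} $H_i$ with parts $W$ and $F_i := \mathrm{col}(K_n) \setminus \mathrm{col}(P_i)$, placing an edge $wc$ whenever $w$ has an edge to $U$ of colour $c$ that has not yet been used as an attachment in $T_1, \ldots, T_{i-1}$. A perfect matching of $W$ into $F_i$ in $H_i$ then prescribes valid attachments for $T_i$: to each matched pair $(w,c)$ assign $f^{(i)}_w$ to be the unique (by properness) edge from $w$ to $U$ of colour $c$. Processing $i = 1, 2, \ldots, t$ greedily and invoking Hall's theorem at each step produces the full family of attachments.

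The main obstacle is verifying Hall's condition in $H_i$ at every iteration, which is genuinely tight: in the extremal case of a $1$-factorisation of $K_n$ the palette has size exactly $n-1$, so $|F_i| = n - |U| = |W|$ and the matching must exhaust every free colour with no slack whatsoever. To handle this I would exploit the randomness of the partition $V = U \cup W$: with high probability, each colour class of $K_n$ splits pseudorandomly between $K_U$ and the bipartite graph $K[U,W]$, so the free colours in $F_i$ are spread roughly uniformly across the edges from $W$ to $U$. Since each $w \in W$ has $|U| \approx n$ bipartite edges and takes part in only $t = (1-\varepsilon)n/2$ attachments, roughly half of its bipartite edges remain available at every step; a standard concentration argument---akin to those underlying the proof of Theorem~\ref{Theorem_Hamiltonian}, or obtained via the pseudorandom matching statement Lemma~\ref{Lemma_PerfectMatching_Decomposition_ExtraPseudorandom}---then shows that Hall's condition holds at every iteration, completing the proof.
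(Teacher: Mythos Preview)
Your reduction to Corollary~\ref{Corollary_Hamiltonian} followed by pendant attachments has a genuine gap in the attachment step. The colour set $F_i=\mathrm{col}(K_n)\setminus\mathrm{col}(P_i)$ is not random with respect to the partition $U\cup W$: once $U$ is fixed, the cycles $C_i$---and hence the sets $F_i$---are the output of a black-box algorithm and may depend adversarially on which edges at each $w\in W$ cross into $U$. In the $1$-factorisation case $|F_i|=|W|=\varepsilon n/2$ exactly, and any single colour $c$ has only about $|W|$ edges crossing between $U$ and $W$; nothing in your argument rules out $c\in F_i$ for $\Omega(n)$ values of $i$, in which case the crossing $c$-edges are exhausted long before the greedy process finishes. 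More quantitatively, if a fixed set $F$ of $|W|$ colours lies inside many of the $F_i$, the total supply of crossing $F$-edges is about $|F|\cdot|W|=\varepsilon^2 n^2/4$, whereas the demand over all iterations is about $t|W|=\varepsilon(1-\varepsilon)n^2/4$---short by a factor of $\varepsilon^{-1}$. Your appeal to ``randomness of the partition'' cannot close this, because $F_i$ is measurable with respect to $U$; the hand-wave towards Lemma~\ref{Lemma_PerfectMatching_Decomposition_ExtraPseudorandom} does not fit either, since the bipartite graph between $W$ and $U$ is far from balanced and the required matchings are indexed by the uncontrolled sets $F_i$.

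The paper confronts exactly this obstruction with machinery your sketch lacks. Before building the paths it sets aside a pseudorandom graph $H\subseteq K_n[S]$, edge-disjoint from the paths, in which any moderate set of colours covers almost all of $S$ (Lemma~\ref{Lemma_Erdos_Renyi_Subgraph}). To attach a vertex $v\notin S$ to a tree $T_i$, the paper does not rely on the specific colours of $F_i$ being available at $v$; instead it \emph{switches} edges inside $T_i$---first using a large unused colour class, then using $H$---to free up almost half of the colours on $T_i$, one of which must then appear on an edge at $v$ (Lemma~\ref{Lemma_Tree_Extend_By_One_Vertex}, iterated in Lemma~\ref{Lemma_Many_Trees_Completion}). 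There is also a separate argument (Section~9.1) incorporating small colour classes so that the remaining free colours are guaranteed to be large. Pendant attachment alone, without this internal switching, does not suffice in the near-$1$-factorisation regime, which is precisely the case the paper isolates in Lemma~\ref{Lemma_Tree_Decomposition_Many_Large_Colours}.
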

This theorem proves an asymptotic version of the Brualdi-Hollingsworth and Kaneko-Kano-Suzuki conjectures. Note that unlike our results about perfect matchings and Hamiltonian cycles,
which require certain small additional conditions, this theorem is true for all proper edge-colourings.

\section{Proof overview}
Our various rainbow decomposition results build on each other. First we find decompositions into rainbow perfect matchings, then into rainbow Hamiltonian cycles, and then into rainbow spanning trees. There are other rainbow structures that we find in between these --- the actual sequence of our proofs is the following:
\begin{enumerate}
\item Near-decompositions of nearly-regular balanced  bipartite graphs into nearly-perfect rainbow matchings.
\item Near-decompositions of typical balanced  bipartite graphs into perfect rainbow matchings.
\item Near-decompositions of typical   graphs into  rainbow 2-factors.
\item Near-decompositions of typical   graphs into  rainbow Hamiltonian cycles.
\item Near-decompositions of complete  graphs into  rainbow spanning trees.
\end{enumerate}
The following definitions make precise various terms in this overview.
\begin{itemize}
\item \textbf{Near-decomposition:} A near-decomposition of a graph $G$ is a set of edge-disjoint subgraphs $H_1, \dots, H_t$ in $G$ which cover almost all the edges of $G$, i.e.\ which have $e(H_1\cup \dots \cup H_t)=(1-o(1))e(G)$.
\item\textbf{Average degree:} The average degree of a graph $G$ is $\ad(G)=2e(G)/v(G)$.
\item\textbf{Nearly-regular:} A graph is nearly regular if all the vertices $v\in V(G)$ have $d(v)=(1\pm o(1))\frac{2e(G)}{v(G)}$, i.e.\ if all its degrees are close to each other.
\item\textbf{Typical:} A graph is typical if any pair of vertices $u,v\in V(G)$ has $d(u,v)=(1\pm o(1))\frac{4e(G)^2}{v(G)^3}$, i.e.\ if all its codegrees are close to each other. This is the main notion of pseudorandomness that we use in this paper.
\item\textbf{Global boundedness:} A coloured graph is globally $b$-bounded if it has $\leq b$ edges of each colour.
\item\textbf{2-factor:} A 2-factor is a collection of vertex-disjoint cycles which span all the vertices of a graph.
\item\textbf{Balanced bipartite:} A graph is balanced bipartite if its vertices can be partitioned into two sets of the same size, so that all the edges lie between the two sets.
\end{itemize}

\subsection{Nearly-perfect rainbow matchings}
There are two main results we prove about nearly-perfect rainbow matchings --- one finds a single nearly-perfect rainbow matching in a graph, the other nearly-decomposes a graph into them. The following is an informal description of the first result:
\begin{result}
Every properly coloured, nearly-regular, globally $\ad(G)$-bounded, balanced bipartite graph $G$ has a rainbow matching $M$ of order $(1-o(1))|V(G)|/2$. Additionally, $M$ can be chosen probabilistically so that every edge of $G$ is in $M$ with roughly the same probability.
\end{result}
The precise statement of this is Lemma~\ref{Lemma_near_perfect_matching}. The proof uses R\"odl's semi-random method together with some extra ideas.
The key point in A1 is that the matching it produces is randomized. Given a properly coloured, nearly-regular, globally $\ad(G)$-bounded, balanced bipartite graph we can repeatedly apply A1 in order to produce a sequence of disjoint nearly-perfect rainbow matchings $M_1, \dots, M_t$. We can keep iterating this as long as the remaining graph satisfies the assumptions of A1 (near-regularity and global boundedness). Using the fact that the matching in A1 is randomized we can show that with high probability we can iterate A1 until there are $o(|V(G)|^2)$ edges left in the graph, i.e.\ until we have a near-decomposition into nearly-perfect rainbow matchings:
\begin{result}\label{Result_near_matching_decomposition}
Every properly coloured, nearly-regular, globally $\ad(G)$-bounded, balanced bipartite graph $G$ can be nearly-decomposed into rainbow matchings of order $(1-o(1))|V(G)|/2$.
\end{result}
The precise statement of this is Lemma~\ref{Lemma_Nearly_Rainbow_Decomposition}.
The proof of A2 iterates A1 while ensuring that the assumptions of A1 are maintained. We show this using a martingale concentration inequality.

\subsection{Rainbow perfect matchings}
The basic result we prove about near-decompositions into perfect rainbow matchings is the following:
\begin{result}\label{Result_perfect_matching_decomposition}
Let $G$ be a properly coloured, nearly-regular, globally $\ad(G)$-bounded, balanced bipartite graph.
Let $H$ be a properly coloured, typical graph on $V(G)$ which is edge-disjoint and colour-disjoint from $G$.
Then $G\cup H$ has a near-decomposition into rainbow perfect matchings.
\end{result}
The precise statement of this is Lemma~\ref{Lemma_PerfectMatching_Decomposition_Typical}.
The assumptions of this lemma (that we have two disjoint graphs, one of which is typical and the other nearly-regular and globally bounded) will reoccur several times in this paper. We pause now to explain why these are natural assumptions under which to seek spanning rainbow structures.

We look at a nearly-regular, globally $\ad(G)$-bounded graph for two reasons. Firstly A\ref{Result_near_matching_decomposition} shows that under this assumption one can find rainbow nearly-perfect matchings (so it is reasonable to try to strengthen A\ref{Result_near_matching_decomposition} to get perfect matchings). Secondly, if one starts in any properly coloured $K_{n,n}$ and selects a random subgraph $G$ by choosing every colour independently with probability $p$ (and letting $G$ be the edges of the chosen colours), then the resulting subgraph will be a nearly-regular, globally $(1+o(1))\ad(G)$-bounded graph with high probability. We prove results about nearly-regular, globally $\ad(G)$-bounded graphs so that we can say things about random subgraphs of properly coloured complete graphs.

Unfortunately one cannot hope to find perfect rainbow matchings if one just considers a nearly-regular graph $G$. This is because nearly-regular graphs might have no perfect matchings at all (e.g.\ a disjoint union of two copies of $K_{n, n+1}$ is nearly-regular, balanced bipartite, and has no perfect matching). This is the motivation for the typical graph $H$ disjoint from $G$ in A\ref{Result_perfect_matching_decomposition}. The union of a nearly-regular graph $G$ and a typical graph $H$ has a perfect matching, making A\ref{Result_perfect_matching_decomposition} more plausible.

To prove A\ref{Result_perfect_matching_decomposition} we first apply A\ref{Result_near_matching_decomposition} to $G$ to get a near-decomposition of $G$ into nearly-rainbow matchings. Then we use edges of $H$ to modify the matchings one-by-one to turn them into perfect matchings. The modifications we use are simple switchings where we exchange 2 edges of a matching $M$ for 3 edges of $H$ in order to get a larger matching $M'$. Using a sequence of switchings we will obtain perfect matchings.

\subsubsection*{Proving Theorem~\ref{Theorem_Matchings}}
A\ref{Result_perfect_matching_decomposition} can be used to prove Theorem~\ref{Theorem_Matchings}. To do this, we need two intermediate results. The first concerns choosing a random set of colours in a properly coloured graph.
\begin{result}\label{Result_random_subgraph}
Let $G$ be properly coloured and typical. Choose every colour independently with probability $p$, and let $H$ be the subgraph formed by the edges of the chosen colours. Then, with high probability, $H$ is typical.
\end{result}
This result says that the subgraph chosen by a random set of colours is pseudorandom. A result like this was first used by Alon and the second and third author when studying rainbow cycles in graphs~\cite{alon2016random}.

Applying A\ref{Result_random_subgraph} to the complete bipartite graph $K_{n,n}$ from Theorem~\ref{Theorem_Matchings} gives a typical subgraph $H$ which can be used in A\ref{Result_perfect_matching_decomposition}. The graph $G$ formed by the colours from $K_{n,n}$ unused in $H$ will be nearly-regular with high probability. However, we cannot yet apply A\ref{Result_perfect_matching_decomposition} since the graph $G$ might not be globally $\ad(G)$-bounded. Indeed, $G$ may have colour classes of size $n$, whereas the average degree of $G$ will be $(1\pm o(1))(n-p)$ (where $p$ is the parameter from A\ref{Result_random_subgraph}). To get around this we have another intermediate result saying that there is a subgraph $G'$ of $G$ which is globally $\ad(G')$-bounded.
\begin{result}\label{Result_improve_boundedness}
Let $G$ be a properly coloured balanced bipartite graph with $\leq (1-\varepsilon)n$ colours having $\geq (1-\varepsilon)n$ edges and $\delta(G)\geq (1-\varepsilon^2)n$. Then $G$ has a spanning subgraph $G'$ with $\ad(G')\geq (1-2\varepsilon)n$
which is globally $\ad(G')$-bounded and nearly-regular.
\end{result}
See Lemma~\ref{Lemma_regular_subgraph_few_large_colours_bipartite} for a precise statement of A\ref{Result_improve_boundedness}.
This is proved in two stages. First, for every colour $c$ with $\geq (1-\varepsilon)n$ edges, we randomly delete every colour $c$ edge with a small probability $q$. The remaining graph $G_1$ will be globally $(1-o(1))\ad(G_1)$-bounded with high probability, but might no longer be nearly-regular. We then apply a ``regularization'' lemma to $G_1$ which deletes a small number of edges from $G_1$ to make it nearly-regular, without overly affecting the global boundedness. The resulting graph $G'$ is then globally $\ad(G')$-bounded and nearly-regular. Plugging $G'$ into A\ref{Result_perfect_matching_decomposition} together with the graph $H$ from A\ref{Result_random_subgraph} we obtain Theorem~\ref{Theorem_Matchings}.

\subsection{Rainbow 2-factors}
Rainbow 2-factors are intermediate structures we use between finding perfect matchings and Hamiltonian cycles.
The main result about 2-factors that we need is a direct analogue of A\ref{Result_perfect_matching_decomposition}.
\begin{result}\label{Result_2factor_decomposition}
Let $G$ be a properly coloured, nearly-regular, globally $\frac12\ad(G)$-bounded graph.
Let $H$ be a properly coloured, typical graph on $V(G)$ which is edge-disjoint and colour-disjoint from $G$.
Then, $G\cup H$ has a near-decomposition into rainbow $2$-factors.
\end{result}
See Lemma~\ref{Lemma_2_Factor_Decomposition} for a precise statement of this.
The main difference betwen A\ref{Result_perfect_matching_decomposition} and A\ref{Result_2factor_decomposition} is that the global boundedness in A\ref{Result_2factor_decomposition} is $\frac12\ad(G)$ (rather than $\ad(G)$ as it was in A\ref{Result_perfect_matching_decomposition}).  The reason for this is that to find a rainbow 2-factor we would need $|V(G)|$ colours in the graph, which is forced by global $\frac12\ad(G)$-boundedness (but not by $\ad(G)$-boundedness). Thus the global $\frac12\ad(G)$-boundedness condition is natural because it is the weakest global boundedness we can impose on the graph to guarantee enough colours for a rainbow $2$-factor

The proof of A\ref{Result_2factor_decomposition} consists of using A\ref{Result_perfect_matching_decomposition} to find matchings in the graph, which are then put together to get 2-factors. To see how we might do this, we randomly partition $V(G\cup H)$ and $C(G\cup H)$ into  vertex sets $U_1, \dots, U_k$ and colour sets $C_1, \dots, C_k$ of the same size.
Then, using variants of A\ref{Result_random_subgraph} we can show that the subgraphs $G_{C_i}[U_j, U_k]$ are nearly-regular, while the subgraphs $H_{C_i}[U_j, U_k]$ are typical. By  A\ref{Result_perfect_matching_decomposition}, these subgraphs have near-decompositions into families $\mathcal{M}_{i,j,k}$ of perfect rainbow matchings for all distinct $i,j,k$. By taking unions of these matchings for suitable $i,j,k$ we obtain rainbow 2-factors. I.e.,\ $\bigcup_{i=1}^k\mathcal M_{i,i+1\pmod k,i}$ is a family of rainbow 2-factors.

\subsection{Rainbow Hamiltonian cycles}
The main result about Hamiltonian cycles that we need is a direct analogue of A\ref{Result_perfect_matching_decomposition} and A\ref{Result_2factor_decomposition}.
\begin{result}\label{Result_Hamiltonian_decomposition}
Let $G$ be a properly coloured, nearly-regular, globally $\frac12\ad(G)$-bounded graph.
Let $H$ be a properly coloured, typical graph on $V(G)$ which is edge-disjoint and colour-disjoint from $G$.
Then $G\cup H$ has a near-decomposition into rainbow Hamiltonian cycles.
\end{result}
See Lemma~\ref{Lemma_Hamiltonian_Decomposition_gap} for a precise statement of this.
The proof of  A\ref{Result_Hamiltonian_decomposition} consists of first splitting the colours of $H$ at random into two subgraphs $H_1$ and $H_2$. Using a result like A\ref{Result_random_subgraph}, we have that $H_1$ and $H_2$ are both typical. Applying A\ref{Result_2factor_decomposition} to $G$ and $H_1$, we get a near-decomposition of $G\cup H_1$ into rainbow 2-factors. Then we use the typical graph $H_2$ to modify the 2-factors one-by-one into Hamiltonian cycles. This modification is done by ``rotations'' --- switching a small number of edges on a 2-factor for edges of $H_2$ in order to decrease the number of cycles in the 2-factor. After a small number of rotations like this, we create a Hamiltonian cycle.

Theorem~\ref{Theorem_Hamiltonian} is proved using A\ref{Result_Hamiltonian_decomposition}. The proof is similar to the proof of Theorem~\ref{Theorem_Matchings} --- starting with a properly coloured $K_n$, we use analogues of A\ref{Result_random_subgraph} and A\ref{Result_improve_boundedness} to get the graphs $G$ and $H$ needed in A\ref{Result_Hamiltonian_decomposition}.

\subsection{Rainbow spanning trees}
Here we explain the proof of Theorem~\ref{Theorem_Trees} --- that the Brualdi-Hollingsworth and Kaneko-Kano-Suzuki conjectures hold asymptotically. The starting point of this is to observe that a near-decomposition into rainbow Hamiltonian cycles gives a near-decomposition into rainbow spanning trees. Because of this, our results about Hamiltonian cycles have implications for spanning tree decompositions.
The first implication is that if we have a properly coloured $K_n$ with $\leq (1-\varepsilon)n$ colours having $\geq (1-\varepsilon)n/2$ edges, then this $K_n$ has a near-decomposition into rainbow spanning  trees (by Theorem~\ref{Theorem_Hamiltonian}).

Thus it remains to look at colourings of $K_n$ with   $\geq (1-\varepsilon)n$ colours having $\geq (1-\varepsilon)n/2$ edges. In this section we will focus on the case when the colouring has exactly $n-1$ colours each having exactly $n/2$ edges. This is the setting of the Brualdi-Hollingsworth Conjecture and is substantially easier to deal with. To deal with this case we need the following result on how the colours in a random subset of vertices behave.
\begin{result}\label{Result_random_subset}
Let $K_n$ be properly coloured and choose a subset of $(1-\varepsilon)n$ vertices $U\subseteq V(K_n)$ at random. Then, $K_n[U]$ is globally $(1-2\varepsilon)n/2$-bounded.
\end{result}
See Lemma~\ref{Lemma_Random_Subgraph_General} (c) for a precise statement of this.
Notice that the subgraph $K_n[U]$ from A\ref{Result_random_subset} is globally $(1-2\varepsilon)n/2$-bounded and has  $\ad(K_n[U])=(1-\varepsilon)n$.

Randomly partition $K_n[U]$ into graphs $G'$ and $J$, with every edge placed in $J$ independently with probability $p\ll \varepsilon$.
Then randomly partition the colours of $G'$ into sets $C_G$ and $C_H$, with each colour ending up in $C_H$ independently with probability $p$. Let $G''$ and $H$ be the subgraphs of $G'$ consisting of edges with colours in $C_G$ and $C_H$ respectively.
Using results like A\ref{Result_random_subset} it can be shown that $G''$, $H$, and $J$ are all nearly-regular and typical. Since $G''\subseteq G$, we have that $G''$ is also globally $(1-2\varepsilon)n/2$-bounded. Since $p\ll \varepsilon$ and $G$ had $\ad(G)=(1-\varepsilon)n$, we have that $\ad(G'')\approx (1-\varepsilon-2p)n\geq (1-2\varepsilon)n$. Thus $G''$ and $H$ satisfy the assumptions of A\ref{Result_Hamiltonian_decomposition}, which gives a near-decomposition of $G''\cup H$ into rainbow Hamiltonian paths.

We now have a set of rainbow paths of length $(1-\varepsilon)n$ and an edge-disjoint typical subgraph $J$. We turn the paths into spanning rainbow trees by extending each path one vertex at a time using edges of $J$. The operations we use to extend the trees are very simple: we always have a collection of rainbow trees $T_1, \dots, T_{(1-\varepsilon)n}$ which we want to enlarge. To enlarge a tree $T_i$, we find three edges $e_1, e_2, e_3$ outside $T_1, \dots, T_{(1-\varepsilon)n}$ and two edges $f_1, f_2$ on $T_i$ so that $T'_i=T_i\cup \{e_1, e_2, e_3\}\setminus \{f_1,f_2\}$ another rainbow tree. Replacing $T_i$ by $T_i'$ gives us a collection of larger rainbow trees, so by iterating this process we would eventually get rainbow spanning trees. The remaining question is then ``how can we find the edges $e_1, e_2, e_3$, $f_1, f_2$ which we use to enlarge $T_i$?'' This is where the typicality of the graph $J$ is used. The fact that $J$ is pseudorandom means that its edges are suitably spread out around $V(K_n)$, and allows us to find edges in $J$ to switch with edges of $T_i$.

\section{Preliminaries}
Here we collect some useful notation and results which will be used later in the paper.
\subsection{Basic notation}
For a graph $G$, the set of edges of $G$ is denoted by $E(G)$ and the set of vertices of $G$ is denoted by $V(G)$.
For a vertex $v$ in a graph $G$, the set of edges in $G$ through $v$ is denoted by $E_G(v)$, the set of colours of edges going through $v$ is denoted by $C_G(v)$, the set of neighbours of $v$ in $G $ is denoted by $N_G(v)$, and  $d_G(v)=|N_G(v)|$.
For a coloured graph $G$ and a colour $c$, the set of colour $c$ edges in $G$ is denoted by $E_G(c)$ and the set of vertices touching colour $c$ edges in $G$ is denoted by $V_G(c)$. In all of these, we omit the ``$G$'' subscript when the graph $G$ is clear from context.
We will use additive notation for adding and deleting vertices and edges from graphs.

For a graph $G$ and a set of vertices $A$, let $G[A]$ denote the induced subgraph of $G$ on $A$. For disjoint sets of vertices $A$ and $B$, we use $G[A,B]$ to denote the bipartite subgraph of $G$ on $A\cup B$ consisting of all edges between $A$ and $B$. For any event $E$, we let $\mathbf{1}_E$ be the indicator function for $E$, taking the value 1 when $E$ occurs, and 0 otherwise.

 For two functions $f(x_1, \dots, x_t)$ and $g(y_1, \dots, y_s)$, we use $f(\pm x_1, \dots, \pm x_t)  =  g(\pm y_1, \dots, \pm y_s)$ to mean that ``$\max_{\sigma_i\in \{-1, +1\}} f(\sigma_1 x_1, \dots, \sigma_t x_t)$ $\leq$ $\max_{\sigma_i\in \{-1, +1\}} g(\sigma_1 y_1, \dots, \sigma_s y_s)$ and also that $\min_{\sigma_i\in \{-1, +1\}} f(\sigma_1 x_1, \dots, \sigma_t x_t)$ $\geq$ $\min_{\sigma_i\in \{-1, +1\}} g(\sigma_1 y_1, \dots, \sigma_s y_s)$''.
The most frequently used case of this notation will to say  $x=y\pm z$ for some $z\geq 0$, in which case the notation  is equivalent to both ``$y-z\leq x\leq y+z$'' and ``$|x-y|\leq z$''.

Notice that $a=b\pm c$, $b=d\pm e\implies a=d\pm c\pm e$.
Also notice that for any $a,b, b'$ with $|b'|\geq |b|$, we have $a\pm b=a\pm b'$.
Finally notice that the notation is transitive $f(\pm x_1, \dots, \pm x_t)  =  g(\pm y_1, \dots, \pm y_s)$ and $  g(\pm y_1, \dots, \pm y_s) = h(\pm z_1, \dots, \pm z_r)  \implies f(\pm x_1, \dots, \pm x_t)  =  h(\pm z_1, \dots, \pm z_r)$.

We will often use the following which hold for any  $0\leq x<0.5$.
\begin{align}
\frac{1}{1-x}&\leq 1+2x \text{ and } \frac{1}{1+x}\geq 1-2x \label{Eq_Denominator_Bounds}\\
1-x&=(1\pm x^2)e^{-x} \label{Eq_Exponential_Bounds}\\
1+x&\leq e^{x} \label{Eq_Exponential_Bound_Plus}\\
(1-x)^t&\geq 1-tx \label{Eq_Power_Bound}\\
\sum_{i=1}^{T}e^{-(i-1)x}&=(1\pm x^2\pm 2e^{-xT})x^{-1} \label{Eq_Exponential_Bound_Sum}
\end{align}
The last inequality comes from  $\sum_{i=1}^{T}e^{-(i-1)x}=(1\pm x^2)\sum_{i=1}^{T}(1-x)^{i-1}=(1\pm x^2)\frac{1-(1-x)^T}{x}=(1\pm x^2\pm 2e^{-xT})x^{-1}$.

Throughout the paper most of our results will be either about balanced bipartite graphs or about general graphs. When dealing with balanced bipartite graphs, they will always come with a specific bipartition into two parts usually labelled by ``$X$'' and ``$Y$'' with $|X|=|Y|=n$. When dealing with general graphs, they will usually have $v(G)=n$.
Whenever we define a graph $G$, if we do not specifically say that $G$ is balanced bipartite, we implicitly mean that $G$ is a general graph.

We make a two definitions about graphs, which vary slightly depending on whether the graph they are talking about is balanced bipartite or not.
\begin{definition}
\

\begin{itemize}
\item A balanced bipartite graph $G$ with parts $X$ and $Y$ is $(\gamma, \delta,n)$-regular if $|X|=|Y|=(1\pm \gamma)n$ and $d_G(v)=(1\pm \gamma)\delta n$ for every vertex $v\in V(G)$.
\item A general graph $G$ is $(\gamma, \delta,n)$-regular if $|G|=n$ and $d_G(v)=(1\pm \gamma)\delta n$ for every vertex $v\in V(G)$.
\end{itemize}
\end{definition}

\begin{definition}\label{Definition_Typical}
\

\begin{itemize}
\item A balanced bipartite graph $G$ with parts $X$ and $Y$ graph is $(\gamma, \delta, n)$-typical if it is $(\gamma, \delta, n)$-regular and  we have $d(x,y)=(1\pm \gamma)\delta^2 n$ for any pair of vertices $x,y\in X$ or $x,y\in Y$.
\item A general graph is $(\gamma, \delta, n)$-typical if it is $(\gamma, \delta, n)$-regular and for any pair of vertices $x,y$ we have $d(x,y)=(1\pm \gamma)\delta^2 n$.
\end{itemize}
\end{definition}

\begin{definition}
 A  graph $G$ is globally $b$-bounded if $G$ has $\leq b$ edges of  each colour, i.e.\ if $|E_G(c)|\leq b$ for all colours $c$.
\end{definition}

\begin{definition}
 A  graph $G$ is locally $\ell$-bounded if $G$ has $\leq \ell$ edges of  each colour passing through any vertex $v\in V(G)$, i.e.\ if $\Delta(E_G(c))\leq \ell$ for all colours $c$.
\end{definition}

\subsection{Asymptotic notation}
For a number $C\geq 1$ and $x,y\in (0,1]$, we use ``$x\ll_C y$'' to mean ``$x\leq \frac{y^C}{C}$. We will write ``$x\LL y$'' to mean that there is some absolute constant $C$ for which the proof works with ``$x\LL{} y$'' replaced by ``$x\ll_{C} y$''. This notation parallels more standard notation $x\ll y$ which means ``there is a fixed positive continuous function $f$ on $(0,1]$ for which the remainder of the proof works with ``$x\ll y$'' replaced by ``$x\leq f(y)$'' (equivalently ``$x\ll y$'' can be interpreted as ``for all $x\in (0,1]$, there is some $y\in (0,1]$ such that the remainder of the proof works with $x$ and $y$''). The two notations ``$x\LL{} y$'' and ``$x\ll y$'' are largely interchangeable --- most of our proofs remain correct with all instances of ``$\LL$'' replaced by ``$\ll$''. The advantage of using ``$\LL$'' is that it proves polynomial bounds on the parameters (rather than bounds of the form ``for all $\varepsilon>0$ and  sufficiently large $n$''). This is important towards the end of this paper, where the proofs need polynomial bounds on the parameters.

While the constants $C$ will always be implicit in each instance  of ``$x\LL{} y$'',  it is possible to work  them out explicitly. To do this one should go through the lemmas in the paper in numerical order, choosing the constants $C$ for earlier lemmas before later lemmas. This is because an inequality $x\ll_C y$ in a later lemma may be needed to imply an inequality $x\ll_{C'} y$ from an earlier lemma. Within an individual lemma we will often have several inequalities of the form $x\LL y$. There the constants $C$ need to be chosen in the reverse order of their occurrence in the text. The reason for this is the same --- as we prove a lemma we may use an inequality  $x\ll_C y$  to imply another inequality $x\ll_{C'} y$ (and so we should choose $C'$ before choosing $C$).

Throughout the paper, there are four operations we perform with the ``$x\LL{} y$'' notation:
\begin{enumerate}[(a)]
\item We will use $x_1\LL x_2\LL\dots\LL x_k$ to deduce finitely many inequalities of the form ``$p(x_1, \dots, x_k)\leq q(x_1, \dots, x_k)$'' where $p$ and $q$ are {monomials} with non-negative coefficients and $\min\{i: p(0, \dots, 0, x_{i+1}, \dots, x_k)=0\}< \min\{j: q(0, \dots, 0, x_{j+1}, \dots, x_j)=0\}$  e.g.\ $1000x_1\leq x_2^5x_4^2x_5^3$ is of this form.
\item We will use $x\LL y$ to deduce finitely many inequalities of the form ``$x\ll_C y$'' for a fixed constant $C$.
\item For $x\LL y$ and  fixed constants $C_1, C_2$, we can choose a variable $z$ with $x\ll_{C_1} z\ll_{C_2}y$.
\item For $n^{-1}\LL 1$ and any fixed constant $C$, we can deduce $n^{-1}\ll_C \log^{-1} n\ll_C 1$.
\end{enumerate}

{To see that (a) is possible, we need to show that for any finite collection $\mathcal I$ of inequalities of the given form, we can choose constants $C_1, \dots, C_{k-1}$ so that $0<x_1\ll_{C_1} x_2\ll_{C_2}\dots\ll_{C_{k-1}} x_k<1$ implies all the inequalites in  $\mathcal I$.
To see this, first consider a single inequality ``$p(x_1, \dots, x_k)\leq q(x_1, \dots, x_k)$'' of the form in (a).  From the assumptions on $p$ and $q$, we know that $p(x_1, \dots, x_k)=D_px_1^{\ell_1}\dots x_k^{\ell_k}$ and $q(x_1, \dots, x_k)=D_qx_1^{r_1}\dots x_k^{r_k}$ for some $D_p, D_q>0$ and $\min\{i: \ell_i\neq 0\}< \min\{i: r_i\neq 0\}$. Now, it is easy to check that for $C = r_1+\dots+r_k+D_p/D_q$, we have $0<x_1\LL_C x_2\LL_C\dots\LL_C x_k<1$ $\implies$ $p(x_1, \dots, x_k)\leq q(x_1, \dots, x_k)$.
Now given a finite collection $\mathcal I$ of inequalities of the given form, for each $I\in \mathcal I$, we can choose a constant $C_I$ so that $0<x_1\LL_{C_I} x_2\LL_{C_I}\dots\LL_{C_I} x_k<1 \implies I$. Letting $C=\max_{I\in \mathcal I} C_I$ gives a single constant for which $0<x_1\LL_C x_2\LL_C\dots\LL_C x_k<1$ implies all the inequalities in $\mathcal I$.

We remark that occasionally we will use a slight strengthening of (a), when $p$ and $q$ are  \emph{multinomials} with non-negative coefficients and $\min\{i: p(0, \dots, 0, x_{i+1}, \dots, x_k)=0\}< \min\{j: q(0, \dots, 0, x_{j+1}, \dots, x_j)=0\}$  e.g.\ $50x_1x_2+5x_2^2\leq x_3^5x_4^2 + x_1^2x_5^3$ is of this form. This strengthening can be reduced to the monomial version. To do this, consider multinomials $p$ and $q$ with non-negative coefficients and an integer $i$ for which $p(0, \dots, 0, x_{i+1}, \dots, x_k)=0$ and $q(0, \dots, 0, x_{i+1}, \dots, x_k)\neq 0$. Let $D_p$ be the sum of the coefficients of $p$ and notice that the monomial $\hat p=D_p x_i$ satisfies $\hat p\geq p$ (for $0<x_1\leq\dots\leq x_k<1$).
Letting $D_q$ be the smallest coefficient of $q$ and $d$ the degree of $q$, notice that the monomial $\hat q= D_qx_{i+1}^{d}$ satisfies $\hat q\leq q$ (for $0<x_1\leq\dots\leq x_k<1$).  Thus we can use the monomial version of (a) to get constants $C_1, \dots, C_{k-1}$ so that $0<x_1\ll_{C_1} x_2\ll_{C_2}\dots\ll_{C_{k-1}} x_k<1$ implies $\hat p\leq \hat q$ and hence also $p\leq q$.}

Notice that (b) is just a special case of (a) since the inequality ``$x\ll_C y$'' is of the form of the inequalities in (a). Operation (b) is important because it allows us to plug one instance of the ``$\LL$'' notation into another one. As an example, suppose that we have proved a lemma which assumes ``$a\LL b$''. This means that we have proved that there is some explicit constant $C$ for which the lemma holds with ``$a\LL b$'' replaced by ``$a\ll_C b$''. Now if we subsequently have variables $x,y$ with $x\LL y$, then (b) guarantees that we can plug $x$ and $y$ into the earlier lemma with $a=x$ and $b=y$.

For operation (c), notice that for $C=C_1C_2^{C_1C_2}$, if we have numbers $x,y$ with  $x\ll_C y$ then the number $z=y^{C_2}/C_2$ satisfies $x\ll_{C_1} z \ll_{C_2}y$. Operation (c) is important  because it allows us to introduce new variables inside our proof. For example if we have a lemma which assumes $x\LL y$, then in the proof of the lemma we can say ``choose $z$ with $x\LL z\LL y$''. Here the constants $C_1$ and $C_2$ in ``$x\ll_{C_1} z\ll_{C_2} y$'' are chosen first, and operation (c) guarantees that we can later choose a constant for ``$x\ll_C y$''.

For operation (d), notice that ``$n^{-1}\ll_C \log^{-1} n\ll_C 1$'' is equivalent to ``$\frac1{C^{1/c}}n^{1/c}\geq \log n\geq C$'' which is true for sufficiently large $n$. Operation (d) is important because it allows us to use $n^{-1}\LL 1$ to deduce any instance of $n^{-1}\LL \log^{-1} n\LL 1$.

How does our ``$\LL$'' notation compare with the standard  ``$\ll$'' notation? Versions of the operations (a), (b), and (c) work with  the  ``$\ll$'' notation as well. Particularly (a) is more versatile with  ``$\ll$'', because it is possible to show that $x_1\ll x_2\ll\dots\ll x_k$ can be used to deduce finitely many inequalities of the form ``$p(x_1, \dots, x_k)\leq q(x_1, \dots, x_k)$'' where $p$ and $q$ are {arbitrary positive continuous functions on $(0,1]$ satisfying $\min\{i: p(0, \dots, 0, x_{i+1}, \dots, x_k)=0\}< \min\{j: q(0, \dots, 0, x_{j+1}, \dots, x_j)=0\}$} (rather than multinomials).
Operation (d) however has no analogue for the ``$\ll$'' notation (the natural analogue would be that ``for  $n^{-1}\ll 1$ and any positive continuous $f, g$  on $(0,1]$ we can deduce $n^{-1}\leq f(\log^{-1} n) \leq g(1)$''. However this is not true for $f(x)=0.5e^{-1/x}$). Because of this, in our proofs the ``$\LL$'' and ``$\ll$'' notations are interchangeable whenever operation (d) is not used (while when operation (d) is used, we need to use  the ``$\LL$'' notation).

\subsection{Probabilistic tools}
We will use the following cases of the Bonferroni Inequalities.
\begin{lemma}[Bonferroni Inequalities]\label{BIineq}{
Let $X_1,\ldots,X_n$ be events in a probability space. Then,
\begin{align*}
\P(\cup_{i=1}^nX_i)\geq \sum_{i=1}^n\P(X_i)-\sum_{i=1}^n\sum_{j=1}^{i-1}\P(X_i\cap X_j).
\end{align*}}
\end{lemma}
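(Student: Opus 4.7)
The plan is to prove the inequality pointwise via indicator functions and then take expectations. For an outcome $\omega$ in the underlying probability space, let $k = k(\omega) = |\{i : \omega \in X_i\}|$ denote the number of events that contain $\omega$. Then $\mathbf{1}_{\cup_{i=1}^n X_i}(\omega) = \mathbf{1}_{k\geq 1}$, while $\sum_{i=1}^n \mathbf{1}_{X_i}(\omega) = k$ and $\sum_{i=1}^n \sum_{j=1}^{i-1} \mathbf{1}_{X_i\cap X_j}(\omega) = \binom{k}{2}$, since the last sum counts the unordered pairs from the $k$ events containing $\omega$.

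Thus the lemma reduces to the elementary pointwise claim that, for every nonnegative integer $k$,
\begin{equation*}
\mathbf{1}_{k\geq 1} \;\geq\; k - \binom{k}{2}.
\end{equation*}
This is immediate for $k\in\{0,1,2\}$ (where both sides are $0$, $1$, $1$ respectively), and for $k\geq 3$ the right-hand side equals $k(3-k)/2 \leq 0 \leq 1$, so the inequality holds trivially. Integrating this pointwise inequality against the probability measure, i.e.\ taking expectations and using linearity together with $\E[\mathbf{1}_A]=\P(A)$, yields exactly the bound in the statement.

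The main obstacle is essentially nonexistent — the whole argument reduces to the one-line pointwise verification above. An alternative route I would keep in reserve is to write $\cup_{i=1}^n X_i = \bigsqcup_{i=1}^n \bigl(X_i \setminus \bigcup_{j<i} X_j\bigr)$ as a disjoint union, then bound each summand below via $\P\bigl(X_i\setminus\bigcup_{j<i}X_j\bigr) \geq \P(X_i) - \sum_{j<i}\P(X_i\cap X_j)$ using the union bound on the overlap. Summing over $i$ gives the same inequality. The indicator-function proof is slightly cleaner and has the added advantage of generalizing immediately to higher-order Bonferroni bounds by replacing the quadratic truncation of $\mathbf{1}_{k\geq 1}$ with a truncation of the binomial identity $(1-1)^k=\sum_\ell (-1)^\ell\binom{k}{\ell}$ at any desired order.
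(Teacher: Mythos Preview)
Your proof is correct. The paper does not actually prove this lemma; it is stated in the preliminaries as a standard fact about the Bonferroni inequalities and used without proof. Your indicator-function argument (and the alternative disjoint-decomposition argument you sketch) are both valid and standard ways to establish it.
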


Given a probability space $\Omega=\prod_{i=1}^n\Omega_i$ and a random variable $X:\Omega\to \mathbb{R}$ we make the following definitions.
\begin{itemize}
\item Supose that there is a constant $C$ such that changing $\omega\in \Omega$ in any one coordinate changes $X(\omega)$ by at most $C$. Then we say that $X$ is $C$-Lipschitz.
\item For $i \in \{1, \dots, n\}$ we say that $X$ is uninfluenced by $i$ if $\omega_j=\omega'_j$ for $j\neq i \implies X(\omega)=X(\omega')$. Otherwise we say that $X$ is influenced by $i$.
\end{itemize}

We will use the following concentration inequalities
\begin{lemma}[Azuma's Inequality]\label{Lemma_Azuma}
Suppose that $X$ is $C$-Lipschitz and influenced by $\leq m$ coordinates in $\{1, \dots, n\}$. Then{, for any $t>0$,}
$$\P\left(|X-\E(X)|>t \right)\leq 2e^{\frac{-t^2}{mC^2}}$$
\end{lemma}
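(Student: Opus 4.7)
The plan is to prove this via a standard Doob martingale argument together with Hoeffding's lemma, the only subtlety being to track how the ``influenced by at most $m$ coordinates'' hypothesis kicks in.

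First I would fix an arbitrary ordering of the coordinates and set up the Doob martingale $X_0, X_1, \dots, X_n$ defined by $X_i = \E[X \mid \omega_1, \dots, \omega_i]$, so that $X_0 = \E(X)$ and $X_n = X$. Let $D_i = X_i - X_{i-1}$ be the martingale differences; these satisfy $\E[D_i \mid \omega_1, \dots, \omega_{i-1}] = 0$ and $X - \E(X) = \sum_{i=1}^n D_i$.

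Next I would bound each $D_i$. Writing $g(\omega_i) = \E[X \mid \omega_1, \dots, \omega_{i-1}, \omega_i]$ with $\omega_1, \dots, \omega_{i-1}$ fixed, the $C$-Lipschitz property implies that for any two values $\omega_i, \omega_i'$ of the $i$-th coordinate, $|g(\omega_i) - g(\omega_i')| \leq C$ (since $g$ is an average over $\omega_{i+1}, \dots, \omega_n$ of a function whose value at any realization of those remaining coordinates moves by at most $C$ when $\omega_i$ changes). Hence, conditional on $\omega_1, \dots, \omega_{i-1}$, the random variable $D_i = g(\omega_i) - \E_{\omega_i'} g(\omega_i')$ lies in an interval of length at most $C$. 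Crucially, if $X$ is uninfluenced by coordinate $i$, then $g$ is constant and $D_i = 0$ deterministically. So at most $m$ of the $D_i$ are nonzero, and those are bounded in an interval of length $\leq C$.

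I would then run the usual exponential moment argument: by Hoeffding's lemma (applied conditionally), $\E[e^{s D_i} \mid \omega_1, \dots, \omega_{i-1}] \leq e^{s^2 C^2 / 8}$ for each nonzero-difference coordinate, and the identity $1$ for the rest. Iterating the tower property gives
\[
\E\bigl[e^{s(X - \E X)}\bigr] \;\leq\; e^{m s^2 C^2 / 8}.
\]
A Markov/Chernoff step $\P(X - \E X > t) \leq e^{-st}\,\E e^{s(X-\E X)}$ optimized over $s > 0$ yields $\P(X - \E X > t) \leq e^{-2t^2/(mC^2)}$, which is stronger than the stated bound $e^{-t^2/(mC^2)}$; applying the same bound to $-X$ and a union bound over the two tails produces the factor of $2$ in front.

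The main point requiring care is the reduction from ``$m$ influencing coordinates'' to ``at most $m$ nonzero martingale differences'': one must check that if $X$ does not depend on $\omega_i$ at all, then conditioning on or not conditioning on $\omega_i$ produces the same conditional expectation, so $X_i = X_{i-1}$. Everything else is the textbook Azuma--Hoeffding computation, and there is no real obstacle beyond bookkeeping the conditional Hoeffding step correctly.
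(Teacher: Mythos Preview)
Your argument is the standard Doob--martingale / Hoeffding route and is correct. Note, however, that the paper does not actually prove this lemma: it is listed among the probabilistic tools in the preliminaries and is quoted without proof as a standard concentration inequality, so there is no ``paper's own proof'' to compare against. Your write-up would serve perfectly well as a supplied proof; the only minor remark is that your constant $2$ in the exponent (giving $e^{-2t^2/(mC^2)}$) is sharper than what the paper states, which is fine since the paper only needs the weaker form.
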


Notice that the bound in the above inequality can be rewritten as $\P\left(X\neq \E(X)\pm t \right)\leq 2e^{\frac{-t^2}{mC^2}}$.
A sequence of random variables $X_0, X_1, X_2, \dots$ is a supermartingale if $\E(X_{t+1}| X_{0}, \dots, X_t)\leq X_t$  for all $t$.

{
\begin{lemma}[Azuma's Inequality for Supermartingales]\label{Lemma_Azuma_supermartingales}
Suppose that $Y_0,Y_1,\ldots,Y_n$ is a supermartingale with $|Y_{i}-Y_{i-1}|\leq C$ for each $i\in [n]$. Then, for any $t>0$,
$$\P\left(Y_n> Y_0+ t \right)\leq e^{\frac{-t^2}{2nC^2}}$$
\end{lemma}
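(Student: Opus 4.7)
The plan is to use the exponential moment method (Chernoff-style), following the standard derivation of Azuma's inequality but exploiting the supermartingale hypothesis rather than a martingale hypothesis. Let $D_i = Y_i - Y_{i-1}$ for $i \in [n]$, so that $Y_n - Y_0 = \sum_{i=1}^n D_i$. By the supermartingale property, $\E(D_i \mid Y_0, \dots, Y_{i-1}) \leq 0$, and by hypothesis $|D_i| \leq C$. For any $\lambda > 0$, Markov's inequality applied to $e^{\lambda(Y_n - Y_0)}$ gives
\[
\P(Y_n > Y_0 + t) \leq e^{-\lambda t}\, \E\!\left(e^{\lambda(Y_n - Y_0)}\right).
\]

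The key step is an auxiliary bound: if $D$ is any random variable with $|D| \leq C$ and $\E(D) \leq 0$, then $\E(e^{\lambda D}) \leq e^{\lambda^2 C^2 / 2}$. I would prove this by convexity of $x \mapsto e^{\lambda x}$ on $[-C, C]$: write $D = \frac{C+D}{2C} \cdot C + \frac{C-D}{2C} \cdot (-C)$, take expectations of $e^{\lambda D} \leq \frac{C+D}{2C} e^{\lambda C} + \frac{C-D}{2C} e^{-\lambda C}$, and use $\E(D) \leq 0$ together with $e^{\lambda C} - e^{-\lambda C} \geq 0$ to bound the expectation by $\cosh(\lambda C) \leq e^{\lambda^2 C^2/2}$ (the last inequality coming from comparing Taylor series term by term).

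Armed with this, I would iterate using the tower property: conditioning on $\mathcal{F}_{n-1} = \sigma(Y_0, \dots, Y_{n-1})$,
\[
\E\!\left(e^{\lambda(Y_n - Y_0)}\right) = \E\!\left(e^{\lambda(Y_{n-1} - Y_0)} \cdot \E(e^{\lambda D_n} \mid \mathcal{F}_{n-1})\right) \leq e^{\lambda^2 C^2/2}\, \E\!\left(e^{\lambda(Y_{n-1} - Y_0)}\right),
\]
applying the auxiliary bound to the conditional distribution of $D_n$ (which has conditional mean $\leq 0$ and is still bounded by $C$ in absolute value). Inducting downward yields $\E(e^{\lambda(Y_n - Y_0)}) \leq e^{n \lambda^2 C^2 / 2}$.

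Combining,
\[
\P(Y_n > Y_0 + t) \leq e^{-\lambda t + n \lambda^2 C^2 / 2},
\]
and optimizing by choosing $\lambda = t/(nC^2)$ gives the claimed bound $e^{-t^2/(2nC^2)}$. The main technical obstacle is just the Hoeffding-type lemma for bounded variables with nonpositive mean; everything else is standard bookkeeping with conditional expectations. Note that the supermartingale (as opposed to martingale) hypothesis is used precisely in allowing $\E(D_i \mid \mathcal{F}_{i-1}) \leq 0$ rather than $= 0$ in this auxiliary bound, which is why we only control the upper tail.
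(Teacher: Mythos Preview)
The paper does not give a proof of this lemma; it is stated without proof in the preliminaries section as a standard probabilistic tool, alongside the Bonferroni inequalities, the Chernoff bound, and the usual Azuma inequality. Your proof is correct and is exactly the standard exponential-moment derivation one would find in a textbook, so there is nothing to compare against here.
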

}
\begin{lemma}[Chernoff Bound]\label{Chernoff}\label{Lemma_Chernoff}
Let $X$ be the binomial random variable with parameters $(n,p)$.
Then for $\varepsilon\in (0,1)$ we have
$$\mathbb P\big(|X-pn|> \varepsilon pn\big)\leq
2e^{-\frac{pn\varepsilon^2}{3}}.$$
\end{lemma}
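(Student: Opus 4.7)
The plan is to use the standard Chernoff--Cram\'er method: bound each tail separately by applying Markov's inequality to the moment generating function, then combine via a union bound.

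First I would write $X=\sum_{i=1}^n X_i$ where the $X_i$ are i.i.d.\ Bernoulli($p$). For any $t>0$, Markov's inequality gives
$$\mathbb{P}\bigl(X\geq (1+\varepsilon)pn\bigr) = \mathbb{P}\bigl(e^{tX}\geq e^{t(1+\varepsilon)pn}\bigr) \leq \frac{\mathbb{E}[e^{tX}]}{e^{t(1+\varepsilon)pn}}.$$
By independence, $\mathbb{E}[e^{tX}] = (1-p+pe^t)^n = (1+p(e^t-1))^n \leq e^{pn(e^t-1)}$, where the last step uses \eqref{Eq_Exponential_Bound_Plus}. Substituting in and optimising the choice of $t$ (the optimum is $t=\ln(1+\varepsilon)$) yields the classical bound
$$\mathbb{P}\bigl(X\geq (1+\varepsilon)pn\bigr) \leq \left(\frac{e^{\varepsilon}}{(1+\varepsilon)^{1+\varepsilon}}\right)^{pn}.$$

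For the lower tail I would run the same argument with $t<0$ (equivalently, apply the method to $-X$) to obtain
$$\mathbb{P}\bigl(X\leq (1-\varepsilon)pn\bigr)\leq \left(\frac{e^{-\varepsilon}}{(1-\varepsilon)^{1-\varepsilon}}\right)^{pn}.$$
Combining the two tail bounds via a union bound reduces the lemma to showing the analytic inequalities
$$(1+\varepsilon)\ln(1+\varepsilon)-\varepsilon \;\geq\; \tfrac{\varepsilon^2}{3} \quad\text{and}\quad (1-\varepsilon)\ln(1-\varepsilon)+\varepsilon \;\geq\; \tfrac{\varepsilon^2}{3} \qquad\text{for all }\varepsilon\in(0,1),$$
since these are precisely what is needed to convert each factor above into $e^{-pn\varepsilon^2/3}$.

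The only real work is in this last analytic step, which I would handle by Taylor expansion: for the first inequality, set $f(\varepsilon)=(1+\varepsilon)\ln(1+\varepsilon)-\varepsilon-\varepsilon^2/3$ and check $f(0)=f'(0)=0$, together with $f''(\varepsilon)=\tfrac{1}{1+\varepsilon}-\tfrac{2}{3}\geq 0$ on $(0,1)$. The second inequality is handled identically (and in fact one can afford the sharper constant $\varepsilon^2/2$ there, but $\varepsilon^2/3$ suffices). This is the main \emph{technical} obstacle, but it is elementary. I expect no conceptual difficulty elsewhere in the proof.
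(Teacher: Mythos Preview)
The paper does not prove this lemma at all: it is listed among the standard probabilistic tools in Section~3.3 and quoted without proof. Your approach is the standard Chernoff--Cram\'er argument and is correct in outline.

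There is, however, a small slip in your final analytic check. You claim that $f''(\varepsilon)=\tfrac{1}{1+\varepsilon}-\tfrac{2}{3}\geq 0$ on $(0,1)$, but this fails once $\varepsilon>\tfrac{1}{2}$ (for instance $f''(1)=-\tfrac{1}{6}$). The inequality $(1+\varepsilon)\ln(1+\varepsilon)-\varepsilon\geq \varepsilon^2/3$ is nonetheless true on $(0,1)$; one way to repair the argument is to observe that $f'(\varepsilon)=\ln(1+\varepsilon)-\tfrac{2\varepsilon}{3}$ satisfies $f'(0)=0$, increases on $(0,\tfrac{1}{2}]$ and decreases on $[\tfrac{1}{2},1]$, and has $f'(1)=\ln 2-\tfrac{2}{3}>0$, so $f'>0$ on $(0,1]$ and hence $f>0$ there. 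With this fix your proof goes through.
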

\begin{lemma}[Greenhill, Isaev, Kwan,  McKay]\label{Lemma_Kwan}
Let $\binom{[N]}{r}$ be the set of $r$-subsets of $\{1, \dots, N\}$ and let $h:\binom{[N]}{r}\to \mathbb{R}$ be given. Let $C$ be a uniformly random element of $\binom{[N]}{r}$. Suppose that there exists $\alpha\geq 0$ such that $|h(A)-h(A')|\leq \alpha$ for any $A, A'\in \binom{[N]}{r}$ with $|A\cap A'|=r-1$. Then for any $t>0$,
$$\mathbb P\big(|h(C)-\E h(C)|\geq t\big)\leq
2e^{-\frac{2t^2}{\alpha^2\min(r, N-r)}}.$$
\end{lemma}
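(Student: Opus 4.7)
The plan is to use a Doob-martingale argument over the elements of a uniformly random permutation, combined with a swap coupling, which is the standard route to Hoeffding-type concentration for functions of random subsets (sampling without replacement). First I would reduce to the case $\min(r, N-r) = r$: the map $A \mapsto [N] \setminus A$ is a bijection $\binom{[N]}{r} \to \binom{[N]}{N-r}$ that preserves the ``differ in one element'' relation, so replacing $h$ by $\tilde h(A) := h([N]\setminus A)$ turns the $r \geq N-r$ case into the $r \leq N-r$ case with the same constant $\alpha$. So assume $r \leq N-r$.

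Next, I realise $C$ by drawing a uniformly random permutation $\pi$ of $[N]$ and setting $C = \{\pi(1), \ldots, \pi(r)\}$, and form the Doob martingale $Y_i := \mathbb{E}[h(C) \mid \pi(1), \ldots, \pi(i)]$ for $i = 0, 1, \ldots, r$, so that $Y_0 = \mathbb{E}\, h(C)$ and $Y_r = h(C)$. The heart of the proof is to show $|Y_i - Y_{i-1}| \leq \alpha$ for each $i$. For this I fix $\pi(1), \ldots, \pi(i-1)$ and compare, for any two candidate values $a,b$ for $\pi(i)$, the conditional expectations $\mathbb{E}[h(C) \mid \pi(i)=a]$ and $\mathbb{E}[h(C) \mid \pi(i)=b]$. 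Couple the two conditional distributions as follows: sample a uniform completion $\pi(i+1), \ldots, \pi(N)$ under $\pi(i)=a$, and in the companion scenario (with $\pi(i)=b$) use the same completion but swap the positions of $a$ and $b$. This map is a bijection on completions and hence transports the uniform measure to the uniform measure, so the coupling is valid. In every outcome, the two resulting $r$-sets $C$ and $C'$ either coincide (when the original completion placed $b$ inside positions $i+1, \ldots, r$) or differ in exactly one element (when it placed $b$ after position $r$); in either case the hypothesis gives $|h(C)-h(C')|\leq \alpha$, and averaging yields $|Y_i-Y_{i-1}|\leq \alpha$.

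Finally, apply Lemma~\ref{Lemma_Azuma_supermartingales} to the martingales $Y_i - Y_0$ and $Y_0 - Y_i$ (a martingale is a supermartingale and so is its negative), with $n = r$ steps and per-step bound $\alpha$; a union bound gives two-sided concentration of the form $2\exp(-t^2/(2r\alpha^2))$. To obtain the sharper constant $2\exp(-2t^2/(\alpha^2 r))$ stated in the lemma, one uses the Hoeffding refinement of Azuma, which exploits the fact that $Y_i - Y_{i-1}$ lies in a \emph{predictable} interval of width $\alpha$ (its conditional range given $\pi(1),\ldots,\pi(i-1)$) rather than only satisfying the absolute bound $|Y_i-Y_{i-1}|\leq \alpha$; this improves the exponent by a factor of $4$. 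I expect the main obstacle to be the verification of the coupling step: one must check that swapping $a$ and $b$ inside a uniformly random completion is a measure-preserving involution on completions of the second scenario and that, in every outcome, the resulting $r$-sets differ in at most one coordinate, so that the single application of the Lipschitz hypothesis is legitimate.
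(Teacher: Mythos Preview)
Your argument is sound, but note that the paper does not prove this lemma at all: it is quoted as a known concentration inequality due to Greenhill, Isaev, Kwan, and McKay, and is used as a black box throughout (for instance in the proof of Lemma~\ref{Lemma_Random_Subgraph_General}). So there is nothing to compare against on the paper's side.

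Your proof is the standard exchangeable-pair/Doob-martingale route to Hoeffding-type bounds for sampling without replacement, and each step is correct. The complementation reduction to $r\le N-r$ is clean, the swap coupling is measure-preserving as you say, and the key point---that for fixed $\pi(1),\dots,\pi(i-1)$ the quantity $Y_i$, as a function of $\pi(i)$, has oscillation at most $\alpha$---is exactly what is needed to invoke the Azuma--Hoeffding inequality with predictable range $\alpha$ rather than merely $|Y_i-Y_{i-1}|\le\alpha$, yielding the exponent $-2t^2/(\alpha^2 r)$ rather than $-t^2/(2\alpha^2 r)$. The only thing I would flag is that you should be explicit that the coupling bound you prove is precisely ``for any $a,b$, $|\,\mathbb{E}[h(C)\mid\pi(i)=a]-\mathbb{E}[h(C)\mid\pi(i)=b]\,|\le\alpha$,'' since this is what gives the predictable-range condition directly; the weaker conclusion $|Y_i-Y_{i-1}|\le\alpha$ alone would only give the constant $1/2$ in the exponent.
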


\section{Finding one rainbow matching probabilistically}
The goal of this section is to prove that every properly coloured $d$-regular, globally $(1+o(1))d$-bounded balanced bipartite graph has a nearly-spanning rainbow matching $M$.
This matching is found using a randomized process, which allows us to prove that every edge ends up in $M$ with at least the expected probability ${d}^{-1
}$.
It will be more convenient for us to prove the result for graphs which are approximately regular rather than regular. Thus, throughout this section we will always be deal with $(\gamma, \delta, n)$-regular graphs for suitable parameters. See Lemma~\ref{Lemma_near_perfect_matching} for a precise statement of the result we prove.

The random process that we use to find a rainbow matching is a variation of the semi-random method introduced by R\"odl.
We remark that in the case when the graph $G$ has exactly $d$ edges of each colour, then our results follow directly from standard versions of the R\"odl Nibble (this is done by first expressing the problem in terms of finding a matching in an uncoloured $3$-uniform hypergraph, and then using e.g.\ Theorem~4.7.1 from \cite{alon2004probabilistic}).
Thus the difficult case of the result we aim to prove is when $G$ is a graph in which some colour classes have size much smaller than $d$. We deal with this situation by using a balancing coin flips approach to keep our graphs nearly-regular.

\subsubsection*{Random process}
Let $G$ be a coloured balanced bipartite graph which is $(\gamma, \delta, n)$-regular and globally $(1+o(1))d$-bounded. We describe a randomized process which will find a rainbow matching $M$ of size $(1-o(1))n$ in $G$ with high probability.
The process will last for $T$ rounds.
In each round we will focus on some subgraph $G_t$ of $G$ and partition $G_t$ into a rainbow matching $M_t$ and a vertex-disjoint, colour-disjoint graph $G_{t+1}$. At the end of the process we will have a collection of vertex-disjoint, colour-disjoint matchings $M_1, \dots, M_T$, and so letting $M=M_1\cup\dots \cup M_T$ we get a rainbow matching. We will prove that with high probability $e(M)=(1-o(1))n$.

\subsubsection*{Individual rounds}
To partition $G_t$ into  $M_{t+1}$ and $G_{t+1}$, in each round we use a random process which we call an \emph{$(\alpha, b)$-random edge-assignment}.
Let the parts of the bipartition of $G_t$ be called $X$ and $Y$.
The definition of the $(\alpha, b)$-random edge-assignment is the following:
\begin{itemize}
\item First we activate every vertex of $X$ with probability $\alpha$.
\item For every activated vertex $x$ we choose a random neighbour $y_x$  of $x$  in $Y$.
\item Let $M_{t+1}$ be the largest matching formed by the isolated edges of the form $xy_x$ whose colour is not the colour of any other chosen edge $x'y_{x'}$.
\item Let $H$ be the subgraph of $G$ on {$V(G)\setminus V(M_{t+1})$} consisting of all the edges whose colours do not occur on any chosen edge.
\item Delete every edge $xy$ on $H$ with probability $\frac{\alpha {b}}{d(x)}-\frac{\alpha |E({c(xy)})|}{d(x)}$ to get $G_{t+1}$.
\end{itemize}
Suppose that $G_t$ is $(\gamma_t, \delta_t, n_t)$-regular and globally $(1+\gamma_t)\delta_tn_t$-bounded.
We will run an $(\alpha, (1+\gamma_t)\delta_t n_t)$-random edge-assignment on $G_t$ and estimate the probabilities of edges and vertices of $G_t$ ending up in $M_t$ or $G_{t+1}$.
\begin{align}
\P(v\in  V(G_{t+1}))&\approx 1-\alpha\approx e^{-\alpha} &\text{ for any vertex   $v\in V(G_t)$} \notag\\
\P(\{x,y\} \subseteq V(G_{t+1}))&\approx 1-2\alpha\approx e^{-2\alpha} &\text{ for any pair   $\{x, y\}\subseteq V(G_t)$} \notag\\
\P(y\in N_{G_{t+1}}(x)|x\in V(G_t))&\approx 1-2\alpha\approx e^{-2\alpha} &\text{ for any  $y\in N_{G_t}(x)$} \notag\\
\P(e\in E(G_{t+1}))&\approx 1-3\alpha \approx e^{-3\alpha}  &\text{ for any edge   $e\in E(G_t)$} \label{Eq_Proof_Sketch_HProb}\\
\P(e\in E(M_{t+1}))&\approx\frac{\alpha}{\delta_tn_t} &\text{ for any edge   $e\in E(G_t)$} \label{Eq_Proof_Sketch_MProb}
\end{align}
Using linearity of expectation, we can estimate  the expected number of vertices, degrees of vertices, and sizes of colour classes in $G_{t+1}$.
\begin{align*}
\E(|X\cap V(G_{t+1})|)&=\E(|Y\cap V(G_{t+1})|)\approx  e^{-\alpha} n_t\\
\E(|E_{G_{t+1}}(c)|)&\lesssim e^{-2\alpha}\delta_t n_t&\text{ for any colour $c$}\\
\E(d_{G_{t+1}}(x))&\approx e^{-2\alpha}\delta_tn_t = (e^{-\alpha}\delta) (e^{-\alpha}n_t) &\text{ for any vertex $x\in V(H_{\omega})$}
\end{align*}
It can be shown that the quantities above are Lipschitz, and so by Azuma's Inequality they are concentrated around their expectation with high probability. This implies that with high probability $G_{t+1}$ is  $(\gamma_{t+1}, e^{-\alpha}\delta_t, e^{-\alpha}n_t)$-regular and globally $(1+\gamma_{t+1})(e^{-\alpha}\delta_t)(e^{-\alpha} n_t)$-bounded for some suitable error $\gamma_{t+1}$.

\subsubsection*{Iterating}
Let $G_0=G$ be a coloured graph which is $(\gamma, \delta, n)$-regular and globally $(1+o(1))\delta n$-bounded. We iteratively construct graphs $G_1, \dots, G_T$ and matchings $M_1, \dots, M_{T}$---at step $t$ we run an $(\alpha, (1+o(1))e^{-2\alpha t}\delta n)$-random edge-assignment on $G_t$ in order to obtain $M_{t+1}$ and $G_{t+1}$.

From the previous section we have that, with suitable errors $\gamma_1, \dots, \gamma_T$, the following hold for all $t$ with high probability:
\begin{enumerate}[(i)]
\item $G_t$ is $(\gamma_t, e^{-\alpha t}\delta, e^{-\alpha t}n)$-regular.
\item $G_t$ is globally $(1+\gamma_t)(e^{-\alpha t}\delta)(e^{-\alpha t}n)$-bounded.
\end{enumerate}
In particular, if $T= \omega(\alpha^{-1})$, then (i) implies that $|V(G_{T})|\lesssim (1+\gamma_T)e^{-\alpha T}n= o(n)$. Since $M_1, \dots, M_{T}$ are vertex-disjoint, colour-disjoint rainbow  matchings with $|V(G)|=|V(G_{T})|+|\bigcup_{i=1}^{T}V(M_i)|$, we get that $M=\bigcup_{i=1}^{T}V(M_i)$ is a rainbow matching of size {$(1-o(1))n$} in $G$.

\subsubsection*{Showing that the matching is random}
It remains to show that for any edge $e\in E(G)$, the probability that $e$ is in $M$ is (approximately) at least $(\delta n)^{-1}$.
First notice that (\ref{Eq_Proof_Sketch_HProb}) implies
$\P(e\in E(G_t))
=\prod_{i=0}^t \P(e\in G_i| e\in G_{i-1})
\gtrsim  e^{-3t\alpha}$.
Combining this with (\ref{Eq_Proof_Sketch_MProb}), and \eqref{Eq_Exponential_Bound_Sum}, we get
\begin{align*}
\P(e\in E(M_1\cup \dots \cup M_T))&=\sum_{t=0}^T \P(e\in G_t)\P(e\in M_{t+1}|e\in G_t)\\
&\gtrsim \sum_{t=0}^T \left(e^{-3t\alpha}\right)\left(\frac{\alpha}{(e^{-\alpha t}\delta)(e^{-\alpha t}n)} \right)
= \frac{\alpha}{\delta n}\sum_{t=0}^T e^{-\alpha t}\gtrsim \frac{1}{\delta n}.
\end{align*}
This concludes the proof sketch in this section. The main thing we need to do in the full proof is to keep track of the errors $\gamma_t$ and make sure that they do not get too big.

\subsection{Formal definition of the random edge assignment}
Here we formally define the probability space of the  $(\alpha, b)$-random edge-assignment which runs on a graph $G$.
The process will depend on two parameters $\alpha$ and ${b}$.
The graph $G$ will be a globally ${b}$-bounded balanced bipartite graph with parts $X$ and $Y$.
The process has a coordinate for every vertex in $X$, and a coordinate for every edge $e\in E(G)$ (the balancing coin flips):
\begin{itemize}
\item \textbf{Vertex choices:} For $x\in X$, the vertex $x$ is activated with probability $\alpha$. Every activated vertex \emph{chooses} a neighbour $y_x$ of $x$ uniformly at random from its neighbours.
\item \textbf{Balancing coin flips:} For $xy\in E(G)$, the edge $xy$ is \emph{killed} with probability $\frac{\alpha {b}}{d(x)}-\frac{\alpha |E({c(xy)})|}{d(x)}$.
\end{itemize}
We say that an edge $xy\in E(G)$ is \emph{chosen} if $x$ is activated and chooses $y$. We say that a colour $c$ is \emph{chosen} if some colour $c$ edge is chosen.
We construct a matching $M$ and graphs $\Gamma, H$ depending on the process as follows.
\begin{align*}
M&=\{xy\in E(G): \text{$xy$ is chosen, and no $x'y'\in E(G)\setminus\{xy\}$ is chosen} \\
   &\hspace{8cm}\text{with  $y'=y$ or $c(x'y')=c(xy)$}\}\\
V(\Gamma)&=V(G)\\
E(\Gamma)&=\{e\in E(G): \text{$c(e)$ is not chosen and $e$ is not killed}\}\\
H&=\Gamma[V(G)\setminus V(M)]
\end{align*}
We say that the $M, \Gamma$, and $H$ are \emph{produced} by the process.
Notice that by the definitions of $M$ and $H$ we always have that $M$ is a rainbow matching, that $M$  and $H$ partition $V(G)$, and that $M$  and $H$ share no colours.

\subsection{Probabilities}
To analyze various features  of $(\alpha, {b})$-random edge-assignments, we need estimates of the probability of various events. The following lemma computes all the probability estimates required.

\begin{lemma} \label{Lemma_Nibble_Probabilities}
Suppose that we have $d, {b},\alpha, \ell$ with  $(1+\gamma)d\geq {b}$ and $\ell d^{-1}\leq \alpha\leq \gamma\leq 0.01$.

Let $G$ be a  coloured balanced bipartite graph  which is $(\gamma, d/n, n)$-regular, globally ${b}$-bounded, and locally $\ell$-bounded.
Let $M, \Gamma, H$ be produced by an $(\alpha, {b})$-random edge-assignment on $G$. Then the following probability bounds  (\ref{Eq_Prob_Edge_Chosen}) -- (\ref{Eq_Prob_EdgeInGraph}) hold.
\end{lemma}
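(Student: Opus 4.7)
The plan is to derive each of the probability estimates by unwinding the definition of the $(\alpha,b)$-random edge-assignment and exploiting the two layers of independence built into the process: activations at different $x\in X$ (together with the uniform neighbour choice at each activated vertex) are mutually independent, and the balancing coin flips are independent of each other and of the activations. For the simplest quantity, the probability that an edge $e=xy$ with $x\in X$ is chosen, this is just $\alpha/d(x) = (1\pm\gamma)\alpha/d$ directly from the definition. For the probability that a colour $c$ is chosen, Bonferroni (Lemma~\ref{BIineq}) applied to the events ``$e$ is chosen'' for $e\in E(c)$ gives a first-order sum of $\sum_{e\in E(c)} \alpha/d(x_e) = (1\pm O(\gamma))|E(c)|\alpha/d$ and a second-order correction of order $(|E(c)|\alpha/d)^2 = O(\alpha^2)$, using the global $b$-bound $|E(c)|\le b\le(1+\gamma)d$.

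Next I would compute $\P(v\in V(H)) = \P(v\notin V(M))$. For $v\in Y$, the event ``some edge $xv$ is chosen'' is a disjoint union over $x\in N(v)$ of independent events (since activations at distinct $x\in X$ are independent), giving $1-\prod_{x\in N(v)}(1-\alpha/d(x))$. Applying identity (\ref{Eq_Exponential_Bounds}) edge-by-edge turns this product into $(1\pm O(\alpha^2))e^{-\alpha|N(v)|/d}$, which equals $e^{-\alpha}$ up to a $(1\pm O(\gamma+\alpha))$ factor. However $v\notin V(M)$ really requires either that no incident edge is chosen, or that every chosen incident edge is disqualified from $M$ (by a $Y$- or colour-collision with some other chosen edge); a second Bonferroni application absorbs the collision term into the main error. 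An analogous argument handles $v\in X$, with the roles of ``activated'' and ``picked'' swapped.

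The pair events $\P(\{x,y\}\subseteq V(H))$ and the conditional $\P(y\in N_\Gamma(x)\mid x\in V(H))$ are then handled by the same template, but now over two stars of incident edges. The potentially troublesome interaction is between edges at $x$ and edges at $y$ that share a colour; the local $\ell$-boundedness bounds the number of such shared colours by $\ell$, producing a collision error of order $O(\alpha\ell/d)\le O(\alpha^2)$ by the hypothesis $\alpha\ge \ell/d$. For $\P(e\in E(\Gamma))$ with $e=xy$, I would multiply the probability that $c(e)$ is not chosen (computed as above) with the probability $1-\alpha b/d(x)+\alpha|E(c(e))|/d(x)$ that $e$ is not killed; the design of the balancing coin flip is precisely so that the terms involving $|E(c(e))|$ cancel with the ``colour-chosen'' correction, leaving a net survival probability of $(1\pm\text{err})\,e^{-3\alpha}$ that is uniform in the colour-class size. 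Finally, $\P(e\in E(M))$ is just $\P(e\text{ chosen})$ times the probability of no $Y$-collision or colour-collision at $e$, which is $(1\pm O(\alpha))$ by one more Bonferroni bound.

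The main obstacle will be the bookkeeping of error terms: each estimate is built from products over up to $d$ independent factors and from inclusion-exclusion over up to $O(d^2)$ pair events, and all these must be combined into a single clean multiplicative error that is polynomial in $\gamma$, $\alpha$ and $\ell/d$. The inequalities (\ref{Eq_Denominator_Bounds})--(\ref{Eq_Power_Bound}) in the preliminaries are exactly tailored for this kind of telescoping, and the hypothesis $\alpha\ge \ell/d$ is used precisely to absorb the ``colour-collision'' correction into the leading $\alpha$-scale. Once these error bounds are stated uniformly, each of the listed probability estimates (\ref{Eq_Prob_Edge_Chosen})--(\ref{Eq_Prob_EdgeInGraph}) follows from one of the templates above.
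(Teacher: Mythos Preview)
Your plan is broadly correct and would go through, but you are making some of the individual computations harder than the paper does, and in one place your description conflates two different quantities.

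For $\P(v\in V(M))$ you propose to compute $1-\prod_{x\in N(v)}(1-\alpha/d(x))$ via the exponential approximation and then correct for the discrepancy between ``some edge at $v$ is chosen'' and ``$v\in V(M)$'' with a second Bonferroni pass. The paper avoids this entirely with a one-line observation: because $M$ is a matching, the events $\{vu\in E(M)\}$ for $u\in N(v)$ are \emph{pairwise disjoint}, so $\P(v\in V(M))=\sum_{u\in N(v)}\P(vu\in E(M))=d(v)\cdot\frac{\alpha}{d}(1\pm 5\gamma)=\alpha(1\pm 7\gamma)$. The same disjointness trick makes the pair bound $\P(u,v\in V(M))$ and the bound $\P(\{u,v\}\cap V(M)\neq\emptyset)$ immediate via a single Bonferroni step, with no product over $d$ factors and no exponential approximation needed. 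Your product approach works, but the error tracking is needlessly heavier.

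Second, when you discuss $\P(e\in E(\Gamma))$ you write that the balancing coin flip leaves ``a net survival probability of $(1\pm\text{err})e^{-3\alpha}$''. That is the survival probability in $H$, not in $\Gamma$: membership in $\Gamma$ only requires $c(e)$ unchosen and $e$ not killed, giving $\P(e\notin\Gamma)=(1\pm 9\gamma)\alpha b/d$ (a single $\alpha$, not $3\alpha$). The $e^{-3\alpha}$ appears only after additionally requiring both endpoints to avoid $V(M)$, i.e.\ for $\P(e\in E(H))$, and even then the lemma records the linear form $(2\alpha+b\alpha/d)(1\pm 40\gamma)$ rather than the exponential---the $e^{-k\alpha}$ expressions belong to the downstream Corollary, not to this lemma. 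Finally, the paper invokes local $\ell$-boundedness in a slightly different place than you suggest: not for the pair event $\{x,y\}\subseteq V(H)$, but specifically when bounding $\P(c(e)\text{ chosen and }v\in V(M))$, where one must separate the $\le\ell$ colour-$c(e)$ edges through $v$ from the rest.
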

\begin{proof}
Let the bipartition classes of $G$ be $X$ and $Y$.
We will often use the following
\begin{equation}\label{Eq_dInverse}
\frac1{d(v)}= (1\pm 2\gamma)\frac{1}{d} \text{ \hspace{0.5cm} for any $v\in V(G)$}.
\end{equation}
This comes from the  $(\gamma, d/n, n)$-regularity of $G$ and (\ref{Eq_Denominator_Bounds}).

\begin{equation}\label{Eq_Prob_Edge_Chosen}
\P(\text{$xy$ chosen})=\frac{\alpha}{d}(1\pm 2\gamma) \text{ \hspace{0.5cm} for any $xy\in E(G)$}.
\end{equation}
This comes from $\P(\text{$xy$ chosen})=\frac{\alpha}{d(x)}$ and (\ref{Eq_dInverse}).

\begin{equation}\label{Eq_Prob_Edge_Killed}
\P(\text{$xy$ killed})=\left(\frac{\alpha {b}}{d}-\frac{\alpha |E({c(e)})|}{d}\right)(1\pm 2\gamma) \text{ \hspace{0.5cm} for any $xy\in E(G)$}.
\end{equation}
This comes from $\P(\text{$xy$ killed})=\frac{\alpha {b}}{d(x)}-\frac{\alpha |E({c(e)})|}{d(x)}$ and (\ref{Eq_dInverse}).

\begin{equation}\label{Eq_Prob_2_Chosen_Edges}
\P(\text{$e$ and $e'$ chosen})\leq \frac{\alpha^2}{d^2}(1+5\gamma) \hspace{0.5cm} \text{ for $e\neq e'\in E(G)$}.
\end{equation}
If $e\cap X=e'\cap X$, then both $e$ and $e'$ cannot be chosen, so we may assume that $e=xy$ and $e'=x'y'$ for $x\neq x'$. The events that $xy$ and $x'y'$ are chosen are independent which gives $\P(\text{edges $xy$ and $x'y'$ chosen})= \frac{\alpha^2}{d(x)d(x')}.$ Now (\ref{Eq_Prob_2_Chosen_Edges}) comes from (\ref{Eq_dInverse}) and $\and \gamma\leq 0.01$.

\begin{equation}\label{Eq_Prob_Colour}
\P(\text{$c$ chosen})= \frac{\alpha|E(c)|}{d}(1\pm 4\gamma) \text{ \hspace{0.5cm} for any $c\in C(G)$}.
\end{equation}
By the union bound and (\ref{Eq_Prob_Edge_Chosen}) we have that $c$ is chosen with probability $\leq \sum_{e\in E(c)}\P(e \text{ chosen})\leq |E(c)| (1+2\gamma)\frac{\alpha}{d}$.
By the Bonferroni inequalities (see Lemma~\ref{BIineq}),  (\ref{Eq_Prob_Edge_Chosen}) and (\ref{Eq_Prob_2_Chosen_Edges}) we have the bound
$\P(\text{colour $c$ chosen})$ $\geq$ $\sum_{e\in E(c)}\P(\text{$e$ chosen})-$ $\sum_{\substack{e, e'\in E(c)\\ e\neq e'}}\P(\text{$e$ and $e'$ chosen})\geq |E(c)|(1-2\gamma)\frac{\alpha}{d}- \binom{|E(c)|}2(1+5\gamma)\frac{\alpha^2}{d^2}$.
The lower bound in (\ref{Eq_Prob_Colour}) then comes from $|E(c)|\leq {b}\leq (1+\gamma)d$ and $\alpha\leq \gamma\leq 0.01$.

\begin{equation}\label{Eq_Prob_Edge_Chosen_But_Not_Deleted}
\P(\text{$xy$ chosen and $xy\not\in E(M)$})\leq \frac{3\alpha^2}{d} \text{ \hspace{0.5cm} for any $xy\in E(G)$}.
\end{equation}
From the definition of $M$, the only way  $xy\not\in E(M)$ can hold for a chosen edge $xy$ is if another edge $x'y'$ is chosen with either $y'=y$ or $c(x'y')=c(xy)$. By the union bound we have $\P(\text{$xy$ chosen and $xy\not\in E(M)$})\leq\sum_{x'\in N(y)\setminus\{x\}} \P(\text{$xy$ chosen and $x'y$ chosen})+ \sum_{x'y'\in C(xy)\setminus\{xy\}}\P(\text{$xy$ chosen and $x'y'$ chosen})$. Using (\ref{Eq_Prob_2_Chosen_Edges}), $|E(c)|\leq b\leq (1+\gamma)d$, $\Delta(G)\leq (1+\gamma)d$, and $\gamma\leq 0.01$, this is at most $(d(y)+|E(c)|)(1+5\gamma)\frac{\alpha^2}{d^2}\leq \frac{3\alpha^2}{d}$, as required.

\begin{equation}\label{Eq_Prob_Edge_J}
\P(\text{$e\not\in \Gamma$})= (1\pm 9\gamma)\frac{\alpha b}{d} \text{ \hspace{0.5cm} for any $e\in E(G)$}.
\end{equation}
Since $e$ is killed  independently of any colour being chosen, we have $\P(\text{$e\not\in \Gamma$})=\P(\text{$c(e)$ chosen})+\P(\text{$e$ killed})-\P(\text{$c(e)$ chosen})\P(\text{$e$ killed})$.
Combining this with (\ref{Eq_Prob_Colour}), (\ref{Eq_Prob_Edge_Killed}),  $|E({c(e)})|\leq {b}\leq (1+\gamma)d$,  and $\alpha\leq \gamma\leq 0.01$, we get $\P(\text{$e\not\in \Gamma$})=\frac{\alpha |E({c(e)})|}{d}(1\pm 4\gamma)+ \left(\frac{\alpha {b}}{d}-\frac{\alpha |E({c(e)})|}{d}\right)(1\pm 2\gamma)-  (1\pm 7\gamma)\frac{\alpha |E({c(e)})|}{d}\left(\frac{\alpha {b}}{d}-\frac{\alpha |E({c(e)})|}{d}\right)=  (1\pm 9\gamma)\frac{{b}\alpha}{d}$.

\begin{equation}\label{Eq_Prob_EdgeInMatching}
\P(xy\in E(M))=\frac{\alpha}{d}(1\pm 5\gamma) \text{ \hspace{0.5cm} for any $xy\in E(G)$}.
\end{equation}
Recall that $M$ contains only chosen edges. Using this, the upper bound comes from~(\ref{Eq_Prob_Edge_Chosen}), while the lower bound comes from~(\ref{Eq_Prob_Edge_Chosen}),~(\ref{Eq_Prob_Edge_Chosen_But_Not_Deleted}), and $\alpha\leq \gamma$.

\begin{equation}\label{Eq_Prob_Vertex}
 \P(v\in  V(M))= \alpha(1\pm 7\gamma) \text{ \hspace{0.5cm} for any $v\in V(G)$}.
\end{equation}
Recall that $M$ is a matching, which implies that  the events ``$\text{$vu\in E(M)$}$'' are disjoint for $u\in N(v)$. Using (\ref{Eq_Prob_EdgeInMatching}) and that $d(v)=(1\pm\gamma)d$, this gives $\P(v\in V(M))=\sum_{u\in N(v)}\P(\text{$vu\in E(M)$})=d(v)\cdot\frac{\alpha}{d}(1\pm 5\gamma)=\alpha(1\pm 7\gamma).$

\begin{equation}\label{Eq_Prob_2Vertices}
 \P(u, v\in  V(M))\leq  3\alpha^2 \text{\hspace{0.5cm} for any vertices $u\neq v\in V(G)$.}
\end{equation}
Notice that $\P(u,v\in V(M))\leq\sum_{\substack{z\in N(u),\\ w\in N(v)}}\P(\text{$uz, vw$ chosen})=\P(\text{$uv$ chosen})+\sum_{\substack{z\in N(u),\\ w\in N(v),\\ uz\neq vw}}\P(\text{$uz, vw$ chosen})$.
Here the first term is defined to be zero if there is no edge $uv$ in $G$.
   Using (\ref{Eq_Prob_Edge_Chosen}), (\ref{Eq_Prob_2_Chosen_Edges}), and $\Delta(G)\leq (1+\gamma)d$, we get that this is at most $(1+2\gamma)\frac{\alpha}{d}+ (1+\gamma)^2d^2\cdot (1+5\gamma)\frac{\alpha^2}{d^2}$ which, combined with $d^{-1}\leq \alpha\leq \gamma\leq 0.01$, implies the result.

\begin{equation}\label{Eq_Prob_2Vertices_OR}
\P({\{u,v\}\cap  V(M)\neq \emptyset})= 2\alpha(1\pm 11\gamma)  \text{\hspace{0.5cm} for any vertices $u\neq v\in V(G)$.}
\end{equation}
This comes from the Bonferroni inequalities together with (\ref{Eq_Prob_Vertex}), (\ref{Eq_Prob_2Vertices}), and $\alpha\leq \gamma$.

\begin{equation}\label{Eq_Prob_Edge1Chosen_And_vInM}
\P(xy \text{ chosen and } x'\in V(M))\leq \frac{\alpha^2}d (1+7\gamma)\hspace{0.5cm} \text{for $xy\in E(G)$ and $x'\not\in\{x,y\}$.}
\end{equation}
By the union bound, (\ref{Eq_Prob_2_Chosen_Edges}), and $\Delta(G)\leq (1+\gamma)d$, this probability is $\leq \sum_{y'\in N(x')}\P(\text{$xy$ and $x'y'$ chosen})\leq d(x')(1+5\gamma)\frac{\alpha^2}{d^2}\leq (1+7\gamma)\frac{\alpha^2}d$.

\begin{equation}\label{Eq_Prob_EdgeNotJ_and_VertexM}
\P(\text{$e\not\in \Gamma$ and $v\in V(M)$})\leq 6\alpha^2\hspace{0.5cm} \text{for $e\in E(G)$ and $v\in V(G)$.}
\end{equation}
By the union bound $\P(\text{$e\not\in \Gamma$ and $v\in V(M)$})\leq \P(\text{$e$ killed  and $v\in V(M)$})+ \P(\text{$c(e)$ chosen  and $v\in V(M)$})$.
Using (\ref{Eq_Prob_Edge_Killed}), (\ref{Eq_Prob_Vertex}), and ${b}\leq (1+\gamma)d$, the first term can be bounded above by $\P(\text{$e$ killed and }  v\in V(M))=\P(\text{$e$ killed})\P(v\in V(M))\leq \left(\frac{\alpha {b}}{d}-\frac{\alpha |E({c(e)})|}{d}\right)(1+ 2\gamma)\alpha(1+ 7\gamma)\leq  3\alpha^2$.
Let $E_{c(e),v}$ be the set of $\leq \ell$ colour $c(e)$ edges through $v$. The second term can be bounded by
$$ \P(\text{$c(e)$ chosen  and $v\in V(M)$})\leq \sum_{e'\in E_{c(e),v}} \P(\text{$e'$ chosen}) + \sum_{\substack{e'\not\in E_{c(e),v},\\ c(e')=c(e)}}\P(\text{$e'$ chosen and $v\in V(M)$}).$$
Using (\ref{Eq_Prob_Edge_Chosen}), (\ref{Eq_Prob_Edge1Chosen_And_vInM}), $|E_{c(e),v}|\leq \ell\leq \alpha d$, $d^{-1}\leq \alpha\leq \gamma\leq 0.01$, and ${b}\leq (1+\gamma)d$, this is at most  $(1+2\gamma)(\alpha d)\frac{\alpha}{d}+ |E(c)|\frac{\alpha^2}d (1+7\gamma)\leq 3\alpha^2$.

\begin{equation}\label{Eq_Prob_EdgeJ_and_VertexNotM}
\P(\text{$e\in \Gamma$ and $v\not\in V(M)$})= 1-\alpha-\frac{\alpha{b}}{d}\pm 22\alpha\gamma= (1\pm 23\alpha\gamma)\left(1-\alpha-\frac{\alpha{b}}{d}\right) \hspace{0.5cm}\text{for $e\in E(G)$ and $v\in V(G)$.}
\end{equation}
This comes from ``$\P(A\text{ and }B)=1-\P(\overline A)-\P(\overline B)+\P(\overline A\text{ and }\overline B)$'' together with (\ref{Eq_Prob_Edge_J}), (\ref{Eq_Prob_Vertex}), (\ref{Eq_Prob_EdgeNotJ_and_VertexM}), $\alpha\leq \gamma\leq 0.01$, and ${b}\leq (1+\gamma)d$.

\begin{equation}\label{Eq_Prob_EdgeInGraph}
\P(xy\not\in E(H))=\P(xy\not\in E(\Gamma)\text{ or } x\in V(M) \text{ or } y\in V(M))=\left(2\alpha+\frac{{b}\alpha}{d}\right)(1\pm 40\gamma)  \hspace{0.5cm}\text{for $xy\in E(G)$.}
\end{equation}
This comes from the Bonferroni inequalities together with (\ref{Eq_Prob_Edge_J}), (\ref{Eq_Prob_Vertex}), (\ref{Eq_Prob_2Vertices}),  (\ref{Eq_Prob_EdgeNotJ_and_VertexM}), and $\alpha\leq \gamma$.
\end{proof}

\subsection{Expectations}
Using the probabilities in the previous section, it is immediate to compute the expectations of relevant quantities.
\begin{lemma}\label{Lemma_expectations}
Suppose that we have $d, \ell, \alpha, \gamma$ with  $\ell d^{-1}\leq \alpha\leq \gamma \leq 0.01$.

Let $G$ be a  coloured bipartite graph  which is $(\gamma, d/n, n)$-regular, globally $(1+\gamma)d$-bounded, and locally $\ell$-bounded.
Let $M, \Gamma,$ and $H$ be the graphs produced by an $(\alpha, (1+\gamma)d)$-random edge-assignment on $G$.
The following hold:
\begin{itemize}
\item $\E(|X\cap V(H)|)=(1\pm (1+10\alpha)\gamma)(1-\alpha)n$
\item $\E(|Y\cap V(H)|)=(1\pm (1+10\alpha)\gamma)(1-\alpha)n$
\item $\E(|E_{H}(c)|)\leq \E(|E(c)\setminus V(M)|)\leq  (1+ 24\alpha \gamma)(1-2\alpha)|E_G(c)|$ for any colour $c$.
\item $\E(d_{H}(x))=\E(|N_{\Gamma}(x)\setminus V(M)|)= (1\pm (1+26\alpha)\gamma)\left(1-2\alpha\right)d$ for any vertex $x\in V(H)$.
\end{itemize}
\end{lemma}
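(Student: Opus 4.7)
The plan is to compute each of the four expectations by writing the relevant random variable as a sum of indicators and applying linearity of expectation, plugging in the corresponding probability bound already established in Lemma~\ref{Lemma_Nibble_Probabilities}. No further probabilistic argument is needed; the only work is tracking the multiplicative errors.

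For the first two bullets I would write $|X\cap V(H)|=\sum_{x\in X}\mathbf{1}_{x\notin V(M)}$. By (\ref{Eq_Prob_Vertex}), each term has expectation $1-\alpha(1\pm 7\gamma)=(1-\alpha)\pm 7\alpha\gamma$. Since $|X|=(1\pm\gamma)n$, summing gives $\E(|X\cap V(H)|)=(1\pm\gamma)n\cdot\bigl((1-\alpha)\pm 7\alpha\gamma\bigr)$. Dividing by $(1-\alpha)n$ and using $\alpha\leq\gamma\leq 0.01$ together with (\ref{Eq_Denominator_Bounds}), the accumulated error is at most $\gamma+7\alpha\gamma/(1-\alpha)+\text{lower order}\leq(1+10\alpha)\gamma$, which yields the stated bound. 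The calculation for $|Y\cap V(H)|$ is identical.

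For the third bullet, the inclusion $E_H(c)\subseteq\{e\in E(c):e\cap V(M)=\emptyset\}$ holds because $H=\Gamma[V(G)\setminus V(M)]$, giving the first inequality. For the second, expand $|E(c)\setminus V(M)|=\sum_{e=xy\in E(c)}\mathbf{1}_{\{x,y\}\cap V(M)=\emptyset}$ and apply (\ref{Eq_Prob_2Vertices_OR}) to each term to obtain expectation $1-2\alpha(1\pm 11\gamma)$. Summing over $|E(c)|$ edges yields $|E(c)|(1-2\alpha\pm 22\alpha\gamma)=(1-2\alpha)|E(c)|\bigl(1\pm 22\alpha\gamma/(1-2\alpha)\bigr)$, and the factor $22\alpha\gamma/(1-2\alpha)\leq 24\alpha\gamma$ since $\alpha\leq 0.01$ (using (\ref{Eq_Denominator_Bounds})). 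For the fourth bullet, I would write, for any fixed $x$, $\E(|N_\Gamma(x)\setminus V(M)|)=\sum_{y\in N_G(x)}\P(xy\in E(\Gamma)\text{ and }y\notin V(M))$; the observation that $d_H(x)=|N_\Gamma(x)\setminus V(M)|$ whenever $x\in V(H)$ identifies this with $\E(d_H(x))$ in the sense intended. By (\ref{Eq_Prob_EdgeJ_and_VertexNotM}) with $b=(1+\gamma)d$, each term equals $(1\pm 23\alpha\gamma)(1-2\alpha-\alpha\gamma)$, and combining with $d_G(x)=(1\pm\gamma)d$ gives a three-factor product whose relative deviation from $(1-2\alpha)d$ is bounded by $\gamma+23\alpha\gamma+\alpha\gamma/(1-2\alpha)+\text{lower order}\leq(1+26\alpha)\gamma$.

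The only subtlety, and the main place I expect care to be needed, is purely the error bookkeeping: each bullet asks for a specific constant in front of $\alpha\gamma$ (namely $10$, $24$, and $26$), and each of these has to be produced by combining two or three multiplicative perturbations of the form $1\pm\text{small}$ together with one application of (\ref{Eq_Denominator_Bounds}) to clear the $(1-\alpha)$ or $(1-2\alpha)$ denominator. There is no conceptual difficulty, but one must be slightly careful to bound the quadratic cross-terms (such as $\alpha\gamma^2$) using $\alpha,\gamma\leq 0.01$ so that they are absorbed into the linear-in-$\alpha$ correction rather than inflating the leading constant.
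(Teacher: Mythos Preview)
Your proposal is correct and follows exactly the approach in the paper, which simply says the bounds are ``immediate from linearity of expectation, (\ref{Eq_Prob_Vertex}), (\ref{Eq_Prob_2Vertices_OR}), (\ref{Eq_Prob_EdgeJ_and_VertexNotM}), and the $(\gamma, d/n, n)$-regularity of $G$.'' Your version is in fact more detailed than the paper's, spelling out the error bookkeeping that the paper leaves implicit.
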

\begin{proof}
These are immediate from linearity of expectation, (\ref{Eq_Prob_Vertex}), (\ref{Eq_Prob_2Vertices_OR}), (\ref{Eq_Prob_EdgeJ_and_VertexNotM}), and the $(\gamma, d/n, n)$-regularity of $G$.
\end{proof}

\subsection{Concentration}
By Azuma's Inequality, the random variables considered in the previous section are concentrated around their expectations.
\begin{lemma}\label{Lemma_nibble}
{Suppose that we have $n, \delta,\gamma, \alpha, \ell$ with $n^{-0.001} \leq  \alpha\leq \gamma\leq 0.00001, \delta\leq 1$ and $\ell\leq n^{0.001}$.}

Let $G$ be a  coloured bipartite graph with bipartition classes $X$ and $Y$ which is $(\gamma, \delta, n)$-regular, locally $\ell$-bounded, and globally $(1+\gamma) \delta n$-bounded.
Let $M, \Gamma,$ and $H$ be the graphs produced by an $(\alpha, (1+\gamma)\delta n)$-random edge-assignment on $G$.
The following hold with probability $\geq 1-n^{-2}$:
\begin{enumerate}[(i)]
\item $|X\cap V(H)|=|Y\cap V(H)|= (1\pm (1+12\alpha)\gamma)(1- \alpha)n$.
\item $|E_{H}(c)|\leq |E(c)\setminus V(M)|\leq (1+ 26\alpha \gamma)(1-2\alpha)\delta n$ for every colour $c$.
\item $d_{H}(v)=|N_{\Gamma}(v)\setminus V(M)|=(1\pm (1+30\alpha)\gamma)\left(1-2\alpha \right)\delta n $ for every vertex $v\in V(H)$.
\end{enumerate}
\end{lemma}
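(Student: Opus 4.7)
The plan is to apply Azuma's inequality (Lemma~\ref{Lemma_Azuma}) separately to each of the four random variables in the statement, showing concentration around the expectations already computed in Lemma~\ref{Lemma_expectations}. A union bound over colours and vertices then yields the simultaneous estimate with failure probability at most $n^{-2}$.

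The probability space is a product with one coordinate per $x\in X$ (encoding $x$'s activation and chosen neighbour) and one coordinate per edge $e\in E(G)$ (its balancing coin flip). Coin-flip coordinates are easy: altering the flip of $e$ only toggles $e\in\Gamma$, which changes $|E_H(c(e))|$ and $d_H(v)$ (for $v\in e$) by at most one, and does not affect $|X\cap V(H)|$ or $|Y\cap V(H)|$. So the coin-flip Lipschitz constant is $1$ everywhere.

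The real work is bounding the effect of a vertex-choice coordinate. Changing $x_0$'s choice from $x_0 y_1$ (colour $c_1$) to $x_0 y_2$ (colour $c_2$) shifts the chosen-edge set by two, but through shared $Y$-endpoint or shared-colour conflicts this can cascade into many $M$-status flips. I would introduce the random variables $N_z := |\{x\in X : x \text{ chose } z\}|$ for $z\in Y$ and $N_c := |\{xy\in E(c) : x \text{ activated and chose } y\}|$ for each colour $c$, and observe that any cascade triggered by altering $x_0$'s coordinate is contained in edges counted by $N_{y_1}$, $N_{y_2}$, $N_{c_1}$, $N_{c_2}$. Using $(\gamma,\delta,n)$-regularity to get $\E N_z = O(\alpha)$, and combining local $\ell$-boundedness with regularity to get $\E N_c = O(\alpha)$, Chernoff together with a union bound show that the good event
\[
\mathcal{G} \;:=\; \bigl\{\, N_z \le \beta \text{ for every } z\in Y \text{ and } N_c \le \beta \text{ for every colour } c\,\bigr\}, \qquad \beta := \lceil 30\ln n\rceil,
\]
holds with probability at least $1-n^{-5}$. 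On $\mathcal{G}$, every vertex-choice coordinate is $O(\beta)$-Lipschitz for each of our four random variables.

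To get a deterministic Lipschitz bound for Azuma, I would work with truncated versions of each random variable that coincide with the originals on $\mathcal{G}$ and are globally $O(\beta)$-Lipschitz, with expectations shifted by at most $n^{-3}$. Azuma then yields a tail of order $2\exp\bigl(-t^2/(n\beta^2+\delta n)\bigr)$ at deviation $t$, where the $\le n$ vertex coordinates contribute $O(\beta^2)$ each and the $\le (1+\gamma)\delta n$ relevant coin flips contribute $O(1)$ each. Choosing $t$ a small constant times $\alpha\gamma n$ for (i) and $\alpha\gamma\delta n$ for (ii) and (iii), the hypotheses $\alpha\ge n^{-0.001}$, $\ell\le n^{0.001}$, $\gamma\ge\alpha$ push each per-variable tail below $n^{-4}$, and a union bound over the $O(n^2)$ random variables of interest gives the $n^{-2}$ bound. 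The chief obstacle is the unbounded-in-principle cascade from a vertex-choice change, which is why the good-event / truncation step is essential; executing it delicately enough that the truncated expectation does not drift by more than $o(\alpha\gamma\delta n)$ will be the most technical part of the write-up.
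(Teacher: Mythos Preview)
Your overall framework---Azuma on each variable, then union bound---is exactly what the paper does, and your coordinate accounting is essentially right. But you have misdiagnosed the ``chief obstacle'': there is \emph{no} unbounded cascade, so the entire good-event/truncation machinery is unnecessary.

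The point you are missing is that membership in $M$ is determined directly by the set of \emph{chosen} edges, not recursively by other edges' $M$-status. If $x_0$'s coordinate changes from choosing $y_1$ to choosing $y_2$, the set of chosen edges changes by exactly $\{x_0y_1, x_0y_2\}$. An edge $e\notin\{x_0y_1,x_0y_2\}$ can switch $M$-status only if it is chosen and conflicts (shares a $Y$-vertex or a colour) with $x_0y_1$ or $x_0y_2$. Now use that $M_1$ and $M_2$ are themselves \emph{rainbow matchings}: each contains at most one edge through $x_0$, one through $y_1$, one through $y_2$, one of colour $c(x_0y_1)$, and one of colour $c(x_0y_2)$. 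Hence $|E(M_1)\Symdiff E(M_2)|\le 10$ and $|V(M_1)\Symdiff V(M_2)|\le 20$, \emph{deterministically}. Combined with local $\ell$-boundedness (needed only for $|E(c)\setminus V(M)|$ and for the colour part of $|N_\Gamma(v)\setminus V(M)|$), this gives a flat $26\ell$-Lipschitz bound for every coordinate. Azuma then applies directly with $t=\alpha\gamma\delta n/10$ and $\le 3n$ influencing coordinates, and the numerics go through without any conditioning.

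Your route via $N_z,N_c\le\beta$ and truncation would eventually work, but it yields a \emph{worse} Lipschitz constant ($O(\log n)$ on $\mathcal G$ versus the constant $20$ for $|V(M_1)\Symdiff V(M_2)|$) and introduces the genuinely delicate step of building globally Lipschitz surrogates whose expectations are close enough. Drop it; the deterministic bound is both simpler and stronger.
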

\begin{proof}
First we prove the Lipschitzness of the relevant random variables.
\begin{claim}
$|X\cap V(H)|$, $|Y\cap V(H)|$, $|E(c)\setminus V(M)|$, and $|N_{\Gamma}(v)\setminus V(M)|$ are all $26 \ell$-Lipschitz for any colour $c$ and vertex $v$.
\end{claim}
\begin{proof}
{Consider two $(\alpha, (1+\gamma)d)$-random edge-assignments which differ on one coordinate---Edge-Assignment 1 which produces graphs $M_1, \Gamma_1, H_1$ and Edge-Assignment 2 which produces graphs $M_2, \Gamma_2, H_2$. Furthermore, let $C_1$ and $C_2$ be the colours chosen respectively by the two edge-assignments, {and let $K_1$ and $K_2$ the edges killed respectively by the two edge-assignments.} We will show that $|V(M_1)\Symdiff V(M_2)|\leq 20$, {$|E(K_1)\Symdiff E(K_{2})|\leq 1$} and $|C_1 \Symdiff C_2|\leq 2$.

First, notice that $C_1$ and $C_2$ only differ in the colour of some edge $xy$ if $xy$ is chosen by one assignment and not the other.

Suppose that the coordinate on which the two edge-assignments differ is a balancing coin flip on an edge $xy$.
Notice that $M_1=M_{2}$, $C_1=C_2$ and $K_1$ and $K_{2}$ can differ only on the edge $xy$, so that, as required, $|V(M_1)\Symdiff V(M_2)|\leq 20$, $|K_1\Symdiff K_{2}|\leq 1$ and $|C_1 \Symdiff C_2|\leq 2$.

Suppose that the coordinate on which the two edge-assignments differ is a vertex-activation choice for a vertex $x\in X$, which is, say, activated in Edge-Assignment 1 but not Edge-Assignment 2. Say that $y$ is chosen by $x$ in Edge-Assignment 1. Either $M_1=M_2$, or $M_1=M_2+xy$, or $M_1$ is $M_2$ with up to two edges removed --- edges $x'y'$ with $y'=y$ or $c(xy)=c(x'y')$. Thus, we have $|V(M_1)\Delta V(M_2)|\leq 4$. As $C_1=C_2\cup\{c(xy)\}$ and $K_1=K_2$, we have $|E(K_1)\Symdiff E(K_{2})|\leq 1$ and $|C_1 \Symdiff C_2|\leq 2$.

Suppose finally that the coordinate on which the two edge-assignments differ is a vertex-choice for a vertex $x\in X$. Note that if $x$ is not activated then the outcome of the edge-assignments is the same and $C_1= C_2$, so we can assume that $x$ is activated.
 Let $y^1_x$ and $y_x^2$ be the vertices chosen by $x$ in Edge-Assignments 1 and 2 respectively.

Notice that $xy_x^1$ and $xy_x^2$ are the only edges which may be chosen by one, but not both assignments. Hence $c(xy_x^1)$ and $c(xy_x^2)$ are the only colours which may be chosen by one, but not both assignments, so that $|C_1\Symdiff C_2|\leq 2$.
The two rainbow matchings $M_1$ and $M_2$ can only differ on edges sharing a vertex or a colour with one of the edges $xy_x^1$ or $xy_x^2$.
Notice that $M_1$ has at most one edge touching each of the vertices $x, y_x^1$, and $y_x^2$ (since $M_1$ is a matching), and has at most one edge of each of the colours $c(xy_x^1)$ and $c(xy_x^2)$ (since $M_1$ is rainbow). Thus, $e(M_1\setminus M_2) \leq 5$. Similarly, $e(M_2\setminus M_1)\leq 5$. This implies that $|V(M_1)\Symdiff V(M_2)|\leq 20$. Furthermore, $K_1=K_2$, so certainly $|K_1\Symdiff K_{2}|\leq 1$

Thus, we always have that $|V(M_1)\Symdiff V(M_2)|\leq 20$, $|K_1\Symdiff K_{2}|\leq 1$ and $|C_1 \Symdiff C_2|\leq 2$.
By the definition of $H_1$ and $H_2$, we then have $V(H_1)\Symdiff V(H_2)= V(M_1)\Symdiff V(M_2)$ which implies $|(X\cap V(H_1))\Symdiff (X\cap V(H_2)|,
 |(Y\cap V(H_1))\Symdiff (Y\cap V(H_2)| \leq 20$.
 For a colour $c$, $E(c)\setminus V(M_{1})$ and $E(c)\setminus V(M_{2})$ can only differ on colour $c$ edges passing through $V(M_{1})\Symdiff V(M_{2})$. Combined with local $\ell$-boundedness, this gives  $|(E(c)\setminus V(M_{1})) \Symdiff (E(c)\setminus V(M_{2}))|\leq \ell|V(M_1)\Symdiff V(M_2)|\leq 20 \ell$.
For a vertex $v$, $N_{\Gamma_{1}}(v)\setminus V(M_{1})$ and $N_{\Gamma_{2}}(v)\setminus V(M_{2})$ can differ only on vertices of $V(M_{1})\Symdiff V(M_{2})$, {on vertices of $K_1\Symdiff K_{2}$}, or on vertices $z\in N(v)$ with $vz$ having colour in $C_1\Symdiff C_2$.
Combined with local $\ell$-boundedness, this gives $|(N_{\Gamma_{1}}(v)\setminus V(M_{1}))\Symdiff (N_{\Gamma_{2}}(v)\setminus V(M_{2}))|\leq 2\ell+22\leq 26\ell$.}
\end{proof}

{
Notice that $|X\cap V(H)|$,  $|Y\cap V(H)|$, and $|E(c)\setminus V(M)|$ are influenced only by the choices of the vertices $x\in X$ and which vertices in $X$ are activated, but not which edges are killed. Furthermore, $|N_{\Gamma}(v)\setminus V(M)|$ is influenced only by the choices of the vertices $x\in X$ and which vertices in $X$ are activated and which edges between $v$ and $N_G(v)$ are killed. There are at most $(1+\gamma)n$ vertices in $X$, and $d_G(v)\leq (1+\gamma)n$ neighbours of $v$. Overall we have that the quantities $|X\cap V(H)|$, $|Y\cap V(H)|$, $|E(c)\setminus V(M)|$, and $|N_{\Gamma}(v)\setminus V(M)|$ are each influenced by at most $3n$ coordinates.
}

Notice that $n^{-0.001}\leq  \alpha\leq \gamma\leq 0.00001, \delta\leq 1$ and $\ell\leq n^{0.001}$ implies that $\ell(\delta n)^{-1}\leq \alpha\leq \gamma\leq 0.01$.
Fix $t=\alpha\gamma\delta n/10$. By Lemma~\ref{Lemma_expectations}, we have
\begin{align*}
\E(|X\cap V(H)|)\pm  t&= (1\pm (1+12\alpha)\gamma)(1- \alpha)n
\\
\E(|Y\cap V(H)|)\pm  t&= (1\pm (1+12\alpha)\gamma)(1- \alpha)n   \\
\E(|E(c)\setminus V(M)|) + t&\leq (1+ (1+26\alpha)\gamma)(1-2\alpha)\delta n\\
\E(|N_{\Gamma}(v)\setminus V(M)|)\pm  t &= (1\pm (1+30\alpha)\gamma)\left(1-2\alpha\right)\delta n
\end{align*}

By Azuma's Inequality  we have that for any given $c, v$ any of (i) -- (iii) fail to hold with probability $\leq 2e^{-\frac{t^2}{3n(26\ell)^2}}\leq 2e^{-\frac{\alpha^2\gamma^2\delta^2n^{0.9}}{300000}}\leq 2e^{- n^{0.8}}$ (using $n^{-0.001}\leq \ell^{-1} \leq  \alpha\leq \gamma\leq 0.00001$).
Taking a union bound over all $c, v$ we have that all of (i) -- (iii) hold with probability $> 1 - 8n^2e^{-n^{0.8}}\geq 1-n^{-2}$ (using $n^{-0.001}\leq 0.001$).
\end{proof}

The following version of the above lemma will be more convenient to apply.
\begin{corollary}\label{Corollary_nibble}
Suppose that we have $n, \delta, \gamma, \alpha, \ell$ with  $n^{-1}\LL \alpha\leq \gamma\LL \delta\leq 1$ and $\ell\LL n$.
Let $G$ be a coloured balanced bipartite graph which is $(\gamma, \delta, n)$-regular, locally $\ell$-bounded, and globally $(1+\gamma)\delta n$-bounded.
Let $H$ be produced by an $(\alpha, (1+\gamma)\delta n)$-random edge-assignment on $G$.

With probability $\geq 1-n^{-2}$, the graph $H$ is $(e^{35\alpha}\gamma, e^{-\alpha}\delta, e^{-\alpha}n)$-regular and globally $(1+e^{35\alpha}\gamma)(e^{-\alpha}\delta)(e^{-\alpha} n)$-bounded.
\end{corollary}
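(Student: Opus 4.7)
The plan is to derive the corollary essentially as a restatement of Lemma~\ref{Lemma_nibble}, with the only real work being to translate the error terms written in the form $(1\pm c\gamma)(1-k\alpha)$ into the exponential form $(1\pm e^{35\alpha}\gamma)e^{-k\alpha}$ demanded by the definitions of ``regular'' and ``globally bounded''.

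First I would check that the hypotheses of Lemma~\ref{Lemma_nibble} are met. Using operation~(a) of the $\LL$-notation, from $n^{-1}\LL \alpha \leq \gamma \LL \delta \leq 1$ and $\ell\LL n$ one obtains the polynomial bounds $\alpha \geq n^{-0.001}$, $\gamma \leq 0.00001$, $\delta \leq 1$ and $\ell \leq n^{0.001}$ required by Lemma~\ref{Lemma_nibble} (choosing the hidden constants in $\LL$ sufficiently large). Thus with probability at least $1-n^{-2}$, conclusions (i)--(iii) of Lemma~\ref{Lemma_nibble} hold for the graphs $M, \Gamma, H$.

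Next I would convert the conclusions. The key identity is \eqref{Eq_Exponential_Bounds}: for $0 \leq x < 0.5$ we have $1-x = (1\pm x^2)e^{-x}$. Applying this with $x=\alpha$ and with $x=2\alpha$ gives $1-\alpha = (1\pm\alpha^2)e^{-\alpha}$ and $1-2\alpha = (1\pm 4\alpha^2)e^{-2\alpha}$. Substituting into (i) yields
\[
|X \cap V(H)| = |Y\cap V(H)| = (1\pm (1+12\alpha)\gamma)(1\pm \alpha^2)\,e^{-\alpha}n,
\]
and the combined error factor is at most $1\pm \gamma(1+15\alpha)$ using $\alpha^2 \leq \alpha\gamma$ (since $\alpha\leq\gamma$) and discarding higher-order cross terms. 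Since $1+15\alpha \leq e^{15\alpha}\leq e^{35\alpha}$, this gives the desired $(1\pm e^{35\alpha}\gamma)e^{-\alpha}n$ bound. The degree bound in (iii) is handled identically: $(1\pm(1+30\alpha)\gamma)(1\pm 4\alpha^2) = 1\pm \gamma(1+35\alpha) = 1\pm e^{35\alpha}\gamma$, yielding $d_H(v) = (1\pm e^{35\alpha}\gamma)(e^{-\alpha}\delta)(e^{-\alpha}n)$ as required for $(e^{35\alpha}\gamma, e^{-\alpha}\delta, e^{-\alpha}n)$-regularity.

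Finally, for global boundedness I would use (ii): $|E_H(c)| \leq (1+26\alpha\gamma)(1\pm 4\alpha^2)e^{-2\alpha}\delta n$, and again $\alpha\leq\gamma$ gives $4\alpha^2\leq 4\alpha\gamma$, so the coefficient is at most $1 + 30\alpha\gamma + O(\alpha^3\gamma) \leq 1 + (e^{35\alpha}-1)\gamma$ using $e^{35\alpha}-1 \geq 35\alpha$. There is essentially no obstacle here; the main bookkeeping is to verify that the constants $12, 26, 30$ appearing in Lemma~\ref{Lemma_nibble} combined with the quadratic correction from \eqref{Eq_Exponential_Bounds} all fit under $e^{35\alpha}$, which they do comfortably with the chosen exponent $35$.
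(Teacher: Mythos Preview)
Your proposal is correct and follows essentially the same route as the paper: invoke Lemma~\ref{Lemma_nibble} after checking its numerical hypotheses via the $\LL$-notation, then use \eqref{Eq_Exponential_Bounds} together with $\alpha\leq\gamma$ to absorb the $(1\pm c\alpha^2)$ corrections into the $\gamma$-error, finally bounding $(1+35\alpha)\gamma\leq e^{35\alpha}\gamma$ via \eqref{Eq_Exponential_Bound_Plus}. The paper packages this last computation into a single displayed identity \eqref{Eq_Corollary_nibble}, but the content is identical to what you wrote.
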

\begin{proof}
Notice that  $n^{-1}\LL\alpha\leq \gamma\LL   \delta\leq 1$ and $\ell\LL n$ implies $n^{-0.001}\leq   \alpha\leq \gamma\leq 0.00001, \delta\leq 1$ and $\ell\leq n^{0.001}$. Let $X, Y$ be the bipartition classes of $G$.
By Lemma~\ref{Lemma_nibble}, we have that with probability $\geq 1- n^{-2}$ all of (i), (ii), and (iii) hold. Notice that, from (\ref{Eq_Exponential_Bound_Plus}) and $\alpha\leq \gamma\LL 1$, we have
\begin{equation}\label{Eq_Corollary_nibble}
(1\pm (1+30\alpha)\gamma)(1\pm 4\alpha^2)=(1\pm (1+30\alpha)\gamma \pm 5\alpha^2)=(1\pm e^{35 \alpha}\gamma).
\end{equation}

From (i), (\ref{Eq_Exponential_Bounds}), and (\ref{Eq_Corollary_nibble}), we have $|X\cap V(H)|=|Y\cap V(H)|= (1\pm (1+12\alpha)\gamma)(1- \alpha)n=(1\pm (1+12\alpha)\gamma)(1\pm \alpha^2)e^{-\alpha}n=(1\pm e^{35\alpha}\gamma)(e^{-\alpha})n$.

From (iii), (\ref{Eq_Exponential_Bounds}), (\ref{Eq_Corollary_nibble}),  we have that for all vertices $v\in V(G)$ we have $d_{H}(v)=(1\pm (1+30\alpha)\gamma)\left(1-2\alpha \right)\delta n=(1\pm (1+30\alpha)\gamma)(1\pm 4\alpha^2)e^{-2\alpha }\delta n=(1\pm e^{35\alpha}\gamma)(e^{-\alpha}\delta)( e^{-\alpha}n)$. These show that $H$ is $(e^{35\alpha}\gamma, e^{-\alpha}\delta, e^{-\alpha}n)$-regular.

From (ii), (\ref{Eq_Exponential_Bounds}),  and (\ref{Eq_Corollary_nibble}), we have that for every colour $c$ we  have $|E_{H}(c)|\leq (1+ (1+26\alpha)\gamma)(1-2\alpha)\delta n\leq (1+ (1+26\alpha)\gamma)(1+ 4\alpha^2)e^{-2\alpha}\delta n\leq (1+ e^{35\alpha}\gamma)e^{-2\alpha}\delta n$. This shows that $H$ is  globally $(1+e^{35\alpha}\gamma)(e^{-\alpha}\delta)(e^{-\alpha} n)$-bounded.
\end{proof}

\subsection{Finding a nearly-perfect matching}
Here we prove the main result of this section.
By iterating the $(\alpha,  b)$-random edge-assignment process on a properly coloured graph $G$ we can find a nearly spanning rainbow matching $M$ in $G$. The following lemma does this and shows that the resulting rainbow matching is random-like in a sense that every edge is in $M$ with at least (approximately) the right probability.
\begin{lemma}\label{Lemma_near_perfect_matching}
Suppose that we have $n, \delta, \gamma, p, \ell$ with $1\geq \delta \GG  p \GG \gamma \GG n^{-1}$ and $n\GG \ell$.

Let $G$ be a locally $\ell$-bounded, $(\gamma, \delta, n)$-regular, globally $(1+\gamma) \delta n$-bounded,  coloured, balanced bipartite graph. Then $G$ has  a random rainbow matching $M$ which has size $\geq (1-2p)n$ and
\begin{align}
\P(e\in E(M))&\geq(1-  9p)\frac{1}{\delta n}\hspace{0.5cm}\text{ for each $e\in E(G)$.} \label{Eq_Near_Matching_Edge_Probability_Lower_Bound}
\end{align}
\end{lemma}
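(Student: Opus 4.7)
\medskip

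\noindent\textbf{Proof plan.} The idea is to iterate the $(\alpha,b)$-random edge-assignment of the previous subsections, using Corollary~\ref{Corollary_nibble} to control the parameters, and then read off both the size bound and the probability bound from the computations in Section~4.2. Choose auxiliary parameters $\alpha$ and $T$ so that $n^{-1}\LL \alpha \LL p \LL \gamma$ and $T=\lceil \alpha^{-1}\log(2/p)\rceil$. These can be chosen so that, writing $\gamma_t=e^{35\alpha t}\gamma$, $\delta_t=e^{-\alpha t}\delta$ and $n_t=e^{-\alpha t}n$, one has $\gamma_T\leq p^2$ and $e^{-\alpha T}\leq p/2$; this is possible precisely because $p\GG\gamma$ gives polynomial slack between $\log(2/p)$ and $\log(p^2/\gamma)/35$.

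\smallskip

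\noindent\textbf{The iteration.} Set $G_0=G$ and, inductively, produce a rainbow matching $M_{t+1}$ and a subgraph $G_{t+1}$ from $G_t$ by running an $(\alpha,(1+\gamma_t)\delta_t n_t)$-random edge-assignment on $G_t$. Since every $G_t\subseteq G$, each $G_t$ is automatically locally $\ell$-bounded. Applying Corollary~\ref{Corollary_nibble} at step $t$, conditional on $G_t$ being $(\gamma_t,\delta_t,n_t)$-regular and globally $(1+\gamma_t)\delta_tn_t$-bounded, with probability $\geq 1-n_t^{-2}$ the graph $G_{t+1}$ is $(\gamma_{t+1},\delta_{t+1},n_{t+1})$-regular and globally $(1+\gamma_{t+1})\delta_{t+1}n_{t+1}$-bounded. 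Let $\mathcal{E}$ denote the event that this succeeds at every step $0\leq t<T$; a union bound gives $\mathbb{P}(\mathcal{E})\geq 1-Tn^{-1}\geq 1-n^{-1/2}$, since $T$ is only logarithmic in $n$. Set $M=M_1\cup\cdots\cup M_T$, which is rainbow by construction because successive matchings use disjoint vertices and disjoint colours; on the null event $\overline{\mathcal{E}}$ redefine $M$ in any canonical way to be a rainbow matching of size at least $(1-2p)n$ (e.g.\ use an arbitrary large rainbow matching of $G$ guaranteed by e.g.\ Lemma~\ref{Lemma_expectations}'s deterministic setup, or by an earlier iteration of the process). On $\mathcal{E}$, $V(G_T)\cap X$ has size at most $(1+\gamma_T)e^{-\alpha T}n\leq pn$, and since $V(M)\cap X=X\setminus(V(G_T)\cap X)$, the matching satisfies $|M|\geq (1-p)n\geq (1-2p)n$.

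\smallskip

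\noindent\textbf{The probability bound.} Fix an edge $e\in E(G)$. Work on $\mathcal{E}$ and apply the inequalities from Lemma~\ref{Lemma_Nibble_Probabilities} step-by-step. Equation~\eqref{Eq_Prob_EdgeInGraph}, with $b_t/d_t=1\pm O(\gamma_t)$, gives
\[
\mathbb{P}(e\in E(G_{t+1})\mid e\in E(G_t))\geq 1-3\alpha(1+O(\gamma_t))\geq e^{-3\alpha-O(\alpha\gamma_t)}
\]
using \eqref{Eq_Exponential_Bounds} with $\alpha\leq \gamma_t\leq 0.01$. Telescoping, $\mathbb{P}(e\in E(G_t))\geq e^{-3\alpha t-O(\alpha\sum_{s<t}\gamma_s)}$, and because $\sum_{s<T}\gamma_s\leq \gamma_T/(1-e^{-35\alpha})\leq p^2/(35\alpha)$, the accumulated error satisfies $\alpha\sum_{s<t}\gamma_s\leq p^2/35$. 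Combining this with \eqref{Eq_Prob_EdgeInMatching} applied to $G_t$ (so $\mathbb{P}(e\in E(M_{t+1})\mid e\in E(G_t))\geq (1-5\gamma_t)\alpha/(\delta_tn_t)=(1-5\gamma_t)e^{2\alpha t}\alpha/(\delta n)$), summing over $t$ and substituting, one obtains
\[
\mathbb{P}(e\in M)\geq \frac{\alpha}{\delta n}\sum_{t=0}^{T-1}e^{-\alpha t}\bigl(1-O(p^2)\bigr).
\]
Finally, invoking \eqref{Eq_Exponential_Bound_Sum} with $T=\lceil \alpha^{-1}\log(2/p)\rceil$ gives $\alpha\sum_{t=0}^{T-1}e^{-\alpha t}\geq 1-\alpha^2-2e^{-\alpha T}\geq 1-p-\alpha^2$, yielding $\mathbb{P}(e\in M)\geq (1-9p)/(\delta n)$, after absorbing the $O(n^{-1/2})$ loss from $\overline{\mathcal{E}}$ (which is negligible compared to $p/(\delta n)$ since $p\GG n^{-1}$).

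\smallskip

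\noindent\textbf{Main obstacle.} The delicate point is not the high-probability analysis of any individual round (which is provided by Corollary~\ref{Corollary_nibble}), but the bookkeeping of two kinds of accumulated errors over $T$ rounds: the multiplicative blow-up $\gamma_{t+1}=e^{35\alpha}\gamma_t$ of the regularity parameter, and the additive error $\alpha\sum_{s<t}\gamma_s$ contributed to the conditional survival probability. Both must remain at most $O(p)$ simultaneously with $e^{-\alpha T}\leq p/2$, and it is the polynomial gap $p\GG\gamma$ hypothesised in the lemma that is used precisely to guarantee enough room for such a choice of $\alpha$ and $T$.
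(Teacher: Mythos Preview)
Your approach is the same as the paper's --- iterate the $(\alpha,b)$-random edge-assignment, use Corollary~\ref{Corollary_nibble} to track parameters, telescope the survival probability, and sum. The paper in fact sets $\alpha=\gamma$ and $T=\alpha^{-1}\ln(p^{-1})$. However, there are two genuine errors in your handling of the failure event $\overline{\\mathcal{E}}$, and one parameter inconsistency.

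\smallskip

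\textbf{Parameter chain.} You write ``$n^{-1}\LL \alpha \LL p \LL \gamma$'', but the hypothesis is $p\GG\gamma$ (i.e.\ $\gamma\LL p$), and Corollary~\ref{Corollary_nibble} requires $\alpha\leq\gamma$. This is presumably a typo, but as written it is inconsistent.

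\smallskip

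\textbf{Size bound on $\overline{\mathcal{E}}$.} Your instruction to ``redefine $M$ \dots\ to be a rainbow matching of size at least $(1-2p)n$'' on $\overline{\mathcal{E}}$ is circular: the existence of such a matching is precisely what the lemma asserts, and Lemma~\ref{Lemma_expectations} provides no deterministic guarantee. The paper resolves this cleanly by \emph{defining} the random matching $M$ to be $M_1\cup\cdots\cup M_T$ \emph{conditioned on} $A_0\cap\cdots\cap A_T$; in that conditional space the size bound holds deterministically.

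\smallskip

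\textbf{Probability bound.} Your claim that the $O(n^{-1/2})$ loss from $\overline{\mathcal{E}}$ is ``negligible compared to $p/(\delta n)$'' is false: since $\delta\geq p$ we have $p/(\delta n)\leq 1/n\ll n^{-1/2}$. The fix is that $\P(\overline{\mathcal{E}})$ is actually far smaller than $n^{-1/2}$: each step fails with probability at most $n_t^{-2}\leq p^{-2}n^{-2}$, so the paper obtains $\P(\overline{\mathcal{E}})\leq (\gamma^2 p^4 n^2)^{-1}$, which \emph{is} $\ll p/(\delta n)$ via $p,\gamma\GG n^{-1}$. Then the conditional bound follows from $\P(e\in M\mid \mathcal{E})\geq \P(e\in M_1\cup\cdots\cup M_T)-\P(\overline{\mathcal{E}})$.
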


\begin{proof}
Fix $H_0=G$, $\alpha=\gamma$, and $T=\alpha^{-1}\ln(p^{-1})$. Without loss of generality, we may suppose that $\gamma$ and $\alpha$ are chosen so that $T$ is an integer (to see this replace $\gamma$ by $\gamma'= \frac{\ln(p^{-1})}{\lfloor \gamma^{-1} \ln(p^{-1}) \rfloor}$. This ensures that $T'=\gamma'^{-1}\ln(p^{-1})$ is an integer. Notice that $p\GG 2\gamma\geq \gamma'\geq \gamma$ holds, so we could perform the  proof of the lemma with $\gamma$ replaced by $\gamma'$).  Notice that this gives $p=e^{-\alpha T}$.
Fix the following constants:
\begin{align*}
\gamma_t= e^{35\alpha t}\gamma\hspace{1cm}
\delta_t=e^{-\alpha t}\delta\hspace{1cm}
n_t=e^{-\alpha t}n.
\end{align*}
Using $p=e^{-\alpha T}$  we have $n_T= pn$, $\gamma_T= p^{-35}\gamma\leq p$, and $\delta_T=p \delta$.

We construct graphs $H_{1}, \dots, H_T$ and matchings $M_1, \dots, M_T$ recursively as follows.
\begin{itemize}
\item For $t\geq 0$, if $H_t$ is not  $(\gamma_t, \delta_t, n_t)$-regular or globally $(1+\gamma_t)\delta_t n_t$-bounded then stop the process at step $t$.
\item Otherwise, if $H_t$ is  $(\gamma_t, \delta_t, n_t)$-regular and globally $(1+\gamma_t)\delta_t n_t$-bounded, then we run an $(\alpha, (1+\gamma_t) \delta_t n_t)$-random edge-assignment on $H_{t}$ to get a graph $H_{t+1}$ and a matching $M_{t+1}$.
\end{itemize}

Notice that for all $t$,  $H_t$ is locally $\ell$-bounded and we have ${n_t^{-1}}\leq n_T^{-1}=p^{-1}n^{-1}\LL   \alpha= \gamma\leq \gamma_t\leq p^{-35}\gamma \LL p\delta\leq\delta_t\leq   1$\ and $\ell\LL n$.
Let $A_t$ be the event that the process has not stopped at any of the steps $1, \dots, t$. The events $A_t$ are clearly decreasing.
Since $\gamma_0=\gamma, \delta_0=\delta$, and $n_0=n$, the assumptions of the lemma imply that $\P(A_0)=1$.
 From Corollary~\ref{Corollary_nibble} we have $\P(A_t|  A_{t-1})\geq 1-n_{t-1}^{-2}$ (in this application we have $\gamma=\gamma_{t-1}$, $\delta=\delta_{t-1}$, $n=n_{t-1}$, $\alpha=\alpha$, $\ell=\ell$).
  This implies $\P(A_0\cap A_1\cap \dots \cap A_T)=\P(A_0)\P(A_1|A_0)\P(A_2|A_1)\dots \P(A_T|A_{T-1})\geq  \prod_{i=1}^T(1-n_{t-1}^{-2})\geq  (1-n_T^{-2})^T=(1- p^{-2}n^{-2})^T\geq 1-\frac1{\gamma^2 p^4 n^2}>0$ (using $p, \gamma\GG n^{-1}$).

Define $M$ to be the rainbow matching $M_1\cup \dots\cup M_T$ conditional on the events $A_0, \dots, A_T$ occuring (to see that $M$ is a rainbow matching, recall that $H_i$ and $M_i$ were vertex-disjoint and colour-disjoint). As $A_T$ holds, $H_T$ has $(1\pm \gamma_T)n_T$ vertices, so that M is a matching of size $\geq n-(1+\gamma_T)n_T\geq (1- 2p)n$.

\begin{claim}
The following hold {for each $t=1,\ldots,T$} and $e\in E(G)$.
\begin{align}
 \P(e\in E(H_t), A_t| e\in E(H_{t-1}), A_{t-1})&\geq (1- T^{-1}p)\left(1-3\alpha\right)\label{Eq_Prob_EdgeInGraph_Conditional}\\
 \P(e\in E(M_{t+1})| e\in E(H_t), A_t)&=(1\pm p)e^{2t\alpha} \frac{\alpha}{\delta n}
 \label{Eq_Prob_EdgeInMatching_Conditional}
\end{align}
\end{claim}
\begin{proof}
Using Corollary~\ref{Corollary_nibble}, notice that $\P(\overline{A_t}| e\in E(H_{t-1}), A_{t-1})\leq n_{t-1}^{-2}\leq n_T^{-2}\leq p^{-2}n^{-2}$ (this application of Corollary~\ref{Corollary_nibble} is the same as our previous one).
Using (\ref{Eq_Prob_EdgeInGraph}), (\ref{Eq_Prob_EdgeInMatching}),  and  $p^{-2} n^{-2}\leq {\gamma\alpha}{}$ (which comes from $1 \GG  p\GG \gamma \GG n^{-1}$) gives:
\begin{align*}
 \P(e\in E(H_t), A_t| e\in E(H_{t-1}), A_{t-1})&\geq \P(e\in E(H_t)| e\in E(H_{t-1}), A_{t-1})- \P(\overline{A_t}| e\in E(H_{t-1}), A_{t-1})\\
 &\geq \left(1-2\alpha-\frac{(1+\gamma_t)\delta_t}{\delta_t}\alpha \right) - 150\alpha\gamma_t - p^{-2}n^{-2}\\
 &\geq(1-160p^{-45}\alpha\gamma)\left(1-3\alpha\right)\\
 \P(e\in E(M_{t+1})| e\in E(H_t), A_t)&=(1\pm 5\gamma_t)\frac{\alpha}{\delta_t n_t}
 =(1\pm 6p^{-45}\gamma)e^{2t\alpha}\frac{\alpha}{\delta n}.
\end{align*}
Now the claim follows from $160p^{-45}\alpha\gamma\leq T^{-1}p$ and $6p^{-45}\gamma\leq p$ (which both come from $1 \GG  p\GG \gamma$).
\end{proof}

Let $t\leq T$. Notice that the events ``$e\in H_t$ and $A_{t}$ holds'' are decreasing with $t$.
Using (\ref{Eq_Exponential_Bounds}), (\ref{Eq_Power_Bound}), (\ref{Eq_Prob_EdgeInGraph_Conditional}), and $p\geq 10\alpha^2T$  we have
\begin{align*}
\P(e\in E(H_t), A_t)&=\prod_{i=1}^t \P(e\in H_i, A_i| e\in H_{i-1}, A_{i-1})
\geq\left(1- T^{-1}p\right)^t\left(1-3\alpha\right)^{t}
\geq (1- 3p)e^{-3t\alpha}.
\end{align*}

Combining the above with  (\ref{Eq_Exponential_Bound_Sum}), (\ref{Eq_Prob_EdgeInMatching_Conditional}), $p=e^{-\alpha T}$, and  $p\geq \alpha$  we get:
\begin{align*}
\P(e\in E(M_1\cup \dots \cup M_T))&=\sum_{t=1}^T \P(e\in H_{t-1}, A_{t-1})\P(e\in M_t|e\in H_{t-1}, A_{t-1})\\
&\geq (1- p)(1- 3p)\frac{\alpha}{\delta n}\sum_{t=1}^{T} e^{-\alpha {(t-1)}}\\
&\geq (1-4p)(1- \alpha^{2}- 2e^{-\alpha T})\frac{1}{\delta n}\\
&\geq (1- 8p)\frac{1}{\delta n}.
\end{align*}
Now (\ref{Eq_Near_Matching_Edge_Probability_Lower_Bound}) comes from $\P(e\in E(M_1\cup \dots \cup M_T)|A_0\cap \dots\cap A_T)\geq \P(e\in E(M_1\cup \dots \cup M_T))-\P(\overline{A_0\cap \dots\cap A_T})\geq \P(e\in E(M_1\cup \dots \cup M_T))-2{\gamma^2 p^4 n^{-2}}\geq (1- 9p)\frac{1}{\delta n}$.
\end{proof}

\section{Random and pseudorandom subgraphs}
In this section we collect intermediate lemmas which we will need concerning random and pseudorandom subgraphs. We will often use the typicality of complete graphs.
\begin{lemma}\label{Lemma_Typicality_K_n}
For $\gamma \GG n^{-1}$,
$K_n$ is $(\gamma,1,n)$-typical and $K_{n,n}$ is $(\gamma,1,n)$-typical.
\end{lemma}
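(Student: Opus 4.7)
The plan is essentially direct verification from the definitions of regularity and typicality, exploiting that the hypothesis $\gamma \GG n^{-1}$ allows us to absorb small additive constants of size $O(1)$ into multiplicative factors of $(1\pm\gamma)n$.

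First I would handle $K_n$. It has exactly $n$ vertices, and every vertex has degree $n-1$. Since $\gamma \GG n^{-1}$ implies $\gamma \geq 1/n$, we have $n-1 = n(1 - 1/n) = (1 \pm \gamma)n$, establishing $(\gamma,1,n)$-regularity. For the codegree condition, any two distinct vertices $x,y$ satisfy $d(x,y) = n-2$, and since $\gamma \GG n^{-1}$ gives $\gamma \geq 2/n$, we have $n-2 = (1 \pm \gamma)n = (1\pm\gamma)\cdot 1^2 \cdot n$, as required by Definition~\ref{Definition_Typical}.

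For $K_{n,n}$, both parts have size exactly $n = (1\pm\gamma)n$, and every vertex has degree exactly $n = (1\pm\gamma)\cdot 1\cdot n$, so $(\gamma,1,n)$-regularity is immediate. For the codegree condition, any two vertices $x,y$ in the same part have identical neighbourhoods of size $n$, so $d(x,y) = n = (1\pm\gamma)\cdot 1^2\cdot n$, which verifies the bipartite version of typicality.

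There is no real obstacle here: the only thing to check is that the operation ``$\gamma \GG n^{-1}$'' suffices to conclude $\gamma \geq 2/n$, which follows from item (a) of the asymptotic notation conventions (the inequality $2n^{-1} \leq \gamma$ is of the monomial form governed by $\GG$). The whole argument is a one-line verification in each case once the definitions are unpacked.
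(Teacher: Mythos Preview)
Your proof is correct and follows essentially the same approach as the paper: direct verification that $K_n$ is $(2/n,1,n)$-typical and $K_{n,n}$ is $(0,1,n)$-typical, after which $\gamma \GG n^{-1}$ absorbs the $2/n$ error. The paper's version is a one-line statement of exactly this, while you have helpfully unpacked the degree and codegree computations.
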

\begin{proof}
Notice that $K_n$ is $(\frac2n,1,n)$-typical while $K_{n,n}$ is $(0,1,n)$-typical. Combined with $\gamma \GG n^{-1}$, this implies the lemma.
\end{proof}
\subsection{Random subgraphs}
We will need a number of results of the form ``for a nice graph $G$, a random subgraph $H$ of $G$ is still nice''.
Here ``nice'' can mean that $G$ is $(\gamma, \delta, n)$-regular, $(\gamma, \delta, n)$-typical, or globally bounded.
We will look at four different kinds of ``random subgraphs'' $H$.

\begin{lemma}[Random subgraphs of a general graph]\label{Lemma_Random_Subgraph_General}
Let $1\geq \gamma, \delta, p, \mu\GG n^{-1}$.
Let $G$ be a properly coloured,  globally $\mu n$-bounded $(\gamma, \delta, n)$-regular/$(\gamma, \delta, n)$-typical general graph.
\begin{enumerate}[(a)]
\item \textbf{Random set of colours:} Let $H_1$ be a subgraph of $G$ formed by choosing each colour with probability $p$. Then $H_1$ is $(2\gamma, p\delta, n)$-regular/$(2\gamma, p\delta, n)$-typical with probability $1-o(n^{-1})$.
\item \textbf{Random set of edges:} Let $H_2$ be a subgraph of $G$ formed by choosing each {edge} with probability $p$. Then $H_2$ is $(2\gamma, p\delta, n)$-regular/$(2\gamma, p\delta, n)$-typical and globally $(1+\gamma) p\mu n$-bounded with probability $1-o(n^{-1})$.
\item \textbf{Random set of vertices:}
For $pn\in \mathbb{Z}$ with $p<1$, let  $A\subseteq V(G)$ be a subset of order $pn$ chosen uniformly at random out of all  such subsets.
Then $G[A]$ is globally $(1+\gamma)\left({\mu p^2}\right) n$-bounded and $(2\gamma, \delta, pn)$-regular/$(2\gamma, \delta, pn)$-typical  with probability $1-o(n^{-1})$.
\item \textbf{Two disjoint random sets of vertices:}
For $pn\in \mathbb{Z}$ with $p<1/2$, let $A,B\subseteq V(G)$ be two disjoint subsets of order $pn$ chosen uniformly at random out of all pairs of such subsets.
Then $G[A,B]$ is a globally $(1+\gamma)\left({2\mu p^2}\right) n$-bounded, $(2\gamma, \delta, pn)$-regular/$(2\gamma, \delta, pn)$-typical balanced bipartite graph with probability $1-o(n^{-1})$.
\end{enumerate}
\end{lemma}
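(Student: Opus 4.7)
The plan is to handle all four parts by the same template: for each of the statements (degree condition, codegree condition, and, where applicable, global boundedness) I identify the relevant random variable, compute its expectation, prove sharp concentration with failure probability $o(n^{-2})$, and take a union bound over the $O(n^2)$ relevant events. The hierarchy $1\geq \gamma,\delta,p,\mu\GG n^{-1}$ ensures that every quantity of the form $\gamma^a p^b \delta^c \mu^d\,n$ arising as a Chernoff- or Azuma-style exponent is at least a positive power of $n$ and hence dominates $\log n$, which is all the union bound requires. The tools will be Chernoff (Lemma~\ref{Lemma_Chernoff}), Azuma (Lemma~\ref{Lemma_Azuma}), and Greenhill--Isaev--Kwan--McKay (Lemma~\ref{Lemma_Kwan}).

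For (a) write $d_{H_1}(v)=\sum_{c\in C_G(v)} \mathbf{1}_{c\in C_{H_1}}$. Since $G$ is properly coloured, this is a sum of $d_G(v)=(1\pm\gamma)\delta n$ independent Bernoulli$(p)$ variables, and Chernoff handles the degree bound. For the codegree, each $w\in N_G(u)\cap N_G(v)$ contributes $\mathbf{1}_{c(uw)\in C_{H_1}}\mathbf{1}_{c(vw)\in C_{H_1}}$; since $c(uw)\neq c(vw)$ (both meet $w$), the mean is $p^2 d_G(u,v)=(1\pm\gamma)p^2\delta^2 n$. The summands across different $w$ need not be independent, but switching a single colour changes the count by at most $2$ (at most one $w$ has $c(uw)=c$ and at most one has $c(vw)=c$), and only the $\leq 3\delta n$ colours touching $u$ or $v$ influence it, so Azuma with deviation $\gamma p^2\delta^2 n$ suffices. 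Part (b) is easier: both $d_{H_2}(v)$ and $d_{H_2}(u,v)=\sum_{w}\mathbf{1}_{uw}\mathbf{1}_{vw}$ are sums of independent Bernoullis (codegree summands involve disjoint edge pairs), so Chernoff applies directly; and for global boundedness, $|E_{H_2}(c)|$ is stochastically dominated by $\mathrm{Bin}(\mu n,p)$, whose upper-tail Chernoff gives $|E_{H_2}(c)|\leq (1+\gamma)p\mu n$.

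For (c) I apply Lemma~\ref{Lemma_Kwan} in place of Chernoff. Conditioning on $v\in A$, the set $A\setminus\{v\}$ is a uniformly random $(pn-1)$-subset of $V(G)\setminus\{v\}$; the degree $d_{G[A]}(v)=|N_G(v)\cap(A\setminus\{v\})|$ is a $1$-Lipschitz function of $A$ with mean $(p\pm n^{-1})d_G(v)$, and Lemma~\ref{Lemma_Kwan} with deviation $\gamma p\delta n$ gives the degree bound. Codegrees are analogous with Lipschitz constant $2$. For global boundedness, $h(A)=|E_{G[A]}(c)|$ has mean at most $p^2\mu n$ and Lipschitz constant $2$, since a single vertex swap affects at most one colour-$c$ edge incident to each swapped vertex (by proper colouring), so Lemma~\ref{Lemma_Kwan} with deviation $\gamma p^2\mu n$ gives the boundedness. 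Part (d) reduces to (c): condition on $A$ and apply the same arguments to the uniform $pn$-subset $B\subseteq V(G)\setminus A$, noting that each colour-$c$ edge of $G$ contributes to $E_{G[A,B]}(c)$ with probability $\approx 2p^2$, which accounts for the factor $2$ in the stated bound $(1+\gamma)(2\mu p^2)n$.

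The one subtle point, which I expect to be the main obstacle, is the global-boundedness estimate when a colour class $|E_G(c)|$ is much smaller than $\mu n$: then the mean of $|E_{H_2}(c)|$ (respectively $|E_{G[A]}(c)|$) is much smaller than the target $(1+\gamma)p\mu n$ (respectively $(1+\gamma)p^2\mu n$), so one cannot argue ``tight concentration around the mean'' to reach the target. In (b) this is sidestepped cleanly by stochastic domination by $\mathrm{Bin}(\mu n,p)$; in (c) and (d), the gap between the target and $\E h(A)$ is at least $\gamma p^2\mu n$, and under our hierarchy this still exceeds, by a positive power of $n$, the $O(\sqrt{n})$ scale at which Lemma~\ref{Lemma_Kwan} becomes ineffective, so the lemma applied to $|h(A)-\E h(A)|$ still yields $o(n^{-2})$ failure probability and the final union bound goes through.
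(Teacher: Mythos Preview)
Your proposal is correct and follows essentially the same template as the paper: compute expectations by linearity, establish concentration (Azuma/Chernoff for (a),(b), Lemma~\ref{Lemma_Kwan} for (c),(d)) with failure probability $o(n^{-3})$, and union-bound over $O(n^2)$ vertex pairs and colours. The paper applies Azuma uniformly in (a) and (b) rather than switching to Chernoff where independence permits, but this is a cosmetic difference.

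The one place where the paper's argument is cleaner than your sketch is part (d). Rather than conditioning on $A$ and running a two-stage argument (which works but accumulates errors and is only gestured at in your outline), the paper exploits the fact that the marginals of $A$, $B$, and $A\cup B$ are each uniform over subsets of the appropriate size, so Lemma~\ref{Lemma_Kwan} applies directly to each of $|N(v)\cap A|$, $|N(v)\cap B|$, $|E_G(c)\cap A|$, $|E_G(c)\cap B|$, and $|E_G(c)\cap(A\cup B)|$. The global bound for $G[A,B]$ then follows from the identity $|E_{G[A,B]}(c)|=|E_G(c)\cap(A\cup B)|-|E_G(c)\cap A|-|E_G(c)\cap B|$. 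A minor sharpening: your Lipschitz constant $2$ for $|E_{G[A]}(c)|$ can be taken to be $1$, since a single vertex swap loses at most one colour-$c$ edge and gains at most one, and these two events contribute with opposite signs.
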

\begin{proof}
Notice that the following bounds on expectations are true by linearity of expectation for all vertices $u\neq v$ and colours $c$.
\begin{align*}
\E(d_{H_1}(v))= \E(d_{H_2}(v))&=pd_{G}(v)=(1\pm \gamma)p\delta n &  &\mkern-85mu\text{when $G$ is $(\gamma, \delta, n)$-regular.}\\
\E(d_{H_1}(u,v))= \E(d_{H_2}(u,v))&=p^2d_{G}(u,v)=(1\pm \gamma)p^2\delta^2 n & &\mkern-85mu\text{when $G$ is $(\gamma, \delta, n)$-typical.}\\
\E(|N(v)\cap A|)= \E(|N(v)\cap B|)&=p d_{G}(v)=(1\pm \gamma)\delta (pn)  & &\mkern-85mu\text{when $G$ is $(\gamma, \delta, n)$-regular.}\\
\E(|N(u,v)\cap A|)= \E(|N(u,v)\cap B|)&=p d_{G}(u,v)=(1\pm \gamma)\delta^2 (pn) & &\mkern-85mu\text{when $G$ is $(\gamma, \delta, n)$-typical.}\\
\E(|E_{H_2}(c)|)&= p|E_{G}(c)|\leq p\mu n & &\\
\E(|E_{G}(c)\cap A|)= \E(|E_{G}(c)\cap B|)&= \frac{pn(pn-1)}{n(n-1)}|E_{G}(c)|={ \Big(p^2-\frac{p(1-p)}{n-1}\Big)|E_{G}(c)|}\leq p^2\mu n & &\\
\E(|E_{G}(c)\cap (A\cup B)|)&= \frac{2pn(2pn-1)}{n(n-1)}|E_{G}(c)|\leq 4p^2|E_{G}(c)|-\frac{2p(1-2p)}{n-1}\leq 4p^2\mu n & &
\end{align*}
First we prove (a) and (b).
Notice that the random variables $d_{H_1}(v)$, $d_{H_2}(v)$, $d_{H_1}(u,v)$, $d_{H_2}(u,v)$ and $|E_{H_2}(c)|$ are all 2-Lipshitz (using the fact that the colouring is proper), and are all influenced by $\leq 2n$ coordinates. By Azuma's Inequality (Lemma~\ref{Lemma_Azuma}), we have that the probability that any of these deviate from their expectation by more than $\gamma p^2\delta^2\mu n$ is $\leq 2e^{\frac{-(\gamma p^2\delta^2\mu n)^2}{8n}}= o(n^{-3})$ (using $1\geq \gamma, \delta, p, \mu\GG n^{-1}$ which implies $\gamma^2p^4\delta^4\mu^2 n\geq 40 \log n$). Taking a union bound over all pairs of vertices and colours, we obtain (a) and (b).

It remains to prove (c) and (d).
Notice that the functions $|N(v)\cap A|$, $|N(v)\cap B|$, $|N(u,v)\cap A|$, $|N(u,v)\cap B|$, $|E_{G}(c)\cap A|$ and  $|E_{G}(c)\cap B|$ each satisfy the assumptions
of Lemma~\ref{Lemma_Kwan} with $\alpha=1$, $r=pn$, $N=n$. Also  $|E_{G}(c)\cap (A\cup B)|$ satisfies the assumptions of  Lemma~\ref{Lemma_Kwan} with $\alpha=1$, $r=2pn$, $N=n$. Finally, notice that we have $0< \min(r, N-r)< n$ for all of these.
By Lemma~\ref{Lemma_Kwan} we have that the probability that any of these functions deviate from their expectation by more than $\gamma p^2\delta^2\mu n/4$ is $\leq
2e^{-\frac{2(\gamma p^2\delta^2\mu n/4)^2}{n}}=o(n^{-3})$ (using $1\geq \gamma, \delta, p, \mu\GG n^{-1}$ which implies $\gamma^2p^4\delta^4\mu^2 n\geq 40 \log n$).
Taking a union bound over all pairs of vertices and all colours, we obtain (c) and the ``$(2\gamma, \delta, pn)$-regular/$(2\gamma, \delta, pn)$-typical'' part of (d).
We also get that with probability $1-o(n^{-1})$ we have $|E_{G}(c)\cap A|, |E_{G}(c)\cap B|= p^2|E_{G}(c)|-\frac{p(1-p)}{n-1}{|E_{G}(c)|}\pm \gamma p^2\delta^2\mu n/4= p^2|E_{G}(c)|\pm \gamma\mu n/3 $ for all colours $c$ (using $\gamma, p,\mu \GG n^{-1}$). Similarly we have $|E_{G}(c)\cap (A\cup B)|= 4p^2|E_{G}(c)|\pm \gamma \mu n/3$.
These give $e(G[A,B]\cap E_G(c))= |E_G(c)\cap (A\cup B)| - |E_G(c)\cap A|-|E_G(c)\cap B|= 4p^2E_{G}(c)- 2\cdot p^2E_{G}(c)\pm  \gamma \mu n\leq (1+\gamma)2p^2 \mu n$ (the last inequality coming from global $\mu n$-boundedness). This implies the global boundedness part of  (d).
\end{proof}

We will need a balanced bipartite version of part of the above lemma.
\begin{lemma}[Random subgraphs of balanced bipartite graph]\label{Lemma_Random_Subgraph_Bipartite}
Let $1\geq \gamma, \delta, p, \mu\GG n^{-1}$.
Let $G$ be a properly coloured,  globally $\mu n$-bounded $(\gamma, \delta, n)$-regular/$(\gamma, \delta, n)$-typical balanced bipartite graph.
\begin{enumerate}[(a)]
\item \textbf{Random set of colours:} Let $H_1$ be a subgraph of $G$ formed by choosing each colour with probability $p$. Then $H_1$ is $(2\gamma, p\delta, n)$-regular/$(2\gamma, p\delta, n)$-typical with probability $1-o(n^{-1})$.
\item \textbf{Random set of edges:} Let $H_2$ be a subgraph of $G$ formed by choosing each colour with probability $p$. Then $H_2$ is $(2\gamma, p\delta, n)$-regular/$(2\gamma, p\delta, n)$-typical and globally $(1+\gamma) p\mu n$-bounded with probability $1-o(n^{-1})$.
\end{enumerate}
\end{lemma}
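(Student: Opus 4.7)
The plan is to adapt the argument from parts (a) and (b) of Lemma~\ref{Lemma_Random_Subgraph_General} essentially verbatim. Nothing in that argument actually exploits the non-bipartite structure of $G$: the computations of expectations, the Lipschitz bounds, and the concentration step all transfer word-for-word once we notice that for a bipartite graph the typicality hypothesis concerns only pairs of vertices lying in the same part of the bipartition (so there are strictly fewer codegree constraints to verify than in the general case).

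Concretely, I would first compute the relevant expectations by linearity. For $v \in V(G)$ and for $u$ in the same part as $v$, one has $\E(d_{H_i}(v)) = p\,d_G(v) = (1\pm\gamma)p\delta n$ in the regular case, $\E(d_{H_i}(u,v)) = p^2 d_G(u,v) = (1\pm\gamma)p^2\delta^2 n$ in the typical case, and $\E(|E_{H_2}(c)|) = p|E_G(c)| \leq p\mu n$ for every colour $c$. Next I would observe that each of these random variables is $2$-Lipschitz in the underlying independent Bernoulli coordinates (colour-indicators for $H_1$, edge-indicators for $H_2$): since $G$ is properly coloured, each colour appears at most once at any given vertex, so flipping a single colour or edge indicator changes any of these statistics by at most $2$. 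Moreover, each such random variable is influenced by at most $O(n)$ coordinates: the degree and codegree variables depend only on the $\leq d_G(u)+d_G(v) \leq 2n$ colours or edges incident to $u$ and $v$, while $|E_{H_2}(c)|$ depends on at most $|E_G(c)| \leq \mu n$ edges.

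I would then apply Azuma's inequality (Lemma~\ref{Lemma_Azuma}) with deviation $t = \gamma p^2\delta^2\mu n / 4$. The resulting per-variable failure probability is at most $2\exp(-t^2/O(n)) = o(n^{-3})$, where the hypothesis $\gamma,\delta,p,\mu \GG n^{-1}$ is exactly what makes $t^2/n$ grow faster than $\log n$. A union bound over the $O(n^2)$ pairs of vertices and over the $O(n^2/\mu)$ colours of $G$ (which is at most $O(n^2)$) then gives the claimed $1-o(n^{-1})$ probability that $H_1, H_2$ are $(2\gamma,p\delta,n)$-regular/typical, and in the case of $H_2$ also that every colour class contracts to at most $(1+\gamma)p|E_G(c)| \leq (1+\gamma)p\mu n$ edges, which is the required global boundedness.

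I do not expect any genuine obstacle here: the proof is essentially a transcription of the one already given for Lemma~\ref{Lemma_Random_Subgraph_General}(a),(b). The only bookkeeping point worth double-checking is that in the bipartite setting typicality only imposes codegree conditions on same-part pairs $u,v \in X$ or $u,v \in Y$, so the union bound ranges over a slightly smaller family of pairs than in the general case; this can only help.
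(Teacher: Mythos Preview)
The proposal is correct and follows essentially the same route as the paper's own proof: compute the expectations by linearity, note the $2$-Lipschitz property (via proper colouring) and the $\leq 2n$ influencing coordinates, apply Azuma's inequality with deviation of order $\gamma p^2\delta^2\mu n$, and take a union bound over vertex pairs and colours. Your only additions are the explicit remark that bipartite typicality restricts to same-part pairs and a slightly smaller deviation constant, neither of which changes the argument.
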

\begin{proof}
Let $u,v$ be vertices, and $c$ a colour.
Notice that the following bounds on expectations are true by linearity of expectation.
\begin{align*}
\E(d_{H_1}(v))= \E(d_{H_2}(v))&=pd_{G}(v)=(1\pm \gamma)p\delta n &\text{when $G$ is $(\gamma, \delta, n)$-regular}\\
\E(d_{H_1}(u,v))= \E(d_{H_2}(u,v))&=p^2d_{G}(u,v)=(1\pm \gamma)p^2\delta^2 n &\text{when $G$ is $(\gamma, \delta, n)$-typical}\\
\E(|E_{H_2}(c)|)&= p|E_{G}(c)|\leq p\mu n
\end{align*}
Notice that the random variables $d_{H_1}(v)$, $d_{H_2}(v)$, $d_{H_1}(u,v)$, $d_{H_2}(u,v)$ and $|E_{H_2}(c)|$ and are all 2-Lipshitz (using the fact that the colouring is proper), and are all influenced by $\leq 2n$ coordinates. By Azuma's Inequality (Lemma~\ref{Lemma_Azuma}), we have that the probability that any of these deviate from their expectation by more than $\gamma p^2\delta^2\mu n$ is $\leq 2e^{\frac{-(\gamma p^2\delta^2\mu n)^2}{8n}}= o(n^{-3})$ (using $1\geq \gamma, \delta, p, \mu\GG n^{-1}$ which implies $\gamma^2p^4\delta^4\mu^2 n\geq 40 \log n$). Taking a union bound over all pairs of vertices and colours, we obtain (a) and (b).
\end{proof}

The following lemma gives another property of the random subgraph formed by choosing every edge independently with probability $p$. This time we are concerned with how many vertices a  small set of colours covers.
\begin{lemma}\label{Lemma_Erdos_Renyi_Subgraph}
Let $1\geq p, \varepsilon \GG k^{-1} \GG \nu \GG n^{-1}$.
Let $G$ be a properly coloured  graph with all colours covering $\geq (1-\nu)n$ vertices. Let $H$ be a random subgraph formed by choosing every edge with probability $p$. Then, with high probability, any set of $k$ colours of $H$ covers  $\geq (1-\varepsilon)n$ vertices.
\end{lemma}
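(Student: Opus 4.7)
I would fix an arbitrary set $S$ of $k$ colours and first estimate how many vertices of $G$ are ``well-covered'' by $S$. Since $G$ is properly coloured, each colour class is a matching, so for each $v\in V(G)$ the quantity $d_S(v)=|\{c\in S:v\in V_G(c)\}|$ equals the number of edges of colour in $S$ through $v$, and each $d_S(v)\le k$. From the hypothesis $\sum_{c\in S}|V_G(c)|\ge k(1-\nu)n$ we get $\sum_v(k-d_S(v))\le k\nu n$, which forces $|\{v:d_S(v)<k/2\}|\le 2\nu n$.

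Next, let $X_S$ be the number of vertices of $V(G)$ uncovered by colours of $S$ in $H$. The $d_S(v)$ edges through $v$ that carry colours of $S$ are distinct and each is kept in $H$ independently with probability $p$, so $\P(v\text{ uncovered})=(1-p)^{d_S(v)}$ and
$$\E X_S=\sum_v(1-p)^{d_S(v)}\le 2\nu n + n\,e^{-pk/2}.$$
The hierarchy $1\ge p,\varepsilon\GG k^{-1}\GG\nu\GG n^{-1}$ makes both terms at most $\varepsilon n/4$: the first because $\nu\LL\varepsilon$, and the second because $pk$ is a sufficiently large polynomial in $k$ compared to $\log(1/\varepsilon)$.

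For concentration, I would observe that $X_S$ is determined by the at most $kn/2$ edge-indicator coordinates for edges of $G$ whose colour lies in $S$ (each colour class is a matching of size $\le n/2$), and flipping one such coordinate alters the coverage status of at most two vertices, so $X_S$ is $2$-Lipschitz in these $\le kn/2$ influencing coordinates. Azuma's inequality (Lemma~\ref{Lemma_Azuma}) then gives
$$\P\bigl(X_S>\varepsilon n\bigr)\le\P\bigl(X_S>\E X_S+\varepsilon n/2\bigr)\le 2\exp\!\Bigl(-\tfrac{(\varepsilon n/2)^2}{(kn/2)\cdot 4}\Bigr)=2\exp\!\Bigl(-\tfrac{\varepsilon^2 n}{8k}\Bigr).$$
Finally, every colour of $G$ covers $\ge(1-\nu)n$ vertices, hence uses $\ge(1-\nu)n/2$ edges, so $G$ has at most $n$ distinct colours and the number of candidate sets $S$ is at most $\binom{n}{k}\le n^k$. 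A union bound yields failure probability at most $2n^k\exp(-\varepsilon^2 n/(8k))$, which is $o(1)$ because the polynomial hierarchy gives $n\ge k^{C}$ for a sufficiently large $C$, making $\varepsilon^2 n/k$ dominate $k\log n$. Since any colour that appears in $H$ also appears in $G$, this proves the claim.

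The only real ``obstacle'' is bookkeeping the polynomial dependencies: one must verify that the implicit constants hidden in $p,\varepsilon\GG k^{-1}\GG\nu\GG n^{-1}$ can be chosen to simultaneously enforce $2\nu n+ne^{-pk/2}\le\varepsilon n/2$ and $n^k e^{-\varepsilon^2 n/(16k)}=o(1)$, which is handled by operation~(a) of the $\LL$-calculus in the preliminaries. No structural difficulty arises beyond this calibration.
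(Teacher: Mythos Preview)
Your proposal is correct and follows essentially the same route as the paper's proof: both fix a $k$-set $S$ of colours, show that all but $\le 2\nu n$ vertices have $S$-degree $\ge k/2$ in $G$, bound the expected number of uncovered vertices using $(1-p)^{k/2}\le e^{-pk/2}\le \varepsilon/4$, apply Azuma's inequality with the same $2$-Lipschitz, $\le kn/2$-coordinate setup, and finish with a union bound over $\le n^{O(1)}$ choices of $S$. The only cosmetic differences are that the paper restricts its random variable to the high-degree set $L$ and adds the $2\nu n$ low-degree vertices back afterwards, whereas you fold both contributions directly into $\E X_S$; and your explicit bound on the number of colours (which should strictly be $\le n/(1-\nu)$ rather than $\le n$, though this is immaterial) is left implicit in the paper.
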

\begin{proof}
Let $S$ be a set of $k$ colours and $G_S$, $H_S$ the subgraphs of $G$ and $H$ consisting of colour $S$ edges. Notice that $e(G_S)\geq k(1-\nu)n/2$. By the Handshaking Lemma, we have $\sum_{v\in V(G_S)} d_{G_S}(v)=2e(G_S)\geq k(1-\nu)n$.
Let $L$ be the set of vertices in $G_S$ of degree $\geq k/2$.
Using $\Delta(G_S)\leq k$, we have $|L|k+(n-|L|)k/2\geq \sum_{v\in V(G_S)} d_{G_S}(v)\geq k(1-\nu)n$, which is equivalent to $|L|\geq (1-2\nu)n$.

For a vertex $v\in L$ we have $\P(d_{H_S}(v)=0)=(1-p)^{d_{G_S}(v)}\leq (1-p)^{k/2}\leq e^{-pk/2}\leq \varepsilon/4$, as $p,\varepsilon \GG k^{-1}$.
Let $X$ be the number of isolated vertices in $L$.
By linearity of expectation $\E(X)\leq \varepsilon n/4$.
Notice that $X$ is $2$-Lipschitz and is influenced by $\leq e(G_S)\leq kn/2$ edges.
By Azuma's Inequality (Lemma~\ref{Lemma_Azuma}) applied with $t=\varepsilon n/4$ we have $\P(X\geq \varepsilon n/2)\leq 2e^{-\frac{(\varepsilon n/4)^2}{4(kn/2)}}= 2e^{-\frac{\varepsilon^2 n}{32k}}\leq n^{-3k}$ (as $\varepsilon,k^{-1}\GG n^{-1}$ implies that $\varepsilon^2/k^2\geq \frac{400\log n}{n}$). Thus, with probability $\geq 1-n^{-3k}$, $H_S$ has $\leq X+(n-|L|)\leq \varepsilon n$ isolated vertices.
Taking a union bound over all sets $S$ of $k$ colours gives the result.
\end{proof}

\subsection{$(e,m)$-Dense graphs}
In this paper it is convenient to use two different notions of pseudorandomness. The first of these is $(\gamma, \delta, n)$-typicality (See Definition~\ref{Definition_Typical}). The second is the following.
\begin{definition}
\
\begin{itemize}
\item A general graph $G$ is $(e,m)$-dense if for {any} $\lambda\geq 1$ and disjoint sets $A$, $B$ with $|A|=|B|=\lambda m$, we have $e(A,B)\geq \lambda^2 e$.
\item A balanced bipartite graph $G$ with parts $X$ and $Y$ is $(e,m)$-dense if for {any}  $\lambda\geq 1$ and sets $A\subseteq X$, $B\subseteq Y$ with $|A|=|B|=\lambda m$, we have $e(A,B)\geq \lambda^2 e$.
\end{itemize}
\end{definition}
We remark that most of the time we will use the above definition with $\lambda=1$. Thus the definition should be thought of saying that there are $e$ edges between any two sets of vertices of size $m$. Notice that if $G$ is $(e,m)$-dense, then it is also $(e',m')$-dense for any $e'\leq e$ and $m'\geq m$.

How is the above definition related to $(\gamma, \delta, n)$-typicality? In fact, $(\gamma, \delta, n)$-typicality is a stronger concept.
We prove two lemmas relating typicality and density. The following is a variation of a lemma proved by the third author together with Alon and Krivelevich in \cite{alon1999list}.
\begin{lemma}\label{Lemma_Codegrees_Discrepency}
Every  $(\gamma, p,n)$-typical graph $H$ has the following for every pair of subsets $A$, $B$ with $|B|\geq \gamma^{-1}p^{-2}$:
$$|e(A,B)- p|A||B||\leq 2|A|^{\frac12}|B|\gamma^{\frac12}n^{\frac12}p.$$
\end{lemma}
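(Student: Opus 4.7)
The strategy is the standard second-moment/discrepancy argument: control the sum $\sum_{a\in A} d_B(a) = e(A,B)$ by showing that the variance of $d_B(\cdot)$ over $V(H)$ is small, and then apply Cauchy-Schwarz to transfer this to a concentration bound on any subset $A$.

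First I would compute two moments using the $(\gamma,p,n)$-typicality. For the first moment, $\sum_{v\in V(H)} d_B(v) = \sum_{b\in B} d(b) = (1\pm\gamma)pn|B|$ by the regularity part of typicality, so the average value of $d_B(v)$ over $V(H)$ is $(1\pm\gamma)p|B|$. For the second moment, I would use the double-counting identity
\[
\sum_{v\in V(H)} d_B(v)^2 \;=\; \sum_{b_1,b_2\in B} d(b_1,b_2),
\]
where for $b_1=b_2$ the term is $d(b_1)=(1\pm\gamma)pn$ and for $b_1\neq b_2$ the term is the codegree $d(b_1,b_2)=(1\pm\gamma)p^2n$. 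Collecting,
\[
\sum_{v\in V(H)} d_B(v)^2 \;\leq\; (1+\gamma)pn|B| \;+\; (1+\gamma)p^2n|B|(|B|-1).
\]
Combining with the lower bound on $(\sum_v d_B(v))^2/n$ yields
\[
\sum_{v\in V(H)}\bigl(d_B(v)-p|B|\bigr)^2 \;\leq\; 2pn|B| + 3\gamma p^2 n |B|^2,
\]
which is the key variance estimate.

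Next I apply Cauchy-Schwarz in the form
\[
\Bigl|\sum_{a\in A}\bigl(d_B(a)-p|B|\bigr)\Bigr|^{2} \;\leq\; |A|\sum_{a\in A}\bigl(d_B(a)-p|B|\bigr)^2 \;\leq\; |A|\sum_{v\in V(H)}\bigl(d_B(v)-p|B|\bigr)^2,
\]
which together with the variance bound gives $|e(A,B)-p|A||B|| \leq \sqrt{|A|(2pn|B|+3\gamma p^2 n|B|^2)}$. The hypothesis $|B|\geq \gamma^{-1}p^{-2}$ is designed precisely so that the second term dominates: it gives $\gamma p^2|B|\geq 1$, hence $\gamma p^2 n|B|^2 \geq n|B|\geq pn|B|$. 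Absorbing the $2pn|B|$ term into the $3\gamma p^2 n|B|^2$ term and taking the square root then yields the claimed inequality $|e(A,B)-p|A||B|| \leq 2|A|^{1/2}|B|\gamma^{1/2}n^{1/2}p$.

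The only slightly delicate point will be bookkeeping the constants so that the prefactor really comes out to $2$; this may require being careful to absorb $2pn|B|$ into $3\gamma p^2 n|B|^2$ using the stronger inequality $\gamma p^2|B|\geq 1/p$ (when $p\leq 1$). Otherwise the argument is a routine variance calculation, and no use of the parameter $\lambda$ from the $(e,m)$-dense notion is needed here since the lemma concerns arbitrary subsets $A,B$ directly. I expect no genuine obstacle beyond tracking constants carefully.
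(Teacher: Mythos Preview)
Your proposal is correct and is essentially the same argument as the paper's. The paper writes it in matrix form, setting $M=\mathrm{Adj}_H-pJ$ so that $\sum_{y\in B}M_{x,y}=d_B(x)-p|B|$, and then bounds $\sum_{x\in V(H)}(\sum_{y\in B}M_{x,y})^2$ by separating the diagonal $\sum_{x,y}M_{x,y}^2\le n|B|$ from the off-diagonal $\sum_{y\neq y'}\sum_x M_{x,y}M_{x,y'}\le 3\gamma p^2 n|B|^2$; your expansion of $\sum_v d_B(v)^2$ via $\sum_{b_1,b_2}d(b_1,b_2)$ is the identical computation in different notation. The one place your bookkeeping differs is the diagonal: you get $(1+\gamma)pn|B|$ where the paper uses the cruder $n|B|$, and the paper's version is what makes the final constant come out to exactly $2$ (since $n|B|\le \gamma p^2 n|B|^2$ gives $4\gamma p^2 n|B|^2$ under the square root), whereas your $2pn|B|$ term only absorbs cleanly when $p\le 1/2$---so if you want the stated constant for all $p\le 1$, just bound $(1+\gamma)pn|B|\le n|B|$ instead.
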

\begin{proof}
Let $\mathrm{Adj}_H$ be the adjacency matrix of $H$, and let $M=\mathrm{Adj}_H-pJ$ where $J$ is the appropriately-sized all-ones matrix.
Notice that for every pair of distinct vertices $x,x'$, we have
\begin{equation}
\sum_{v\in V(H)}M_{x,v}M_{x',v}= d_H(x,x')-p(d(x)+d(x'))+p^2n\leq (1+\gamma)p^2 n- 2(1-\gamma)p^2 n+p^2n\leq 3\gamma p^2 n.\label{exponew}
\end{equation}
Next notice that we have
\begin{align*}
|e(A,B)- p|A||B||^2 &=\left(\sum_{x\in A}\sum_{y\in B} M_{x,y}\right)^2
 \leq |A|\sum_{x\in A}\left( \sum_{y\in B} M_{x,y}\right)^2
 \leq |A|\sum_{x\in V(H)}\left( \sum_{y\in B} M_{x,y}\right)^2\\
 &= |A|\sum_{x\in V(H)}\left( \sum_{y\in B} M_{x,y}^2\right) +|A|\sum_{x\in V(H)}\left(\sum_{y\neq y'\in B} M_{x,y}M_{x,y'}\right)\\
 &\leq n|A||B|+ |A| \sum_{y\neq y'\in B}\left(\sum_{x\in V(H)} M_{x,y}M_{x,y'}\right)\\
 &\overset{\eqref{exponew}}{\leq} n|A||B|+ |A|\sum_{y\neq y'\in B} 3\gamma n p^2
 \leq n|A||B|+ |A||B|^2 3\gamma n p^2
 \leq 4 |A||B|^2 \gamma n  p^2
\end{align*}
Here the first inequality comes from the Cauchy-Schwarz inequality
and the last inequality comes from $|B|\geq \gamma^{-1}p^{-2}$.
Taking square roots gives the result.
\end{proof}

The following version of the above is more convenient to apply.
\begin{lemma}\label{Lemma_Codegrees_Pseudorandom}
Let $1\geq p, \mu \GG \gamma\GG n^{-1}$.
Every  $(\gamma, p, n)$-typical graph {(which is either balanced bipartite or general)} is $(0.99p(\mu n)^2, \mu n)$-dense.
\end{lemma}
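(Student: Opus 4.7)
The plan is to reduce the statement to Lemma~\ref{Lemma_Codegrees_Discrepency}. Fix $\lambda\geq 1$ and take disjoint $A,B$ (with $A\subseteq X$, $B\subseteq Y$ in the bipartite case) each of size $\lambda\mu n$. I first need to verify the hypothesis $|B|\geq \gamma^{-1}p^{-2}$ of Lemma~\ref{Lemma_Codegrees_Discrepency}: this is equivalent to $\mu n p^2\gamma\geq 1$, which follows by operation~(a) on the ``$\LL$'' notation from $n^{-1}\LL\gamma\LL\mu,p\leq 1$ (since the RHS is a monomial in variables all polynomially larger than $n^{-1}$). I expect that the same proof of Lemma~\ref{Lemma_Codegrees_Discrepency} goes through verbatim in the balanced bipartite case, because the codegrees $d(y,y')$ appearing after Cauchy--Schwarz only involve pairs $y,y'\in B\subseteq Y$, so the within-part codegree bound supplied by bipartite typicality is exactly what is needed.

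With the hypothesis checked, Lemma~\ref{Lemma_Codegrees_Discrepency} gives
\[
e(A,B)\geq p|A||B|-2|A|^{1/2}|B|\gamma^{1/2}n^{1/2}p = p\lambda^{2}(\mu n)^{2}-2\lambda^{3/2}(\mu n)^{3/2}\gamma^{1/2}n^{1/2}p.
\]
Factoring out $p\lambda^{2}(\mu n)^{2}$ yields
\[
e(A,B)\geq p\lambda^{2}(\mu n)^{2}\!\left(1-\frac{2}{\lambda^{1/2}}\sqrt{\frac{\gamma}{\mu}}\right)\geq p\lambda^{2}(\mu n)^{2}\!\left(1-2\sqrt{\frac{\gamma}{\mu}}\right),
\]
using $\lambda\geq 1$ in the last step.

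Finally, since $\mu\GG\gamma$, operation~(a) gives $\gamma/\mu\leq 10^{-5}$, so $2\sqrt{\gamma/\mu}\leq 0.01$ and
\[
e(A,B)\geq 0.99\,p\lambda^{2}(\mu n)^{2},
\]
which is exactly the $(0.99p(\mu n)^{2},\mu n)$-density condition. There is no real obstacle here beyond a sanity check that Lemma~\ref{Lemma_Codegrees_Discrepency} applies equally to both the general and balanced bipartite typical settings; everything else is bookkeeping of the ``$\LL$'' hierarchy.
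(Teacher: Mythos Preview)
Your argument for the general case matches the paper's proof exactly. For the balanced bipartite case, however, you take a genuinely different route from the paper.

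The paper does \emph{not} apply Lemma~\ref{Lemma_Codegrees_Discrepency} directly in the bipartite case. Instead, given a bipartite $(\gamma,p,n)$-typical graph $G$ with parts $X,Y$, it adds independent copies of the Erd\H{o}s--R\'enyi random graph $G(n,p)$ inside each of $X$ and $Y$, argues (via Azuma) that the resulting graph $H$ on $2n$ vertices is with high probability $(2\gamma,p,2n)$-typical as a general graph, applies the already-proved general case with $\mu'=\mu/2$, and then observes that for $A\subseteq X$, $B\subseteq Y$ one has $e_G(A,B)=e_H(A,B)$.

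Your approach---asserting that the proof of Lemma~\ref{Lemma_Codegrees_Discrepency} goes through for bipartite typical graphs---is more direct and avoids the probabilistic detour. Your intuition is correct: if one works with the bipartite adjacency matrix and sums only over $x\in X$ after Cauchy--Schwarz, the cross terms $\sum_{x\in X}M_{x,y}M_{x,y'}$ for $y,y'\in B\subseteq Y$ are controlled precisely by the within-part codegree bound that bipartite typicality provides, and the diagonal term is bounded by $|X||B|\approx n|B|$ as before. So the same conclusion holds with essentially the same constant. The paper's randomised embedding, by contrast, has the advantage that it reduces cleanly to the general case as a black box without reopening the proof of Lemma~\ref{Lemma_Codegrees_Discrepency}, at the cost of an extra concentration argument. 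Both are valid; yours is shorter once the bipartite version of the discrepancy lemma is checked, but ``verbatim'' slightly overstates it---you do need to adjust the matrix $M$ and the range of summation.
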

\begin{proof}
First we deal with the case when $G$ is a general graph.
Notice that $p, \mu \GG \gamma\GG n^{-1}$ implies $\mu n\geq \gamma^{-1} p^{-2}$.
By Lemma~\ref{Lemma_Codegrees_Discrepency} we have  that for any $\lambda\geq 1$ and  pair of subsets $A$, $B$ with $|A|=|B|=\lambda \mu n$ we have
$|e(A,B)- p(\lambda \mu n)^2|\leq 2(\lambda \mu n)^{\frac12}(\lambda \mu n)\gamma^{\frac12} n^{\frac12}p \leq 0.01p (\lambda \mu n)^2$ (the  last inequality is $2\gamma^{\frac12}\leq 0.01\lambda^{\frac12}\mu^{\frac12}$ which comes from $\gamma\LL \mu$ and $\lambda\geq 1$).

Now suppose that $G$ is a balanced bipartite $(\gamma, p, n)$-typical graph with parts $X, Y$. Add a copy of the Erd\H{o}s-R\'enyi random graph $G(n,p)$ to both $X$ and $Y$ to get a graph $H$. Notice that for any vertex $v$ we have $\E(d_H(v))=(1\pm \gamma)2pn$ and that any pair of vertices $u,v$ have $\E(d_H(u,v)) = (1\pm \gamma)2p^2n$. Notice that these quantities are each $1$-Lipschitz affected by $\leq 2n$ coordinates. By Azuma's Inequality (Lemma~\ref{Lemma_Azuma}) and the union bound we get that with high probability $H$ is a $(2\gamma, p, 2n)$-typical general graph.
By the general graph version of this lemma with $\mu'=\mu/2$, $H$ is  $(0.99p(\mu n)^2, \mu n)$-dense. This implies that between any sets $A\subseteq X$, $B\subseteq Y$ with $|A|=|B|=\lambda \mu n$ we have $e_G(A,B)=e_H(A,B)\geq \lambda^2 0.99p(\mu n)^2$, i.e.\ that $G$ is $(0.99p(\mu n)^2, \mu n)$-dense (as a balanced bipartite graph).
\end{proof}
The following lemma shows that it is possible to delete a small number of edges from any graph so that its complement is pseudorandom.  Here $\overline H$ denotes the set of edges on $V(H)$ not present in $H$.
\begin{lemma}\label{Lemma_Bipartite_Complement_expander}
Let $1\geq p, \mu \GG n^{-1}$.
Every  $d$-regular balanced bipartite graph $G$ on $2n$ vertices has a $(d-\lfloor p n\rfloor)$-regular spanning subgraph $H$ such that $\overline H$  is $(0.48p (\mu n)^2, \mu n)$-dense.
\end{lemma}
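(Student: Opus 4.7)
The plan is to construct $H$ by deleting a random $k$-regular subgraph $D\subseteq G$, where $k=\lfloor pn\rfloor$, so that $H=G\setminus D$ is automatically $(d-k)$-regular. By K\"onig's edge-colouring theorem, $G$ decomposes as the edge-disjoint union of $d$ perfect matchings $M_1,\ldots,M_d$; I would let $S\subseteq[d]$ be a uniformly random $k$-subset and set $D=\bigcup_{i\in S}M_i$. Each perfect matching in $D$ removes exactly one edge at every vertex, so $H$ has the required uniform degree.

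To verify that $\overline H=\overline G\cup D$ (taken inside $K_{n,n}$) is $(0.48p(\mu n)^2,\mu n)$-dense with positive probability, I would first reduce to the case $\lambda=1$: if $e_{\overline H}(A',B')\geq 0.48p(\mu n)^2$ for every $|A'|=|B'|=\mu n$, then for $|A|=|B|=\lambda\mu n$ with $\lambda>1$, averaging $e_{\overline H}(A',B')$ over uniformly random $\mu n$-subsets $A'\subseteq A$, $B'\subseteq B$ gives $e_{\overline H}(A,B)\geq 0.48p(\lambda\mu n)^2$, since $\E[e_{\overline H}(A',B')]=e_{\overline H}(A,B)/\lambda^2$. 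For fixed $|A|=|B|=m=\mu n$, we have $\E[e_{\overline H}(A,B)]=m^2-(1-k/d)\,e_G(A,B)$; combining $e_G(A,B)\leq\min(md,m^2)$ with $k\geq pn-1$ via a case split on whether $d\leq m$ or $d>m$ yields $\E[e_{\overline H}(A,B)]\geq 0.5\,p\,m^2$, providing slack for concentration. Concentration of $e_D(A,B)=\sum_{i\in S}e_{M_i}(A,B)$ around its mean comes from Lemma~\ref{Lemma_Kwan}: swapping one matching in $S$ for another changes $e_D(A,B)$ by at most $\max_ie_{M_i}(A,B)\leq m$, so the Lipschitz hypothesis holds with $\alpha=m=\mu n$. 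A union bound over the $\binom{n}{\mu n}^2$ candidate pairs finishes the argument.

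The hard part will be balancing the Kwan tail estimate against the union bound. The crude Lipschitz constant $\alpha=\mu n$ (together with $\min(k,d-k)\leq pn$) yields only an exponent of order $p\mu^2 n$, whereas the factor $\exp(O(\mu n\log(1/\mu)))$ from $\binom{n}{\mu n}^2$ demands $p\mu\gtrsim\log(1/\mu)$ to win. When the polynomial $\GG$-gaps are chosen large enough this is automatic; otherwise I would sharpen the concentration via a Bernstein-type refinement exploiting the variance bound $\sum_ie_{M_i}(A,B)^2\leq m\cdot e_G(A,B)$, or else replace the K\"onig construction by Bernoulli edge sampling of $D_0$ at rate $k/d$ (whose independent-Chernoff exponent is of order $p^2m^2$, easily beating the union bound) followed by a small Hall-type fix-up that restores exact $k$-regularity; the fix-up modifies only $O(\sqrt{k\log n})$ edges per vertex, perturbing each $e_{\overline H}(A,B)$ by at most $O(m\sqrt{k\log n})\ll 0.48\,p\,m^2$ in the regime $p,\mu\GG n^{-1}$.
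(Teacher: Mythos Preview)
Your overall construction (delete a random $k$-regular union of perfect matchings of $G$) is natural, and the reduction to $\lambda=1$ and the expectation calculation are both fine. The problem is the concentration-plus-union-bound step, and the claim that ``when the polynomial $\GG$-gaps are chosen large enough this is automatic'' is not correct. The hypothesis of the lemma is only $1\geq p,\mu\GG n^{-1}$, with \emph{no} relation between $p$ and $\mu$; under the $\LL$ convention this forces $p,\mu\geq n^{-1/C}$ for some fixed $C$, but then $p\mu\geq n^{-2/C}$ while $\log(1/\mu)$ can be of order $\log n$, so the inequality $p\mu\gtrsim\log(1/\mu)$ you need genuinely fails (take e.g.\ $p=\mu=n^{-0.01}$). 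The Bernstein refinement does not rescue this: using $\sum_i e_{M_i}(A,B)^2\le m\,e_G(A,B)\le m^3$ only upgrades the exponent to order $p\mu n$, so you would still need $p\gtrsim\log(1/\mu)$. The Bernoulli edge-sampling alternative does give exponent $\Theta(p\mu^2 n^2)$, which would beat the union bound, but then the ``Hall-type fix-up'' back to exact $k$-regularity is a real piece of work you have not done (note that $\delta(D_0)\ge k$ in a balanced bipartite graph does \emph{not} by itself guarantee a $k$-regular subgraph), and the claimed perturbation bound $O(m\sqrt{k\log n})\ll 0.48\,p\,m^2$ also needs $p\mu n\gg\sqrt{pn\log n}$, which is again not free.

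The paper sidesteps the exponential union bound entirely, and this is the idea you are missing. It takes a $1$-factorization of $K_{n,n}$ extending a $1$-factorization of $G$ (so every colour class lies either inside $G$ or inside $\overline G$), and lets $E\subseteq K_{n,n}$ be the union of a random $0.5p$-fraction of those perfect matchings. By Lemma~\ref{Lemma_Random_Subgraph_Bipartite}~(a), $E$ is $(2\gamma,0.5p,n)$-typical with high probability; this only requires concentration of degrees and codegrees, so the union bound is over $O(n^2)$ events. Then Lemma~\ref{Lemma_Codegrees_Pseudorandom} converts typicality to $(0.48p(\mu n)^2,\mu n)$-density \emph{deterministically}. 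Because every colour class is a perfect matching either inside or outside $G$, the graph $G\setminus E$ is regular of degree at least $d-\lfloor pn\rfloor$, so (by K\H{o}nig) it contains a $(d-\lfloor pn\rfloor)$-regular subgraph $H$ disjoint from $E$, and $\overline H\supseteq E$ is dense. The point is that typicality is a polynomial-sized certificate for density, so one never has to union-bound over all pairs of $\mu n$-sets.
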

\begin{proof}
Choose $\gamma$ with $1\geq p, \mu \GG \gamma \GG n^{-1}$.
Consider an arbitrary $1$-factorization of $K_{n,n}$ in which every colour either only occurs on $G$ or only occurs outside $G$ (this exists  since every regular bipartite graph has a $1$-factorization).
By Lemma~\ref{Lemma_Typicality_K_n}, $K_{n,n}$ is $(\gamma, 1, n)$-typical.
Let $E$ be a subgraph of $K_{n,n}$  formed by choosing every colour with probability $0.5p$.
By Lemma~\ref{Lemma_Random_Subgraph_Bipartite} (a),   $E$ is $(2\gamma, 0.5 p, n)$-typical with high probability.
By Lemma~\ref{Lemma_Codegrees_Pseudorandom}, applied with $\gamma'=2\gamma$, $p'=0.5p$, $\mu=\mu$, $E$ is  $(0.48p (\mu n)^2, \mu n)$-dense.

Since $E$ and $G\setminus E$   are unions of perfect matchings, they are regular.
Since $E$ is $(2\gamma, 0.5p, n)$-typical, the graph $G\setminus E$ is $d'$-regular for some $d'\geq d-(1+2\gamma)0.5 pn\geq d-\lfloor pn\rfloor$ (using $\gamma\LL   1$). Therefore we can find some $(d-\lfloor pn\rfloor)$-regular subgraph $H$ of $G$ which is edge-disjoint from $E$ (using that $G$ is bipartite).  Since $H^c$ contains $E$, we have that $\overline H$ is $(0.48p(\mu n)^2, \mu n)$-dense as required.
\end{proof}

We will need two lemmas showing that deleting a small number of edges from an $(e, m)$-dense graph does not change the pseudorandomness too much.
\begin{lemma}\label{Lemma_Expander_Delete_Edges}
Let $G$ be  $(e, m)$-dense and $H$  a subgraph of $G$. Then $G\setminus H$ is  $(e-e(H),m)$-dense.
\end{lemma}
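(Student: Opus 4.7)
The plan is to unpack the definition of $(e,m)$-density directly. First I would fix an arbitrary $\lambda \geq 1$ and a pair of (disjoint, or on opposite sides in the bipartite case) vertex sets $A, B$ with $|A|=|B|=\lambda m$. The goal is to show that $e_{G\setminus H}(A,B) \geq \lambda^2(e - e(H))$.

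The key observation is the trivial edge-counting identity $e_{G\setminus H}(A,B) = e_G(A,B) - e_H(A,B)$. By the $(e,m)$-density of $G$, the first term is at least $\lambda^2 e$, and clearly $e_H(A,B) \leq e(H)$, so we get the bound $e_{G\setminus H}(A,B) \geq \lambda^2 e - e(H)$. The remaining ingredient is the inequality $\lambda^2 e - e(H) \geq \lambda^2(e - e(H))$, which is equivalent to $(\lambda^2 - 1)\,e(H) \geq 0$, and this holds since $\lambda \geq 1$ and $e(H) \geq 0$.

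There is no real obstacle here — the only subtle point worth flagging is that one must use $\lambda \geq 1$ to absorb the $e(H)$ loss into the $\lambda^2$ factor; a naive estimate that drops $\lambda^2$ in front of $e(H)$ would be too weak for $\lambda > 1$, but in the other direction (using $\lambda \geq 1$ to inflate $e(H)$ to $\lambda^2 e(H)$) it works out. Since the argument applies uniformly to both the general-graph and balanced-bipartite versions of the definition, a single proof covers both cases.
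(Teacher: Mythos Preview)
Your proof is correct and essentially identical to the paper's: both fix $\lambda\geq 1$ and sets $A,B$ of size $\lambda m$, then chain $e_{G\setminus H}(A,B)\geq e_G(A,B)-e(H)\geq \lambda^2 e - e(H)\geq \lambda^2(e-e(H))$, using $\lambda\geq 1$ for the last step.
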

\begin{proof}
For any $\lambda\geq 1$ and sets $A\subseteq X$, $B\subseteq Y$ with $|A|=|B|=\lambda m$ we have $e_{G\setminus H}(A,B)\geq e_G(A,B)-e(H)\geq \lambda^2 e-e(H)\geq \lambda^2(e-e(H))$.
\end{proof}
\begin{lemma}\label{Lemma_Expander_Delete_Matching}
Let $G$ be  $(e, m)$-dense and $M$ a matching in $G$. Then $G\setminus E(M)$ is  $(e-m,m)$-dense.
\end{lemma}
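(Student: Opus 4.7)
The plan is to mimic the proof of Lemma~\ref{Lemma_Expander_Delete_Edges}, but with a sharper count of how many edges of $M$ can actually land between two prescribed sets $A$ and $B$. The previous lemma just used the trivial bound $e(H)$, but when $H$ is a matching we can do much better: the number of matching edges between $A$ and $B$ can be at most $\min(|A|,|B|)$, because each such edge uses one vertex of $A$ and the vertex-disjointness of matching edges prevents reusing any vertex.

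So concretely, I would fix an arbitrary $\lambda \geq 1$ and sets $A,B$ with $|A|=|B|=\lambda m$ (disjoint in the general case, or with $A\subseteq X$ and $B\subseteq Y$ in the bipartite case). The $(e,m)$-density of $G$ gives $e_G(A,B)\geq \lambda^2 e$. On the other hand, every edge of $M$ lying between $A$ and $B$ uses a distinct vertex of $A$, so $|E(M)\cap e(A,B)|\leq |A|=\lambda m$. Combining these two observations,
\[
e_{G\setminus E(M)}(A,B)\;=\;e_G(A,B)-|E(M)\cap e(A,B)|\;\geq\;\lambda^2 e-\lambda m\;\geq\;\lambda^2(e-m),
\]
where the last step uses $\lambda\geq 1$ so that $\lambda m\leq \lambda^2 m$. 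Since $\lambda,A,B$ were arbitrary, this is exactly the assertion that $G\setminus E(M)$ is $(e-m,m)$-dense.

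There is no real obstacle here — the only slightly delicate point is getting the factor $\lambda^2$ on the right-hand side; that is what forces us to use the bound $\lambda m\leq \lambda^2 m$ (valid precisely because the definition of $(e,m)$-density restricts to $\lambda\geq 1$). The proof is identical in the general-graph and balanced-bipartite settings, so a single argument covers both bullets of the definition.
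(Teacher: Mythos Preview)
Your proof is correct and essentially identical to the paper's own argument: fix $\lambda\geq 1$ and sets $A,B$ of size $\lambda m$, use $(e,m)$-density to get $e_G(A,B)\geq \lambda^2 e$, bound the number of matching edges between $A$ and $B$ by $\lambda m$, and conclude via $\lambda^2 e-\lambda m\geq \lambda^2(e-m)$.
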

\begin{proof}
For $\lambda>1$, let $A$ and $B$ be sets with $|A|=|B|=\lambda m$.
Since $G$ is $(e, m)$-dense, we have $e_G(A,B)\geq \lambda^2 e$.
Since $M$ is a matching there can be at most $\lambda m$ edges of $M$ between $A$ and $B$.
Therefore $e_{G\setminus M}(A,B)\geq \lambda^2 e-\lambda m\geq \lambda^2(e-m)$.
\end{proof}

\section{Regularization lemmas}
In our proofs we will need a number of intermediate lemmas saying that a graph $G$ can be modified into a regular graph. Broadly speaking there are three types of modifications that we will need: deleting a small number of edges, adding edges from a disjoint dense graph,  or  adding a small number of vertices.
\subsection{Regularization by deleting edges}
Here we will prove results about finding a regular subgraph by deleting edges from a graph with very high minimum degree. The goal of this section is to prove Lemmas~\ref{Lemma_regular_subgraph_few_large_colours} and~\ref{Lemma_regular_subgraph_few_large_colours_bipartite}.
The following theorem of Cristofides, K\"uhn, and Osthus is a result of the type we want in this section (see Theorem~12 in \cite{christofides2012edge}).
\begin{theorem}[Cristofides, K\"uhn, Osthus]\label{Theorem_Regular_Subgraph_CKO}
Let $G$ be a graph with minimum degree $\delta\geq n/2$ and $r$ an even number with $r\leq \frac12(\delta+\sqrt{n(2\delta-n)})$. Then $G$ has a spanning $r$-regular subgraph.
\end{theorem}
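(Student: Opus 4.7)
The plan is to deduce this from Tutte's $f$-factor theorem applied with the constant function $f \equiv r$. Tutte's theorem asserts that $G$ has an $r$-regular spanning subgraph if and only if $rn$ is even and, for every pair of disjoint sets $S, T \subseteq V(G)$, one has
\[
r|S| + \sum_{v \in T}\bigl(d_{G-S}(v) - r\bigr) \;\geq\; q_G(S,T),
\]
where $q_G(S,T)$ is the number of components $C$ of $G-(S\cup T)$ for which $e_G(C,T) + r|C|$ is odd. The parity condition is automatic from $r$ being even, so the whole task is to verify the displayed inequality for every disjoint pair $(S,T)$ under the hypothesis $r \leq \tfrac12(\delta + \sqrt{n(2\delta-n)})$.

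Write $s=|S|$ and $t=|T|$. From $\delta(G)\geq \delta$ one gets $d_{G-S}(v)\geq \delta - s$ for every $v\in T$, so the left-hand side is at least $rs + t(\delta - s - r)$. Trivially $q_G(S,T)\leq n-s-t$, but this is too weak at the extremal point; instead I would argue that the minimum-degree hypothesis forces most components of $G-(S\cup T)$ to be large. Indeed any component $C$ contains a vertex whose neighbourhood lies in $C\cup S\cup T$, giving $|C|\geq \delta - s - t + 1$ unless $C$ sits entirely inside $S\cup T$'s ``reach''. After isolating a bounded number of exceptional small components, one obtains $q_G(S,T)\leq 1 + (n-s-t)/(\delta-s-t)$ in the relevant range. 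Substituting into Tutte's inequality yields a quadratic condition in $s, t$ whose worst case, by standard optimisation (differentiating in $t$ for fixed $s$, then in $s$), lies precisely at the configuration making $2r - \delta = \sqrt{n(2\delta-n)}$ tight; this matches the claimed threshold after completing the square.

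The main obstacle is handling $q_G(S,T)$ carefully: the naive bound wastes too much, while the min-degree bound on component sizes breaks down exactly when $s+t$ is close to $\delta$, which is the extremal regime. I expect to split into cases according to whether $s+t\leq \delta - \sqrt{n(2\delta-n)}$ or not, using the large-component bound in the first regime and a direct counting argument (bounding $q_G$ by the number of $G$-edges between $T$ and $V\setminus(S\cup T)$ divided by roughly $r$, since each contributing component touches $T$) in the second. The equality case of the theorem should be realised by disjoint-cliques-type configurations such as two copies of $K_{n/2+O(1)}$ joined by few edges, and tracking that extremal example in parallel will guide the choice of bounds at every step.
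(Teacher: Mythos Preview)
The paper does not prove this theorem at all: it is quoted as an external result, with the attribution ``see Theorem~12 in \cite{christofides2012edge}'', and is immediately used as a black box to derive Lemma~\ref{Lemma_Regular_subgraph}. So there is no ``paper's own proof'' to compare your proposal against.

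For what it is worth, your plan to go through Tutte's $f$-factor theorem is the natural approach and is indeed how Christofides, K\"uhn and Osthus establish the result in their paper. Your sketch is along the right lines, though the component-counting step (``after isolating a bounded number of exceptional small components'') and the two-regime optimisation would need to be made precise; in the original the case analysis on $s,t$ is somewhat delicate. If you want to include a self-contained proof rather than a citation, you should consult \cite{christofides2012edge} directly to see how they organise the Tutte deficiency calculation.
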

The following version of this will be a bit easier to apply.
\begin{lemma} \label{Lemma_Regular_subgraph}
Let $n^{-1}\LL \varepsilon\LL 1$. Let $G$ be an $n$-vertex  graph  with $\delta(G)\geq(1-\varepsilon)n$. Then, $G$ has a spanning $2\lceil(1-\varepsilon-8\varepsilon^2)n/2\rceil$-regular subgraph.
\end{lemma}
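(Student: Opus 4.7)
The plan is to apply Theorem~\ref{Theorem_Regular_Subgraph_CKO} directly with $\delta = \delta(G)$ and $r = 2\lceil(1-\varepsilon-8\varepsilon^2)n/2\rceil$. By construction $r$ is even. Since $\varepsilon\LL 1$, we have $\delta(G)\geq(1-\varepsilon)n\geq n/2$, so the minimum-degree hypothesis is met. The only remaining task is to verify that
\[
r \;\leq\; \tfrac{1}{2}\bigl(\delta(G) + \sqrt{n(2\delta(G)-n)}\bigr).
\]

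To do so, I will first observe that the right-hand side is monotone non-decreasing in $\delta$ on $[n/2,n]$, so it suffices to check the inequality at the worst case $\delta(G)=(1-\varepsilon)n$. Substituting, the right-hand side becomes $\tfrac{n}{2}\bigl(1-\varepsilon + \sqrt{1-2\varepsilon}\bigr)$. Next I will use the elementary bound $\sqrt{1-2\varepsilon}\geq 1-\varepsilon-2\varepsilon^{2}$, which follows by squaring and noting that
\[
(1-\varepsilon-2\varepsilon^2)^2 \;=\; 1-2\varepsilon + \varepsilon^2\bigl(-3 + 4\varepsilon + 4\varepsilon^2\bigr) \;\leq\; 1-2\varepsilon
\]
once $\varepsilon$ is small enough. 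Plugging this in gives $\tfrac{n}{2}(1-\varepsilon+\sqrt{1-2\varepsilon})\geq n(1-\varepsilon-\varepsilon^{2})$.

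Finally, since the ceiling gives $r\leq (1-\varepsilon-8\varepsilon^{2})n + 2$, the required inequality reduces to
\[
n(1-\varepsilon-\varepsilon^{2}) \;\geq\; (1-\varepsilon-8\varepsilon^{2})n + 2,
\]
i.e.\ $7\varepsilon^{2}n\geq 2$, which follows immediately from $n^{-1}\LL\varepsilon$. There is no real obstacle here: the lemma is essentially a calculation verifying the hypothesis of Theorem~\ref{Theorem_Regular_Subgraph_CKO}, and the only point of care is that the constant $8$ in the conclusion is chosen comfortably larger than the $1$ coming from the Taylor bound, which is exactly what provides the slack needed to absorb the $+2$ rounding error from the ceiling.
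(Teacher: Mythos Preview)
Your proof is correct and follows essentially the same approach as the paper: apply Theorem~\ref{Theorem_Regular_Subgraph_CKO} directly, reducing the verification to the elementary bound $\sqrt{1-2\varepsilon}\geq 1-\varepsilon-2\varepsilon^2$. The paper's version is terser and does not explicitly track the $+2$ rounding from the ceiling, but your more careful accounting is entirely in the same spirit.
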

\begin{proof}
Set $\delta=(1-\varepsilon)n\geq n/2$ and $r=2\lceil(1-\varepsilon-8\varepsilon^2)n/2\rceil$. Notice that $r$ is even and has $r\leq \frac12(\delta+\sqrt{n(2\delta-n)})$ (using $\sqrt{n(2\delta-n)}=n\sqrt{1-2\varepsilon}\geq n(1-\varepsilon-2\varepsilon^2)$, which holds for $\varepsilon\leq 1/2$).
 Apply Theorem~\ref{Theorem_Regular_Subgraph_CKO} to get the lemma.
\end{proof}
We will also need a balanced bipartite version of this lemma. To prove it we use the following theorem of Ore and Ryser (see \cite{ore1962theory}).
\begin{theorem} [Ore, Ryser]\label{Theorem_Ore_Ryser}
A balanced bipartite graph with parts $X, Y$ has no spanning $d$-regular subgraph if and only if there is a set $T\subseteq Y$ with
$d|T|>\sum_{x\in X}\min(|N(x)\cap T|, d)$.
\end{theorem}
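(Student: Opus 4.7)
The plan is to derive both directions from the max-flow min-cut theorem. The necessity direction is a short double-counting argument: if $H \subseteq G$ is a spanning $d$-regular subgraph and $T \subseteq Y$, then the number of $H$-edges between $X$ and $T$ equals $d|T|$ on the one hand, and $\sum_{x \in X} |N_H(x) \cap T|$ on the other. Since $|N_H(x) \cap T| \leq d_H(x) = d$ and $|N_H(x) \cap T| \leq |N_G(x) \cap T|$, the latter sum is at most $\sum_{x \in X} \min(|N_G(x) \cap T|, d)$. Hence $d|T| \leq \sum_{x \in X} \min(|N_G(x) \cap T|, d)$ whenever a $d$-regular spanning subgraph exists, which is the contrapositive of one direction.

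For sufficiency, I would set up the standard flow network: a source $s$ and sink $t$, capacity-$d$ arcs $s \to x$ for each $x \in X$, capacity-$1$ arcs $x \to y$ for each edge $xy \in E(G)$, and capacity-$d$ arcs $y \to t$ for each $y \in Y$. An integer $s$-$t$ flow of value $d|X|$ saturates every arc out of $s$ and every arc into $t$, so its support on the middle layer is exactly a spanning $d$-regular subgraph of $G$. Because all capacities are integral, by max-flow min-cut together with integrality it suffices to show that every $s$-$t$ cut has capacity at least $d|X| = d|Y|$.

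A cut is described by choosing sets $A \subseteq X$ and $T \subseteq Y$, where $A$ and $Y \setminus T$ lie on the source side; its capacity is $d|X \setminus A| + e_G(A, T) + d(|Y| - |T|)$. For fixed $T$, the minimum over $A \subseteq X$ places $x$ on the source side exactly when $|N_G(x) \cap T| \leq d$, yielding a minimum cut of value $d|Y| - d|T| + \sum_{x \in X} \min(|N_G(x) \cap T|, d)$. The hypothesis (for the sufficiency direction, in its contrapositive form) gives that this quantity is at least $d|Y|$ for every $T$; hence every cut has capacity at least $d|Y|$, the maximum flow attains $d|Y|$, and the desired $d$-regular spanning subgraph exists.

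The argument is essentially mechanical once the network is set up, and the one observation worth flagging is that optimizing the cut over $A$ is precisely what produces the $\min(\cdot, d)$ term in the theorem statement; I do not anticipate any real obstacle beyond this bookkeeping.
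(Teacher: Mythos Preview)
Your proof is correct. However, note that the paper does not actually prove this statement: it is quoted as a classical result of Ore and Ryser, with a reference to the literature, and is used as a black box in the proof of Lemma~\ref{Lemma_Regular_subgraph_bipartite}. So there is no ``paper's proof'' to compare against.

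That said, your max-flow min-cut argument is one of the standard derivations of the Ore--Ryser criterion, and the details are handled correctly. In particular, the observation that optimizing the cut over the choice of $A\subseteq X$ is exactly what produces the $\min(|N(x)\cap T|,d)$ term is the crux of the computation, and you have it right. The necessity direction via double counting is also fine.
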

Using this we can prove a bipartite version of Lemma~\ref{Lemma_Regular_subgraph}.
\begin{lemma} \label{Lemma_Regular_subgraph_bipartite}
Let $n^{-1}\LL\varepsilon\LL 1$. Let $G$ be a balanced bipartite graph with vertex classes $X$ and $Y$ with $|X|=|Y|=n$ and $\delta(G)\geq(1-\varepsilon)n$. Then $G$ has a spanning $\lfloor(1-\varepsilon-8\varepsilon^2)n\rfloor$-regular subgraph.
\end{lemma}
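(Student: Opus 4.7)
The plan is to apply the Ore--Ryser theorem (Theorem~\ref{Theorem_Ore_Ryser}) directly with $d=\lfloor(1-\varepsilon-8\varepsilon^2)n\rfloor$. Thus, it suffices to show that for every $T\subseteq Y$,
\[
d|T|\le \sum_{x\in X}\min\bigl(|N(x)\cap T|,\, d\bigr).
\]
I would split into cases according to the size of $|T|$, since the $\min$ behaves differently in each regime.

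First, suppose $|T|\le d$. Then $|N(x)\cap T|\le |T|\le d$ for every $x\in X$, so the $\min$ drops out and the right-hand side equals $e_G(X,T)=\sum_{y\in T}d_G(y)\ge (1-\varepsilon)n|T|\ge d|T|$ using $d\le (1-\varepsilon)n$. Second, suppose $|T|\ge d+\varepsilon n$. Using $\delta(G)\ge(1-\varepsilon)n$, every $x\in X$ has at most $\varepsilon n$ non-neighbours in $Y$, hence $|N(x)\cap T|\ge |T|-\varepsilon n\ge d$. Therefore each summand is exactly $d$, giving $nd\ge |T|d$ as $|T|\le n$.

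The interesting case is the narrow middle band $d<|T|<d+\varepsilon n$. Here $|N(x)\cap T|\ge |T|-\varepsilon n$ and also $d>|T|-\varepsilon n$, so $\min(|N(x)\cap T|,d)\ge |T|-\varepsilon n$, and summing gives $n(|T|-\varepsilon n)$. The required inequality becomes $(n-d)|T|\ge \varepsilon n^2$. Now $n-d\ge (\varepsilon+8\varepsilon^2)n$ and $|T|>d\ge (1-\varepsilon-8\varepsilon^2)n-1\ge (1-2\varepsilon)n$, where the last step uses $n^{-1}\LL\varepsilon$. Hence $(n-d)|T|\ge (\varepsilon+8\varepsilon^2)(1-2\varepsilon)n^2$, which exceeds $\varepsilon n^2$ for small enough $\varepsilon$ (the $8\varepsilon^2$ slack in the choice of $d$ is precisely what buys this).

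The main obstacle, and the reason for the $8\varepsilon^2$ term in the statement, is exactly this middle range. Taking $d$ any closer to $(1-\varepsilon)n$ (for example $d=\lfloor(1-2\varepsilon)n\rfloor$) would make the cushion $n-d$ too small to absorb the $\varepsilon n^2$ loss coming from the trivial lower bound $|N(x)\cap T|\ge |T|-\varepsilon n$. Once the three cases above are handled, the Ore--Ryser condition is verified and the $d$-regular spanning subgraph follows immediately.
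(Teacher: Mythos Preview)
Your proof is correct and takes essentially the same approach as the paper: both verify the Ore--Ryser condition using the basic bound $|N(x)\cap T|\ge |T|-\varepsilon n$ together with the degree inequality $e(X,T)\ge (1-\varepsilon)n|T|$. The paper organizes the argument as a proof by contradiction with auxiliary parameters $\tau,\alpha$, whereas your direct three-way case split on $|T|$ is arguably cleaner and makes the role of the $8\varepsilon^2$ cushion more transparent.
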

\begin{proof}
Notice that there is an $\hat{\varepsilon}\in \mathbb{R}$ with $(1-\hat{\varepsilon}-8\hat{\varepsilon}^2)n=\lfloor(1-\varepsilon-8\varepsilon^2)n\rfloor$ and $1\GG \varepsilon+n^{-1}\geq \hat{\varepsilon}\geq \varepsilon\GG n^{-1}$.
Notice that $\delta(G)\geq (1-\hat{\varepsilon})n$.
Fix $d=(1-\hat{\varepsilon}-8\hat{\varepsilon}^2)n=\lfloor(1-\varepsilon-8\varepsilon^2)n\rfloor$.

If the lemma does not hold then by Theorem~\ref{Theorem_Ore_Ryser}, there is a set $T\subseteq Y$ with $d|T|>\sum_{x\in X}\min(|N(x)\cap T|, d)$. Fix $|T|=(1-\tau)n$ and $\alpha n=|\{x\in X: \min(|N(x)\cap T|, d)=d\}|$.
Notice that $\delta(G)\geq (1-\hat{\varepsilon} )n$ implies $|N(x)\cap T|\geq |T|-\hat{\varepsilon} n$ for all $x\in X$.
Using this $d|T|>\sum_{x\in X}\min(|N(x)\cap T|, d)$ implies
$d(1-\tau)n>(1-\tau-\hat{\varepsilon} )(1-\alpha)n^2+ d\alpha n$. {Plugging in the value of $d$ gives
$(1-\hat{\varepsilon}-8\hat{\varepsilon}^2)(1-\tau)>(1-\tau-\hat{\varepsilon})(1-\alpha)+ (1-\hat{\varepsilon}-8\hat{\varepsilon}^2)\alpha$, and therefore $(\hat{\varepsilon}  +8\hat{\varepsilon}^2-\alpha)\tau>8\hat{\varepsilon}^2(1-\alpha)$.
As $\alpha\leq 1$ and $\tau\geq 0$, we must have that $\alpha<\hat{\varepsilon}+8\hat{\varepsilon}^2$.
Therefore, $\tau>\frac{8\hat{\varepsilon}^2(1-\alpha)}{\hat{\varepsilon}+8\hat{\varepsilon}^2-\alpha }$ and $\alpha<\hat{\varepsilon}+8\hat{\varepsilon}^2<1.1\hat{\varepsilon}\leq 0.002$ give $\tau >4\hat{\varepsilon}$.}

Notice that $|N(x)\cap T|\leq (1-\tau)n\leq (1-4\hat{\varepsilon})n\leq d$ which implies that $(1-\hat{\varepsilon}-8\hat{\varepsilon}^2)(1-\tau)n^2=d|T|>\sum_{x\in X}\min(|N(x)\cap T|, d)=\sum_{x\in X}|N(x)\cap T|= e(X,T)\geq |T|\delta(G)\geq (1-\tau)(1-\hat{\varepsilon})n^2$, a contradiction to ``$8\hat{\varepsilon}^2> 0$''.
\end{proof}
We want versions of the above lemmas for coloured graphs. Our lemmas will furthermore provide regular subgraphs with a better global boundedness than the starting graph. To do this, we use the following lemma which shows that any properly coloured graph has a subgraph with better global boundedness. It is proved by selecting the subgraph randomly.
\begin{lemma}\label{Lemma_subgraph_few_large_colours}
Let $1\GG \varepsilon\GG n^{-1}$ and $k\in \{1,2\}$.
Let  ${G}$ be a properly coloured, globally $n/k$-bounded graph on $\leq 2n$ vertices with $\leq (1-20\varepsilon)n$ colours having $\geq (1-20\varepsilon)n/k$ edges and $\delta({G})\geq (1-\varepsilon^2)n$. Then ${G}$ has a spanning subgraph ${H}$ with $\delta({H})\geq (1-\varepsilon+18\varepsilon^2)n$ which is globally $(1-\varepsilon)n/k$-bounded.
\end{lemma}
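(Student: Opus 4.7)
The plan is to obtain $H$ from $G$ by independently deleting edges from the few ``large'' colour classes. Let $\mathcal{C}=\{c:|E_G(c)|>(1-\varepsilon)n/k\}$; since every such $c$ is in particular a colour with at least $(1-20\varepsilon)n/k$ edges, the hypothesis gives $|\mathcal{C}|\leq(1-20\varepsilon)n$. Set $q:=\varepsilon+\varepsilon^2/2$ and form the random subgraph $H$ by retaining every edge whose colour lies outside $\mathcal{C}$ and independently deleting each edge of each colour $c\in\mathcal{C}$ with probability $q$. I aim to show that this random $H$ is globally $(1-\varepsilon)n/k$-bounded and has $\delta(H)\geq (1-\varepsilon+18\varepsilon^2)n$ with probability $1-o(1)$.

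For the global boundedness, any colour $c\notin\mathcal{C}$ is fine automatically, while for $c\in\mathcal{C}$ the random variable $|E_H(c)|$ is a $1$-Lipschitz function of at most $|E_G(c)|\leq n/k$ independent Bernoulli coordinates, with mean $(1-q)|E_G(c)|\leq (1-\varepsilon-\varepsilon^2/2)n/k$. Azuma's inequality (Lemma~\ref{Lemma_Azuma}) applied with deviation $t=\varepsilon^2 n/(2k)$ gives
\[
\P\bigl[|E_H(c)|>(1-\varepsilon)n/k\bigr]\leq 2\exp(-\varepsilon^4 n/(4k)),
\]
and under the hypothesis $\varepsilon\GG n^{-1}$ this is smaller than any polynomial in $n^{-1}$; a union bound over the at most $n^2$ colours then shows global $(1-\varepsilon)n/k$-boundedness with probability $1-o(1)$.

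For the minimum degree, the key observation is that $G$ is properly coloured, so every vertex $v$ meets at most one edge of each colour in $\mathcal{C}$, and hence at most $d_{\mathcal{C}}(v)\leq|\mathcal{C}|\leq(1-20\varepsilon)n$ edges of $\mathcal{C}$-colours. If $D_v$ counts the deletions at $v$, then $D_v$ is a sum of at most $(1-20\varepsilon)n$ independent Bernoullis of parameter $q$, and a direct calculation gives
\[
\E[d_H(v)]=d_G(v)-\E[D_v]\geq (1-\varepsilon^2)n-q(1-20\varepsilon)n=\bigl(1-\varepsilon+18.5\varepsilon^2+O(\varepsilon^3)\bigr)n.
\]
A second application of Azuma with $t=\varepsilon^2 n/2$ bounds $\P[D_v>\E[D_v]+\varepsilon^2 n/2]$ by $2\exp(-\varepsilon^4 n/4)$, so a union bound over the at most $2n$ vertices yields $\delta(H)\geq (1-\varepsilon+18\varepsilon^2)n$ with probability $1-o(1)$. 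Combining both union bounds, some realisation of $H$ has both required properties.

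The only delicate point is the calibration of $q$: global boundedness forces $q$ to exceed $\varepsilon$ by at least the colour-wise concentration slack, while the degree constraint forces $q(1-20\varepsilon)\leq \varepsilon-19\varepsilon^2$ minus the vertex-wise slack, i.e.\ $q\leq \varepsilon+\varepsilon^2+O(\varepsilon^3)$. The admissible window for $q$ therefore has width only $\Theta(\varepsilon^2)$, but the hypothesis $\varepsilon\GG n^{-1}$ guarantees that the Azuma deviations are polynomially smaller than $\varepsilon^2 n$, so the specific choice $q=\varepsilon+\varepsilon^2/2$ sits safely inside this window.
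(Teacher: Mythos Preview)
Your proof is correct and follows essentially the same approach as the paper: independently delete edges of large colour classes with a probability slightly larger than $\varepsilon$, then use Azuma to control both the colour-class sizes and the vertex degrees. The only differences are cosmetic parameter choices (you take $q=\varepsilon+\varepsilon^2/2$ and define ``large'' as $>(1-\varepsilon)n/k$, while the paper takes $p=\varepsilon+\varepsilon^2$ and uses the threshold $(1-20\varepsilon)n/k$), and you use a coarser Azuma deviation of order $\varepsilon^2 n$ rather than the paper's $\varepsilon^3 n$; all of these are immaterial to the argument.
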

\begin{proof}
We say that a colour is \emph{large} if it has $\geq (1-20\varepsilon)n/k$ edges in ${G}$. Other colours are called \emph{small}.
For a vertex $v$, let $\ell_{H}(v)$ and $\ell_{G}(v)$ be the numbers of large colours through $v$ in ${H}$ and ${G}$ respectively. Similarly let $s_{G}(v)$ be the number of small colours through $v$.
Notice that $\ell_{G}(v)\leq (1-20\varepsilon)n$ and $s_{G}(v)= d_{G}(v)-\ell_{G}(v)\geq 20\varepsilon n- \varepsilon^2 n$ always hold.
   Let ${H}$ be the subgraph of ${G}$ formed by deleting every edge having a large colour independently  with probability $p=\varepsilon+\varepsilon^2$.
   The following hold for all  vertices $v$ and large colours $c$ by linearity of expectation.
   \begin{align*}
   \E(|E_H(c)|)&=(1-p)|E_G(c)|\leq (1-\varepsilon-\varepsilon^2)n/k,\\
   \E(d_H(v))&=(1-p)\ell_G(v)+s_G(v)=(1-p)d_G(v)+ps_G(v)\\
   &\geq (1-\varepsilon-\varepsilon^2)(1-\varepsilon^2)n+(\varepsilon+\varepsilon^2)(20\varepsilon- \varepsilon^2 )n= (1-\varepsilon+18\varepsilon^2+20\varepsilon^3)n .
   \end{align*}
Notice that $|E_H(c)|$ and $d_H(v)$ are both $1$-Lipschitz and affected by $\leq n$ edges. By Azuma's Inequality (Lemma~\ref{Lemma_Azuma}), the probability that either of these deviates from its expectation by more than $\varepsilon^3 n/4$ is $\leq 2e^{\frac{-(\varepsilon^3 n/4)^2}{n}}= o(n^{-2})$ (using $n\GG\varepsilon^{-1}$). Taking a union bound, we have that with high probability all large colours have  $|E_H(c)|\leq (1-\varepsilon)n/k$ and all vertices have $d_H(v)\geq (1-\varepsilon+18\varepsilon^2)n$.
Also, small colours $c$ always have $|E_H(c)|\leq (1-\varepsilon)n/k$. Thus with high probability $H$ is globally $(1-\varepsilon)n/k$-bounded
 and has $\delta({H})\geq (1-\varepsilon+18\varepsilon^2)n$, as required.
\end{proof}
By combining this with Lemmas~\ref{Lemma_Regular_subgraph} and~\ref{Lemma_Regular_subgraph_bipartite} we prove the main results of this section.
\begin{lemma}[Regularization lemma for high degree general graphs]\label{Lemma_regular_subgraph_few_large_colours}
Let $n^{-1}\LL \gamma\LL \varepsilon\LL 1$, and  let  $G$ be a properly coloured $n$-vertex graph with  $\leq (1-20\varepsilon)n$ colours having $\geq (1-20\varepsilon)n/2$ edges and $\delta(G)\geq (1-\varepsilon^2)n$. Then $G$ has a spanning subgraph $H$
which is globally $(1-\varepsilon)n/2$-bounded and  $(\gamma, \delta, n)$-regular for some $\delta \geq 1-\varepsilon+9\varepsilon^2$.
\end{lemma}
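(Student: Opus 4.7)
The plan is to combine the two ingredients we have already developed, namely Lemma~\ref{Lemma_subgraph_few_large_colours} (which lowers the global boundedness at the cost of a small drop in minimum degree) and Lemma~\ref{Lemma_Regular_subgraph} (which extracts a regular spanning subgraph from a graph of high minimum degree). The former will do the colour-level work, and the latter will do the degree-regularization, after which we simply read off the parameters.

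First I would apply Lemma~\ref{Lemma_subgraph_few_large_colours} with $k=2$ to $G$. The assumptions are exactly met: $G$ is globally $n/2$-bounded (since the colouring is proper on $n$ vertices, each colour class is a matching of size at most $n/2$), has at most $(1-20\varepsilon)n$ colours of size at least $(1-20\varepsilon)n/2$, and $\delta(G)\geq (1-\varepsilon^2)n$. This yields a spanning subgraph $H_1\subseteq G$ which is globally $(1-\varepsilon)n/2$-bounded and satisfies $\delta(H_1)\geq (1-\varepsilon+18\varepsilon^2)n$.

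Next I would apply Lemma~\ref{Lemma_Regular_subgraph} to $H_1$ with parameter $\varepsilon':=\varepsilon-18\varepsilon^2$; the hypothesis $\delta(H_1)\geq (1-\varepsilon')n$ then holds by construction, and $n^{-1}\LL \varepsilon'\LL 1$ follows from $n^{-1}\LL \varepsilon\LL 1$. The lemma produces a spanning $d$-regular subgraph $H\subseteq H_1$ where $d=2\lceil(1-\varepsilon'-8\varepsilon'^2)n/2\rceil$. A short calculation gives
\[
d\ \geq\ (1-\varepsilon+18\varepsilon^2)n - 8\varepsilon'^2 n\ \geq\ (1-\varepsilon+18\varepsilon^2)n-8\varepsilon^2 n\ \geq\ (1-\varepsilon+9\varepsilon^2)n,
\]
so letting $\delta:=d/n$ we have $\delta\geq 1-\varepsilon+9\varepsilon^2$. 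By construction $H$ is $(0,\delta,n)$-regular and hence $(\gamma,\delta,n)$-regular for any $\gamma>0$. Finally, since $H\subseteq H_1$ and $H_1$ is globally $(1-\varepsilon)n/2$-bounded, so is $H$.

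There is really no hard step here: both key ingredients are in place, and the only thing to be careful about is the bookkeeping when passing from the $(1-\varepsilon+18\varepsilon^2)n$ minimum degree of $H_1$ back down through the $-8\varepsilon'^2 n$ loss incurred by Lemma~\ref{Lemma_Regular_subgraph}, making sure that the extra cushion of $18\varepsilon^2$ is enough to leave us with at least $9\varepsilon^2$ of slack. The hypothesis $\varepsilon\LL 1$ is what absorbs the lower-order $O(\varepsilon^3)$ terms coming from $\varepsilon'^2=(\varepsilon-18\varepsilon^2)^2$, and operation (a) from our asymptotic-notation conventions handles this routinely.
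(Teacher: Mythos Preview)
Your proposal is correct and follows essentially the same approach as the paper: apply Lemma~\ref{Lemma_subgraph_few_large_colours} with $k=2$ to obtain a globally $(1-\varepsilon)n/2$-bounded subgraph of minimum degree $(1-\varepsilon+18\varepsilon^2)n$, then apply Lemma~\ref{Lemma_Regular_subgraph} with $\varepsilon'=\varepsilon-18\varepsilon^2$ and check that the resulting regularity parameter is at least $1-\varepsilon+9\varepsilon^2$. Your write-up even makes explicit the observation that a proper colouring on $n$ vertices is automatically globally $n/2$-bounded, which the paper leaves implicit.
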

\begin{proof}
First apply Lemma~\ref{Lemma_subgraph_few_large_colours} with $k=2$ in order to get a subgraph $G'$ with $\delta(G')\geq (1-\varepsilon+18\varepsilon^2)n$ which is globally $(1-\varepsilon)n/2$-bounded.
 Then apply Lemma~\ref{Lemma_Regular_subgraph} to $G'$ with $\varepsilon'=\varepsilon- 18\varepsilon^2$ to get a subgraph $H$  which is  $r$-regular for $r\geq  (1-(\varepsilon- 18\varepsilon^2)-8(\varepsilon- 18\varepsilon^2)^2)n\geq (1-\varepsilon+ 9\varepsilon^2)n$ (using $\varepsilon\LL 1$).
\end{proof}
\begin{lemma}[Regularization lemma for high degree bipartite graphs]\label{Lemma_regular_subgraph_few_large_colours_bipartite}
Let $n^{-1}\LL \gamma\LL \varepsilon\LL 1$, and  let  $G$ be a properly coloured balanced bipartite graph on $2n$ vertices with $\leq (1-20\varepsilon)n$ colours having $\geq (1-20\varepsilon)n$ edges and $\delta(G)\geq (1-\varepsilon^2)n$. Then $G$ has a spanning subgraph $H$
which is globally $(1-\varepsilon)n$-bounded and  $(\gamma, \delta, n)$-regular for some $\delta \geq 1-\varepsilon+9\varepsilon^2$.
\end{lemma}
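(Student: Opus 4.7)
The plan is to mimic the proof of Lemma~\ref{Lemma_regular_subgraph_few_large_colours} essentially verbatim, but replace the general-graph regularization step with its bipartite counterpart. The two existing ingredients I will invoke are Lemma~\ref{Lemma_subgraph_few_large_colours} (which trades a small amount of minimum degree for better global boundedness) and Lemma~\ref{Lemma_Regular_subgraph_bipartite} (Ore--Ryser-based extraction of a regular spanning subgraph from a bipartite graph of very high minimum degree).

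First, observe that $G$ is a properly coloured graph on $2n$ vertices which, being balanced bipartite with a proper colouring, is automatically globally $n$-bounded. So the hypotheses of Lemma~\ref{Lemma_subgraph_few_large_colours} are satisfied with $k=1$: it has $\leq (1-20\varepsilon)n$ colours of size $\geq (1-20\varepsilon)n$ and $\delta(G)\geq (1-\varepsilon^2)n$. Applying that lemma yields a spanning subgraph $G'\subseteq G$ with $\delta(G')\geq (1-\varepsilon+18\varepsilon^2)n$ that is globally $(1-\varepsilon)n$-bounded.

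Next, apply Lemma~\ref{Lemma_Regular_subgraph_bipartite} to the balanced bipartite graph $G'$ with parameter $\varepsilon'=\varepsilon-18\varepsilon^2$; this is legitimate since $\delta(G')\geq (1-\varepsilon')n$ and $n^{-1}\LL \varepsilon'\LL 1$. The output is a spanning $r$-regular subgraph $H$ of $G'$ with
\[
r\;\geq\; \bigl(1-\varepsilon'-8\varepsilon'^{2}\bigr)n \;=\; \bigl(1-(\varepsilon-18\varepsilon^2)-8(\varepsilon-18\varepsilon^2)^{2}\bigr)n \;\geq\; (1-\varepsilon+9\varepsilon^2)n,
\]
where the last inequality holds because $18\varepsilon^2 - 8\varepsilon^2 - O(\varepsilon^3) \geq 9\varepsilon^2$ for $\varepsilon\LL 1$. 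Since $H\subseteq G'$, it is still globally $(1-\varepsilon)n$-bounded, and since it is exactly $r$-regular it is $(\gamma,\delta,n)$-regular with $\delta=r/n\geq 1-\varepsilon+9\varepsilon^2$ (for any $\gamma\geq 0$, in particular the one given). This delivers $H$ with the claimed properties.

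I do not anticipate any genuine obstacle: the only non-trivial part of the argument lives inside the cited Lemmas~\ref{Lemma_subgraph_few_large_colours} and~\ref{Lemma_Regular_subgraph_bipartite}, and the rest is bookkeeping with the $\varepsilon$-corrections. The slight numerical point worth being careful about is that the $18\varepsilon^2$ gain from the first step must outweigh the $8\varepsilon^2$ loss from the second step (plus lower-order terms), which is exactly what the choice of constants $18$ vs.\ $9$ is calibrated for; this is the same calibration that worked in the proof of Lemma~\ref{Lemma_regular_subgraph_few_large_colours}.
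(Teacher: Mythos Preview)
Your proposal is correct and follows exactly the same approach as the paper's proof: apply Lemma~\ref{Lemma_subgraph_few_large_colours} with $k=1$ to trade minimum degree for global boundedness, then apply Lemma~\ref{Lemma_Regular_subgraph_bipartite} with $\varepsilon'=\varepsilon-18\varepsilon^2$ to extract the regular subgraph. The only cosmetic difference is that the paper tracks the floor in Lemma~\ref{Lemma_Regular_subgraph_bipartite} by writing $r\geq (1-\varepsilon'-8\varepsilon'^{2})n-1$, but the extra $-1$ is absorbed by $n^{-1}\LL\varepsilon$ just as you implicitly assumed.
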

\begin{proof}
First apply Lemma~\ref{Lemma_subgraph_few_large_colours} with $k=1$ in order to get a subgraph $G'$ with $\delta(G')\geq (1-\varepsilon+18\varepsilon^2)n$ which is globally $(1-\varepsilon)n$-bounded.
 Then apply Lemma~\ref{Lemma_Regular_subgraph_bipartite} to $G'$ with $\varepsilon'=\varepsilon- 18\varepsilon^2$ to get a subgraph $H$ which is  $r$-regular for $r\geq  (1-(\varepsilon- 18\varepsilon^2)-8(\varepsilon- 18\varepsilon^2)^2)n-1\geq (1-\varepsilon+ 9\varepsilon^2)n$ (using $\varepsilon\LL 1$).
\end{proof}

\subsection{Regularization using a disjoint dense graph}
The following two lemmas take a graph $G$ which is close to being regular and a disjoint dense graph $E$, and modify $G$ slightly using edges of $E$ in order to produce a truly regular graph. Lemma~\ref{Lemma_Regularization_Expander} will be applied later in the paper, while Lemma~\ref{Lemma_Regularization_Expander_Induction} is a technical lemma to facilitate its proof.
\begin{lemma}\label{Lemma_Regularization_Expander_Induction}
For $d> m$, let $G$ be a balanced bipartite graph on $2n$ vertices with $\delta(G)\geq d-1$ and $2k=\sum_{v\in V(G)}\max(0, d(v)-d)$, let $E$ be an edge-disjoint  $(1, d-m)$-dense graph, and let $M$ be a  matching in $E$ of size $m-k$ such that $d_G(v)=d-1 \iff v\in V(M)$. Then, there is a subgraph $H\subseteq G$ and a matching $N$ in $E$ of size $m$ so that $H\cup N$ is $d$-regular.
\end{lemma}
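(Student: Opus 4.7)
I would prove this by induction on $k$. The base case $k=0$ is immediate: the hypotheses $\delta(G)\geq d-1$, $\sum_v \max(0,d(v)-d)=0$, and ``$d_G(v)=d-1\iff v\in V(M)$'' together force $d_G(v)\in\{d-1,d\}$ with $|M|=m$, so setting $N:=M$ and $H:=G$ already gives a $d$-regular $H\cup N$.

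For the inductive step ($k\geq 1$) the goal is to perform a single ``switch'' that produces a new triple $(G',M',E)$ satisfying the hypotheses with $k$ replaced by $k-1$. Partition the vertices into $A:=\{v:d_G(v)>d\}$, $B:=V(M)$, and $D:=V(G)\setminus(A\cup B)$. Comparing $\sum_{X}d_G=\sum_{Y}d_G$ and using $|B\cap X|=|B\cap Y|=m-k$ yields the balancing identity $\sum_{v\in A\cap X}(d_G(v)-d)=\sum_{v\in A\cap Y}(d_G(v)-d)=k$; in particular both $A\cap X$ and $A\cap Y$ are non-empty and have size at most $k$. Pick any $a\in A\cap Y$ and $b\in A\cap X$. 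Since $d_G(a)\geq d+1$, the number of neighbours of $a$ lying in $D\cap X$ is at least $(d+1)-|A\cap X|-|B\cap X|\geq (d+1)-k-(m-k)=d-m+1$, and symmetrically for $b$. Applying the $(1,d-m)$-denseness of $E$ to $(d-m)$-subsets of $N_G(a)\cap D\cap X$ and $N_G(b)\cap D\cap Y$ produces an edge $e=xy\in E$ with $x\in N_G(a)\cap D\cap X$ and $y\in N_G(b)\cap D\cap Y$. Let $f_1:=xa$, $f_2:=yb$, $M':=M+e$, and $G':=G-f_1-f_2$.

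Routine checks confirm the induction hypothesis for $(G',M',E)$ with parameter $k-1$: (i) $M'\subseteq E$ is a matching of size $m-(k-1)$ because $x,y\in D$ are disjoint from $V(M)$; (ii) $\delta(G')\geq d-1$ because the only degree decreases happen at $x,y$ (from $d$ to $d-1$) and at $a,b$ (from $\geq d+1$ to $\geq d$); (iii) the equivalence $d_{G'}(v)=d-1\iff v\in V(M')$ can be verified case-by-case on $v\in V(M)$, $v\in\{x,y\}$, $v\in\{a,b\}$, and $v$ otherwise; (iv) the excess sum decreases by exactly $2$, since $x,y$ contribute $0$ both before and after while $a,b$ each drop their contribution by $1$. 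By induction there is $H'\subseteq G'$ and a matching $N'\subseteq E$ of size $m$ with $H'\cup N'$ being $d$-regular; taking $H:=H'\subseteq G$ and $N:=N'$ finishes the proof.

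\textbf{Main obstacle.} The whole argument rests on producing the triple $(e,f_1,f_2)$ simultaneously. The technical point is matching the sizes: one needs $|N_G(a)\cap D\cap X|$ and $|N_G(b)\cap D\cap Y|$ to be at least the density scale $d-m$, and this uses both the bound $|A\cap X|,|A\cap Y|\leq k$ (coming from the bipartite balancing $e_X=e_Y=k$) and the fact that $|V(M)|=2(m-k)$. Once these sizes are established, the $(1,d-m)$-denseness of $E$ is exactly strong enough to guarantee the switching edge $e$, making the inductive step go through.
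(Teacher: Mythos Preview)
Your proposal is correct and follows essentially the same approach as the paper's proof: induction on $k$, with the inductive step performing one ``switch'' that deletes two edges of $G$ at high-degree vertices (one on each side) and adds one edge of $E$ joining two degree-$d$ neighbours, thereby reducing $k$ by $1$ and enlarging $M$ by one edge. The counting that guarantees enough degree-$d$ neighbours on each side, and the use of $(1,d-m)$-denseness to find the switching edge, match the paper exactly (up to the labelling of vertices).
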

\begin{proof}
The proof is by induction on $k$. The initial case is when $k=0$. In this case notice that every $v\in V(G)$ must have $\max(0, d(v)-d)=0$ which implies $\Delta(G)\leq d$. Since $\delta(G)\geq d-1$ and $d_G(v)=d-1 \iff v\in V(M)$ we have that $G\cup M$ is $d$-regular

Suppose that $k\geq 1$ and that the lemma holds for all $k'<k$. Let $X$ and $Y$ be the parts of $G$. Notice that since $\delta(G)\geq d-1$ and both $X$ and $Y$ have exactly $e(M)$ degree $d-1$ vertices, we must have $k=\sum_{x\in X}\max(0, d(x)-d)=e(G)-dn+e(M)=\sum_{y\in Y}\max(0, d(y)-d)$. In particular this implies that $X$ and $Y$ each have $\leq k$ vertices of degree $\geq d+1$.
Let $x\in X$, $y\in Y$ be vertices with $\max(0, d(x)-d), \max(0, d(y)-d)\geq 1$. We have $d(x)\geq d+1$, $d(y)\geq d+1$.
Since there are $m-k$ vertices in $Y$ of degree $d-1$ and $\leq k$ vertices in $Y$ of degree $\geq d+1$, $Y$ has at least $n-m$ vertices of degree $d$. Similarly  $X$ has at least $n-m$ vertices of degree $d$.
This implies that $N(x)$ and $N(y)$ have subsets of size $d-m$ consisting of vertices of degree $d$.
Since $E$ is  $(1,d-m)$-dense, there is an edge $uv\in E$ with $u\in N(x)$, $v\in N(y)$, and $d(v)=d(w)= d$.

Let $G'=G-xu-yv$ and $M'=M\cup uv$. We have that $M'$ is a matching because $d_G(u)=d_G(v)=d\neq d-1$ which implies $u,v\not\in V(M)$.  Notice that $\delta(G')\geq d-1$, $\sum_{w\in V(G)}\max(0, d_{G'}(w)-d)=  2k-2$, $e(M')=m-k+1$, and $d_{G'}(v)=d-1 \iff v\in V(M')$. By induction there is a subgraph $H$ of $G'$ and a matching $M$ in $E$ with the required properties.
\end{proof}

The following version of the above lemma will be easier to apply.
\begin{lemma}[Regularization using a disjoint dense graph]\label{Lemma_Regularization_Expander}
For $d > 2m$, let $G$ be a balanced bipartite graph on $2n$ vertices with $e(G)\leq dn+m$, $\delta(G)\geq d$, and  $E$ an edge-disjoint $(1, d/2)$-dense graph. Then there is a subgraph $H\subseteq G$  and a matching $M \subseteq E$ of size $\leq m$ so that $H\cup M$ is $d$-regular.
\end{lemma}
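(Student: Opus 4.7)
The plan is to reduce directly to Lemma~\ref{Lemma_Regularization_Expander_Induction} by choosing its parameters so that the ``starting matching'' in that lemma is empty. To that end, define
\[
2k \;=\; \sum_{v\in V(G)} \max(0,\, d_G(v)-d).
\]
Since $\delta(G)\geq d$, every summand is just $d_G(v)-d$, and therefore
\[
2k \;=\; 2e(G) - 2dn \;\leq\; 2m,
\]
so $k\leq m$. This records how much total excess degree $G$ carries above the target value $d$.

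Next I would apply Lemma~\ref{Lemma_Regularization_Expander_Induction} to the same graph $G$ and the same dense graph $E$, using in that lemma the parameter ``$m$'' equal to our $k$, the parameter ``$k$'' equal to our $k$, and the starting matching $M=\emptyset$. The hypotheses are straightforward to check: the inequality $d>k$ holds since $k\leq m < d/2 < d$; the identity defining $k$ is how we chose it; the minimum degree hypothesis $\delta(G)\geq d-1$ follows from $\delta(G)\geq d$; the matching condition of size $k-k=0$ is automatic for the empty matching, and the biconditional $d_G(v)=d-1 \Leftrightarrow v\in V(M)$ is satisfied vacuously because $\delta(G)\geq d$ ensures no vertex has degree $d-1$. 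Finally, $E$ is required to be $(1,d-k)$-dense: since $k\leq m<d/2$, we have $d-k > d/2$, so by the monotonicity of denseness (a $(1,d/2)$-dense graph is automatically $(1,d')$-dense for any $d'\geq d/2$) the hypothesis is met.

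Applying Lemma~\ref{Lemma_Regularization_Expander_Induction} thus produces a subgraph $H\subseteq G$ and a matching $N\subseteq E$ of size exactly $k$ such that $H\cup N$ is $d$-regular. Since $k\leq m$, the matching $N$ has size at most $m$, which is exactly what is required.

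There is essentially no obstacle here beyond finding the right book-keeping: all the work is packaged inside the inductive Lemma~\ref{Lemma_Regularization_Expander_Induction}, and the only things to verify are (i) that the total excess degree is controlled by the hypothesis $e(G)\leq dn+m$ together with $\delta(G)\geq d$, and (ii) that $d>2m$ gives the comfortable slack $d-k>d/2$ needed both to ensure $E$ is $(1,d-k)$-dense and to legitimize choosing the inductive parameter equal to $k$.
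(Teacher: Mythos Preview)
Your proof is correct and follows essentially the same approach as the paper: set $k=\tfrac12\sum_{v}\max(0,d_G(v)-d)=e(G)-dn\le m$, then invoke Lemma~\ref{Lemma_Regularization_Expander_Induction} with the empty starting matching and the induction parameter equal to $k$, using $d>2m\ge 2k$ to get the required $(1,d-k)$-denseness from $(1,d/2)$-denseness. Your write-up is in fact more careful than the paper's, which contains a typo (it writes $\min$ where it means $\max$).
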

\begin{proof}
Since $\delta(G)\geq d$, we have that $\sum_{v\in V(G)}\min(0, d(v)-d)=2e(G)-2dn\leq 2m$.
The result follows by applying Lemma~\ref{Lemma_Regularization_Expander_Induction} with $m'=k=\sum_{v\in V(G)}\min(0, d(v)-d)/2\leq m$ and $M=\emptyset$. To see that $E$ is $(1, d-m')$-dense for this application, note that $d>2m$ is equivalent to $d-m>d/2$.
\end{proof}

\subsection{Regularization by adding vertices}
Here we show that a nearly-regular graph can be made regular by adding a small number of vertices (and edges adjacent to those vertices).
We will need the Gale-Ryser Theorem concerning which degree sequences are realisable by bipartite graphs (see \cite{ryser1963combinatorial}).
\begin{theorem}[Gale, Ryser]
Let $x_1\geq \dots \geq x_m$, and $y_1\geq \dots\geq y_n$ be non-negative numbers. There exists a bipartite graph with parts $X$, $Y$ with degree sequence $x_1, \dots, x_m$ in $X$ and $y_1, \dots, y_n$ in $Y$ if, and only if, we have
$\sum_{i=1}^n y_i=\sum_{i=1}^m x_i$ and $\sum_{i=1}^t y_i\leq \sum_{i=1}^m \min(t, x_i)$ for $t=1, \dots, n.$
\end{theorem}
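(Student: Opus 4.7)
The plan is to prove necessity by a direct edge-counting argument and sufficiency by induction on $\sum_i y_i$.

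For necessity, assume a bipartite graph $G$ realising the given sequences exists. The sum condition follows by double-counting edges. For the inequality at level $t$, let $Y_t \subseteq Y$ consist of the $t$ vertices with degrees $y_1, \ldots, y_t$. Then the number of edges of $G$ incident to $Y_t$ equals $\sum_{i=1}^{t} y_i$, while each $x_i \in X$ contributes at most $\min(t, x_i)$ such edges (at most $t$ because $|Y_t| = t$, and at most $x_i$ because $d_G(x_i) = x_i$); summing over $i$ gives the claimed inequality.

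For sufficiency I would argue by induction on $\sum_i y_i$; the case $\sum_i y_i = 0$ is trivial. For the induction step, set $d := y_1$. If $d = 0$ there is nothing to do, so assume $d \geq 1$; by the Gale-Ryser condition at $t = 1$, $d = y_1 \leq \sum_{i} \min(1, x_i) = |\{i : x_i \geq 1\}|$, so the sorting gives $x_1, \ldots, x_d \geq 1$. I would then build $G$ by making $y_1$ adjacent to $x_1, \ldots, x_d$ and recursively constructing the remaining edges: let $x'$ be obtained from $x$ by subtracting $1$ from each of $x_1, \ldots, x_d$, and let $y' = (y_2, \ldots, y_n)$. Since $\sum_{i} \min(t, x'_i)$ is invariant under permutations of $x'$, any subsequent re-sorting has no effect on the Gale-Ryser conditions, so it suffices to verify those conditions for the unsorted $(x', y')$ and then apply the inductive hypothesis.

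The main obstacle is verifying that $(x', y')$ satisfies $\sum_{i=1}^{t} y'_i \leq \sum_{i=1}^{m} \min(t, x'_i)$ for every $t \in \{1, \ldots, n-1\}$. I would split into two cases depending on $a := |\{i : x_i \geq t+1\}|$. If $a \geq d$, then all $d$ decremented entries satisfy $x_i \geq t+1$, so $x'_i \geq t$ for $i \leq d$ and $\sum_i \min(t, x'_i) = \sum_i \min(t, x_i) \geq \sum_{i=1}^{t} y_i \geq \sum_{i=2}^{t+1} y_i = \sum_{i=1}^{t} y'_i$, where the last inequality uses $y_1 \geq y_{t+1}$. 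If $a < d$, then exactly $a$ decremented entries stay at $\min = t$ and the remaining $d - a$ drop by $1$, so $\sum_i \min(t, x'_i) = \sum_i \min(t, x_i) - (d - a)$; combining this with the original Gale-Ryser condition at level $t+1$, namely $\sum_{i=1}^{t+1} y_i \leq \sum_i \min(t+1, x_i) = \sum_i \min(t, x_i) + a$, yields the required inequality after rearranging. This short case analysis is the only nontrivial computation; the rest is bookkeeping and the induction closes.
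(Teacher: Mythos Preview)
The paper does not give its own proof of this theorem: it is quoted as the classical Gale--Ryser theorem with a reference to \cite{ryser1963combinatorial}, and is used as a black box in the proof of Lemma~\ref{Lemma_Regularization_Add_Vertices}. So there is no in-paper argument to compare against.

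Your proof is correct and follows one of the standard routes. The necessity direction is the obvious double count. For sufficiency, your ``connect $y_1$ to the $d$ largest $x_i$'s and induct'' is essentially the greedy construction; the only real content is verifying that the residual pair $(x',y')$ still satisfies the inequalities, and your two-case split on $a=|\{i:x_i\ge t+1\}|$ handles this cleanly. In the case $a\ge d$ you use the level-$t$ inequality together with $y_1\ge y_{t+1}$; in the case $a<d$ you use the level-$(t+1)$ inequality together with the identity $\sum_i\min(t+1,x_i)=\sum_i\min(t,x_i)+a$. Both computations check out. Your remark that the permutation-invariance of $\sum_i\min(t,x'_i)$ lets you avoid re-sorting $x'$ is the right way to sidestep that bookkeeping. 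One tiny omission: you should also note that the sum condition $\sum_i x'_i=\sum_j y'_j$ is preserved (it is, since both sides drop by $d$), but this is immediate.
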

We use the Gale-Ryser Theorem to prove the regularization lemma of this section.
\begin{lemma}[Regularization by adding vertices]\label{Lemma_Regularization_Add_Vertices}
Suppose that $0.01\geq \delta\GG \gamma \GG n^{-1}$.
Every $(\gamma, \delta, n)$-regular balanced bipartite graph $G$ on $2n$ vertices is contained in a $d$-regular balanced bipartite graph $G'$ with parts of size $\leq (1+9\gamma)n$ for $d=\lceil(1+5\gamma)\delta n\rceil$, where, additionally, all edges of $G'\setminus G$ touch $V(G')\setminus V(G)$.
\end{lemma}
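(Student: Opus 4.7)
The plan is to add equally many new vertices $A$ to the $X$ side and $B$ to the $Y$ side, each of size $s = \lceil S/d\rceil$, where $S := dn - e(G) = \sum_{v \in X}(d - d_G(v))$ is the total edge deficiency. Since $G$ is $(\gamma,\delta,n)$-regular, $S \leq n(6\gamma\delta n + 1)$ and $S \geq 4\gamma\delta n^2$, giving $3\gamma n \leq s \leq 6\gamma n + 1/\delta + 1$. The hypothesis $\delta \GG \gamma \GG n^{-1}$ forces $\gamma\delta n$ to grow polynomially in $n$ (via operation~(d) in the paper's $\LL$-notation conventions), so $1/\delta \leq 3\gamma n$ for $n$ large and hence $s \leq 9\gamma n$; the resulting parts $X' = X\cup A$ and $Y' = Y\cup B$ have size $n+s \leq (1+9\gamma)n$.

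Next I would decompose the required new edges (all touching $A\cup B$) as $F = F_1 \cup F_2 \cup F_3$, with $F_1$ lying between $X$ and $B$, $F_2$ between $A$ and $Y$, and $F_3$ between $A$ and $B$. The $X$-side degrees of $F_1$ are forced to equal $d - d_G(v)$; I distribute the $B$-side degrees as uniformly as possible, taking each $p_b \in \{\lfloor S/s\rfloor, \lceil S/s\rceil\}$ with multiplicities so that $\sum p_b = S$. I construct $F_2$ symmetrically, giving $A$-side degrees $q_a$. The residual degrees $d - q_a$ on $A$ and $d - p_b$ on $B$ sum to $sd - S$ on each side, and are realised by $F_3$. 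All three bipartite graphs are then constructed via the Gale-Ryser theorem.

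The crux is verifying the Gale-Ryser inequality in each case. For $F_1$ (and symmetrically $F_2$), the $X$-side degrees are bounded above by $6\gamma\delta n + 1$, so the $k$-th partial sum of the sorted $X$-side sequence is at most $k(6\gamma\delta n+1) \leq sk$ for $k \leq \lfloor S/s\rfloor$, and trivially at most $S$ for $k \geq \lceil S/s\rceil$; the Gale-Ryser conditions thus reduce to the easy inequality $s \geq 6\gamma\delta n + 1$. The delicate case is $F_3$: its degrees on both sides take values in $\{c, c+1\}$ with $c = d - \lceil S/s\rceil \leq 2\delta/\gamma + O(1)$, and the Gale-Ryser condition reduces to $c+1 \leq s$. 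This is $2\delta/\gamma \lesssim 2\gamma n$, equivalently $\delta \ll \gamma^2 n$, which is where the polynomial strength of $\gamma\GG n^{-1}$ is essential: it forces $\gamma^2 n \to \infty$ while $\delta \leq 0.01$ is bounded. Once the three bipartite graphs exist, $G' := G \cup F_1 \cup F_2 \cup F_3$ is $d$-regular because each old vertex $v$ has degree $d_G(v) + (d - d_G(v)) = d$ and each new vertex has degree $d$ by construction; and no new edge lies inside $X\cup Y = V(G)$, so all edges of $G'\setminus G$ touch $V(G')\setminus V(G) = A\cup B$, as required.
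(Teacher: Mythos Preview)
Your proof is correct and follows essentially the same scheme as the paper: add $s=\lceil S/d\rceil$ new vertices to each side and realise the missing degrees in three bipartite pieces using Gale--Ryser. The only real difference is the order of construction. The paper builds the new--new piece $H$ first; since $e(H)=sd-S<d$, this is done directly by spreading at most $d$ edges as evenly as possible (no Gale--Ryser needed there), and then applies Gale--Ryser twice for the two old--new pieces $J_1,J_2$. You instead build the two old--new pieces $F_1,F_2$ first and defer Gale--Ryser to the new--new piece $F_3$ as well. As a result your decisive inequality is $c+1\le s$, i.e.\ roughly $\delta\lesssim\gamma^2 n$, whereas the paper's check boils down to $k_v\le 9\gamma d\le \gamma n\le |X'|$, which uses only the absolute bound $\delta\le 0.01$. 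Both inequalities follow from the hypotheses, so both arguments go through; the paper's ordering simply avoids the third Gale--Ryser application.

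One small correction: the inequalities $1/\delta\le 3\gamma n$ and $\delta\le 9\gamma^2 n$ are monomial inequalities of the type handled by operation~(a) of the paper's $\LL$-conventions (take $p=n^{-1}$, $q=3\gamma\delta$ and $p=\delta n^{-1}$, $q=9\gamma^2$ respectively). Operation~(d) is specifically about interpolating $\log^{-1}n$ and plays no role here.
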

\begin{proof}
Let $m = dn - e(G)$. Since $e(G)= \sum_{x\in X}d(x)=(1\pm\gamma)\delta n^2$, we have $2\gamma \delta n^2\leq m\leq 8\gamma \delta n^2$.
Let $X'$ and $Y'$ be two sets of new vertices with $|X'|=|Y'|=\left\lceil\frac md \right\rceil$.
Notice that $\gamma n\leq |X'|, |Y'|\leq 9\gamma n$.
Choose a graph $H$ between $X'$ and $Y'$ with exactly $d\left\lceil\frac md \right\rceil-m=d|X'|-m$ edges (a graph with this many edges exists since $\gamma\GG n^{-1}$ implies $|X'||Y'| \geq \gamma^2 n^2\geq d$).  Moreover, choose $H$ so that  $\Delta(H)$ is as small as possible. This ensures that $\delta(H)\geq \Delta(H)-1$. For each vertex $v\in V(G)\cup V(H)$ let $k_v= d-d_{G\cup H}(v)$.
Notice that $\delta(H)\geq \Delta(H)-1$ implies that any integer $t$ has either ``$t\leq d - \Delta(H)$'' or    ``$t\geq d-\delta(H)$'', and so
\begin{equation}\label{Eq_Regularization_Add_Vertices}
\sum_{x'\in X'} \min(t, k_{x'})=
\begin{cases}
\sum_{x'\in X'} t &\text{ if $t\leq d - \Delta(H)$}
\\
\sum_{x'\in X'} k_{x'} &\text{ if $t\geq d-\delta(H)$}
\end{cases}
\end{equation}
Since $G$ is $(\gamma, \delta, n)$-regular,  for $v\in X\cup Y$ we have $k_v\leq 9\gamma d$.
Notice that $\sum_{x'\in X'} k_{x'}=d|X'|- e_H(X', Y') = m= d|Y|- e_G(X,Y)=\sum_{y\in Y} k_y$.
Order the vertices of $Y$ as $y_1, \dots, y_n$ such that $k_{y_1}\geq k_{y_2}\geq\dots\geq k_{y_n}$.
Notice that, for all $t=1,\ldots,n$, $\sum_{i=1}^t k_{y_i}\leq 9\gamma d t \leq \gamma n t \leq |X'| t=\sum_{x'\in X'} t$. We also have $\sum_{i=1}^t k_{y_i}\leq \sum_{y\in Y} k_y= \sum_{x'\in X'} k_{x'}$.
Combining these with (\ref{Eq_Regularization_Add_Vertices}) we get that for all $t$ we have $\sum_{i=1}^t k_{y_i}\leq \sum_{x'\in X'} \min(t, k_{x'})$. By the Gale-Ryser Theorem, there is a graph $J_1$ between $X'$ and $Y$ such that $d_{J_1}(v)=k_v$.
By symmetry,  there is a graph $J_2$ between $X$ and $Y'$ with $d_{J_2}(v)=k_v$ for all $v\in X\cup Y'$. Now $G'=G\cup H\cup J_1\cup J_2$ is $d$-regular. Notice that all edges of $G'\setminus G$ touch $X'\cup Y'=V(G')\setminus V(G)$, completing the proof of the lemma.
\end{proof}

\section{Completion}\label{Section_Completion}
Our strategy for finding rainbow perfect matchings and Hamiltonian cycles is to first find nearly-perfect matchings or near-Hamiltonian cycles and then modify them.
In this section, we collect the ``modification lemmas'' which we use to complete nearly-spanning structures into truly spanning ones.

In all lemmas of this section we will have a dense graph which is disjoint from the matchings/cycles which we are trying to complete. The matchings/cycles are turned into what we want by modifying them gradually using edges of the dense graph.

\subsection{Perfect matchings}
The following lemma extends a matching by one edge.
\begin{lemma}\label{Lemma_rotation_matching}
Suppose that we have the following edge-disjoint subgraphs in a balanced bipartite graph $G$ with parts $X$, $Y$ of size $n$.
\begin{itemize}
\item   A matching $M$ with $e(M)\geq (1-\theta)n$.
\item  A $(1, \theta n)$-dense graph $E$.
\item  Graphs $D_X, D_Y$ with $d_{D_X}(x')\geq 2\theta n$ for each $x'\in X$ and  $d_{D_Y}(y')\geq 2\theta n$ for each $y'\in Y$.
\end{itemize}
Let $x\in X$ and $y\in Y$ be vertices outside $M$.
Then, there are vertices $u, v, m_u, m_v$ with $uv\in E$, $xm_u\in D_X$, $ym_v\in D_Y$, $um_u, vm_v\in M$ such that $M'=(M\cup\{xm_u, uv, ym_v\})\setminus\{um_u, vm_v\}$ is a matching.
\end{lemma}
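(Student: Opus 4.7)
The goal is to extend the matching $M$ to include both unmatched vertices $x$ and $y$. Concretely, writing $M' = (M \cup \{xm_u, uv, ym_v\}) \setminus \{um_u, vm_v\}$, the effect on $M$ is an augmenting path of length 5, namely $x - m_u - u - v - m_v - y$, whose edges alternate through $D_X, M, E, M, D_Y$. So the problem reduces to finding such a path, and verifying that the five edges are compatible with the rest of $M$.

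The first step is to construct the candidate sets. Since $M$ covers $\geq (1-\theta)n$ vertices in each part, there are at most $\theta n$ unmatched vertices in $Y$. The $\geq 2\theta n$ neighbours of $x$ in $D_X$ (all of which lie in $Y$, since $D_X$ is a subgraph of the bipartite $G$) therefore contain at least $\theta n$ vertices that are matched by $M$; letting $A \subseteq X$ be the set of $M$-partners of these $D_X$-neighbours of $x$, we get $|A| \geq \theta n$. Symmetrically, there is a set $B \subseteq Y$ with $|B| \geq \theta n$ whose elements arise as $M$-partners of matched $D_Y$-neighbours of $y$. Note that the map $u \mapsto m_u$ on $A$ (where $m_u$ is the $M$-partner) is a bijection onto a subset of $N_{D_X}(x)$, and similarly for $B$.

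Next, pick arbitrary subsets $A' \subseteq A$, $B' \subseteq B$ with $|A'|=|B'|=\lceil\theta n\rceil$. By the $(1,\theta n)$-density of $E$ applied with $\lambda = \lceil\theta n\rceil/(\theta n) \geq 1$, we obtain $e_E(A',B') \geq \lambda^2 \geq 1$, so there exists an edge $uv \in E$ with $u \in A'$, $v \in B'$. Define $m_u$, $m_v$ to be the $M$-partners of $u$, $v$ respectively; by construction $xm_u \in D_X$ and $ym_v \in D_Y$.

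Finally, one verifies that $M'$ is a matching. The only nontrivial potential conflicts are $m_u = v$ or $u = m_v$: in the first case $um_u = uv$ would lie in both $M$ and $E$, contradicting edge-disjointness, and the second case is symmetric. The remaining conditions ($u\neq v$, $m_u\neq m_v$, $x\notin\{u,m_v\}$, $y\notin\{v,m_u\}$) all follow immediately from the bipartition, from $M$ being a matching, and from $x,y$ being unmatched in $M$. There is no real obstacle here; the only point requiring care is noticing that the edge-disjointness of $E$ and $M$ automatically rules out the awkward coincidences, so one does not need to shrink $A$ or $B$ to avoid forbidden choices before invoking the density of $E$.
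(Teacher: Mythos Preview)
Your proof is correct and follows essentially the same approach as the paper: the sets $A$ and $B$ you construct are exactly what the paper calls $\sigma(N_{D_X}(x)\cap V(M))$ and $\sigma(N_{D_Y}(y)\cap V(M))$, where $\sigma$ is the involution swapping $M$-partners. Your write-up is in fact more careful than the paper's in two places --- you take equal-size subsets before invoking density, and you explicitly verify that $M'$ is a matching using the edge-disjointness of $M$ and $E$ to rule out $m_u=v$ --- but the underlying argument is identical.
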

\begin{proof}
Let $\sigma$ be the permutation which exchanges vertices of $M$ and fixes all other vertices.
Notice that since $d_{D_X}(x)\geq 2\theta n$,  $d_{D_Y}(y)\geq 2\theta n$ and $e(M)\geq (1-\theta)n$, we have $|N_{D_X}(x)\cap V(M)|, |N_{D_Y}(y)\cap V(M)|\geq  \theta n$.
Since $E$ is $(1, \theta n)$-dense, it has an edge $uv$ from $\sigma(N_{D_X}(x)\cap V(M))$ to $\sigma(N_{D_Y}(y)\cap V(M))$. Now taking $m_u=\sigma(u)$ and $m_v=\sigma(v)$ gives vertices satisfying the lemma.
\end{proof}

By iterating the above lemma, we can turn nearly-perfect rainbow matchings into perfect ones.
\begin{lemma}[Completing a matching]\label{Lemma_completion_matching}
Let $1\GG\theta, p \GG \varepsilon\GG n^{-1}.$ Suppose that we have the following colour-disjoint subgraphs in a properly coloured, balanced bipartite graph $G$ with parts $X,Y$ of size $n$.
\begin{itemize}
\item  A rainbow matching $M_0$ with $e(M_0)\geq (1- \varepsilon)n$.
\item  A $(p (\theta n)^2, \theta n)$-dense graph $E$.
\item  Graphs $D_X, D_Y$ with $d_{D_X}(x)\geq 2\theta n$ for each $x\in X$ and  $d_{D_Y}(y)\geq 2\theta n$ for each $y\in Y$.
\end{itemize}
Then, there is a perfect rainbow matching $N$ in $M_0\cup E\cup D_X\cup D_Y$ using $\leq \varepsilon n$ edges in each of $E$, $D_X$, and $D_Y$, where, additionally edges of $D_X$ in $N$ pass through $X\setminus V(M_0)$ and edges of $D_Y$ in $N$ pass through $Y\setminus V(M_0)$.
\end{lemma}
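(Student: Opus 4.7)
The plan is to extend $M_0$ to a perfect rainbow matching by iteratively applying a colour-restricted version of Lemma~\ref{Lemma_rotation_matching}, producing a sequence of rainbow matchings $M_0,M_1,\dots,M_k$ inside $M_0\cup E\cup D_X\cup D_Y$ with $e(M_i)=e(M_0)+i$ and terminating at the desired $N=M_k$ after $k=n-e(M_0)\leq\varepsilon n$ steps.

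At step $i<k$, pick any $x\in X\setminus V(M_i)$ and $y\in Y\setminus V(M_i)$ (they exist since $e(M_i)<n$). Let $C_i$ be the set of colours on $M_i$, and let $D_X^i,D_Y^i,E^i$ be obtained from $D_X,D_Y,E$ by deleting every edge whose colour lies in $C_i$; using only these restricted graphs in the rotation will automatically preserve the rainbow property. Since $M_0,E,D_X,D_Y$ are pairwise colour-disjoint and at most $i\leq\varepsilon n$ edges of $M_i$ come from each of $E,D_X,D_Y$, we have $|C_i\cap C(D_X)|,|C_i\cap C(D_Y)|,|C_i\cap C(E)|\leq\varepsilon n$. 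As $G$ is properly coloured, each deleted colour removes at most one $D_X$-edge per vertex, so $d_{D_X^i}(x'),d_{D_Y^i}(y')\geq 2\theta n-\varepsilon n$ for all $x'\in X$, $y'\in Y$. Each colour class in $E$ is a matching on at most $2n$ vertices and so contains at most $n$ edges; the restriction therefore deletes at most $\varepsilon n^2$ edges from $E$, and by Lemma~\ref{Lemma_Expander_Delete_Edges} the graph $E^i$ is $(p(\theta n)^2-\varepsilon n^2,\theta n)$-dense, which is in particular $(1,\theta n)$-dense because $p\theta^2\GG\varepsilon$.

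Now we replay the proof of Lemma~\ref{Lemma_rotation_matching} on $M_i,E^i,D_X^i,D_Y^i$: letting $\sigma_i$ denote the $M_i$-partner permutation, the sets $\sigma_i(N_{D_X^i}(x)\cap V(M_i))\subseteq X$ and $\sigma_i(N_{D_Y^i}(y)\cap V(M_i))\subseteq Y$ each have size at least $(2\theta n-\varepsilon n)-\varepsilon n\geq\theta n$, so $E^i$ contains an edge $uv$ between them. Setting $m_u=\sigma_i(u)$ and $m_v=\sigma_i(v)$, we take $M_{i+1}:=M_i\cup\{xm_u,uv,ym_v\}\setminus\{um_u,vm_v\}$. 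The three added edges come from three pairwise colour-disjoint subgraphs and use colours outside $C_i$, so they are pairwise colour-distinct and disjoint in colour from every surviving edge of $M_i$; hence $M_{i+1}$ is rainbow with $e(M_{i+1})=e(M_i)+1$.

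Observe that a rotation never unmatches a vertex: $u,v,m_u,m_v$ simply re-pair, and $x,y$ become newly matched. Thus $V(M_0)\cap X\subseteq V(M_i)\cap X$ for every $i$, so every $D_X$-edge ever added (and hence every $D_X$-edge in $N$) passes through $X\setminus V(M_0)$, and symmetrically for $D_Y$. Since $k\leq\varepsilon n$ and each iteration adds at most one edge from each of $E,D_X,D_Y$, the final matching $N=M_k$ uses at most $\varepsilon n$ edges from each, yielding all the stated conclusions. The main technical point is the bookkeeping in the second paragraph: after $i$ iterations up to $3i\leq 3\varepsilon n$ colours are forbidden, and the polynomial gap $p,\theta\GG\varepsilon$ is exactly what ensures the resulting degree loss ($\varepsilon n$) and edge loss ($\varepsilon n^2$) are absorbed by the thresholds $2\theta n$ and $p(\theta n)^2$ throughout all $k$ iterations.
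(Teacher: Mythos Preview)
Your proof is correct and follows essentially the same iterative-rotation approach as the paper. The only notable difference is in the density bookkeeping for $E^i$: the paper observes that the at most $i$ forbidden colour classes in $E$ are each matchings and applies Lemma~\ref{Lemma_Expander_Delete_Matching} to get $(p(\theta n)^2 - i\theta n,\theta n)$-density, whereas you use the cruder count of $\leq \varepsilon n^2$ deleted edges via Lemma~\ref{Lemma_Expander_Delete_Edges}; both bounds are absorbed by $p,\theta\GG\varepsilon$, so this is a purely cosmetic distinction.
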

\begin{proof}
We will repeatedly apply Lemma~\ref{Lemma_rotation_matching} to produce rainbow matchings $M_1, \dots, M_{n-e(M_0)}$ with $e(M_i)=e(M_0)+i$ and $V(M_0)\subseteq V(M_i)$.
We will maintain  that $M_{i}$ always has at most $i$ edges of each of $E, D_X, D_Y$, with the edges of $M_i\cap D_X$ passing through $X\setminus V(M_0)$ and the edges of $M_i\cap D_Y$ passing through $Y\setminus V(M_0)$. When finished, $N=M_{n-e(M_0)}$ will then satisfy all the requirements of the lemma.

At the $(i+1)$st application, we apply Lemma~\ref{Lemma_rotation_matching} with  $M=M_{i}$,  vertices $x_i\in X\setminus V(M_i),y_i\in Y\setminus V(M_i)$, and  $E^{i}= E\setminus C(M_{i})$, $D_X^{i}=D_X\setminus C(M_{i}), D_Y^{i}=D_Y\setminus C(M_{i})$ -- the graphs $E, D_X, D_Y$ with all the edges with colour in $C(M_{i})$ removed.
Using Lemma~\ref{Lemma_Expander_Delete_Matching} and $e(E\cap M_i)\leq i$, the graph $E^{i}$ is  $(p(\theta n)^2-i\theta n, \theta n)$-dense (and so $(1, \theta n)$-dense since $\theta, p \GG \varepsilon$ implies $p(\theta n)^2\geq \varepsilon n\theta n$).
Also $e(D_X\cap M_i), e(D_Y\cap M_i)\leq i$ gives $d_{D_X^i}(x),d_{D_Y^i}(y)\geq 3\theta n-i\geq 2\theta n$ for each $x\in X$ and $y\in Y$. These show that the assumptions of Lemma~\ref{Lemma_rotation_matching} hold for  $M_i$, $E^i$, $D_X^i, D_Y^i$, $x_i$, $y_i$ and so we can apply it to obtain a matching $M_{i+1}$ containing one more edge than $M_i$. The matching $M_{i+1}$ is necessarily rainbow since it is a union of a submatching of $M_i$ (which is rainbow), and one  edge from each of $E^i$, $D_X^i,$ and $D_Y^i$ (which are all colour-disjoint from each other and from $M_i$). From Lemma~\ref{Lemma_rotation_matching}, the new edges of $M_{i+1}$ in $D_X$ and $D_Y$  pass through $x_i\in X\setminus V(M_0)$ and $y_i\in Y\setminus V(M_0)$ respectively, as required.
\end{proof}

\subsection{Hamiltonian cycles}
The following lemma joins two long cycles together using edges from some disjoint, dense graphs.
\begin{lemma}\label{Lemma_rotation_2_large_cycles}
Let $1\geq\lambda\GG p\GG\theta\GG n^{-1}$.
Let $C_1, C_2$ be two vertex-disjoint cycles of length $\geq \lambda n$. Let $E$, $F, G$ be  $\left(p(\theta n)^2, \theta n\right)$-dense graphs.
Suppose that $C_1, C_2$, $E, F,$ and $G$  are all edge-disjoint.
Let $xy\in E(C_1)\cup E(C_2)$.

Then, there is a cycle $C$ in $C_1\cup C_2\cup E\cup F\cup G$ with vertex set $V(C_1)\cup V(C_2)$ containing $1$ edge of each of $E, F, G$, and, additionally, $xy\in E(C)$.
\end{lemma}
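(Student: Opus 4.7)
We may assume $xy \in E(C_1)$. The idea is to delete one edge $ab \in E(C_1)\setminus\{xy\}$ and two edges $uu^+, vv^+ \in E(C_2)$, and to add $f_E = au^+ \in E$, $f_F = uv \in F$, $f_G = bv^+ \in G$, arranged so that $u$ and $v$ lie in different components of $C_2 - \{uu^+, vv^+\}$. A direct check shows this produces a single Hamilton cycle on $V(C_1)\cup V(C_2)$ containing $xy$ (which remains on $C_1-ab$) with exactly one edge from each of $E$, $F$, $G$: the traversal reads $a \to b$ (through $C_1-ab$) $\to v^+ \to u \to v \to u^+ \to a$.

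To locate the configuration I partition $V(C_2)$ into two disjoint consecutive arcs $W_1, W_2$, each of size at least $\lambda n/2 - 1$, and let $W_i^\circ$ be $W_i$ with its two boundary vertices removed, so $|W_i^\circ| \geq \theta n$. I apply the density hypothesis three times. First, set $U^{**} := \{u^+ \in W_1^\circ : |N_E(u^+)\cap V(C_1)| \geq \theta n\}$. If $|W_1^\circ\setminus U^{**}| \geq \theta n$, a $\theta n$-subset $S$ of this complement gives the upper bound $e_E(S, V(C_1)) < (\theta n)^2$, but partitioning $V(C_1)$ into $\lfloor \lambda/\theta\rfloor$ consecutive $\theta n$-subsets and summing the density guarantee yields $e_E(S, V(C_1)) \geq p\lambda \theta n^2/2$; these contradict because the polynomial regime $1 \geq \lambda \gg p \gg \theta$ implies $p\lambda \gg \theta$. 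Hence $|W_1^\circ\setminus U^{**}|<\theta n$. Analogously define $V^{**} \subseteq W_2^\circ$ with $|W_2^\circ\setminus V^{**}|<\theta n$, using density of $G$. Second, the same argument applied to $A^* := \{a \in V(C_1) : |N_E(a)\cap U^{**}| \geq \theta n\}$ and $B^* := \{b \in V(C_1) : |N_G(b)\cap V^{**}| \geq \theta n\}$ yields $|A^*|, |B^*| > |V(C_1)|-\theta n$. By inclusion-exclusion on the cyclic successor map $\sigma$ of $C_1$, $|A^* \cap \sigma^{-1}(B^*)| > |V(C_1)| - 2\theta n$, so one can pick $a$ in this intersection with $a \notin \{x, y\}$; setting $b = \sigma(a)$, we have $ab \in E(C_1)\setminus\{xy\}$ and $|N_E(a)\cap U^{**}|, |N_G(b)\cap V^{**}| \geq \theta n$. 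Third, let $\hat U := \bigcup_{u^+ \in N_E(a)\cap U^{**}} N_{C_2}(u^+) \subseteq W_1$ and $\hat V \subseteq W_2$ analogously. Since the $C_2$-neighbourhood of any set of $k \geq 1$ vertices on a cycle has size at least $k+1$ (unless the set spans the whole cycle), we have $|\hat U|, |\hat V| \geq \theta n + 1$, and the two sets are disjoint. Density of $F$ then gives an edge $uv \in F$ with $u \in \hat U, v \in \hat V$; the required $u^+ \in N_E(a)\cap U^{**}$ and $v^+ \in N_G(b)\cap V^{**}$ are taken to be the $C_2$-neighbours of $u$ and $v$ witnessing membership in $\hat U, \hat V$.

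The structure closes correctly because $u, u^+ \in W_1$ and $v, v^+ \in W_2$ lie in disjoint arcs of $C_2$: removing the internal edges $uu^+$ and $vv^+$ splits $C_2$ into one path from $u^+$ to $v$ crossing one boundary between $W_1$ and $W_2$, and another from $v^+$ to $u$ crossing the other, which is exactly the configuration needed. The principal obstacle is the third density step: a priori $\hat U$ could nearly collapse onto $N_E(a)\cap U^{**}$ if those vertices form a short consecutive arc on $C_2$; the isoperimetric bound $|N_{C_2}(S)| \geq |S|+1$ just barely preserves the size threshold needed to invoke density of $F$. The first two density steps are routine ``complement has size $<\theta n$'' contradictions made possible by the polynomial regime $p\lambda \gg \theta$, and the inclusion-exclusion on $\sigma$ in the second step succeeds for the same reason.
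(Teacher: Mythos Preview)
Your argument has a genuine gap in the final step. You claim that, because $u,u^+\in W_1$ and $v,v^+\in W_2$, deleting $uu^+$ and $vv^+$ from $C_2$ leaves one path with endpoints $u^+,v$ and another with endpoints $v^+,u$. This is not forced by the arc partition. Each of the two resulting paths does cross exactly one $W_1$--$W_2$ boundary, so each has one endpoint in $\{u,u^+\}$ and one in $\{v,v^+\}$; but the pairing can equally well be $\{u^+,v^+\}$ and $\{u,v\}$. Concretely, fix a cyclic labelling $v_1,\dots,v_m$ of $C_2$ with $W_1=\{v_1,\dots,v_k\}$: if $u^+=v_i$, $u=v_{i-1}$ and $v^+=v_j$, $v=v_{j+1}$, then the two paths are $v_i\cdots v_j$ (endpoints $u^+,v^+$) and $v_{j+1}\cdots v_{i-1}$ (endpoints $v,u$). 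In this case $uv\in F$ together with the second path forms a short cycle disjoint from the rest, and your construction outputs two cycles, not one. Since you pick $u,v$ first via the $F$-edge and only then recover $u^+,v^+$ as witnessing neighbours, you have no control over which side of $u$ (resp.\ $v$) the vertex $u^+$ (resp.\ $v^+$) sits on, so this bad case is not excluded.

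The fix is the paper's key observation: choose a single orientation $\sigma$ of $C_2$ and insist that $u^+=\sigma(u)$ and $v^+=\sigma(v)$ throughout (equivalently, define $\hat U:=\sigma^{-1}(N_E(a)\cap U^{**})$ and $\hat V:=\sigma^{-1}(N_G(b)\cap V^{**})$, each still of size $\geq\theta n$). Then deleting $u\sigma(u)$ and $v\sigma(v)$ always leaves paths from $\sigma(u)$ to $v$ and from $\sigma(v)$ to $u$, regardless of position, so the single-cycle check goes through automatically and the arc partition $W_1,W_2$ becomes unnecessary. This is exactly what the paper does, and it also lets them skip your first round of ``all but $<\theta n$ vertices are good'' arguments: they directly show the set of bad $x_0\in V(C_1)$ (for the $E$- and $F$-edges) has size $<2\lambda n/3$, pick a good $x_0\notin\{x,y\}$, set $y_0=\sigma(x_0)$, and then apply density of $G$ once to find $zw$ with $\sigma(z)\in N_E(x_0)\cap V(C_2)$ and $\sigma(w)\in N_F(y_0)\cap V(C_2)$.
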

{
\begin{proof} Without loss of generality, assume that $|C_1|\leq |C_2|$. Choose arbitrary orientations of $C_1$ and $C_2$. Let $\sigma$ be the permutation mapping $v$ to its successor (in the cycle containing $v$). Let $X_1$ be the set of vertices $x\in V(C_1)$ with $|N_E(x)\cap V(C_2)|< \theta n+2$ and let $X_2$ be the set of vertices $x\in V(C_1)$ with $|N_F(\sigma(x))\cap V(C_2)|< \theta n+2$.

Suppose $|X_1|\geq \lambda n/3$. Pick a set $Y_1\subset V(C_2)$ with $|Y_1|=|X_1|$ and note that $e_E(X_1,Y_1)\leq |X_1|\theta n< p|X_1|^2$ (as $\lambda \GG p,\theta$), contradicting that $E$ is $(p(\theta n)^2,\theta n)$-dense. Thus, $|X_1|<\lambda n/3$. Similarly, $|X_2|<\lambda n/3$, and thus we may pick $x_0\in V(C_1)\setminus (X_1\cup X_2\cup\{x,y\})$.

Let $y_0=\sigma(x_0)$. We have $|(N_E(x_0)\cap V(C_2))\setminus\{x,y\}|\geq \theta n$ and $|(N_F(y_0)\cap V(C_2))\setminus\{x,y\}|\geq \theta n$. Since $G$ is   $\left(p(\theta n)^2, \theta n\right)$-dense, there is an edge $zw\in E(G)$ with $\sigma(z)\in (N_E(x_0)\cap V(C_2))\setminus\{x,y\}$ and $\sigma(w)\in (N_F(y_0)\cap V(C_2))\setminus\{x,y\}$. Now $C=C_1\cup C_2-x_0y_0-z\sigma(z)-w\sigma(w)+x_0\sigma(z)+y_0\sigma(w)+zw$ is a cycle with the required properties (see Figure~\ref{Figure_Rotations}).
\end{proof}
}

\begin{figure}
    \includegraphics[width=1\textwidth]{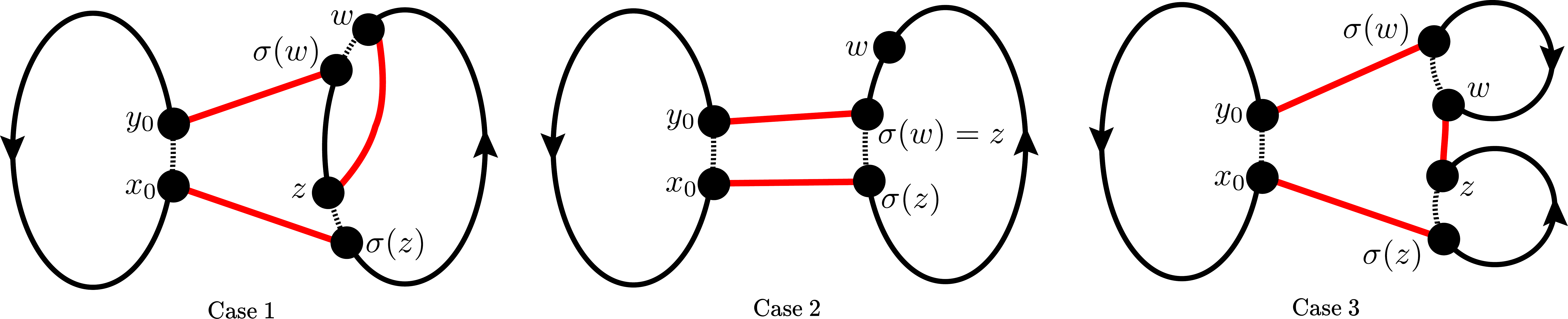}
  \caption{The three kinds of rotations we use in Lemmas~\ref{Lemma_rotation_2_large_cycles} and Lemma~\ref{Lemma_rotation_small_cycle}.
The dashed edges are removed from the cycles, while the solid red edges are added.  In each case, the resulting graph is a single cycle.
  In Lemma~\ref{Lemma_rotation_2_large_cycles}} Cases 1 and 2 are relevant, while in Lemma~\ref{Lemma_rotation_small_cycle} all three cases are relevant.

  Case 2 should be thought of as a degenerate version of Case 1 when we have $z=\sigma(w)$ (or symmetrically $w=\sigma(z)$). Note that this case never actually occurs because Lemmas~\ref{Lemma_rotation_2_large_cycles} and Lemma~\ref{Lemma_rotation_small_cycle} assume that $E, G$ are disjoint from all $C_i$. We include Case 2 in the figure for purposes of exposition.
\label{Figure_Rotations}
\end{figure}

Next we prove a similar lemma which joins cycles together. In the next two lemmas we will have both directed and undirected graphs on the same vertex set. The cycles we obtain in both lemmas will contain a mixture of edges from the directed and undirected graphs. In such a situation, ``cycle'' just means a graph which is a cycle after turning all directed edges into undirected edges, i.e.\ we do not care about directions of edges in cycles at all (the actual purpose of the directed edges in the lemmas is to control degrees through certain vertices).
\begin{lemma}\label{Lemma_rotation_small_cycle}
Let $1\geq \delta   \GG\theta \GG n^{-1}$ and $m\leq \theta n/2$.
Let $\mathcal C= \{C_0,\dots, C_m\}$ be a  2-factor. For each $i=0, \dots, m$ let $x_iy_i\in E(C_i)$. Let $E$ be   $(1 , \theta n)$-dense and $D_X$, $D_Y$  digraphs with $d^+(D_X), d^+(D_Y)\geq \delta n$.
Suppose that $\mathcal C$, $E$, $D_X,$ and $D_Y$ are all edge-disjoint.
Suppose that $|C_0|\leq \delta n/2$.

Then there is a 2-factor $\mathcal C'$ with {$\leq m$} cycles in which each cycle contains an edge $x_iy_i$ for some $i\geq 1$. Additionally, $\mathcal C'$ contains $1$ edge from $E$,  $1$ edge from $F$ which starts at $x_0$, and $1$ edge from $G$ which starts from $y_0$.
\end{lemma}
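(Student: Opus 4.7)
The plan is to adapt the rotation argument from Lemma~\ref{Lemma_rotation_2_large_cycles}. Since $|C_0|\le\delta n/2$ is much smaller than $\delta n$, most out-edges from $x_0$ in $D_X$ and from $y_0$ in $D_Y$ go to $S:=V(\mathcal C)\setminus V(C_0)$; combined with the density of $E$, this should let me remove $x_0y_0$ from $C_0$ and splice the remainder of $C_0$ into at least one other cycle of $\mathcal C$ via three bridging edges, one from each of $D_X$, $D_Y$, and $E$.

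Concretely, I would orient each of $C_1,\dots,C_m$ arbitrarily and let $\sigma$ denote the resulting successor permutation on $S$, then define
\[
A := \{z\in S : x_0\sigma(z)\in D_X\},\qquad B := \{w\in S : y_0\sigma(w)\in D_Y\}.
\]
Because $\sigma$ is a bijection on $S$ and $|V(C_0)|\le\delta n/2$, we have $|A|\ge d^+_{D_X}(x_0)-|V(C_0)|\ge\delta n/2$ and similarly for $|B|$. For each $i=1,\dots,m$ there is at most one $z\in S$ with $z\sigma(z)=x_iy_i$, so by removing these at most $m$ \emph{forbidden} vertices from each of $A,B$ I obtain sets $A',B'$ of size at least $\delta n/2-m\ge 2\theta n$ (using $m\le\theta n/2$ and $\theta\LL\delta$).

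Next, I would pick disjoint $A''\subseteq A'$, $B''\subseteq B'$ of size $\theta n$ each (possible since $|A'|,|B'|\ge 2\theta n$), and apply the $(1,\theta n)$-density of $E$ to find an edge $zw\in E$ with $z\in A''$, $w\in B''$. Letting $C_i,C_j$ be the cycles of $\mathcal C$ containing $z,w$ respectively (with $i,j\ge 1$ and possibly $i=j$), I define $\mathcal C'$ to consist of the untouched cycles $\{C_k:k\notin\{0,i,j\}\}$ together with the connected components of the graph on $V(C_0)\cup V(C_i)\cup V(C_j)$ obtained by deleting $x_0y_0,z\sigma(z),w\sigma(w)$ and adding $x_0\sigma(z),zw,y_0\sigma(w)$.

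The bulk of the remaining work is verification, and the main obstacle is the case analysis. In the case $i=j$, the new graph on $V(C_0)\cup V(C_i)$ becomes a single cycle (reducing the cycle count by $1$); in the case $i\ne j$, the new graph on $V(C_0)\cup V(C_i)\cup V(C_j)$ becomes a single cycle (reducing the count by $2$); either way $|\mathcal C'|\le m$. Here I must verify that the three deleted edges are distinct (using $z\ne w$ and that $\mathcal C$ consists of simple cycles of length $\ge 3$) and that the three added edges are distinct from each other and from $E(\mathcal C)$ (by edge-disjointness of $E,D_X,D_Y,\mathcal C$; note for instance that $zw\in E$ forces $z\ne\sigma(w)$ and $w\ne\sigma(z)$, since otherwise $zw$ would also lie in $C_i$). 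The remaining conclusions then follow easily: every cycle of $\mathcal C'$ contains some $x_ky_k$ with $k\ge 1$ because untouched cycles trivially do, while the forbidden-vertex exclusion guarantees that the new merged cycle contains $x_iy_i$ (and $x_jy_j$ when $i\ne j$); and by construction $\mathcal C'$ contains one edge of $E$, one edge of $D_X$ starting at $x_0$, and one edge of $D_Y$ starting at $y_0$.
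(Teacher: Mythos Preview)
Your proposal is correct and follows essentially the same approach as the paper's proof: orient $C_1,\dots,C_m$, use the successor map $\sigma$, find large subsets of $\sigma^{-1}(N^+_{D_X}(x_0))$ and $\sigma^{-1}(N^+_{D_Y}(y_0))$ avoiding $C_0$ and the marked edges, apply $(1,\theta n)$-density of $E$ to get $zw$, and form $\mathcal C'=\mathcal C-x_0y_0-z\sigma(z)-w\sigma(w)+x_0\sigma(z)+y_0\sigma(w)+zw$. The paper excludes the slightly larger set $U=\{x_i,y_i:i\ge 1\}$ from the neighbourhoods rather than just the preimages of the marked edges, and omits the explicit disjointness step and case verification you spell out, but the argument is the same.
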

\begin{proof}
Choose arbitrary orientations of $C_1, \dots, C_m$. Let $\sigma$ be the permutation mapping $v$ to its successor (in the cycle containing $v$).
Let $U=\{x_i, y_i: i\geq 1\}$ and notice that $|U|\leq 2m\leq  \theta n$. We have $|N^+_{D_X}(x_0)\setminus (U\cup C_0)|\geq \delta n-\theta n-\delta n/2\geq  \delta n/3$ and $|N^+_{D_Y}(y_0)\setminus (U\cup C_0)|\geq  \delta n-\theta n-\delta n/2\geq  \delta n/3$. Since $E$ is   $(1 , \theta n)$-dense and $\theta \leq \delta n/3$, there is an edge $zw\in E$ with $\sigma(z)\in N^+_{D_X}(x_0)\setminus (U\cup C_0)$ and $\sigma(w)\in N^+_{D_Y}(y_0)\setminus (U\cup C_0)$. Now $\mathcal C'= \mathcal C-x_0y_0-z\sigma(z)-w\sigma(w)+x_0\sigma(z)+y_0\sigma(w)+zw$ is a 2-factor with the required properties (see Figure~\ref{Figure_Rotations}).
\end{proof}

By iterating the above lemmas we obtain the following lemma which turns a rainbow $2$-factor into a rainbow Hamiltonian cycle. Once again we have have a combination of directed and undirected graphs. When we say a directed graph is properly coloured, we mean that the underlying undirected graph is properly coloured.
\begin{lemma}[Completing a Hamiltonian cycle]\label{Lemma_completion_Hamiltonian}
Let $1\GG\delta \GG p\GG\theta\GG n^{-1}$ and $m\leq p\theta n$. Suppose that we have the following  colour-disjoint, edge-disjoint, properly coloured graphs on a set of $n$ vertices.
\begin{itemize}
\item A rainbow $2$-factor $\mathcal C=\{C_0,\dots, C_m\}$.
\item $\left(p(\theta n)^2+m \theta n, \theta n\right)$-dense graphs $E, F, G$.
\item Digraphs $D_X, D_Y$  with $d^+(D_X), d^+(D_Y)\geq \delta n+m$.
\end{itemize}
For each $i=0, \dots, m$, let $x_i, y_i$ be a pair of adjacent vertices in $C_i$.
Then, there is a rainbow Hamiltonian cycle $C$ in  $E\cup F\cup G \cup D_X\cup D_Y\cup C_0\cup\dots\cup C_m$ so that, additionally, $C\setminus (C_0\cup\dots\cup C_m)$ has $\leq m$ edges of each of $E, F, G, D_X, D_Y$, the edges of $C$ in $D_X$  all start in $\{x_0, \dots, x_m\}$, and the edges of $C$ in $D_Y$ all start in $\{y_0, \dots, y_m\}$.
\end{lemma}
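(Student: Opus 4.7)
My plan is to reduce the number of cycles in the rainbow $2$-factor $\mathcal{C}$ one at a time by iteratively applying the two rotation lemmas just established, stopping when a single Hamiltonian cycle remains. Starting from $\mathcal{C}^{(0)} = \mathcal{C}$, at each step $t$ I will inspect the current rainbow $2$-factor $\mathcal{C}^{(t)}$: if any of its cycles has length at most $\delta n/2$, I will apply Lemma~\ref{Lemma_rotation_small_cycle} to absorb this small cycle (spending one edge from each of $E$, $D_X$, $D_Y$); otherwise every remaining cycle is long, and I will apply Lemma~\ref{Lemma_rotation_2_large_cycles} to merge any two of them (spending one edge from each of $E$, $F$, $G$). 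Each step strictly decreases the number of cycles, so the process halts after at most $m$ steps with a single rainbow Hamiltonian cycle $C$.

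To keep every invocation legal I will work throughout with residual subgraphs $E^{(t)}, F^{(t)}, G^{(t)}, D_X^{(t)}, D_Y^{(t)}$ obtained by deleting from each original the set of edges whose colours have already been incorporated into the current $2$-factor. Because the five graphs are pairwise colour-disjoint (and colour-disjoint from $\mathcal{C}$), the only colours lost from, say, $E^{(t)}$ are the at most $m$ colours of edges of $E$ already placed in the $2$-factor; since $E$ is properly coloured each such colour class is a matching, so $m$-fold application of Lemma~\ref{Lemma_Expander_Delete_Matching} leaves $E^{(t)}$ at least $(p(\theta n)^2 + m\theta n - m\cdot\theta n, \theta n) = (p(\theta n)^2, \theta n)$-dense. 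Using $1 \GG \delta \GG p \GG \theta \GG n^{-1}$ this is strong enough both for the $(p(\theta n)^2, \theta n)$-dense hypothesis of Lemma~\ref{Lemma_rotation_2_large_cycles} (invoked with $\lambda = \delta/2$, which satisfies $\lambda \GG p \GG \theta$) and for the weaker $(1, \theta n)$-dense hypothesis of Lemma~\ref{Lemma_rotation_small_cycle}; the same estimate works for $F^{(t)}$ and $G^{(t)}$. Analogously, each colour class in the properly coloured $D_X$ (respectively $D_Y$) is a matching, so removing a colour decreases every out-degree by at most $1$, and hence $d^+(D_X^{(t)}), d^+(D_Y^{(t)}) \geq \delta n + m - m = \delta n$ throughout.

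The invariant I will maintain is that every cycle of $\mathcal{C}^{(t)}$ contains at least one of the originally designated edges $x_iy_i$; this holds at $t=0$ by hypothesis, is preserved directly by the conclusion of Lemma~\ref{Lemma_rotation_small_cycle}, and is preserved by Lemma~\ref{Lemma_rotation_2_large_cycles} provided I always choose its retained edge $xy$ to be some designated edge of one of the two cycles being merged. This invariant ensures that whenever I invoke Lemma~\ref{Lemma_rotation_small_cycle} on a small cycle, that cycle supplies a valid $(x_0, y_0)$ from the original list, and therefore every edge of $C$ coming from $D_X$ (respectively $D_Y$) starts at one of the $x_i$ (respectively $y_i$), as required. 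Since each of the at most $m$ iterations contributes exactly one edge from three of the five graphs, and the residuals prevent any colour from being reused, $C$ is rainbow and contains at most $m$ edges from each of $E, F, G, D_X, D_Y$. The main obstacle is precisely this bookkeeping: verifying that the slack of $m\theta n$ built into the density parameter and the slack of $m$ built into the minimum out-degree are exactly what the $m$ rounds of colour removal consume. Once that ledger balances, the iteration produces $C$ and the lemma follows.
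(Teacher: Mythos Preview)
Your proposal is correct and is essentially the paper's own proof: both iterate (equivalently, induct on $m$), applying Lemma~\ref{Lemma_rotation_small_cycle} when a short cycle is present and Lemma~\ref{Lemma_rotation_2_large_cycles} otherwise, while passing to residual graphs with the newly used colours removed and tracking exactly the $m\theta n$ slack in density and $m$ slack in out-degree. The only cosmetic difference is that the paper introduces an intermediate parameter $\lambda$ with $\delta\GG\lambda\GG p$ for the long/short threshold, whereas you take $\lambda=\delta/2$ directly; both choices satisfy the hypotheses of the two rotation lemmas.
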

\begin{proof}
Choose  $\lambda$ with $\delta\GG\lambda\GG p$.
The proof is by induction on $m$.  The initial case is when $m=0$ in which case the lemma is trivial. Suppose that $m\geq 1$. If  $|C_0|, |C_1|\geq  \lambda n$, then apply Lemma~\ref{Lemma_rotation_2_large_cycles} to $C_0, C_1, E, F, G$. If  $|C_0|$ or $|C_1| \leq  \lambda n$, then apply Lemma~\ref{Lemma_rotation_small_cycle} to $\mathcal C, E, D_X, D_Y$.
In either case we get a new $2$-factor $\mathcal C'$ with {$\leq m$} cycles, where each cycle contains an edge $x_iy_i$ for some $i$. Additionally $\mathcal C'$ contains at most one edge from each of $E, F, G, D_X, D_Y$, with remaining edges from $\mathcal C$. This implies that $\mathcal C'$ is rainbow since $\mathcal C$ was rainbow and $E, F, G, D_X, D_Y$, were all colour-disjoint from each other and from $\mathcal C.$ From Lemma~\ref{Lemma_rotation_small_cycle} we also have that if $\mathcal C'$ has edges in $D_X$ and $D_Y$, then they start at $x_0$ and $y_0$ respectively.

Let $E', F', G', D'_x, D'_Y$ be $E, F, G, D_X, D_Y$ with colours of $\mathcal C'\setminus \mathcal C$ deleted to get  $\left(p(\theta n)^2+m\theta n-\theta n, \theta n\right)$-dense graphs $E', F', G'$ (using Lemma~\ref{Lemma_Expander_Delete_Matching}), and  digraphs $D_X', D_Y'$ with $d^+(D_X'), d^+(D_Y')\geq \delta n+m-1$. The lemma holds by induction.
\end{proof}

\section{Near-decompositions into rainbow structures}
In this section, we prove our main results on matchings and Hamiltonian cycles. Most of the results here are of the form ``every properly coloured graph with certain properties can be nearly-decomposed into rainbow matchings/2-factors/Hamiltonian cycles''. These results build on one another. First we find near-decompositions into nearly-perfect matchings. Then we use completion results from the previous section to find near-decompositions into perfect matchings. We use these to find near-decompositions into 2-factors. Then we again use completion results from the previous section to find near-decompositions into Hamiltonian cycles.

\subsection{Nearly-perfect matchings}
In this section we show that every properly coloured $d$-regular, globally $d$-bounded bipartite graph has a near-decomposition into nearly-perfect rainbow matchings. This is proved by iteratively finding such matchings individually using Lemma~\ref{Lemma_near_perfect_matching}.
For a $d$-regular, globally $d$-bounded bipartite graph $G_d$, consider the following recursive process producing matchings $M_d, \dots, M_{\varepsilon d}$.
\begin{itemize}
\item[P1:]  For $t=d, \dots, \varepsilon d$, apply Lemma \ref{Lemma_near_perfect_matching} to $G_t$ in order to partition its edges into a randomized rainbow matching $M_t$ and a graph $G_{t-1}$.
\end{itemize}
We emphasise that this process is run in \emph{decreasing} order with $t$. The reason for this is that if we define the process like this, then the graphs $G_t$  turn out to be approximately $t$-regular for every $t$.

If we were able to run this process for $(1-o(1))d$ many steps, then we would obviously produce the desired $(1-o(1))d$ edge-disjoint nearly-perfect rainbow matchings in $G_0$. To show that we can run it for that long, we need to show that with high probability $G_t$ satisfies the assumptions of Lemma~\ref{Lemma_near_perfect_matching}.
There are two assumptions of that lemma which need to be maintained: $(\gamma, t/n, n)$-regularity and global $(1+\gamma) t$-boundedness.

\subsubsection*{Maintaining global boundedness}
Recall that in the matching $M_t$ produced in $G_t$ by Lemma~\ref{Lemma_near_perfect_matching}, every edge ends up in $M_t$ with  probability roughly $t^{-1}$. This means that at step $t-1$ of P1,
for any colour $c$ we have
\begin{align*}
\E(E_{G_{t-1}}(c))&= \P(c\not\in C(M_t))|E_{G_{t}}(c)|+\P(c\in C(M_t))(|E_{G_{t}}(c)|-1)\\
                  &\approx \left(1-\frac{|E_{G_{t}}(c)|}{t}\right)|E_{G_{t}}(c)|+\frac{|E_{G_{t}}(c)|}{t}(|E_{G_{t}}(c)|-1)\\
                  &\leq t-1
\end{align*}
Here, the last inequality is equivalent to ``$|E_{G_{t}}(c)|\leq t$''. Thus one would expect the global $t$-boundedness of $G_t$ to be preserved throughout the entire process. By using Azuma's Inequality, we can show that this happens with high probability.

\subsubsection*{Maintaining regularity}
Here we explain how to preserve   $(\gamma, t/n, n)$-regularity between the applications of Lemma~\ref{Lemma_near_perfect_matching}.
First notice that if Lemma~\ref{Lemma_near_perfect_matching} produced \emph{perfect} matchings, then there would be nothing to check---then $G_t$ would always be $t$-regular (and hence $(0, t/n,n)$-regular).
But the matchings produced by Lemma~\ref{Lemma_near_perfect_matching} have size $(1-o(1))n$, and so over time, one would expect the maximum degree of the graph to become bigger than $t$ after a  large number of steps. One thing that we will never lose is the minimum degree---the graphs will always have $\delta(G_t)\geq t$, since  there can be at most one edge from each matching $M_i$ present at any vertex.

To preserve regularity, we introduce another step to our process in addition to P1. Fix some large constant $k$, and do the following:
\begin{itemize}
\item[P2:] Whenever $t\equiv 0\pmod k$, modify $G_t$ slightly to make it into a $t$-regular graph.
\end{itemize}
This step ensures that $G_t$ is $(\gamma, t/n, n)$-regular for all $t$ and suitable $\gamma$. Indeed, for any $t$, there is some $k'\leq k$ with $G_{t+k'}$ $(t+k')$-regular and $\Delta(G_t)\leq \Delta(G_{t+k})$. Thus we have $t\leq \delta(G_t)\leq \Delta(G_t)\leq t+k$, which  implies that $G_t$ is $(k/t, t/n, n)$-regular.

Step P2 is performed using Lemma~\ref{Lemma_Regularization_Expander}. This lemma turns a graph $G_t$ with $\delta(G_t)\geq t$ into a $t$-regular graph $G_t'$ by deleting some edges and adding a small matching $N$ disjoint from $G_t$.  Edges in this matching $N$ are given a new ``dummy colour'' which was previously unused in $G_t$. While these dummy colours can end up in our matchings $M_t$, the total number of dummy colours is small (at most $n/k$), and so after deleting the dummy colours we still have nearly-perfect matchings in $G$.

\subsection*{A concentration lemma}
The following lemma will be used to show that the global boundedness of a graph decreases suitably after repeated applications of Lemma~\ref{Lemma_near_perfect_matching}.
\begin{lemma}\label{Lemma_martingale_colours_packing}
For $C\varepsilon\leq 0.1$ and $m\leq n$, suppose that we have random variables $X_0, X_1, X_2, \dots, X_{m}$,  with $n-m-1\geq \frac{n}{C}$ and $\frac nC\leq X_0\leq n$, { such that,  for every $t=0,\ldots,m-1$, and for any values of $X_0,\ldots,X_t$, we have}
$$X_{t+1}= \begin{cases}
X_t-1 \text{ with probability $(1-\varepsilon)\frac{1}{n-t}X_t$}\\
X_t \text{ with probability $1-(1-\varepsilon)\frac{1}{n-t}X_t$}.
\end{cases}$$

Then  we have
$$\P\left(X_m\geq  \left(1+3C\varepsilon \right) \left(1-\frac{m}{n} \right)X_0 \right)\leq e^{\frac{-\varepsilon^2 n }{ 18C^4}}.$$
\end{lemma}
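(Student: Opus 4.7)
The natural ``deterministic trajectory'' for this process is the solution of the ODE $x'(t) = -(1-\varepsilon)x(t)/(n-t)$ with $x(0) = X_0$, namely $x(t) = X_0(1-t/n)^{1-\varepsilon}$. Motivated by this, set $\phi(t) = (1-t/n)^{1-\varepsilon}$ and define the rescaled process $Y_t = X_t/\phi(t)$, so $Y_0 = X_0 \in [n/C, n]$. The plan is to show $(Y_t)$ is a supermartingale with bounded differences and then apply Lemma~\ref{Lemma_Azuma_supermartingales}. Since the event in the lemma statement can be rewritten (after dividing through by $\phi(m)$) as
\[
\{X_m \geq (1+3C\varepsilon)(1-m/n)X_0\} = \{Y_m \geq (1+3C\varepsilon)(1-m/n)^{\varepsilon} Y_0\},
\]
and since $(1-m/n)^\varepsilon \geq (1/C)^\varepsilon \geq 1 - \varepsilon \ln C \geq 1 - C\varepsilon$, a short calculation using $C\varepsilon \leq 0.1$ gives $(1+3C\varepsilon)(1-m/n)^\varepsilon \geq 1 + C\varepsilon$, so it suffices to bound $\P(Y_m - Y_0 \geq C\varepsilon \, Y_0)$.

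\textbf{Supermartingale property and Lipschitz bound.} To see $(Y_t)$ is a supermartingale, I would compute
\[
\E(Y_{t+1} \mid X_0,\ldots,X_t) = \frac{X_t}{\phi(t+1)}\Big(1 - \tfrac{1-\varepsilon}{n-t}\Big),
\]
so $\E(Y_{t+1}\mid \cdot) \leq Y_t$ reduces to $\phi(t+1)/\phi(t) = (1 - 1/(n-t))^{1-\varepsilon} \geq 1 - (1-\varepsilon)/(n-t)$, which is just Bernoulli's inequality applied with exponent $1-\varepsilon \in [0,1]$. For the Lipschitz constant, observe that $X_t \leq n$, $\phi(t) \geq (1/C)^{1-\varepsilon} \geq 1/C$, and
\[
\tfrac{1}{\phi(t+1)} - \tfrac{1}{\phi(t)} = \tfrac{\phi(t)-\phi(t+1)}{\phi(t)\phi(t+1)} \leq \tfrac{1}{(n-t)\phi(t+1)} \leq \tfrac{C^2}{n-t} \leq \tfrac{C^3}{n}.
\]
When $X_{t+1}=X_t$ this gives $0 \leq Y_{t+1}-Y_t \leq X_t \cdot C^3/n \leq C^2$ (using $n-t \geq n/C$, so $X_t/(n-t) \leq C$), and when $X_{t+1}=X_t-1$ we also have $Y_{t+1}-Y_t \geq -1/\phi(t+1) \geq -C$. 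In either case $|Y_{t+1}-Y_t| \leq C^2$.

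\textbf{Applying Azuma and finishing.} Lemma~\ref{Lemma_Azuma_supermartingales} with Lipschitz constant $C^2$, step count $m$, and deviation $t' = C\varepsilon Y_0$ gives
\[
\P(Y_m > Y_0 + C\varepsilon Y_0) \leq \exp\!\Big(-\tfrac{(C\varepsilon Y_0)^2}{2m C^4}\Big) = \exp\!\Big(-\tfrac{\varepsilon^2 Y_0^2}{2m C^2}\Big) \leq \exp\!\Big(-\tfrac{\varepsilon^2 n}{2 C^4}\Big),
\]
where the last inequality uses $Y_0 \geq n/C$ and $m \leq n$. This is comfortably below $e^{-\varepsilon^2 n/(18C^4)}$, and combined with the reduction from the first paragraph completes the proof.

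\textbf{Main obstacle.} The only real creative step is guessing the right rescaling $\phi(t) = (1-t/n)^{1-\varepsilon}$ so that $Y_t$ becomes a supermartingale: the naive linear choice $\phi(t) = 1 - t/n$ yields a submartingale instead, and multiplicative corrections become necessary. Once $\phi$ is found, the derivation of the supermartingale property is a one-line application of Bernoulli's inequality, and everything else is bookkeeping with the constants $C\varepsilon \leq 0.1$ and $n-m-1 \geq n/C$.
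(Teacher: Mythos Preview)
Your overall strategy matches the paper's---rescale $X_t$ to a supermartingale, bound the increments, apply Azuma---but the specific rescaling you chose does \emph{not} give a supermartingale. You claim that
\[
\frac{\phi(t+1)}{\phi(t)}=\Big(1-\tfrac{1}{n-t}\Big)^{1-\varepsilon}\geq 1-\frac{1-\varepsilon}{n-t}
\]
``is just Bernoulli's inequality with exponent $1-\varepsilon\in[0,1]$''. But Bernoulli goes the other way for exponents in $(0,1)$: for $0<r<1$ and $x>-1$ one has $(1+x)^r\le 1+rx$, so in fact $(1-1/(n-t))^{1-\varepsilon}\le 1-(1-\varepsilon)/(n-t)$. (Sanity check: $n-t=2$, $\varepsilon=1/2$ gives $(1/2)^{1/2}\approx 0.707<0.75=1-1/4$.) Consequently your $Y_t=X_t/\phi(t)$ is a \emph{sub}martingale, not a supermartingale, and the Azuma step delivers no upper-tail bound. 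This is exactly the phenomenon you flag in your ``Main obstacle'' paragraph: the continuous ODE normalisation $(1-t/n)^{1-\varepsilon}$ is the right scale to first order, but the discrete drift is slightly less negative than the continuous one, so an additional damping factor is needed.

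The paper fixes this by taking instead $Y_t=(1-C\varepsilon/n)^t\,X_t/(n-t)$. The extra geometric factor $(1-C\varepsilon/n)^t$ absorbs the positive drift: since $n-t-1\ge n/C$ one gets
\[
\E(Y_{t+1}\mid Y_0,\dots,Y_t)=(1-C\varepsilon/n)\Big(1+\tfrac{\varepsilon}{n-t-1}\Big)Y_t\le (1-C\varepsilon/n)(1+C\varepsilon/n)Y_t\le Y_t,
\]
and now $|Y_{t+1}-Y_t|\le 3C^2/n$ and Azuma finishes as you intended. Your Lipschitz and endgame computations are essentially fine once a genuine supermartingale is in hand; the only repair needed is the choice of $\phi$.
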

\begin{proof}
Let $q=\P(X_{t+1}=X_t-1|X_0, \dots, X_t)= (1-\varepsilon)\frac{1}{n-t}X_t$,  and notice that $\E(X_{t+1}|X_0, \dots, X_{t})= X_t(1-q)+ (X_t-1)q= \left(\frac{n-t-1+\varepsilon}{n-t}\right)X_t$.
Let $Y_t=(1-C\varepsilon/n)^t\frac{X_t}{n-t}$. Notice that
$\E(Y_{t+1}|Y_0, \dots, Y_t)= (1-C\varepsilon/n)^{t+1}\frac{\E(X_{t+1}| X_{0}, \dots, X_{t})}{n-t-1}
=  (1-C\varepsilon/n)^{t+1}\frac{n-t-1+\varepsilon}{(n-t)(n-t-1)}X_t
= (1-C\varepsilon/n)\left(1+\frac{\varepsilon}{n-t-1}\right)Y_t
\leq (1-C\varepsilon/n)\left(1+{C\varepsilon}/{n}\right)Y_t\leq Y_t$.
This shows that $Y_t$ is a supermartingale.

Notice that for $t\leq m$ we have
\begin{align*}
|Y_{t+1}-Y_t|&= (1-C\varepsilon/n)^t\left|\frac{(1-C\varepsilon/n)X_{t+1}}{n-t-1} - \frac{X_t}{n-t}\right|\\
&=(1-C\varepsilon/n)^t\left|\frac{X_{t+1}-X_t}{(n-t-1)}-\frac{C\varepsilon X_{t+1}}{n(n-t-1)} +\frac{X_t}{(n-t-1)(n-t)}\right|\\
&\leq (1-C\varepsilon/n)^t\left(\frac{1}{n-t-1} + \frac{C\varepsilon }{n-t-1} +\frac{n}{(n-t-1)(n-t)}\right)\\
&\leq \frac{3C^2}n.
\end{align*}
The third inequality uses the triangle inequality, the 1-Lipschitzness of $X_t$, and $X_t\leq X_0\leq n$. the fourth inequality comes from $n-t-1\geq n-m-1\geq \frac{n}{C}$ and $C\varepsilon\leq 0.1$. Hence $Y_t$ is $\frac {3C^2}n$-Lipschitz.

{By Lemma~\ref{Lemma_Azuma_supermartingales}}, we have
$$\P\left(Y_m\geq  Y_0 + s \right)\leq e^{\frac{-s^2 n^2}{18 C^4 m}}.$$
Substituting $Y_0=X_0/n$, $s=\gamma X_0/n$, and using $X_0\geq n/C$  and $m\leq n$ gives
$$\P\left(Y_m\geq  (1+\gamma)\frac{X_0}{n} \right)\leq e^{\frac{-\gamma^2 X_0^2 }{18 C^4m}}\leq e^{\frac{-\gamma^2 n }{ 18C^6}}.$$
Substituting $Y_m=(1-C\varepsilon/n)^m\frac{X_m}{n-m}$, $\gamma=C\varepsilon$, and using $1+3C\varepsilon\geq \frac{1+C\varepsilon}{(1-C\varepsilon/n)^m}$ gives the lemma
$$ \P\left(X_m\geq  \left(1+3C\varepsilon \right) \left(1-\frac{m}{n} \right)X_0 \right)\leq\P\left(X_m\geq  (1+C\varepsilon)\frac{(n-m)X_0}{(1-C\varepsilon/n)^mn} \right)\leq e^{\frac{-\varepsilon^2 n }{ 18C^4}}.\qedhere$$
\end{proof}

\subsection*{Analysis of the random process}
Now we prove the first decomposition result of this paper. All our other decomposition results build on this.
It produces a near-decomposition into nearly-perfect rainbow matchings in a $\delta n$-regular graph which is globally $(1-\sigma)\delta n$-bounded.

\begin{lemma}\label{Lemma_Nearly_Rainbow_Decomposition}
Suppose that we have $n, \ell, \delta, \sigma$ with $n^{-1}\LL \sigma\LL  \delta \leq 1$ and $\ell\LL n$.

Let $G$ be a coloured balanced bipartite graph on $2n$ vertices which is $\delta n$-regular, locally $\ell$-bounded, and globally $(1-\sigma)\delta n$-bounded.
Then, $G$ has $(1- \sigma)\delta n$ edge-disjoint rainbow matchings $M_1, \dots, M_{(1-\sigma)\delta n}$  with $e(M_i) =(1-\sigma)n$  for all $i$.
\end{lemma}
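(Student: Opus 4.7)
The plan is to apply Lemma~\ref{Lemma_near_perfect_matching} iteratively, extracting one nearly-perfect rainbow matching $M_t$ per round in \emph{decreasing} order of the target degree $t$, and setting $G_{t-1}=G_t\setminus M_t$. This produces a chain $G=G_{\delta n}\supseteq G_{\delta n-1}\supseteq\cdots\supseteq G_{\sigma\delta n}$ (up to auxiliary dummy edges introduced during regularization), and outputs the matchings $M_{\sigma\delta n+1},\ldots,M_{\delta n}$ after discarding their dummy edges. Fix parameters with $n^{-1}\LL\gamma\LL p\LL\sigma$. The invariant to maintain is that $G_t$ is $(\gamma,t/n,n)$-regular, locally $O(\ell)$-bounded, and globally $(1-\sigma/2)t$-bounded, so that Lemma~\ref{Lemma_near_perfect_matching} applies to $G_t$ and produces $M_t$ with $|M_t|\geq(1-2p)n$ and $\P(e\in M_t)\geq(1-9p)/t$ for each edge~$e$.

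First I use Lemma~\ref{Lemma_Bipartite_Complement_expander} to replace $G$ by a slightly smaller regular subgraph $G^{*}\subseteq G$ whose complement in $K_{n,n}$ is dense; this complement serves as the reservoir of dummy edges throughout. Each round, after removing $M_t$, I apply Lemma~\ref{Lemma_Regularization_Expander} to restore exact $(t-1)$-regularity, adding at most $O(pn)$ dummy edges (each assigned a \emph{fresh} colour previously unused) and deleting a small number of edges. This keeps $G_{t-1}$ exactly $(t-1)$-regular, hence $(\gamma,(t-1)/n,n)$-regular automatically. Local boundedness is preserved because dummy colours each appear only $O(1)$ times, and global boundedness of original colours is unaffected by regularization since dummy colours are fresh.

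The crux is controlling the global boundedness of original colours across the random matching extractions. For each original colour $c$, set $X_i=|E_{G_{\delta n-i}}(c)|$. Since $M_t$ is rainbow, at most one $c$-coloured edge is removed per round, and the edge-probability bound from Lemma~\ref{Lemma_near_perfect_matching} gives
\[
\P\bigl(X_{i+1}=X_i-1\,\big|\,X_0,\ldots,X_i\bigr)\ \geq\ X_i\cdot\frac{1-9p}{\delta n-i}.
\]
By stochastic dominance, Lemma~\ref{Lemma_martingale_colours_packing} (applied with $n\mapsto\delta n$, $m\mapsto(1-\sigma)\delta n$, $\varepsilon\mapsto 9p$, $C=O(\sigma^{-1})$) yields
\[
\P\Bigl(X_i\ \geq\ (1+O(p/\sigma))\,\tfrac{\delta n-i}{\delta n}\,X_0\Bigr)\ \leq\ e^{-\Omega(p^{2}\sigma^{4}n)}
\]
for every $i\leq(1-\sigma)\delta n$. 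Since $X_0\leq(1-\sigma)\delta n$ and $p\LL\sigma$ gives $(1+O(p/\sigma))(1-\sigma)\leq 1-\sigma/2$, we obtain $X_i\leq(1-\sigma/2)(\delta n-i)=(1-\sigma/2)t$ as required. A union bound over at most $n^{2}$ colours and $\delta n$ time steps preserves the invariant throughout the process with probability $1-o(1)$.

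Finally, each produced matching $M_t$ may contain dummy edges, but since only $O(p\delta n^{2})$ dummy edges are ever introduced (across $O(\delta n)$ regularizations, each adding $O(pn)$), and each dummy edge lies in at most one output matching, the expected number of dummy edges in any given $M_t$ is $O(p/\sigma^{2})$, with straightforward Chernoff-type concentration. After discarding them, each $M_t$ retains size at least $(1-2p)n-o(n)\geq(1-\sigma)n$. The main obstacle I anticipate is coordinating the regularization steps with the martingale analysis: immediately after a regularization the global-boundedness invariant must be rechecked against the perturbation, and the edge-probability guarantees of Lemma~\ref{Lemma_near_perfect_matching} must be shown to remain valid in the presence of dummy edges. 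Both issues are handled by ensuring dummy colours are always fresh, so the martingale on each original colour is unaffected by regularization and only evolves through the matching-removal steps analysed above.
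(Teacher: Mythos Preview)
Your overall strategy—iterating Lemma~\ref{Lemma_near_perfect_matching}, regularizing via Lemma~\ref{Lemma_Regularization_Expander} with a dense reservoir from Lemma~\ref{Lemma_Bipartite_Complement_expander}, and controlling global boundedness through Lemma~\ref{Lemma_martingale_colours_packing}—is exactly the paper's approach. The martingale analysis on original colours is correct, and your observation that fresh dummy colours leave that martingale untouched is the right one.

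The genuine gap is the dummy-edge accounting. You regularize after \emph{every} round, introducing a fresh dummy colour each time; over $(1-\sigma)\delta n$ rounds this creates $\Theta(\delta n)$ dummy colours (or more, if each edge gets its own colour). A rainbow matching can contain one edge of each dummy colour, so there is nothing preventing $M_t$ from containing $\Theta(\delta n)\gg\sigma n$ dummy edges, which destroys the size bound after deletion. Your averaging argument (``total $O(p\delta n^2)$ dummy edges spread over $\Theta(\delta n)$ matchings'') gives only $O(pn)$ dummy edges \emph{on average}, and the appeal to ``Chernoff-type concentration'' is unsupported: the events ``dummy edge $e$ lands in $M_t$'' are neither independent across edges nor across rounds, and there is no product structure to exploit. (The stated bound $O(p/\sigma^{2})$ per matching is also dimensionally off.)

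The paper sidesteps this entirely by regularizing only once every $k$ rounds, for a new parameter $k$ with $p\LL k^{-1}\LL\sigma$. Between regularizations the graph is only approximately regular (degrees in $[d,d+k]$), but this still yields $(\gamma,d/n,n)$-regularity, which is all Lemma~\ref{Lemma_near_perfect_matching} needs. The payoff is that at most $\delta n/k\ll\sigma n$ dummy colours are ever created, so \emph{deterministically} each rainbow matching contains at most $\delta n/k$ dummy edges, and stripping these still leaves $(1-p)n-\delta n/k\geq(1-\sigma)n$ genuine edges. Inserting this parameter $k$ and spacing out the regularizations is the missing idea in your proposal.
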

\begin{proof}
Choose $k, \gamma, \nu, p$  so that $n^{-1}\LL \gamma \LL  p\LL  \nu\LL  k^{-1}\LL   \sigma \LL  \delta \leq 1$.
Set $D= \lceil (1-k^{-1})\delta n\rceil$.

\begin{claim}
There is a $D$-regular subgraph $G_D$ of $G$ and a  $(6pnD, \nu D/6)$-dense  graph $E_D$ with $G_D$ and $E_D$ edge-disjoint.
\end{claim}
\begin{proof}
Apply Lemma~\ref{Lemma_Bipartite_Complement_expander}  to $G$ with $\mu= \nu \delta/8$ and $p'= k^{-1}\delta$ in order to  find a $(\delta n-\lfloor k^{-1}\delta n\rfloor)$-regular subgraph $G_D$ of $G$ so that $E_D=\overline{G_D}$ is $(  0.48k^{-1}\delta(\nu\delta n/8)^2, \nu \delta n/8)$-dense (to apply Lemma~\ref{Lemma_Bipartite_Complement_expander} we use that $1\geq k^{-1}\delta, \nu \delta/8 \GG n^{-1}$).
Since  $\delta n-\lfloor k^{-1}\delta n\rfloor=D$, the graph $G_D$ is $D$-regular. Notice that $E_D$ is $(6pnD, \nu \delta n/8)$-dense (using $6pnD\leq  0.48k^{-1}\delta(\nu\delta n/8)^2$ which comes from $p\LL k^{-1}, \nu, \delta$). Now $\nu D/6\geq \nu \delta n/8$ implies the claim.
\end{proof}

We will define a random process producing spanning graphs $G_{D-1}, \dots, G_{\nu D}, E_{D-1}, \dots, E_{\nu D}$ and rainbow matchings $M_{D-1}, \dots, M_{\nu D}$. They will always have the following properties:
\begin{enumerate}[(a)]
\item $G_d$ is $(\gamma, d/n, n)$-regular.
\item $M_d\subseteq E(G_d)$ with $e(M_d)\geq (1-p)n$.
\item $\delta(G_d)\geq d$ and $e(G_d)\leq dn+kpn$.
\item $E_d$ is $\left(1, \frac{d-1}2\right)$-dense and edge-disjoint from $G_d$.
\item
\begin{itemize}
\item If $k\nmid d$ then $G_{d-1}\subseteq G_d$.
\item If  $k\mid d$ then $G_{d-1}\setminus G_d$ is a matching of size $\leq 4kpn$ in $E_d$. This matching has a dummy colour $c_d$.
\end{itemize}
\end{enumerate}
Notice (a) (c), and (d) hold for $G_D$ and $E_D$: indeed $G_D$ is $D$-regular (implying (a) and (c)), while $E_D$ is $(6pnD, \nu D/6)$-dense (which combined with $(D-1)/2\geq \nu D$ implies (d)).

The process producing  $G_{D-1}, \dots, G_{\nu D}, E_{D-1}, \dots, E_{\nu D}$, and  $M_{D-1}, \dots, M_{\nu D}$ is the following.
\begin{itemize}
\item If $G_{d}$ is globally $d$-bounded  then $M_{d}$ is the (random) matching produced from Lemma~\ref{Lemma_near_perfect_matching} applied to $G_d$ with $n=n$, $\gamma=\gamma$, $p'=p/20$, $\ell = \ell$, $\delta'=d/n$. Notice that  $1\geq \delta' \GG p' \GG \gamma  \GG  n^{-1}$ and $n\GG \ell$  hold with these parameters (using $\delta \geq \delta'\geq  \nu \delta/2$), allowing us to apply the lemma. Additionally:
\begin{itemize}
\item  If $k\nmid d$ then let $G_{d-1}=G_d\setminus M_d$ and $E_{d-1}=E_d$.
\item If $k\mid d$ then apply Lemma~\ref{Lemma_Regularization_Expander} to $G_d\setminus M_d$ and $E_d$ with $d'=d-1$ and $m=e(G_d\setminus M_d)-(d-1)n$ (using that by (d) $E_d$ is $\left(1, {d'}/2\right)$-dense,  by (c), $e(M_d)\geq (1-p)n$, and by $p\LL k^{-1}, \nu, \delta$ we have $2m=2(e(G_d\setminus M_d)-(d-1)n)\leq 4kpn\leq  \nu \delta n/2\leq d'$). We get a subgraph $H_d$ and a matching $N_d$ in $E_d$ of size $\leq 4kpn$.
Let $G_{d-1}=H_d\cup N_d$ with the edges of $N_d$ given the dummy colour $c_d$ to get a $(d-1)$-regular graph. Let $E_{d-1}=E_d\setminus N_d$.
\end{itemize}
\item If $G_{d}$ is not globally $d$-bounded then we stop the process.
\end{itemize}

We show that the properties we need in the process hold.
\begin{claim}
As long as the process goes on (a) -- (e) hold.
\end{claim}
\begin{proof}
First we prove (b), (d), and (e).
\begin{enumerate}
\item [(b)]  This is immediate since $M_d$ was produced by applying Lemma~\ref{Lemma_near_perfect_matching} to $G_{d}$ with $p'=p/20$.
\item [(d)] We have $E_d=E_D\setminus \bigcup_{\substack{i\in [d+1, D], k\mid i}}N_{i}$ and $e(N_i)\leq 4kpn$. As $E_D$ is $(6pnD, \nu D/6)$-dense, this implies that $E_d$ is $(6pnD - 4kpn\lceil(D-d)/k\rceil, \nu D/6)$-dense (using Lemma~\ref{Lemma_Expander_Delete_Edges}). Since $\frac{d-1}2\geq \nu D/6 $ and $6pnD - 4kpn\lceil(D-d)/k\rceil\geq 1$, this implies that $E_d$ is $(1, \frac{d-1}2)$-dense
\item [(e)] This is immediate from the construction of $G_{d-1}$: When $k\nmid d$, then $G_{d-1}=G_d\setminus M_d\subseteq G_d$ holds. When $k\mid d$, then $G_{d-1}=N_d\cup H_d\subseteq N_d\cup(G_d\setminus M_d)$ with $N_d$ ``a matching in $E_d$ of size $\leq 4kpn$'' which has dummy colour $c_d$.
\end{enumerate}
Next we prove (a) and (c). First recall that (a) and (c) hold for the starting graph $G_D$.  If $k\mid d+1$, then, by construction, $G_{d}$ is $d$-regular which implies both (a) and (c).

Suppose then that $k\nmid d+1$ and $d<D$.
Fix $\hat d= \min(k\lceil (d+1)/k\rceil-1, D)$. Notice that $G_{\hat d}$ is $\hat d$-regular (as $\hat d=D$ or $k\mid\hat d+1$ in which case $G_{\hat d}$ is $\hat d$-regular from the application of Lemma~\ref{Lemma_Regularization_Expander}). Also notice that $0< \hat d - d\leq k-1$ (the first inequality comes from $d<D$ and $k\nmid d+1$. The second inequality comes from $k\lceil (d+1)/k\rceil-1\leq d+k-1$).
Because $k\nmid \hat d, \hat d-1, \dots, d+1$, we have that
\begin{equation}\label{Eq_GdForm}
G_{d}=G_{\hat d}\setminus (M_{\hat d}\cup M_{\hat d-1}\cup\dots \cup M_{d+1}).
\end{equation}
Since the graphs $M_i$ are matchings, this implies $\delta(G_d)\geq \delta(G_{\hat d})-(\hat d -d)=d$.
Next, (\ref{Eq_GdForm}) implies $\Delta(G_{d-1})\leq \Delta(G_{\hat d})= \hat d \leq d+k$, which combined with $\delta(G_{d})\geq d$ implies that $G_d$ is $(k/d, d/n, n)$-regular and hence $(\gamma, d/n, n)$-regular since $k/d\leq k/\nu D\leq 2k/\nu\delta n\leq  \gamma$. Thus, (a) holds.
Finally, (\ref{Eq_GdForm}) implies  $e(G_{d})\leq e(G_{\hat d})-e(M_{\hat d})- \dots -e(M_{d+1})\leq \hat d n - (\hat d - d)(1-p)n\leq dn+kpn$, completing the proof that (c) holds.
\end{proof}

To show that the process does not end too early, it remains to show that $G_{d}$ is  globally $d$-bounded.
\begin{claim}
With probability $\geq 1-n^{-1}$, $G_{d}$ is globally $d$-bounded for $d=D, \dots, \nu D$.
\end{claim}
\begin{proof}
Notice that by (e) and $p\LL \nu, k^{-1}, \delta$, for any dummy colour $c_i$ we always have $|E_{G_d}({c_i})|\leq 4kpn\leq  \nu D/2\leq d$.
For a non-dummy colour $c$, if $|E_{G_D}(c)|\leq \nu D/2$ then $|E_{G_d}(c)|\leq\nu D/2\leq d$ for all $d$.
It remains to prove the claim for non-dummy colours with $|E_{G_D}(c)|\geq \nu D/2$.
Let $c$ be such a  colour.

Let $Y_d^c$ be the number of colour $c$ edges in $G_d$.
From Lemma~\ref{Lemma_near_perfect_matching} we have that as long as the process goes on, we have
$\P(e\in E(M_d)| e\in E(G_d))\geq (1-  p)\frac{1}{d}$.
Since $M_d$ is rainbow, for any colour $c$ the events ``$e\in E(M_d)$'' and ``$f\in E(M_d)$'' are disjoint for distinct $e,f\in E(c)$. This implies
$\P(c\in C(M_d)| Y_d^c=y)\geq (1-  p)\frac{y}{d}.$
Recall that, $G_{d-1}$ consists of a subgraph of $G_d\setminus M_d$ together with maybe some dummy colour edges.  This implies that the event ``$c\in C(M_d)$'' is contained in the event ``$Y_{d-1}^c\leq Y_{d}^c-1$'', which gives following.
$$Y^c_{d-1}= \begin{cases}
\leq Y^c_d-1 \text{ with probability $\geq (1-p)\frac{1}{d}Y^c_d$}\\
Y^c_d \text{ with probability $\leq 1-(1-p)\frac{1}{d}Y^c_d$}.
\end{cases}$$
Let $Z_D^c, \dots, Z_{\nu D}^c$ be random variables with $Z_D^c=Y^c_D=|E_{G_D}(c)|$ and, for each $d=D,\ldots,\nu D+1$, for any values of $Z_D^c, \dots, Z_{d}^c$, we have
$$Z_{d-1}^c=\begin{cases}
Z^c_d-1 \text{ with probability $(1-p)\frac{1}{d}Z^c_d$}\\
Z^c_d \text{ with probability $1-(1-p)\frac{1}{d}Z^c_d$}
\end{cases}$$
Notice that $Y_d$ is stochastically dominated by $Z_d$, i.e.\ specifically we have $\P(Y_d^c\geq x)\leq \P(Z_d^c\geq x)$ (to see this note, that $Y_d$ and $Z_d$ can be coupled so that $Y_d$ is always bounded above by $Z_d$).

Let $X_t^c=Z_{D-t}^c$ and notice that $X_0^c, \dots, X^c_{m}$ satisfy the assumptions of Lemma~\ref{Lemma_martingale_colours_packing} with $n=D$, $\varepsilon=p$, $C=2\nu^{-1}$, $m=D-d$.
Therefore we have
$$\P\left(Z^c_d\geq  \frac{d}{D} Z^c_D(1 + 6\nu^{-1}p) \right)=\P\left(X^c_{D-d}\geq  \left(1-\frac{D-d}{D}\right) X^c_0(1 + 6\nu^{-1}p) \right)  \leq 4e^{- 0.001\nu^{4} p^2 n }\leq n^{-4}.$$
The last inequality comes from  $0.001\nu^{4} p^2 \GG n^{-1}$.
Note that $(1+6\nu^{-1}p)\frac{d}{D}Z^c_D\leq (1+6\nu^{-1}p)\frac{d}{D}(1-\sigma)\delta n\leq d$ (using the global $(1-\sigma)\delta n$-boundedness of $G$ and  $p\nu^{-1}, k^{-1}\LL\sigma$).

Thus for every colour $c$ with $|E_{G_D}(c)|\geq \nu D/2$, we have $\P(|E_{G_d}(c)|\geq d)=\P(Y_d\geq d)\leq \P(Z_d\geq d)\leq n^{-4}$ for all $d$.
By the union bound over all $c$ and $d$, we have that $G_d$ is globally $d$-bounded for all $d\geq \nu D$ with probability $\geq 1-n^{-1}$.
\end{proof}

Since there are $\leq \lceil D/k\rceil\leq  \sigma n/2$ dummy colours we have that each $M_i$ contains a submatching $M_i'$ of size $\geq e(M_i)-\lceil D/k\rceil\geq (1-p)n -\sigma n/2\geq (1 - \sigma)n$ containing no dummy colours (using (b)). Finally notice that we have $(1-\nu)D\geq (1-\sigma) \delta n$ matchings (using $\sigma\GG\nu, k^{-1}$).
\end{proof}

The following lemma takes a decomposition into rainbow matchings (as in the previous lemma) and outputs another such decomposition where the matchings are nicely spread out around the vertex set.
\begin{lemma}\label{Lemma_Nearly_Rainbow_Decomposition_Spread_Out}
Let $t\geq pn$, $pt \geq 1$, and $p\leq \frac12$.
Let $G$ be a properly coloured balanced bipartite graph on $2n$ vertices with $\delta(G)\geq t$ and $M_1, \dots, M_{t}$ edge-disjoint rainbow matchings in $G$ with $e(M_i)\geq (1-p^3)n$ for all $i$.
Then $G$ has edge-disjoint rainbow matchings $M'_1, \dots, M'_{(1-p)t}$  with $e(M_i') \geq(1- 10p)n$  for all $i$ and also  $\delta(M_1'\cup \dots \cup M_{t}')\geq {(1-101p)t}$.
\end{lemma}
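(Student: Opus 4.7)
The plan is to find the $M_i'$ by first randomly selecting $(1-p)t$ of the input matchings and then repairing the few vertices whose degree in the resulting union falls short of $(1-101p)t$, by inserting into some of the chosen matchings edges of $G$ that are missing from every $M_i$. The size condition $e(M_i') \geq (1-10p)n$ is essentially free, because $p \leq 1/2$ implies $p^3 \leq 10p$; the repair step below removes only $O(p^2 t) \ll pn$ edges from each matching.

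I begin by quantifying defects. For each $v$ set $d_v := |\{i : v \in V(M_i)\}|$ and $e_v := t - d_v \geq 0$. Since each $M_i$ misses at most $2p^3 n$ vertices,
\[
\sum_v e_v \;=\; \sum_i \bigl(2n - 2e(M_i)\bigr) \;\leq\; 2 t p^3 n,
\]
so the set $B$ of \emph{bad} vertices with $e_v > 50 pt$ has $|B| \leq p^2 n/25$. Crucially, $d_G(v) \geq t$ forces there to be at least $e_v$ edges of $G$ through $v$ that lie in none of the $M_i$; these ``unused'' edges supply the material for the repair.

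Next I would take $S \subseteq \{1,\ldots,t\}$ uniformly at random with $|S| = (1-p)t$ and let $X_v := |\{i \in S : v \in V(M_i)\}|$. Each $X_v$ is hypergeometric with mean $(1-p)d_v$, and standard concentration for sampling without replacement combined with a union bound over $v$ yields a realization of $S$ for which $X_v \geq (1-p)d_v - 0.1 pt$ holds simultaneously for every $v$. For $v \notin B$ this gives $X_v \geq (1-p)(1-50p)t - 0.1pt \geq (1-101p)t$, so only the vertices of $B$ remain. In the degenerate regime in which $pt$ is too small for such concentration to apply, the assumption $t \geq pn$ forces $2p^3 n < 1$, so each $M_i$ is a perfect matching, every $d_v = t$, and the conclusion is immediate.

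The repair step processes each $v \in B$ in turn. While $X_v < (1-101p)t$, I pick an edge $vw$ of $G$ that appears in no $M_i$ and an index $i \in S$ with $v \notin V(M_i)$, and insert $vw$ into $M_i$, deleting at most two edges of $M_i$ (one through $w$, one of colour $c(vw)$) to maintain the rainbow matching property; this raises the degree of $v$ by exactly one. The total number of insertions is at most $101 pt \cdot |B| \leq O(p^3 tn) = O(p^2 t^2)$ using $pn \leq t$, so they can be spread across $S$ so that each matching absorbs $O(p^2 t) \leq (10p - p^3) n$ insertions, maintaining $e(M_i') \geq (1-10p) n$. The main obstacle is showing that the greedy schedule never gets stuck: this is where the large surplus of candidate choices, namely at least $e_v \geq 50pt$ unused $G$-edges through any $v \in B$ and at least $pt$ indices in $S$ with $v \notin V(M_i)$, together with $|B|$ being a tiny fraction of $V(G)$, provides the robustness needed to respect the vertex, colour and edge-disjointness constraints throughout the process.
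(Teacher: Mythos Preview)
Your approach is genuinely different from the paper's, which uses a purely extremal argument: it looks among \emph{all} families $N_1,\dots,N_t$ of $t$ edge-disjoint rainbow matchings for one maximising $e(H)-4f(H)$, where $H=\bigcup N_i$ and $f(H)=\tfrac12\sum_v\max(0,(1-100p)t-d_H(v))$; a short local-improvement argument shows $f(H)=0$, and then one discards the at most $pt$ matchings of size $\le(1-10p)n$. There is no randomness and no repair procedure to analyse.

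Two remarks on your write-up. First, the concentration step is a red herring: for any choice of $(1-p)t$ matchings whatsoever you trivially have $X_v\ge d_v-pt$, and for $v\notin B$ this already gives $X_v\ge(1-50p)t-pt\ge(1-101p)t$. So neither the hypergeometric bound nor the ``degenerate regime'' dichotomy is needed (and your dichotomy does not actually close: when $p^2t$ is of order $\log n$ you have neither concentration nor $2p^3n<1$).

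Second --- and this is the real gap --- your repair step is not justified. Each insertion of $vw$ into $M_i$ may delete up to two edges of $M_i$, and the other endpoints of those edges lose a degree. You assert that the ``large surplus of candidate choices'' makes the greedy schedule go through, but you never show that these deletions do not push previously good vertices below $(1-101p)t$, nor that bad vertices do not keep knocking each other down. Controlling this cascading is exactly the heart of the lemma; the paper's potential $e(H)-4f(H)$ handles it in one stroke because any local swap that raises a deficient vertex increases the potential even after accounting for two lost edges. Your outline could likely be salvaged by introducing a similar potential or by carefully choosing $w$ and the colour to avoid vertices near the threshold, but as written the argument is incomplete at its crucial point.
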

\begin{proof}
For a bipartite graph $H$ on $2n$ vertices with $\Delta(H)\leq t$, let $f_H(v)= \max((1-100p)t - d_H(v), 0)$ and $f(H)=\frac 12\sum_{v\in V(H)}f(v)$. Notice that $f(H)\leq tn-e(H)$ (since $\Delta(H)\leq t$ implies that all vertices $v$ have $f_H(v)\leq t-d_H(v)$).
For $H= M_1\cup \dots \cup M_{t}$ notice that $e(H)\geq (1-p^3)tn$ and hence $f(H)\leq tn-(1-p^3)tn= p^3tn$.

Let $N_1, \dots, N_t$ be a family of $t$ edge-disjoint rainbow matchings in $G$ such that $H=N_1\cup\dots \cup N_t$ has $e(H)-4f(H)$ as large as possible. By the previous paragraph we have $e(H)-4f(H)\geq (1-5p^3)tn$.
\begin{claim}
$f(H)=0$.
\end{claim}
\begin{proof}
Suppose, for contradiction, that {$f(H)>0$}.
Let $U$ be the set of vertices $u$ in $H$ with $d_H(u)\leq (1-10p)t$.
Notice that vertices $v$ outside $U$ have $(1-100p)t - d_H(v)\leq -90pt <-1$ (using $pt\geq 1$).
We have $|U|(1-10p)t+ (2n-|U|)t \geq 2e(H)\geq 2e(H)-8f(H)\geq 2(1-5p^3)tn$, which is equivalent to $|U|\leq p^2 n$.

Let $u$ be a vertex with {$f_H(u)>0$}. This implies that  $d_H(u)< (1-100p)t$ and hence $d_G(u)-d_H(u)\geq 100p t > 4|U|$ (using $t\geq pn$).
Choose some matching $N_i$  with $u\not\in V(N_i)$ (one must exist since $d_H(u)< (1-100p)t$).
Let $N_i'\subset N_i$ be the set of edges in $N_i$ that touch a vertex in $U$, so that $|N_i'|\leq 2|U|$. As $d_G(u)-d_H(u)\geq 4|N_i|$ and $G$ is properly coloured, we can pick some $y\in N_{G\setminus H}(u)$ so that $y$ is in no edge in $N_i'$ and $uy$ has colour outside of $C(N_i')$.

Let $F_y\subset N_i'$ be the set of edges in $N_i$ with the same colour as $uy$ or which contain $y$, noticing that, by construction, $V(F_y)\cap U=\emptyset$.
Let $N_i''=(N_i\setminus F_y)\cup \{uy\}$ to get a rainbow matching. Let $H'=(H\setminus N_i)\cup N_i''$ to get another union of $t$ rainbow matchings. Notice that $f_{H'}(u)=f_{H}(u)-1$ and $f_{H'}(w)=f_H(w)$ for $w\neq u$ (using $V(F_y)\cap U=\emptyset$), so that $f(H')\leq f(H)-1/2$. We also have $e(H')\geq e(H)+1-e(F_y)\geq e(H)-1$.
Combining these gives $e(H')-4f(H')\geq (e(H)-1)-4(f(H)-\frac12)>e(H)-4f(H)$, which is a contradiction to the maximality of $H$.
\end{proof}
Notice that $f(H)=0$ is equivalent to $\delta(H)\geq (1-100p)t$.
Recall that $\sum_{i=1}^t{e(N_i)}=e(H)\geq e(H)-4f(H)\geq (1-5p^3)tn$.
These imply that $H$ has at most $pt$ matchings of size $\leq (1- 10p)n$ (since otherwise we would have $e(H)\leq pt(1-10p)n + (t-pt)n= (1-10p^2)tn< (1-5p^3)tn$, giving a contradiction to $e(H)\geq (1-5p^3)tn$). The union of the remaining matchings has minimum degree $\geq (1-101p)t$ and so satisfies the lemma.
\end{proof}

The following lemma strengthens our previous rainbow matching decomposition result (Lemma~\ref{Lemma_Nearly_Rainbow_Decomposition}). It bootstraps that lemma in two different ways. First it removes the condition that the host graph is regular, replacing this with the condition that it is $(\gamma, \delta, n)$-regular. Secondly, the decomposition produced is nicely spread out as in Lemma~\ref{Lemma_Nearly_Rainbow_Decomposition_Spread_Out}.
\begin{lemma}\label{Lemma_NearMatching_Decomposition_Not_Regular}
Let $n^{-1}\LL  \gamma \LL  p, \delta \leq 1$.
Let $G$ be a properly coloured balanced bipartite graph on $2n$ vertices, which is $(\gamma, \delta, n)$-regular, and globally $(1 -  p)\delta n$-bounded.
Then, $G$ has edge-disjoint rainbow matchings $M_1, \dots, M_{(1-p)\delta n}$  with $e(M_i) \geq (1-p)n$  for all $i$ and also  $\delta(M_1\cup \dots \cup M_{(1-p)\delta n})\geq {(1-2p)\delta n}$.
\end{lemma}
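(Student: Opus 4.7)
The plan is to pass to a strictly regular bipartite supergraph $G'$ of $G$ via Lemma~\ref{Lemma_Regularization_Add_Vertices}, apply Lemma~\ref{Lemma_Nearly_Rainbow_Decomposition} to obtain a near-decomposition of $G'$ into nearly-perfect rainbow matchings, pull these back to rainbow matchings of $G$, and finally invoke Lemma~\ref{Lemma_Nearly_Rainbow_Decomposition_Spread_Out} to spread the matchings out so as to obtain the required minimum-degree lower bound.

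Concretely, Lemma~\ref{Lemma_Regularization_Add_Vertices} produces a $d$-regular balanced bipartite graph $G' \supseteq G$ with parts of size $n' \leq (1+9\gamma)n$ and $d = \lceil(1+5\gamma)\delta n\rceil$, where every edge of $G'\setminus G$ touches one of the (at most) $9\gamma n$ new vertices per side. I will extend the proper colouring of $G$ to $G'$ by invoking K\"onig's edge-colouring theorem on $G'\setminus G$ (a bipartite graph of maximum degree $\leq d$), giving $d$ fresh ``dummy'' colours. Each dummy colour is then a matching inside $G'\setminus G$ and has at most $9\gamma n$ edges, while original colours retain $\leq (1-p)\delta n$ edges. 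Setting $\sigma := 3\gamma$, the graph $G'$ becomes globally $(1-\sigma)d$-bounded, because $\gamma \LL p,\delta$ makes both slacks $(1-\sigma)d - (1-p)\delta n$ and $(1-\sigma)d - 9\gamma n$ nonnegative. Applying Lemma~\ref{Lemma_Nearly_Rainbow_Decomposition} to $G'$ (whose hypotheses $(n')^{-1} \LL \sigma \LL d/n' \leq 1$ and $1 \LL n'$ follow from $n^{-1} \LL \gamma \LL p,\delta$) yields $(1-\sigma)d$ edge-disjoint rainbow matchings $N_1,\dots,N_{(1-\sigma)d}$ in $G'$, each of size $\geq (1-\sigma)n'$. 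Setting $M_i^0 := N_i \cap E(G)$ and noting that at most $18\gamma n$ edges of each $N_i$ lie in $G'\setminus G$, we get $e(M_i^0) \geq (1-\sigma)n' - 18\gamma n \geq (1-13\gamma)n$, and the $M_i^0$ are pairwise edge-disjoint rainbow matchings in $G$.

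Now set $p' := \min(p/200,\, \delta/2)$ and $t := \lfloor(1-\gamma)\delta n\rfloor$. From $\gamma \LL p,\delta$ we get $13\gamma \leq p'^{3}$, so $e(M_i^0) \geq (1-p'^{3})n$. Also $\delta(G) \geq (1-\gamma)\delta n \geq t$ by $(\gamma,\delta,n)$-regularity, and $(1-\sigma)d \geq t$ since $\sigma = 3\gamma$ and $d \geq (1+5\gamma)\delta n$. Applying Lemma~\ref{Lemma_Nearly_Rainbow_Decomposition_Spread_Out} with parameter $p'$ to any $t$ of the matchings $M_i^0$ -- with preconditions $t \geq p'n$ (from $p' \leq \delta/2$), $p't \geq 1$ (from $n \GG (p\delta)^{-1}$), and $p' \leq 1/2$ all holding -- gives $(1-p')t$ edge-disjoint rainbow matchings of $G$, each of size $\geq (1-10p')n$, whose union has minimum degree $\geq (1-101p')t$. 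A short case analysis depending on whether $p' = p/200$ or $p' = \delta/2$ then yields the three target inequalities $(1-p')t \geq (1-p)\delta n$, $(1-10p')n \geq (1-p)n$, and $(1-101p')t \geq (1-2p)\delta n$; discarding the excess matchings finishes the proof.

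The main obstacle will be coordinating the small parameters: $\sigma$ must be small enough (comparable to $\gamma$) that $M_i^0$ stays near-perfect after restriction, yet large enough to absorb the slack from $d > \delta n$ and from the dummy colours; meanwhile the spread-out lemma's precondition $t \geq p'n$ rules out the naive choice $p' = p/200$ in the regime $\delta \ll p$, forcing the piecewise definition $p' = \min(p/200, \delta/2)$ and a two-case verification of the output bounds. All arising inequalities are polynomial in $\gamma, p, \delta, n^{-1}$ and are handled by operation (a) of the $\LL$ notation.
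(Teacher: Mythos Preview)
Your overall strategy---regularize via Lemma~\ref{Lemma_Regularization_Add_Vertices}, apply Lemma~\ref{Lemma_Nearly_Rainbow_Decomposition}, restrict to $V(G)$, then invoke Lemma~\ref{Lemma_Nearly_Rainbow_Decomposition_Spread_Out}---is exactly the paper's approach, and your device $p'=\min(p/200,\delta/2)$ is a neat way to bypass the paper's intermediate parameter $\rho$. However, there is one genuine gap.

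\textbf{The hypothesis $\delta\leq 0.01$ of Lemma~\ref{Lemma_Regularization_Add_Vertices} is not satisfied in general.} The statement you are proving only assumes $\gamma\LL p,\delta\leq 1$, so $\delta$ may be as large as $1$. Lemma~\ref{Lemma_Regularization_Add_Vertices} genuinely needs $\delta$ bounded by a small constant: in its proof the Gale--Ryser step uses $9\gamma d\leq |X'|$, which forces roughly $\delta\leq 1/9$. You invoke the lemma without checking this, and nothing in your argument compensates. The paper handles this by first proving the case $p\LL\delta\leq 0.01$ exactly as you do, and then reducing the general case via a random partition of $E(G)$ into $100$ colour-disjoint pieces $G_1,\dots,G_{100}$ (Lemma~\ref{Lemma_Random_Subgraph_Bipartite}~(b) with $p'=0.01$), each of which is $(2\gamma,0.01\delta,n)$-regular and hence amenable to the special case with a fresh parameter $\hat p\LL p,\delta$.

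Two smaller points. First, a dummy colour class (a matching in $G'\setminus G$) can have up to $18\gamma n$ edges, not $9\gamma n$, since there are up to $9\gamma n$ new vertices on \emph{each} side; this is harmless for the rest of your bounds. Second, after applying the spread-out lemma you obtain $(1-p')t$ matchings whose union has minimum degree $\geq(1-101p')t$, but the target lemma asks for exactly $(1-p)\delta n$ matchings. Discarding the excess can drop the minimum degree by up to $(1-p')t-(1-p)\delta n$, so the inequality you actually need is $(1-p)\delta n-100p't\geq(1-2p)\delta n$, i.e.\ $100p't\leq p\delta n$. This does hold in both your cases (since $100p'\leq p$ either way and $t\leq\delta n$), but it is not the inequality $(1-101p')t\geq(1-2p)\delta n$ that you stated.
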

\begin{proof}
We will first prove the lemma when we additionally have ``$p\LL \delta\leq 0.01$''.
Assuming that, choose $n^{-1}\LL \gamma \LL    \sigma   \LL  \rho \LL    p\LL  \delta \leq 0.01$.

By Lemma~\ref{Lemma_Regularization_Add_Vertices}, there is a balanced bipartite, regular graph $G'$ containing $G$ which has $\leq 11\gamma n$ extra vertices in each part with $G'[V(G)]=G$. Colour the edges of $G'$ so that the edges of $G$ retain their colours, and any edge $e\not\in E(G)$ gets a new colour $c_e$ (which only occurs on $e$). Let $n'=|G'|/2\leq (1+11\gamma)n$ be the size of the parts of $G'$. Since $G'[V(G)]=G$ and $G$ was $(\gamma, \delta, n)$-regular, the graph $G'$ is $(\delta' n')$-regular for some $\delta'n'=(1\pm\sigma)\delta n$ (using $n^{-1}\LL \gamma\LL \sigma \LL \delta \leq  1$).
Notice that $G'$ is globally $(1-\sigma)\delta' n'$-bounded (using $(1-p)\delta n\leq  (1-\sigma)\delta' n'$ which comes from $\gamma\LL\sigma \LL p$).

By Lemma~\ref{Lemma_Nearly_Rainbow_Decomposition} applied to $G'$ with $\ell=1$, we have $(1-\sigma)\delta' n'\geq (1-2\sigma)\delta n$ edge-disjoint rainbow matchings $M_1, \dots, M_{(1-\sigma)\delta' n'}$  with $e(M_i) =(1- \sigma)n'$. For each $i$, let $M'_i=M_i[V(G)]$ to get a rainbow matching in $G$ with $e(M_i) \geq (1- \sigma)n'- 22\gamma  n\geq (1-\rho^3)n$ (using $\gamma\LL  \sigma \LL  \rho$).   Notice that $\delta(G)\geq (1-\gamma)\delta n\geq (1-2\sigma)\delta n$.

Apply Lemma~\ref{Lemma_Nearly_Rainbow_Decomposition_Spread_Out} to $G$ and $M'_1, \dots, M'_{(1-2\sigma)\delta n}$ with $t=(1- 2\sigma)\delta n$, $n=n$,  and $p=\rho$. This gives   edge-disjoint rainbow matchings $M''_1, \dots, M''_{(1 - \rho)(1-2\sigma)\delta n}$  with $e(M''_i) =(1- 10\rho)n$  for all $i$ and also
$\delta(M''_1\cup \dots \cup M''_{(1 - \rho)(1-2\sigma)\delta n})\geq (1-101\rho)(1-2\sigma)\delta n$.
Notice that $(1 -  \rho)(1-2\sigma)\delta n\geq (1-p)\delta n$ (using $\rho, \sigma\LL p$) and consider the matchings $M''_1, \dots, M''_{(1 - p)\delta n}$.
Now the lemma holds for these matchings  because  $e(M_i'')= (1- 10\rho)n\geq (1-p)n$, and $\delta(M''_1\cup \dots \cup M''_{(1 - p)\delta n})\geq \delta(M''_1\cup \dots \cup M''_{(1 - \rho)(1-2\sigma)\delta n}) -  ((1 - \rho)(1-2\sigma)\delta n - (1 - p)\delta n) \geq (1-101\rho)(1-2\sigma)\delta n- (p - \rho -2\sigma+2\rho\sigma)\delta n\geq (1-2p)\delta n$  (using $\rho, \sigma\LL p$).

Now we will prove the general case when we just have  $n^{-1}\LL  \gamma \LL  p, \delta \leq 1$.
Choose $\hat p$ such that $n^{-1}\LL  \gamma\LL \hat p\LL p,\delta\leq 1$.
Apply Lemma~\ref{Lemma_Random_Subgraph_Bipartite} (b) to $G$ with $p'=0.01$ in order to partition the edges of $G$ into $100$ spanning subgraphs $G_1, \dots, G_{100}$ with each $G_i$ $(2\gamma, 0.01\delta, n)$-regular and globally $(1+\gamma)(1-p)0.01\delta n\leq (1-\hat p)0.01\delta n$-bounded.
 For $i=1, \dots, 100$, applying the ``$\hat p \LL \delta\leq 0.01$'' case of the lemma to  $G_i$ gives a family of rainbow matchings  $M^i_1, \dots, M^i_{(1-\hat p)0.01\delta n}$  with $e(M_j^i) \geq(1-\hat p)n$  for all $i$ and also  $\delta(M_1^i\cup \dots \cup M^i_{(1-\hat p)\delta n})\geq {(1-\hat p)0.01\delta n}$. Taking the union of these families for $i=1, \dots, 100$ gives the required edge-disjoint rainbow matchings (using $\hat p \leq p$).
\end{proof}

\subsection{Perfect matchings}\label{Section_Perfect_Matchings}
In this section we find near-decompositions of graphs into perfect rainbow matchings.
This is done by taking the near-decompositions into nearly-perfect rainbow matchings produced in the previous section and then
 using a completion lemma from Section~\ref{Section_Completion} to turn them into perfect matchings. The most straightforward to prove version of this is the following.
\begin{lemma}\label{Lemma_decomposition_completion}
Let $1\GG \theta, p \GG \varepsilon \GG n^{-1}$ and $t\leq n$.
Suppose that we have the following edge-disjoint subgraphs in a properly coloured balanced bipartite graph on $2n$ vertices.
\begin{itemize}
\item Rainbow matchings $M_1, \dots, M_t$   with $e(M_i)\geq (1- \varepsilon)n$ and $\delta(M_1\cup \dots \cup M_t)\geq t-  10\varepsilon n$.
\item A  $(2p (\theta n)^{2}, \theta n)$-dense graph $E$.
\item Graphs $D_X, D_Y$  with $\delta(D_X), \delta(D_Y)\geq 4\theta n$.
\end{itemize}
Additionally suppose that $M_1\cup\dots\cup M_t$, $E$, $D_X$, and $D_Y$ are colour-disjoint.
Then there there are edge-disjoint perfect rainbow matchings $M'_1, \dots, M'_t$ in $E\cup D_X\cup D_Y\cup M_1\cup \dots \cup M_t$.
\end{lemma}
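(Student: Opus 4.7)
The plan is to apply Lemma~\ref{Lemma_completion_matching} once to each $M_i$ in sequence, completing it to a perfect rainbow matching $M_i'$ using previously unused edges of $E, D_X, D_Y$. More precisely, set $E^{(0)}=E$, $D_X^{(0)}=D_X$, $D_Y^{(0)}=D_Y$, and for $i=1,\ldots,t$ apply Lemma~\ref{Lemma_completion_matching} to $M_i$ together with $E^{(i-1)}, D_X^{(i-1)}, D_Y^{(i-1)}$ to obtain $M_i'$; then define $E^{(i)}, D_X^{(i)}, D_Y^{(i)}$ by deleting from $E^{(i-1)}, D_X^{(i-1)}, D_Y^{(i-1)}$ the (at most $\varepsilon n$) edges of $E, D_X, D_Y$ actually used by $M_i'$. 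Since the $M_j$ are pairwise edge-disjoint and each $M_i'$ augments $M_i$ only with \emph{fresh} edges of $E \cup D_X \cup D_Y$, the resulting matchings $M_1',\ldots,M_t'$ are automatically pairwise edge-disjoint.

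The only real work is to check that the hypotheses of Lemma~\ref{Lemma_completion_matching} persist through all $t$ iterations. For $E$, at most $t\varepsilon n\leq \varepsilon n^2$ edges have been removed in total, so by Lemma~\ref{Lemma_Expander_Delete_Edges} the graph $E^{(i-1)}$ is $(2p(\theta n)^2-\varepsilon n^2,\theta n)$-dense, which is $(p(\theta n)^2,\theta n)$-dense because $p,\theta\GG\varepsilon$ implies $p\theta^2\geq\varepsilon$.

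For $D_X$, the crucial observation is that Lemma~\ref{Lemma_completion_matching} guarantees that every $D_X$-edge used in any $M_j'$ has its $X$-endpoint in $X\setminus V(M_j)$. Hence, for any fixed $x\in X$, the number of previously used $D_X$-edges incident to $x$ is at most $|\{j<i:x\in X\setminus V(M_j)\}|$. The hypothesis $\delta(M_1\cup\cdots\cup M_t)\geq t-10\varepsilon n$ says that $x$ lies in $V(M_j)$ for all but at most $10\varepsilon n$ values of $j$, so at most $10\varepsilon n$ $D_X$-edges incident to $x$ are ever used over the whole process. Therefore $d_{D_X^{(i-1)}}(x)\geq 4\theta n-10\varepsilon n\geq 2\theta n$, using $\theta\GG\varepsilon$, and an identical argument handles $D_Y$. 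Colour-disjointness of $M_i$ from $E^{(i-1)}, D_X^{(i-1)}, D_Y^{(i-1)}$ is immediate from the hypothesis and is preserved by edge deletion.

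There is no substantive obstacle beyond this bookkeeping; the global-degree condition $\delta(M_1\cup\cdots\cup M_t)\geq t-10\varepsilon n$ is precisely what prevents the per-vertex cost in $D_X, D_Y$ from accumulating over many iterations, and the factor-$2$ slack in the density of $E$ exists for exactly the analogous reason applied to the global edge budget.
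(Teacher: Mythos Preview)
Your proposal is correct and follows essentially the same approach as the paper's proof: iteratively apply Lemma~\ref{Lemma_completion_matching}, using Lemma~\ref{Lemma_Expander_Delete_Edges} to maintain the density of $E$, and using the ``edges of $D_X$ in $N$ pass through $X\setminus V(M_0)$'' clause of Lemma~\ref{Lemma_completion_matching} together with $\delta(M_1\cup\cdots\cup M_t)\geq t-10\varepsilon n$ to bound the per-vertex depletion of $D_X$ (and symmetrically $D_Y$). The only cosmetic difference is that the paper records the surviving degree bound as $3\theta n$ rather than your $2\theta n$, but either suffices for invoking Lemma~\ref{Lemma_completion_matching}.
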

\begin{proof}
Let $X$ and $Y$ be the parts of the bipartition.
We construct the matchings $M'_1, \dots, M'_t$ one-by-one using Lemma~\ref{Lemma_completion_matching}.  They will have the following properties.
\begin{enumerate}[(i)]
\item $xy\in E(D_X\cap M'_i) \implies x\not\in V(M_i)\cap X$.
\item $xy\in E(D_Y\cap M'_i) \implies y\not\in V(M_i)\cap Y$.
\item $e(E \cap M'_i)\leq \varepsilon n$.
\end{enumerate}
Suppose that we have constructed matchings $M'_1, \dots, M'_s$ satisfying the above properties. Let $E^s=E\setminus (M'_1\cup \dots\cup M'_s)$, $D^s_X=D_X\setminus (M'_1\cup \dots\cup M'_s)$, $D^s_Y=D_Y\setminus (M'_1\cup \dots\cup M'_s)$. Using (i), $\varepsilon\LL \theta$, and  $\delta(M_1\cup \dots \cup M_{t})\geq t- 10\varepsilon n$,  we have $d_{D^s_X}(x)\geq d_{D_X}(x)-|\{i: x\not\in V(M_i)\}|\geq \delta(D_X)-10\varepsilon n\geq 3\theta n$ for any $x\in X$. Similarly  $d_{D^s_Y}(y)\geq 3 \theta n$ for $y\in Y$. By (iii) and Lemma~\ref{Lemma_Expander_Delete_Edges}, the graph $E_s$ is $(2p(\theta n)^2-  s\varepsilon n, \theta n)$-dense.  Hence, using $s\varepsilon n\leq \varepsilon n^2\leq p\theta^2 n^2$, $E_s$ is $(p(\theta n)^2, \theta n)$-dense. By Lemma~\ref{Lemma_completion_matching} applied to $M_{s+1}$, $E^s, D_X^s$ and $D_Y^s$, there is a rainbow perfect matching $M'_{s+1}$ satisfying (i) -- (iii).
\end{proof}

Dense graphs are less convenient to work with than typical ones. The following lemma is a version of the previous one which replaces the colour-disjoint dense graphs by a single colour-disjoint typical one.
\begin{lemma}\label{Lemma_decomposition_completion_codegrees}
Let $1\geq p\GG\varepsilon\GG\gamma\GG n^{-1}$. {Suppose that we have the following edge-disjoint subgraphs in a properly coloured balanced bipartite graph on $2n$ vertices for some $t\leq n$.
\begin{itemize}
\item Rainbow matchings $M_1, \dots, M_t$  with $e(M_i)\geq (1-\varepsilon)n$ for each $i$, and $\delta(M_1\cup \dots \cup M_t)\geq t-10\varepsilon n$.
\item A $(\gamma, p, n)$-typical, balanced bipartite graph $G$ which is colour-disjoint from $M_1,\ldots,M_t$.
\end{itemize}}
Then, there there are edge-disjoint perfect rainbow matchings $M'_1, \dots, M'_t$ in $M_1\cup \dots\cup M_t\cup G$.
\end{lemma}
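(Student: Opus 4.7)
The plan is to derive this from Lemma~\ref{Lemma_decomposition_completion} by partitioning the typical graph $G$ into three colour-disjoint pieces that play the roles of $E$, $D_X$ and $D_Y$. The main tools will be Lemma~\ref{Lemma_Random_Subgraph_Bipartite}(a), which shows that choosing a random set of colours preserves typicality, and Lemma~\ref{Lemma_Codegrees_Pseudorandom}, which converts typicality into $(e,m)$-density.

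First I would choose auxiliary parameters with $p\GG \theta \GG p_E\GG \varepsilon \GG \gamma \GG n^{-1}$ (all polynomially related so we can use $\LL$). Randomly partition the colour set $C(G)$ into three parts $C_E, C_X, C_Y$ by assigning each colour independently and uniformly to one of the three classes, and let $E, D_X, D_Y$ be the corresponding colour-induced subgraphs of $G$. These three graphs are pairwise colour-disjoint by construction, and, because $G$ is colour-disjoint from $M_1\cup\cdots\cup M_t$, each is also colour-disjoint from every $M_i$.

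Next I would verify the hypotheses of Lemma~\ref{Lemma_decomposition_completion}. Applying Lemma~\ref{Lemma_Random_Subgraph_Bipartite}(a) three times (with parameter $1/3$), with positive probability each of $E$, $D_X$ and $D_Y$ is $(2\gamma, p/3, n)$-typical. In particular they are $(2\gamma, p/3, n)$-regular, so each vertex has degree $(1\pm 2\gamma)(p/3)n \geq 4\theta n$ in $D_X$ and in $D_Y$, giving the required minimum-degree bound. For $E$, Lemma~\ref{Lemma_Codegrees_Pseudorandom} (applied with $\mu=\theta$, since $p/3\GG\gamma\GG n^{-1}$) yields that $E$ is $\bigl(0.99(p/3)(\theta n)^2, \theta n\bigr)$-dense, which is in particular $(2p_E(\theta n)^2, \theta n)$-dense because $p\GG p_E$. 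Fix one outcome of the random partition for which all these properties hold simultaneously.

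Finally, the rainbow matchings $M_1,\dots,M_t$ satisfy the required hypotheses of Lemma~\ref{Lemma_decomposition_completion} by assumption, and we have just checked the remaining hypotheses with parameters $\theta, p_E, \varepsilon$ satisfying $1 \GG \theta, p_E \GG \varepsilon \GG n^{-1}$. Applying Lemma~\ref{Lemma_decomposition_completion} to $M_1,\dots,M_t$, $E$, $D_X$, $D_Y$ produces the required edge-disjoint perfect rainbow matchings $M_1',\dots,M_t'$ inside $M_1\cup\cdots\cup M_t \cup E\cup D_X\cup D_Y \subseteq M_1\cup\cdots\cup M_t\cup G$. There is no real obstacle here: the proof is essentially a bookkeeping exercise reducing one decomposition lemma to the other, and the only thing to keep track of is that the chosen parameters can be ordered polynomially, so that $\LL$-style inequalities propagate correctly.
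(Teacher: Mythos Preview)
Your proposal is correct and follows essentially the same approach as the paper: split the colours of $G$ randomly into three classes, use Lemma~\ref{Lemma_Random_Subgraph_Bipartite}(a) to obtain typicality of each piece, convert typicality of $E$ into $(e,m)$-density via Lemma~\ref{Lemma_Codegrees_Pseudorandom}, and finish with Lemma~\ref{Lemma_decomposition_completion}. The only cosmetic difference is that the paper uses tailored probabilities (roughly $2p_1/p$ for $C_E$ and $5\theta/p$ for $C_{D_X},C_{D_Y}$, with an auxiliary parameter $p_1$ satisfying $p\GG p_1\GG\theta$) rather than the uniform $1/3$ split, but either choice works.
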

\begin{proof}
Choose $1\geq p\GG p_1 \GG\theta \GG\varepsilon\GG\gamma\GG n^{-1}.$
Choose   three disjoint sets of colours $C_E, C_{D_X},C_{D_Y}$ from $G$, with each colour put independently into  $C_E, C_{D_X}$ and $C_{D_Y}$  with probability $2p_1p^{-1}/0.99$, $5\theta p^{-1}$ and $5\theta p^{-1}$ respectively (this is possible since  $p\GG p_1 \GG\theta $ implies $2p_1p^{-1}/0.99+5\theta p^{-1} + 5\theta p^{-1}\leq 1$). Let $E$, $D_X$, $D_Y$ be the subgraphs of $G$ with colours from  $C_E, C_{D_X}, C_{D_Y}$ respectively.
By Lemma~\ref{Lemma_Random_Subgraph_Bipartite} (a), with high probability   $E$ is  $(2\gamma, 2p_1/0.99, n)$-typical and $D_X$, $D_Y$ are $(2\gamma, 5\theta , n)$-typical.

We have that $\delta(D_X), \delta(D_Y)\geq (1-2\gamma)5\theta  n\geq 4\theta  n$.
By Lemma~\ref{Lemma_Codegrees_Pseudorandom} applied with $\mu=\theta $, $p=p_1$ and $\gamma=\gamma$, $E$ is $( 2p_1(\theta  n)^2, \theta  n)$-dense. By Lemma~\ref{Lemma_decomposition_completion} applied with $\theta=\theta $ and $p=p_1$ we obtain the required perfect matchings.
\end{proof}

Combining the above with Lemma~\ref{Lemma_NearMatching_Decomposition_Not_Regular} we get the following versatile lemma guaranteeing near-decompositions into perfect rainbow matchings.
\begin{lemma}\label{Lemma_PerfectMatching_Decomposition_ExtraPseudorandom}
Suppose that we have $n, \delta, p$ with $n^{-1}\LL  \gamma \LL{} p, \delta \leq 1$.
Let $G$ be a  properly coloured, $(\gamma, \delta, n)$-regular, globally $(1- p)\delta n$-bounded, balanced bipartite graph of order $2n$.
Let $H$ be a {properly coloured}, colour-disjoint, $(\gamma, p, n)$-typical, balanced bipartite graph on the same vertex set as $G$.
Then $G\cup H$ has edge-disjoint perfect rainbow matchings $M_1, \dots, M_{(1-p)\delta n}$.
\end{lemma}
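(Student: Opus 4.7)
The plan is to combine the two principal results already proven in this section: Lemma~\ref{Lemma_NearMatching_Decomposition_Not_Regular} produces a near-decomposition of the nearly-regular, globally bounded graph $G$ into rainbow matchings that are each almost perfect, and Lemma~\ref{Lemma_decomposition_completion_codegrees} then uses the colour-disjoint typical graph $H$ to upgrade each of these nearly-perfect matchings into a perfect one. The only delicate point is to arrange the parameters so that both black boxes can be fed with compatible hypotheses.

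First I would introduce an auxiliary parameter $\varepsilon$ with $\gamma \LL \varepsilon \LL p, \delta$, which is allowed by operation (c) of the ``$\LL$'' calculus. Since $\varepsilon \leq p$, the graph $G$ is in particular globally $(1-\varepsilon)\delta n$-bounded. Applying Lemma~\ref{Lemma_NearMatching_Decomposition_Not_Regular} to $G$ with this $\varepsilon$ in place of $p$ yields edge-disjoint rainbow matchings $N_1, \dots, N_{(1-\varepsilon)\delta n}$ with $e(N_i) \geq (1-\varepsilon)n$ for every $i$ and $\delta(N_1\cup\dots\cup N_{(1-\varepsilon)\delta n}) \geq (1-2\varepsilon)\delta n$.

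Next, set $t = (1-p)\delta n$ and restrict to the sub-family $N_1,\dots,N_t$. Every vertex $v$ belongs to at least $(1-2\varepsilon)\delta n$ of the $(1-\varepsilon)\delta n$ matchings in the full family, so $v$ is absent from at most $\varepsilon \delta n$ of them. In particular, $v$ is absent from at most $\varepsilon \delta n \leq 10\varepsilon n$ of $N_1,\dots,N_t$, giving
\[
\delta(N_1\cup\dots\cup N_t) \geq t - 10\varepsilon n,
\]
which is exactly the minimum-degree hypothesis required by Lemma~\ref{Lemma_decomposition_completion_codegrees}.

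Finally, since $H$ is $(\gamma,p,n)$-typical and colour-disjoint from $G$ (and hence from every $N_i$), all the hypotheses of Lemma~\ref{Lemma_decomposition_completion_codegrees} are met (note $p\GG\varepsilon\GG\gamma\GG n^{-1}$ by the chosen hierarchy). That lemma produces edge-disjoint perfect rainbow matchings $M_1,\dots,M_t$ inside $N_1\cup\dots\cup N_t\cup H \subseteq G\cup H$, which is the required near-decomposition. There is essentially no hard step beyond bookkeeping: the two previous lemmas do all the combinatorial work, and the only thing to verify carefully is that truncating the output of Lemma~\ref{Lemma_NearMatching_Decomposition_Not_Regular} from $(1-\varepsilon)\delta n$ down to $(1-p)\delta n$ matchings does not destroy the minimum-degree condition needed for the completion step---and the estimate above shows that it does not.
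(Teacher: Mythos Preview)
Your proof is correct and takes essentially the same approach as the paper. The only cosmetic difference is that the paper applies Lemma~\ref{Lemma_decomposition_completion_codegrees} directly with $t=(1-\varepsilon)\delta n$ (noting that $(1-2\varepsilon)\delta n \geq t - 10\varepsilon n$ since $\delta\leq 1$) and then observes $(1-\varepsilon)\delta n \geq (1-p)\delta n$, whereas you truncate to $t=(1-p)\delta n$ first and re-verify the minimum-degree hypothesis---both routes are valid and equally short.
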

\begin{proof}
Choose $n^{-1}\LL  \gamma\LL{} \varepsilon \LL{}p, \delta \leq 1$.
By Lemma~\ref{Lemma_NearMatching_Decomposition_Not_Regular} there are  edge-disjoint rainbow matchings $M_1,$ $\dots,$ $M_{(1-\varepsilon)\delta n}$  with $e(M_i) =(1-\varepsilon)n$  for all $i$ with  $\delta(M_1\cup \dots \cup M_{(1-\varepsilon)\delta n})\geq {(1-2\varepsilon)\delta n}$. By Lemma~\ref{Lemma_decomposition_completion_codegrees}  applied with $t=(1-\varepsilon)\delta n$ there are edge-disjoint perfect rainbow matchings $M'_1, \dots, M'_{(1-\varepsilon)n}$ in $H\cup G$.
\end{proof}

As a corollary, we obtain that a  typical properly coloured graph can be nearly-decomposed into perfect rainbow matchings as long as there is a gap between its global boundedness and its degrees.
\begin{corollary}\label{Lemma_PerfectMatching_Decomposition_Typical}
Suppose that we have $n, \delta, p, \gamma$ with $n^{-1}\LL  \gamma \LL{} p, \delta \leq 1$.
Every properly coloured,  $(\gamma, \delta, n)$-typical,  globally $(1- p)\delta n$-bounded, balanced bipartite graph $G$  of order $2n$ has
 $(1-p)\delta n$ edge-disjoint perfect rainbow matchings.
\end{corollary}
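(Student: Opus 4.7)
The plan is to deduce this from Lemma~\ref{Lemma_PerfectMatching_Decomposition_ExtraPseudorandom} by splitting $G$ into two colour-disjoint pieces: a ``bulk'' subgraph $G'$ that will play the role of the nearly-regular, globally-bounded graph in that lemma, and a colour-disjoint ``helper'' $H$ that will play the role of the auxiliary typical graph. The natural way to produce such a split is by colour sampling: fix a small constant fraction $q$ of the colours of $G$ to form $H$, and let $G'=G\setminus H$ consist of the remaining colours.

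I will take $q=p/10$ and apply Lemma~\ref{Lemma_Random_Subgraph_Bipartite}(a) to $G$ --- once viewing $H$ as the sample at probability $q$, and once viewing $G'$ as the sample at probability $1-q$ --- together with a union bound. With positive probability both outcomes hold simultaneously, so I can fix $H$ to be $(2\gamma,q\delta,n)$-typical and $G'$ to be $(2\gamma,(1-q)\delta,n)$-typical (in particular $(2\gamma,(1-q)\delta,n)$-regular). By construction $G'$ and $H$ are colour-disjoint, and every colour class of $G'$ has no more edges than it did in $G$, so $G'$ remains globally $(1-p)\delta n$-bounded.

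The final step is to apply Lemma~\ref{Lemma_PerfectMatching_Decomposition_ExtraPseudorandom} with $G\leftarrow G'$, $H\leftarrow H$, $\gamma\leftarrow 2\gamma$, $\delta\leftarrow(1-q)\delta$ and $p\leftarrow q\delta$. The key compatibility check is that $G'$ is globally $(1-q\delta)(1-q)\delta n$-bounded, i.e.\ that $(1-q\delta)(1-q)\geq 1-p$; with $q=p/10$ and $\delta\leq 1$ one has $(1-q\delta)(1-q)\geq(1-q)^2\geq 1-2q=1-p/5\geq 1-p$, as needed. The chain $n^{-1}\LL 2\gamma\LL q\delta,(1-q)\delta\leq 1$ follows from the hypothesis $n^{-1}\LL\gamma\LL p,\delta\leq 1$ via operation~(a) of the $\LL$-notation, the only non-obvious point being $\gamma\LL p\delta$, which is a monomial inequality in $\gamma,p,\delta$ deducible from $\gamma\LL p,\delta$. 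The lemma then supplies $(1-q\delta)(1-q)\delta n\geq(1-p)\delta n$ edge-disjoint perfect rainbow matchings in $G'\cup H=G$, which is the required number.

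There will be no real obstacle, since all the hard work---actually finding near-decompositions in the presence of an auxiliary typical graph---has already been carried out in Lemma~\ref{Lemma_PerfectMatching_Decomposition_ExtraPseudorandom}. The only task here is to manufacture that auxiliary typical graph from $G$ itself, at the cost of slightly shrinking the degrees of the bulk piece while preserving a sufficient gap between its degrees and its global boundedness.
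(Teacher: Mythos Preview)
Your proposal is correct and follows essentially the same approach as the paper: split the colours of $G$ randomly into a small fraction forming an auxiliary typical graph $H$ and a large fraction forming $G'$, then feed $G'$ and $H$ into Lemma~\ref{Lemma_PerfectMatching_Decomposition_ExtraPseudorandom}. The paper takes the sampling probability to be $p/2$ rather than your $p/10$, but the arithmetic checks are identical in spirit and the result is the same.
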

\begin{proof}
By Lemma~\ref{Lemma_Random_Subgraph_Bipartite} (a), $G$ can be partitioned into a $(2\gamma, p\delta/2, n)$-typical graph $H$ and a  colour-disjoint $(2\gamma, \delta-p\delta/2, n)$-typical graph $G'$. Since $(1- p)\delta n\leq (1-p\delta/2)(1-p/2)\delta n$, $G'$ is globally $(1-p\delta/2)(\delta-p\delta/2) n$-bounded.
By Lemma~\ref{Lemma_PerfectMatching_Decomposition_ExtraPseudorandom} applied with $p'=p\delta/2$, and $\delta'=\delta-p\delta/2$, $G'\cup H$ has  $(1-p\delta/2)(\delta-p\delta/2) n\geq (1-p)\delta n$ edge-disjoint perfect rainbow matchings.
\end{proof}

Applying the above lemma when the host graph is $K_{n,n}$ we can show that any proper colouring of $K_{n,n}$ has a  near-decomposition into perfect rainbow matchings under natural conditions on the sizes of the colour classes.
\begin{lemma}\label{Lemma_PerfectMatching_Decomposition_Few_Large_Colours}
Let $1\GG \varepsilon\GG n^{-1}$.
Let $K_{n,n}$ be properly coloured  with $\leq (1-20\varepsilon)n$ colours having $\geq (1-20\varepsilon)n$ edges. Then $K_{n,n}$  has $(1-\varepsilon)n$ edge-disjoint  perfect rainbow matchings.
\end{lemma}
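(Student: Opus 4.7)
The plan is to combine the regularization lemma for high-degree bipartite graphs (Lemma~\ref{Lemma_regular_subgraph_few_large_colours_bipartite}) with the decomposition result for a dense graph plus an auxiliary typical graph (Lemma~\ref{Lemma_PerfectMatching_Decomposition_ExtraPseudorandom}). The key idea is that we can extract a small typical graph $H$ from $K_{n,n}$ by taking a random set of colours, and then regularize what is left into a graph $G$ whose global boundedness is just slightly better than its degree. The slack produced by the $9\varepsilon_{0}^{2}$ term in the regularization lemma will be exactly what we need to fit the typicality parameter of $H$.

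Concretely, I would introduce constants $n^{-1}\LL \gamma \LL p \LL \varepsilon_{0}\LL \varepsilon$, with $\varepsilon_{0}$ chosen so that $20\varepsilon_{0}<\varepsilon$ (for instance $\varepsilon_{0}=\varepsilon/40$) and $p=\varepsilon_{0}^{3}$. First I would form $H\subseteq K_{n,n}$ by including each colour independently with probability $p$. Since $K_{n,n}$ is $(\gamma,1,n)$-typical by Lemma~\ref{Lemma_Typicality_K_n}, Lemma~\ref{Lemma_Random_Subgraph_Bipartite}(a) gives that with high probability $H$ is $(2\gamma,p,n)$-typical. A standard Chernoff bound (Lemma~\ref{Lemma_Chernoff}) then shows that $\Delta(H)\le (1+\varepsilon_{0})pn\le \varepsilon_{0}^{2}n$ with high probability, so that $G_{0}:=K_{n,n}\setminus H$ satisfies $\delta(G_{0})\ge (1-\varepsilon_{0}^{2})n$. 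Note $G_{0}$ is edge- and colour-disjoint from $H$ by construction.

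Next, since a colour with $\ge (1-20\varepsilon_{0})n$ edges in $G_{0}$ (or even in $K_{n,n}$) certainly has $\ge (1-20\varepsilon)n$ edges, the hypothesis gives at most $(1-20\varepsilon)n\le (1-20\varepsilon_{0})n$ colours with $\ge (1-20\varepsilon_{0})n$ edges in $G_{0}$. Thus Lemma~\ref{Lemma_regular_subgraph_few_large_colours_bipartite} applies to $G_{0}$ (with $\varepsilon_{0}$ in place of $\varepsilon$) and yields a spanning subgraph $G\subseteq G_{0}$ which is globally $(1-\varepsilon_{0})n$-bounded and $(\gamma,\delta,n)$-regular with $\delta\ge 1-\varepsilon_{0}+9\varepsilon_{0}^{2}$.

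Finally I would feed $G$ and $H$ into Lemma~\ref{Lemma_PerfectMatching_Decomposition_ExtraPseudorandom}. I need to check that $G$ is globally $(1-p)\delta n$-bounded, which is the content of the estimate
\[
(1-p)\delta\,\ge\,(1-\varepsilon_{0}^{3})(1-\varepsilon_{0}+9\varepsilon_{0}^{2})\,\ge\,1-\varepsilon_{0}+\tfrac{1}{2}\varepsilon_{0}^{2}\,\ge\,1-\varepsilon_{0},
\]
so that the global boundedness $(1-\varepsilon_{0})n$ of $G$ is at most $(1-p)\delta n$. Lemma~\ref{Lemma_PerfectMatching_Decomposition_ExtraPseudorandom} then produces $(1-p)\delta n$ edge-disjoint perfect rainbow matchings, and the same calculation gives $(1-p)\delta n\ge (1-\varepsilon_{0}-\varepsilon_{0}^{3})n\ge (1-\varepsilon)n$. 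The main thing to be careful about is the parameter hierarchy and, in particular, choosing $p$ small enough compared with $\varepsilon_{0}^{2}$ so that the slack from the regularization lemma absorbs the typicality parameter of $H$; once this bookkeeping is set up, every step is a direct invocation of a previously proved lemma.
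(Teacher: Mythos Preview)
Your proposal is correct and follows essentially the same route as the paper: split off a random colour set to get a $(2\gamma,p,n)$-typical graph, apply Lemma~\ref{Lemma_regular_subgraph_few_large_colours_bipartite} to the remainder, and feed both pieces into Lemma~\ref{Lemma_PerfectMatching_Decomposition_ExtraPseudorandom}. The paper's version is slightly leaner in that it applies the regularization lemma directly with the given $\varepsilon$ (since the hypothesis is already phrased with $20\varepsilon$) rather than introducing an intermediate $\varepsilon_0$, and it reads off $\delta(G_0)\ge(1-\varepsilon^2)n$ from the typicality of the complement rather than via a separate Chernoff bound; but these are cosmetic differences, not substantive ones.
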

\begin{proof}
Choose $1\GG \varepsilon\GG p \GG \gamma\GG n^{-1}$. By Lemma~\ref{Lemma_Typicality_K_n}, $K_{n,n}$ is $(\gamma, 1, n)$-typical.
Apply Lemma~\ref{Lemma_Random_Subgraph_Bipartite} (a) with $p=p$, $\delta=1$, and  $\gamma=\gamma$ in order to partition $K_{n,n}$ into a $(2\gamma, p, n)$-typical graph $J$ and a colour-disjoint graph $G$ with $\delta(G)\geq (1-2\gamma)(1-p)n \geq (1-\varepsilon^2)n$.
Apply Lemma~\ref{Lemma_regular_subgraph_few_large_colours_bipartite} to $G$ in order to find a subgraph $G'$ which is  globally $(1-\varepsilon)n$-bounded and $(\gamma, \delta, n)$-regular for some $\delta \geq 1-\varepsilon+9\varepsilon^2$.
Since $\varepsilon\GG p$ we have that $G'$  is  globally $(1-p)\delta n$-bounded.
By Lemma~\ref{Lemma_PerfectMatching_Decomposition_ExtraPseudorandom} applied with $\gamma'=2\gamma$ to $G'$ and $J$ there are edge-disjoint perfect rainbow matchings $M_1, \dots, M_{(1-p)\delta n}$ in $G'\cup J$. Since $\varepsilon\GG p$ and $\delta \geq 1-\varepsilon+9\varepsilon^2$, we have the required perfect rainbow matchings in $K_{n,n}$.
\end{proof}

As a corollary we show that having quadratically many colours guarantees perfect rainbow matchings.
\begin{lemma}\label{Lemma_many_colours_implies_few_large_colours}
Let $1\geq \varepsilon\GG n^{-1}$.
Let $K_{n,n}$ be coloured with at least $2\varepsilon n^2$ colours. Then $K_{n,n}$ has $\leq (1-\varepsilon)n$ colours having $\geq (1-\varepsilon)n$ edges.
\end{lemma}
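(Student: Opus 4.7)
The plan is a direct double-counting argument by contradiction. Suppose for a contradiction that $K_{n,n}$ has $L > (1-\varepsilon)n$ colours, each with at least $(1-\varepsilon)n$ edges. Let $S$ denote the number of remaining colours; by assumption on the total number of colours, $S \geq 2\varepsilon n^2 - L$.

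Since $K_{n,n}$ has exactly $n^2$ edges and every colour class has at least one edge, counting edges by colour gives
\[
n^2 \;=\; \sum_{c} |E(c)| \;\geq\; L\cdot (1-\varepsilon)n \;+\; S\cdot 1 \;\geq\; L\bigl((1-\varepsilon)n - 1\bigr) + 2\varepsilon n^2.
\]
Rearranging yields $L\bigl((1-\varepsilon)n - 1\bigr) \leq (1-2\varepsilon)n^2$. Plugging in $L > (1-\varepsilon)n$ gives
\[
(1-\varepsilon)^2 n^2 - (1-\varepsilon)n \;<\; (1-2\varepsilon)n^2,
\]
which simplifies to $\varepsilon^2 n < 1-\varepsilon$, contradicting the assumption $\varepsilon \GG n^{-1}$ (which in particular implies $\varepsilon^2 n$ is much larger than $1$).

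There is essentially no obstacle: the proof is a one-line pigeonhole once one observes that many large colour classes would use up more than $n^2$ edges and leave too few edges available for the remaining small colours. The only subtle point is ensuring the arithmetic works, namely that the small colours each contributing $1$ to the edge count and the large ones each contributing $(1-\varepsilon)n$ together exceed $n^2$ when $L > (1-\varepsilon)n$; this is where the hypothesis $\varepsilon \GG n^{-1}$ is used, and it suffices comfortably.
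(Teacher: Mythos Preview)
Your proof is correct and takes essentially the same approach as the paper's: both argue by contradiction that more than $(1-\varepsilon)n$ large colour classes would consume so many of the $n^2$ edges that the remaining colours cannot number $2\varepsilon n^2$, reducing to the inequality $\varepsilon^2 n > 1-\varepsilon$. The only cosmetic difference is that the paper first bounds the edges left over after removing the large colours and then counts small colours, whereas you count edges from large and small colours simultaneously.
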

\begin{proof}
Suppose otherwise. Then $K_{n,n}$ has $\leq n^2- (1-\varepsilon)n\cdot (1-\varepsilon)n=2\varepsilon n^2-\varepsilon^2 n^2$ edges outside of the $(1-\varepsilon)n$ largest colours. This means that $K_{n,n}$ has $\leq 2\varepsilon n^2-\varepsilon^2 n^2+(1-\varepsilon)n$ colours. By  $\varepsilon\GG n^{-1}$, this  is smaller that $2\varepsilon n^2$, contradicting the lemma's assumption.
\end{proof}

\begin{corollary}\label{Corollary_many_colours_perfect}
Let $1\GG \varepsilon\GG n^{-1}$.
Let $K_{n,n}$ be coloured with at least $\varepsilon n^2$ colours. Then $K_{n,n}$ has $(1-\varepsilon)n$ edge-disjoint perfect rainbow matchings.
\end{corollary}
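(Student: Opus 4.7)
The plan is to deduce this corollary as a direct composition of Lemma~\ref{Lemma_many_colours_implies_few_large_colours} and Lemma~\ref{Lemma_PerfectMatching_Decomposition_Few_Large_Colours}, with a rescaling of $\varepsilon$ to reconcile the different constants in front of $\varepsilon n^2$ (which is $2\varepsilon n^2$ in the first lemma) and in front of $n$ inside the ``large colour'' thresholds (which is $20\varepsilon$ in the second lemma).

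Concretely, I would set $\varepsilon' := \varepsilon/40$. Since $1 \GG \varepsilon \GG n^{-1}$, the same asymptotic chain $1 \GG \varepsilon' \GG n^{-1}$ holds for $\varepsilon'$, which is all that is needed to apply both lemmas with parameter $\varepsilon'$. The given hypothesis then reads ``$K_{n,n}$ is coloured with at least $\varepsilon n^2 = 2(20\varepsilon')n^2$ colours'', so Lemma~\ref{Lemma_many_colours_implies_few_large_colours}, invoked with its $\varepsilon$ replaced by $20\varepsilon'$, tells us that $K_{n,n}$ has at most $(1 - 20\varepsilon')n$ colours with at least $(1 - 20\varepsilon')n$ edges. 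This is precisely the hypothesis of Lemma~\ref{Lemma_PerfectMatching_Decomposition_Few_Large_Colours} with parameter $\varepsilon'$ (here the ``properly coloured'' requirement is implicit in the setting of generalized Latin squares / proper edge colourings that the corollary is sitting in). Applying that lemma yields $(1-\varepsilon')n$ edge-disjoint perfect rainbow matchings in $K_{n,n}$, and since $(1-\varepsilon')n = (1-\varepsilon/40)n \geq (1-\varepsilon)n$, we can simply retain $(1-\varepsilon)n$ of them.

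There is essentially no obstacle: the only care needed is numerical bookkeeping to match the constant $2$ of Lemma~\ref{Lemma_many_colours_implies_few_large_colours} with the factor $20$ appearing in Lemma~\ref{Lemma_PerfectMatching_Decomposition_Few_Large_Colours}, which is why dividing $\varepsilon$ by $40$ (rather than a smaller constant) suffices. All the real work has already been carried out in the two referenced lemmas.
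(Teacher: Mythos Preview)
Your proof is correct and essentially identical to the paper's own proof: the paper also sets $\varepsilon'=\varepsilon/40$, applies Lemma~\ref{Lemma_many_colours_implies_few_large_colours} (with parameter $20\varepsilon'$) to get at most $(1-20\varepsilon')n$ colours of size at least $(1-20\varepsilon')n$, and then invokes Lemma~\ref{Lemma_PerfectMatching_Decomposition_Few_Large_Colours} with parameter $\varepsilon'$. Your write-up is in fact more explicit about the numerical bookkeeping than the paper's terse version.
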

\begin{proof}
Let $\varepsilon'=\varepsilon/40$.
By Lemma~\ref{Lemma_many_colours_implies_few_large_colours}, $K_{n,n}$  has $\leq (1-20\varepsilon')n$ colours having $\geq (1-20\varepsilon')n$ edges.
By Lemma~\ref{Lemma_PerfectMatching_Decomposition_Few_Large_Colours}, $K_{n,n}$ has $(1-\varepsilon')n\geq (1-\varepsilon)n$ edge-disjoint perfect rainbow  matchings.
\end{proof}

\subsection{2-Factors}
Here we use the perfect matching decomposition results from the previous section in order to show that suitable properly coloured complete graphs have near-decompositions into rainbow $2$-factors. These $2$-factor results are a stepping stone for finding Hamiltonian cycles.
The basic idea of the proof is to join rainbow matchings together into $2$-factors. Suppose that we have partitioned the vertices of a graph into sets $V_1, \dots, V_k$ of equal size, and that we have rainbow matchings $M_1, \dots, M_k$ with $M_i$ going from $V_i$ to $V_{i+1\pmod k}$. Notice that if the matchings $M_1, \dots, M_k$ are all colour-disjoint, then  $M_1\cup \dots\cup M_k$ is a rainbow $2$-factor. The proof strategy in this section is to partition the edges of a general graph $G$ into balanced bipartite subgraphs in which we can find perfect rainbow matchings using results from the previous section. Then we can put these matchings together in the way just described and obtain $2$-factors.

First we will need the following standard lemma which asserts that there exist complete graphs with rainbow Hamiltonian decompositions.
\begin{lemma}\label{Lemma_Existence_Hamiltonian_Decomposition}
For  prime $n\geq 3$, there exist properly $n$-coloured $K_n$ with decompositions into $(n-1)/2$ rainbow Hamiltonian cycles.
\end{lemma}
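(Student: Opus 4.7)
The plan is to use a single explicit construction based on the cyclic group $\mathbb{Z}_n$. Identify $V(K_n)$ with $\mathbb{Z}_n$ and colour each edge $\{x,y\}$ by $c(\{x,y\}) := x+y \pmod n$. For $k \in \{1, 2, \ldots, (n-1)/2\}$, let $C_k$ be the subgraph of $K_n$ with edge set $\{\{x, x+k\} : x \in \mathbb{Z}_n\}$. I claim that $c$ is a proper $n$-edge-colouring and that $C_1, \ldots, C_{(n-1)/2}$ is a decomposition of $K_n$ into rainbow Hamiltonian cycles.

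First I would check that $c$ is proper: if two distinct edges through a vertex $x$ are $\{x,y\}$ and $\{x,z\}$ with $y \neq z$, then $c(\{x,y\}) - c(\{x,z\}) = y - z \neq 0 \pmod n$, so their colours differ. Next, since $n$ is prime, for every $k \in \{1, \ldots, n-1\}$ we have $\gcd(k,n) = 1$, so the sequence $0, k, 2k, \ldots, (n-1)k$ visits every element of $\mathbb{Z}_n$ exactly once. This means $C_k$ is a Hamiltonian cycle. The cycles are pairwise edge-disjoint because an edge $\{x,y\}$ determines $|y - x|$ up to sign, and this value is in $\{1, \ldots, (n-1)/2\}$ (working with the representative in $\{1,\ldots,(n-1)/2\}$), so $\{x,y\}$ lies on exactly one $C_k$. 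As $\bigcup_k C_k$ has $(n-1)/2 \cdot n = \binom{n}{2}$ edges, this is a decomposition of $K_n$.

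Finally I would verify that each $C_k$ is rainbow. The colours appearing on $C_k$ are $\{c(\{x, x+k\}) : x \in \mathbb{Z}_n\} = \{2x + k \pmod n : x \in \mathbb{Z}_n\}$. Since $n$ is an odd prime, $2$ is invertible in $\mathbb{Z}_n$, and so the map $x \mapsto 2x + k$ is a bijection on $\mathbb{Z}_n$. Hence the $n$ edges of $C_k$ receive the $n$ distinct colours of $\mathbb{Z}_n$, so $C_k$ is rainbow. There is no real obstacle here; the only subtlety is that we simultaneously need $\gcd(k,n)=1$ for all $k \leq (n-1)/2$ (which is where primality is used) and $\gcd(2,n)=1$ (which is where oddness of $n$ is used, automatic from $n$ being an odd prime $\geq 3$).
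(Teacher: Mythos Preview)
Your proof is correct and uses exactly the same construction as the paper: vertices identified with $\mathbb{Z}_n$, edge $\{x,y\}$ coloured by $x+y \pmod n$, and cycles $C_k=\{\{x,x+k\}:x\in\mathbb{Z}_n\}$ for $k=1,\dots,(n-1)/2$. Your verification is in fact slightly more thorough than the paper's (you explicitly check properness and give a cleaner edge-disjointness argument via the difference $|y-x|$), but the approach is identical.
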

\begin{proof}
Identify the vertices of $K_n$ with $\mathbb{Z}/n\mathbb{Z}$.
Colour $ij$ by $i+j \pmod n$. For $i=1, \dots, (n-1)/2$, let $C_i=\{a(a+i): a=1, \dots, n\}$. Notice that $C_i$ is rainbow since we have $a(a+i), a'(a'+i)\in C_i \implies c(a(a+i))=2a+i,  c(a'(a'+i))=2a'+i$, and so distinct edges in $C_i$ have distinct colours.
Since $n$ is prime and $i\leq (n-1)/2$, $a+ki=a \pmod n \implies n\mid k$ which implies that $C_i$ is a cycle. The cycles $C_{1}, \dots, C_{(n-1)/2}$ are disjoint because for $i\neq j$, $a(a+i)=b(b+j)$ implies that  $b+j=a$ and $a+i=b$, which implies $i+j\equiv 0 \pmod n$.
\end{proof}

Using the above, and results about perfect matching decompositions, we can prove our first result about 2-factor decompositions. The following should be compared with Lemma~\ref{Lemma_PerfectMatching_Decomposition_ExtraPseudorandom}. It shows that under analogous assumptions to that lemma, one can find a near-decomposition into rainbow $2$-factors. It also has a divisibility condition on the size of the host graph. This divisibility condition will later be removed.
\begin{lemma}\label{Lemma_2_Factor_Decomposition_Divisibility}
Let $1\geq \delta, p\GG k^{-1}\GG \gamma \GG n^{-1}$, with  $k$  prime and $k\mid n$.
Let $G$ be a properly coloured, globally $(1-p)\delta n/2$-bounded,  $(\gamma, \delta, n)$-regular graph, $J$ a properly coloured $(\gamma, p, n)$-typical graph which is edge-disjoint and colour-disjoint  from $G$.
Then $G\cup J$ has $(1-2p)\delta n/2$ edge-disjoint rainbow $2$-factors with cycles of length $\geq k$.
\end{lemma}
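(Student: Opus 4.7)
The plan is to partition both the vertices and the colours of $G\cup J$ into $k$ equal classes, find perfect rainbow matchings in each small bipartite piece using Lemma~\ref{Lemma_PerfectMatching_Decomposition_ExtraPseudorandom}, and then assemble these matchings into rainbow $2$-factors guided by a rainbow Hamiltonian decomposition of $K_k$ from Lemma~\ref{Lemma_Existence_Hamiltonian_Decomposition} (which exists since $k$ is prime).

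Concretely, I would first randomly partition $V(G\cup J)$ into $k$ parts $U_1,\ldots,U_k$ of size $n/k$ (using $k\mid n$) and independently place each colour into a random class $C_i$, each with probability $1/k$. By Lemma~\ref{Lemma_Random_Subgraph_General}(a) restricted to colours, followed by Lemma~\ref{Lemma_Random_Subgraph_General}(d) restricted to a pair of parts, for each triple $(i,j,l)$ with $j\neq l$ the piece $G_{C_i}[U_j,U_l]$ is $(\gamma',\delta/k,n/k)$-regular and globally $(1-p/k)(\delta/k)(n/k)$-bounded, while $J_{C_i}[U_j,U_l]$ is $(\gamma',p/k,n/k)$-typical and colour-disjoint from $G_{C_i}[U_j,U_l]$. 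A union bound over the $O(k^3)$ triples works because $k^{-1}\gg\gamma\gg n^{-1}$ forces $k^3\ll n$. Applying Lemma~\ref{Lemma_PerfectMatching_Decomposition_ExtraPseudorandom} to each such pair yields a family $\mathcal{M}_{i,j,l}$ of at least $T:=(1-p/k)(\delta/k)(n/k)$ edge-disjoint perfect rainbow matchings.

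To assemble $2$-factors I would use the decomposition of $K_k$ into rainbow Hamiltonian cycles $H^{(1)},\ldots,H^{((k-1)/2)}$. For each $s$ with $H^{(s)}$ visiting $\sigma_s(1),\ldots,\sigma_s(k)$ and edge $\sigma_s(t)\sigma_s(t+1)$ having $K_k$-colour $c_s(t)$, each cyclic shift $r\in\{0,\ldots,k-1\}$, and each index $\alpha\in\{1,\ldots,T\}$, define the $2$-factor
\[
F^{(s,r,\alpha)} := \bigcup_{t=1}^{k}\, M^{(c_s(t)+r\bmod k,\,\sigma_s(t),\,\sigma_s(t+1))}_{\alpha},
\]
where $M^{(i,j,l)}_\alpha$ is the $\alpha$-th matching of $\mathcal{M}_{i,j,l}$. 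Each $F^{(s,r,\alpha)}$ is a $2$-factor because every vertex in $U_{\sigma_s(t)}$ gets one matching edge to $U_{\sigma_s(t-1)}$ and one to $U_{\sigma_s(t+1)}$, and it is rainbow since the $k$ colour classes $C_{c_s(t)+r\bmod k}$ are pairwise disjoint as $t$ ranges over $[k]$. Every cycle of $F^{(s,r,\alpha)}$ cyclically alternates through all $k$ parts, so its length is a multiple of $k$ and in particular $\geq k$. Each triple $(i,j,l)$ is used by exactly one configuration $(s,r)$ — the unique $s$ with $\{j,l\}\in E(H^{(s)})$ and the unique shift $r$ matching the colour class — so different $(s,r,\alpha)$ draw from disjoint matchings and the $F^{(s,r,\alpha)}$ are mutually edge-disjoint. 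The total count is $\tfrac{k-1}{2}\cdot k\cdot T=(k-1)(1-p/k)\delta n/(2k)$, which exceeds $(1-2p)\delta n/2$ because $p\gg k^{-1}$.

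The main obstacle is the parameter book-keeping in the partitioning step: after two successive random restrictions one must verify that the new global-boundedness parameter for $G_{C_i}[U_j,U_l]$ is at most $p/k$ (the density to which typicality of $J$ drops), so that Lemma~\ref{Lemma_PerfectMatching_Decomposition_ExtraPseudorandom} can be applied with $J_{C_i}[U_j,U_l]$ in the role of the typical side. The hierarchy $p\gg k^{-1}\gg\gamma\gg n^{-1}$ is exactly what provides enough slack for this inequality and for simultaneous concentration over all $O(k^3)$ triples.
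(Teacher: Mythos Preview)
Your proposal is correct and follows essentially the same approach as the paper: random partitions of vertices and colours into $k$ classes, Lemma~\ref{Lemma_PerfectMatching_Decomposition_ExtraPseudorandom} on each bipartite piece, and assembly via the rainbow Hamiltonian decomposition of $K_k$ from Lemma~\ref{Lemma_Existence_Hamiltonian_Decomposition} together with cyclic shifts of the colour classes. The only cosmetic differences are that the paper restricts to vertex pairs first and then to colour classes (using Lemma~\ref{Lemma_Random_Subgraph_General}(d) followed by Lemma~\ref{Lemma_Random_Subgraph_Bipartite}(a)), and it fixes the common family size at $(1-p)k^{-2}\delta n$ rather than your slightly larger $T$, but these lead to the same count and the same edge-disjointness argument.
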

\begin{proof}
Partition $V(K_n)$ randomly into $k$ sets $V_1, \dots, V_k$ of size $m=n/k$. Partition $C(G\cup J)$ randomly into $k$ sets $C_1, \dots, C_k$, with each colour ending up in each set independently with probability $1/k$.

By Lemma~\ref{Lemma_Random_Subgraph_General} (d), with probability $1-o(k^2n^{-1})$, the balanced bipartite graphs $G[V_i, V_j]$ are  $(2\gamma, \delta, k^{-1} n)$-regular and globally $(1+ \gamma)(1-p)\delta k^{-2} n$-bounded.
Also by Lemma~\ref{Lemma_Random_Subgraph_General} (d), with probability $1-o(kn^{-1})$, the graphs $J[V_i, V_j]$ are  $(2\gamma, p, k^{-1} n)$-typical.
Let $G_{a,b,c}=G_{C_{c}}[V_a, V_b]$ and $J_{a,b,c}=J_{C_{c}}[V_a, V_b]$. By Lemma~\ref{Lemma_Random_Subgraph_Bipartite} (a), with  probability $1-o(kn^{-1})$,  the balanced bipartite  graphs $G_{a,b,c}$ are $(4\gamma, k^{-1}\delta, k^{-1} n)$-regular, and the  graphs $J_{a,b,c}$ are $(4\gamma, k^{-1}p, k^{-1} n)$-typical. By a union bound, with high probability these hold for all the graphs $G_{a,b,c}, J_{a,b,c}$ simultaneously (using $n^{-1}\LL k^{-1}$).
Since $\gamma\LL p$, the graphs $G_{a,b,c}$ are globally $(1-p/2)(k^{-1}\delta)(k^{-1} n)$-bounded.

For any $a,b,c\in \{1, \dots, k\}$ with $a\neq b$, apply Lemma~\ref{Lemma_PerfectMatching_Decomposition_ExtraPseudorandom} to $G_{a,b,c}$ and $J_{a,b,c}$ with $n'=k^{-1}n$, $\delta'=k^{-1}\delta$, $p'=k^{-1}p$, $\gamma'=4\gamma$. This gives a family $\mathcal{M}_{a,b,c}=\{M^i_{a,b,c}:1\leq i\leq (1-p)k^{-2}\delta n\}$ of $(1-p)k^{-2}\delta n$ rainbow matchings  with every matching $M_{a,b,c}^i\in \mathcal{M}_{a,b,c}$ having $V(M_{a,b,c}^i)=V_a\cup V_b$ and $C(M_{a,b,c}^i)\subseteq C_c$.

Consider a proper $k$-colouring of $K_{k}$ with vertex set $\{1,\ldots,k\}$ and a decomposition into rainbow Hamiltonian cycles $H_1,$ $\dots,$ $H_{(k-1)/2}$ (which exists by Lemma~\ref{Lemma_Existence_Hamiltonian_Decomposition}).
For every $i\in [(k-1)/2]$,  $j\in  [(1-p)k^{-2}\delta n]$, and $t\in [k]$, let $F_{i,t}^j= \bigcup_{\substack{ab\in E(H_i)\\ c=c(ab)+t\pmod k}}M_{a,b,c}^j$.

\begin{claim}
For all $i, j, t$, $F_{i,t}^j$ is a rainbow $2$-factor with cycles of length $\geq k$.
\end{claim}
\begin{proof}
Without loss of generality, by reordering the vertex sets $V_1$ and colour sets $C_j$, we can suppose that $K_k$ is ordered and coloured so that the vertex sequence of $H_i$ is $1,2,\dots, k$ and so that $c(a(a+1))+t\mod k =a$. Then  $F_{i,t}^j= \bigcup_{a=1}^kM_{a,a+1,a}^j$.

Let $v\in V(F_{i,t}^j)$ with $v\in V_a$. We claim that $N_{F_{i,t}^j}(v)=\{x,y\}$  for some $x\in V_{a+1}, y\in V_{a-1}$.
To see this, notice that since $M_{a, a+1, a}^j$, $M_{a-1, a, a-1}^j$ are perfect matchings between $V_a$ and $V_{a+1}, V_{a-1}$ respectively, they each have one edge through $v$. Let these edges be $vx$ and $vy$ to get two vertices with $x,y\in N(v)$. To see that there are no  edges other than $vx, vy$ containing $v$, notice that $v\in V_a$ and none of the matchings forming $F_{i,t}^j$ other than $M_{a, a+1,  a}^j$ and $M_{a-1, a, a-1}^j$ touch $V_a$.
We have shown that $F_{i,t}^j$ is $2$-regular and so a $2$-factor.

To see that $F_{i,t}^j$ has no cycles shorter than $k$, consider a cycle $C$ with vertex sequence $v_1, v_2, \dots, v_s$.  Without loss of generality, we can suppose that $H_i$ is labelled so that $v_1\in V_1$ and $v_2\in V_2$. By the previous paragraph, this implies that $v_i\in V_{i \pmod k}$ must hold for all $i$, implying that $s\geq k$.

To see that $F_{i,t}^j$ is rainbow, notice first that it is the union of rainbow sets of edges $M_{a,a+1,a}^j$ for $a=1, \dots, k$. For any such matching we have $C(M_{a,a+1,a}^j)\subseteq C_{a}$.   Together with the colour-disjointness of $C_{a}$ and $C_{a'}$,  we get that  $M_{a,a+1,a}^j$ and $M_{a',a'+1,a'}^j$ are colour-disjoint for $a\neq a'$. Thus, $F_{i,t}^j$ is rainbow.
\end{proof}
Notice that $F_{i,t}^j$ and $F_{i',t'}^{j'}$ are edge-disjoint for $(i,j,t)\neq (i', j',t')$ (since any matching $M_{a,b,c}^d$ is contained in exactly one of the 2-factors $F_{i,t}^j$, and  matchings $M_{a,b,c}^d$, $M_{a',b',c'}^{d'}$  are edge-disjoint for $(a,b,c,d)\neq (a',b',c',d')$).
The total number of $2$-factors we have is $k\times  (1-p)k^{-2}\delta n\times (k-1)/2=(1-p)(1-k^{-1})\frac{\delta n}{2}\geq (1-2p)\frac{\delta n}{2}$.
\end{proof}

In the remainder of this section we prove that the above lemma is true even without the divisibility condition on $n$. The idea of the proof is to randomly partition the graph $G$ into subgraphs which do satisfy the divisibility condition. Applying Lemma~\ref{Lemma_2_Factor_Decomposition_Divisibility} to each of these subgraphs gives a decomposition of them into $2$-factors. By carefully putting  the $2$-factors together we get a decomposition of the whole graph into $2$-factors. First we need the following standard number-theoretic result.
\begin{lemma}\label{Lemma_Number_Decompose_Into_Primes}
Let $1\GG \varepsilon\GG s^{-1}\GG k^{-1} \GG n^{-1}$.
There exist prime numbers $k_1, k_2\in [k, (1+\varepsilon) k]$,  integers $s'= (1\pm \varepsilon)s$ and  $n_1, \dots, n_{s'}=(1\pm \varepsilon)n/s$ so that $n_1+\dots+n_{s'}=n$  and for each $i=1, \dots, s'$ either $k_1\mid n_i$ or $k_2\mid n_i$.
\end{lemma}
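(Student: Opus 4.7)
My plan is to pick two primes $k_1<k_2$ in $[k,(1+\varepsilon)k]$ using a short-interval prime result, and then build the decomposition explicitly by taking $s-1$ parts that are multiples of $k_1$ close to $n/s$, together with one part that is a multiple of $k_2$ chosen to absorb the residue of $n$ modulo $k_1$.

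\emph{Finding the primes.} From the hierarchy $1\GG\varepsilon\GG s^{-1}\GG k^{-1}\GG n^{-1}$ and the definition of $\LL$ I can extract polynomial relations; in particular, choosing the hidden constants large enough, I get $\varepsilon k\geq k^{1-\eta}$ for any prescribed small $\eta>0$. Any standard prime-in-short-intervals theorem (Hoheisel, Ingham, Huxley, or Baker--Harman--Pintz) then gives $\pi((1+\varepsilon)k)-\pi(k)\gg \varepsilon k/\log k$, which is certainly at least $2$ for large $k$. Pick two distinct primes $k_1<k_2$ in $[k,(1+\varepsilon)k]$; being distinct primes they are automatically coprime.

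\emph{Building the decomposition.} Take $s'=s$. Let $A=k_1\lfloor n/(sk_1)\rfloor$, a multiple of $k_1$ with $|A-n/s|\leq k_1\leq 2k$. Since $\gcd(k_1,k_2)=1$, there is a unique residue class modulo $k_1$ satisfying $t\cdot k_2\equiv n\pmod{k_1}$; pick the representative $t^*$ of this class closest to $n/(sk_2)$, so that $|t^*k_2-n/s|\leq k_1k_2\leq 2k^2$. Set $n_{s'}=t^*k_2$; this is a multiple of $k_2$ and by construction $n-n_{s'}$ is a nonnegative multiple of $k_1$. Let $M=(n-n_{s'})/k_1$ and distribute $M$ as evenly as possible across $s-1$ nonnegative integers $m_1,\dots,m_{s-1}\in\{q,q+1\}$ with $q=\lfloor M/(s-1)\rfloor$; set $n_i=k_1 m_i$ for $i<s'$.

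\emph{Verifying the error.} By tracking how the $O(k^2)$ error in $n_{s'}$ propagates through the distribution of $M$, every $n_i$ differs from $n/s$ by $O(k^2)$. The hierarchy yields $k^2\leq \varepsilon n/s$ (by arranging $\LL$-constants so that $n$ dominates $sk^2$ polynomially, using $n\geq Ck^C$ and $k\geq Cs^C$ with $C$ large), so each $n_i\in[(1-\varepsilon)n/s,(1+\varepsilon)n/s]$. By construction $\sum_{i=1}^{s'}n_i=n$, and each $n_i$ is divisible by either $k_1$ (for $i<s'$) or $k_2$ (for $i=s'$).

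\emph{Main obstacle.} The only non-elementary input is the existence of two primes in the very short interval $[k,(1+\varepsilon)k]$; this requires a prime-in-short-intervals theorem rather than just Bertrand's postulate, but the polynomial margin provided by the $\LL$-hierarchy makes the interval comfortably longer than any of the classical thresholds, so one can cite standard analytic number theory as a black box. Everything else is bookkeeping with the Chinese remainder theorem and the hierarchy.
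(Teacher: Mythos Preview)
Your argument is correct and takes a somewhat different route from the paper. Both proofs invoke the same short-interval prime result (Hoheisel-type) to get two primes $k_1,k_2\in[k,(1+\varepsilon)k]$; the difference lies in how the decomposition $n=n_1+\dots+n_{s'}$ is produced. The paper writes $n=k_1z_1+k_2z_2$ with both $z_1,z_2$ large (via the Frobenius/Sylvester coin lemma) and then chops each of $z_1,z_2$ into roughly equal pieces of size $\approx n/(sk_1)$ and $\approx n/(sk_2)$, obtaining many parts divisible by $k_1$ and many divisible by $k_2$, at the cost of only getting $s'=(1\pm\varepsilon)s$. You instead fix $s'=s$ exactly and use a single $k_2$-multiple to absorb the residue of $n$ modulo $k_1$, distributing the remainder evenly among $s-1$ multiples of $k_1$; this is a cleaner and more direct CRT argument, and the exact equality $s'=s$ is a small bonus. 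The paper's version is more symmetric in $k_1,k_2$, but that symmetry is not needed downstream.

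Two minor remarks. First, you define $A=k_1\lfloor n/(sk_1)\rfloor$ but never use it; you can delete that sentence. Second, your error bound ``every $n_i$ differs from $n/s$ by $O(k^2)$'' is correct but loose: the actual deviation for $i<s'$ is $O(k+k^2/s)$, which is still $O(k^2)$, and the hierarchy indeed gives $sk^2\leq\varepsilon n$ via operation~(a) (with $x_1=n^{-1}$, $x_2=k^{-1}$, $x_3=s^{-1}$, $x_4=\varepsilon$, the inequality $x_1\leq x_2^2x_3x_4$ is of the admissible form).
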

{
\begin{proof}
Since $1\GG \varepsilon \GG k^{-1}$, we can choose two distinct primes $k_1, k_2\in [k, (1+0.2\varepsilon) k]$. (When $\varepsilon$ is constant, this is a consequence of the Prime Number Theorem. More generally, we need the result of Hoheisel that there is some fixed number $\alpha>0$ such that for sufficiently large $n$, there is a prime in the interval $[n, (1+n^{-\alpha})n]$. (See \cite{baker1996difference}). Since $\varepsilon\GG n^{-1}$ implies $0.2\varepsilon \geq n^{-\alpha}$, we get that there is a prime in $[x, (1+0.2\varepsilon)x]$ for sufficiently large $x$).
As $\varepsilon\GG k_1^{-1},k_2^{-1},s^{-1} \GG n^{-1}$, there are integers {$z_1, z_2\geq 100n/\varepsilon s k$} with $k_1z_1+k_2z_2=n$. (See \cite[pg 25--26, Corollary~2]{sierpinski1988elementary}. Applying this corollary with $a=k_1, b=k_2$, $n'=n-(k_1+k_2)\lceil 100n/\varepsilon s k\rceil$ gives non-negative integers $x,y$ with $k_1 x+k_2 y= n-(k_1+k_2)\lceil 100n/\varepsilon s k\rceil$. Letting $z_1=x+\lceil 100n/\varepsilon s k\rceil$, $z_2=y+\lceil 100n/\varepsilon s k\rceil$ gives the numbers we want.)

For some appropriate $s''\geq 50\varepsilon^{-1}$, pick integers $m_1,\ldots,m_{s''}$ so that $m_i=(1\pm 0.5\varepsilon)n/sk_1$ and $\sum_{i=1}^{s''}m_i=z_1$. This is possible as {$z_1\geq 100n/\varepsilon s k$}.
Similarly, for some appropriate $s'$, pick integers $m_{s''+1},\ldots,m_{s'}$ so that $m_i=(1\pm 0.5\varepsilon)n/sk_2$ and $\sum_{i=s''+1}^{s'}m_i=z_2$.

For each $1\leq i\leq s''$, let $n_i=k_1m_i$, and for each $s''<i\leq s'$, let $n_i=k_2m_i$. Then, for each $1\leq i\leq s'$, $n_i=(1\pm 0.5\varepsilon)n/s$. Thus, as $n_1+\ldots+n_{s'}=n$, we have $s'=(1 \pm \varepsilon)s$. The numbers $k_1$, $k_2$, $s'$, and $n_1,\ldots,n_{s'}$ then satisfy the conditions of the lemma.
\end{proof}}

The following lemma shows how any large set can be evenly covered by subsets whose sizes satisfy the divisibility condition of Lemma~\ref{Lemma_2_Factor_Decomposition_Divisibility}.
\begin{lemma}\label{Lemma_Near_Design_Existence}
Let $1\GG  \varepsilon\GG \hat s^{-1} \GG k^{-1}\GG \gamma  \GG n^{-1}$.
There exists a family $\mathcal H$ of {partitions} of $[n]$ and a number  $s=(1\pm \varepsilon)\hat s$ with the following properties.
\begin{enumerate}[(i)]
\item $\mathcal H=\mathcal M_1 \cup \dots \cup \mathcal M_{s^2 \log^2 n}$ with each $\mathcal M_i=\{M_i^1, \dots, M_i^s\}$ for disjoint sets $M_i^1, \dots, M_i^s$ satisfying $|M_i^j|=(1\pm \varepsilon) n/s$ and $\bigcup_{j=1}^s M_i^s=[n]$.
\item For each $i,j$ there is some  prime number $k_i^j\in[k, (1+\varepsilon)k]$ with $k_i^j\mid |M_i^j|$.
\item
For each distinct pair  $x,y\in [n]$, there are $(1\pm 5\varepsilon)s\log^2 n$ sets $M_i^j$ containing both $x$ and $y$.
\end{enumerate}
\end{lemma}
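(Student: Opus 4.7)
The plan is to build $\mathcal H$ by sampling $T := s^2 \log^2 n$ partitions of $[n]$ independently at random, where each partition has a common list of block sizes supplied by Lemma~\ref{Lemma_Number_Decompose_Into_Primes}. Properties (i) and (ii) will be immediate by construction, while (iii) will follow from a Chernoff bound and a union bound over vertex pairs.

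Concretely, first choose an auxiliary $\varepsilon'$ with $\varepsilon \GG \varepsilon' \GG \hat s^{-1}$ and apply Lemma~\ref{Lemma_Number_Decompose_Into_Primes} with its parameters $(\varepsilon,s,k,n)$ set equal to $(\varepsilon',\hat s,k,n)$. This yields primes $k_1,k_2 \in [k,(1+\varepsilon')k]\subseteq [k,(1+\varepsilon)k]$, an integer $s := (1\pm\varepsilon')\hat s = (1\pm\varepsilon)\hat s$, and positive integers $n_1,\dots,n_s$ summing to $n$ with $n_j = (1\pm\varepsilon')n/\hat s = (1\pm 3\varepsilon')n/s \in (1\pm\varepsilon)n/s$, each divisible by $k_1$ or $k_2$. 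Then, for each $i \in [T]$ independently, sample a uniformly random permutation $\pi_i$ of $[n]$ and set
\[
M_i^j \;:=\; \pi_i\!\left(\{n_1+\cdots+n_{j-1}+1,\,\ldots,\,n_1+\cdots+n_j\}\right) \qquad \text{for } j \in [s].
\]
Each $\mathcal M_i = \{M_i^1,\ldots,M_i^s\}$ is a partition of $[n]$ into parts of sizes $n_1,\ldots,n_s$, each of which is divisible by a prime in $\{k_1,k_2\} \subseteq [k,(1+\varepsilon)k]$, so (i) and (ii) hold with $k_i^j$ chosen in $\{k_1,k_2\}$ to divide $n_j$.

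For (iii), fix a pair $x \neq y \in [n]$. For any given $i$, the probability that $x,y$ share a block of $\mathcal M_i$ equals
\[
p \;=\; \sum_{j=1}^s \frac{n_j(n_j-1)}{n(n-1)} \;=\; \frac{\sum_j n_j^2 - n}{n(n-1)} \;=\; (1\pm 3\varepsilon)\frac{1}{s},
\]
where the last step uses $n_j = (1\pm\varepsilon)n/s$ together with $s \ll n$ (which follows from the hierarchy). Because the permutations $\pi_i$ are independent, the count $X_{xy}$ of indices $i$ for which $x,y$ lie in a common block of $\mathcal M_i$ is $\mathrm{Binomial}(T,p)$, with $\E(X_{xy}) = (1\pm 3\varepsilon)s\log^2 n$. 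The Chernoff bound (Lemma~\ref{Lemma_Chernoff}) then gives
\[
\P\!\left(X_{xy} \neq (1\pm 5\varepsilon)\, s\log^2 n\right) \;\le\; 2\exp\!\left(-\tfrac{1}{3}\varepsilon^2 s \log^2 n\right) \;=\; o(n^{-2}),
\]
where the final estimate uses $\varepsilon^2 s \gg 1$, a consequence of $\varepsilon \GG \hat s^{-1}$. A union bound over the $\binom{n}{2}$ pairs shows that with positive probability (iii) holds for every pair simultaneously, and fixing any such outcome yields the required family $\mathcal H$.

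The main (mild) obstacle is keeping the multiplicative error terms consistent: Lemma~\ref{Lemma_Number_Decompose_Into_Primes} returns errors relative to its input $\hat s$, whereas the sizes $|M_i^j|$ and the count $X_{xy}$ must be controlled relative to $s = (1\pm\varepsilon)\hat s$. Invoking that lemma with the polynomially smaller $\varepsilon'$ absorbs the extra factors cleanly. Because the $T$ partitions are sampled fully independently, $X_{xy}$ is a genuine binomial random variable, so the concentration step needs no martingale or correlation analysis beyond a plain Chernoff inequality.
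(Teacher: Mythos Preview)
Your proposal is correct and follows essentially the same approach as the paper: fix one block-size profile via Lemma~\ref{Lemma_Number_Decompose_Into_Primes}, take $s^2\log^2 n$ independent uniformly random permutations of that fixed partition, and verify (iii) by a Chernoff bound on the i.i.d.\ indicators that $x,y$ land in a common block. The only cosmetic difference is that you introduce an auxiliary scale $\varepsilon'$ to convert the error relative to $\hat s$ into one relative to $s$, whereas the paper absorbs this directly into its constants.
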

\begin{proof}
Apply Lemma~\ref{Lemma_Number_Decompose_Into_Primes}  in order to find a number $s=(1\pm\varepsilon )\hat s$, primes $k_1, k_2= (1\pm \varepsilon)k$ and numbers $n_1, \dots, n_{s}=(1\pm \varepsilon)n/s$ so that $n_1+\dots+n_{s}=n$  and, for each $i=1, \dots, s$, either $k_1\mid n_i$ or $k_2\mid n_i$.
Let $M^1,\dots, M^{s}$ be disjoint subsets of $[n]$ with $|M^i|= n_i$.

Choose $s^2\log^2 n$ permutations $\sigma_1, \dots, \sigma_{s^2 \log^2 n}$ of $[n]$ uniformly at random. Let $\mathcal M_i=\{\sigma_i(M^1), \dots, \sigma_i(M^s)\}$ and $\mathcal H=\mathcal M_1 \cup \dots \cup \mathcal M_{s^2\log^2 n}$.  Notice that as a consequence of  the properties from Lemma~\ref{Lemma_Number_Decompose_Into_Primes}, all the conditions of the lemma hold for $\mathcal H$ aside from (iii).
We will show that this condition holds with high probability.

Let $x,y\in [n]$ be distinct vertices. We have $\P(x,y \in \mathcal M_i)= \sum_{j=1}^s\binom{n_j}{2}\big/\binom n2=(1\pm 4\varepsilon)s^{-1}$. Let $X$ be the number of families $\mathcal M_i$ which contain $x,y$. We have that $X$ is bounded above and below by random variables with distributions $\mathrm{Binomial}(s^2\log^2 n, (1+4\varepsilon)s^{-1})$ and $\mathrm{Binomial}(s^2 \log^2 n, (1-4\varepsilon)s^{-1})$  respectively.
From Chernoff's Bound we have
$\mathbb P\big(|X- s\log^2 n|> 5\varepsilon s\log^2 n\big)\leq 4e^{-\frac{\varepsilon^2 s \log^2 n}{100}}= o(n^{-2})$ (using $\varepsilon\GG \hat s^{-1}$). By the union bound taken over all pairs $x,y$, we have that with high probability all  pairs  $x,y\in [n]$, have $(1\pm 5\varepsilon)s\log^2 n$ families $\mathcal M_i$ containing both $x$ and $y$.
\end{proof}

By combining the lemmas of this section we can prove Lemma~\ref{Lemma_2_Factor_Decomposition_Divisibility} without the divisibility assumption.
\begin{lemma}\label{Lemma_2_Factor_Decomposition}
Let $1\geq \delta, p,  \log^{-1}n  \GG k^{-1}\GG\gamma \GG n^{-1}$.
Let $G$ be a properly coloured, globally $(1-p)\delta n/2$-bounded,  $(\gamma, \delta, n)$-regular graph and let $J$ be a properly coloured $(\gamma, p, n)$-typical graph which is edge-disjoint and colour-disjoint  from $G$ but has the same vertex set.
Then $G\cup J$ has $(1-p)\delta n/2$ edge-disjoint rainbow $2$-factors with cycles of length $\geq k$.
\end{lemma}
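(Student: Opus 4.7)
The plan is to bootstrap from Lemma~\ref{Lemma_2_Factor_Decomposition_Divisibility} (the divisibility case) by using the design family provided by Lemma~\ref{Lemma_Near_Design_Existence} to partition $V(G)$ into pieces of prime-divisible sizes, then applying the divisibility case locally on each piece.

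First I fix parameters $s,\varepsilon$ with $p,\delta,\log^{-1} n\GG\varepsilon\GG s^{-1}\GG k^{-1}\GG\gamma\GG n^{-1}$, and apply Lemma~\ref{Lemma_Near_Design_Existence} with $\hat s=s$ to obtain partitions $\mathcal{M}_1,\dots,\mathcal{M}_T$ of $V(G)$ with $T=s^2\log^2 n$, where each $\mathcal{M}_i=\{M_i^1,\dots,M_i^s\}$ has parts of size $(1\pm\varepsilon)n/s$, and each $|M_i^j|$ is divisible by some prime $k_i^j\in[k,(1+\varepsilon)k]$, and such that each vertex-pair is in $(1\pm 5\varepsilon)s\log^2 n$ parts.

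Next I randomly assign each edge of $G\cup J$ uniformly to one of the $\approx s\log^2 n$ partitions containing both its endpoints in a common part. Writing $G_i,J_i$ for the spanning subgraphs formed by the edges assigned to $\mathcal{M}_i$, I use concentration arguments in the flavour of Lemma~\ref{Lemma_Random_Subgraph_General} together with the random-permutation structure hidden inside Lemma~\ref{Lemma_Near_Design_Existence} to show that w.h.p.\ every restriction $G_i[M_i^j]$ is nearly-regular with average degree $\sim \delta n/(s^2\log^2 n)$, globally $(1-p/2)$-times-average-degree-over-$2$ bounded, and every $J_i[M_i^j]$ is suitably typical.  Within each partition I also partition the colours of $G_i\cup J_i$ randomly into $s$ disjoint sets $C_i^1,\dots,C_i^s$ and discard from $G_i[M_i^j]\cup J_i[M_i^j]$ every edge whose colour does not lie in $C_i^j$.

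Now I apply Lemma~\ref{Lemma_2_Factor_Decomposition_Divisibility} to $G_i[M_i^j]\cup J_i[M_i^j]$ restricted to colours $C_i^j$ with prime $k_i^j$ to obtain a family of edge-disjoint rainbow $2$-factors on $M_i^j$, each with cycles of length $\geq k_i^j\geq k$.  For each $r$ and each $i$, the union $F_i^r=\bigcup_{j=1}^{s} F_i^{j,r}$ over parts within a single partition is a $2$-regular spanning subgraph of $V(G)$, its cycles inherit the length lower bound $\geq k$ from the factors on each part, and it is rainbow because the colour sets $C_i^1,\dots,C_i^s$ are pairwise disjoint. Finally, $2$-factors from different partitions $\mathcal{M}_i\neq\mathcal{M}_{i'}$ are automatically edge-disjoint since each edge was assigned to exactly one $\mathcal{M}_i$.

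The \emph{hard part} is the accounting: the colour-split loses a factor $s$ per partition, and it is not obvious that summing over the $T=s^2\log^2 n$ partitions recovers the claimed $(1-p)\delta n/2$ rainbow $2$-factors rather than something like $\delta n/(2s)$.  Making the book-keeping work requires exploiting the coverage property (iii) of Lemma~\ref{Lemma_Near_Design_Existence} carefully, and the assumption $\log^{-1} n\GG k^{-1}$ in the hypothesis is exactly what leaves enough room to absorb the accumulated $(\gamma_t,\delta_t,n_t)$-style error terms and colour-partition losses as polynomial quantities in the $\GG$ hierarchy. I expect the decisive step of the formal proof to be an inductive or iterated version of the above, in which the colour-partition loss per pass is compensated by repeating across the $T$ partitions while maintaining edge-disjointness across passes.
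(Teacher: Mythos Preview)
Your framework is essentially the paper's: use Lemma~\ref{Lemma_Near_Design_Existence} to get $T=s^2\log^2 n$ partitions, randomly assign each edge to one of the $\approx s\log^2 n$ partitions covering it, split colours into $s$ classes, and apply Lemma~\ref{Lemma_2_Factor_Decomposition_Divisibility} on each part. The gap is exactly the one you flagged in your ``hard part'': your scheme genuinely only yields $\approx \delta n/(2s)$ rainbow $2$-factors, and no amount of iteration fixes this, because each edge lives in only one $G_i$ and you have already used every edge once.

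The missing idea is a \emph{shift parameter}. Partition the colours \emph{once} globally into $C_1,\dots,C_s$ (not separately for each $i$). Then, for each partition $\mathcal{M}_i$ and each shift $t\in[s]$, let $G_{i,j,t}$ be the edges of $G_i$ on $M_i^j$ whose colour lies in $C_{j+t\pmod s}$. For fixed $i,t$ the parts $j=1,\dots,s$ are both vertex-disjoint and colour-disjoint, so unions over $j$ are rainbow $2$-factors on $V(G)$; for fixed $i$ but different $t$ the graphs $G_{i,j,t}$ and $G_{i,j,t'}$ use disjoint colour classes and hence disjoint edge sets, so the resulting $2$-factors are edge-disjoint. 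This extra index $t$ multiplies the yield per partition by $s$, and the count becomes
\[
\underbrace{s^2\log^2 n}_{\text{partitions }i}\times \underbrace{s}_{\text{shifts }t}\times \underbrace{(1-p)\frac{\delta n}{2s^3\log^2 n}}_{\text{per }(i,t)} \;=\; (1-p)\frac{\delta n}{2},
\]
as required. Your proposed ``inductive or iterated version'' cannot do this, because after one pass the edges are exhausted; the recovery must happen within a single pass by reusing the same edge graph $G_i$ against $s$ different colour assignments.
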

\begin{proof}
Choose $1\geq \delta, p,\log^{-1}n\GG \varepsilon \GG \hat s^{-1}\GG k^{-1}\GG\gamma \GG n^{-1}$.

Apply Lemma~\ref{Lemma_Near_Design_Existence} to find a family $\mathcal {H}$ of partitions of $V(G)$ and a number $s=(1\pm \varepsilon)\hat s$ {so that the properties in that lemma hold (with the associated notation)}. Let $\sigma$ be a random permutation of $V(G)$.
Lemma~\ref{Lemma_Random_Subgraph_General} (c) implies that for each $i,j$ with probability $1-o(n^{-1})$ we have that ${G}[\sigma(M_{i}^j)]$ is $(2\gamma, \delta, |M_{i}^j|)$-regular and globally $(1+\gamma) (1-p)\frac{\delta |M_i^j|^2}{2n}$-bounded, while $J[\sigma(M_{i}^j)]$ is $(2\gamma, p, |M_{i}^j|)$-typical.

Partition the colours independently at random into sets $C_1, \dots, C_s$. Let ${G}^{C_j}$ and $J^{C_j}$ denote the subgraphs of ${G}$ and $J$ respectively consisting of colour $C_j$ edges.
By Lemma~\ref{Lemma_Random_Subgraph_General} (a), for all $i, t$, with  probability $1-o(sn^{-1})$ the graph ${G}^{C_t}[\sigma(M_{i}^j)]$ is $(4\gamma, \delta/s, |M_{i}^j|)$-regular and globally $(1+\gamma) (1-p)\frac{\delta |M_{i}^j|^2}{2n}$-bounded, while $J^{C_t}[\sigma(M_{i}^j)]$ is $(4\gamma, p/s, |M_{i}^j|)$-typical.

For any edge $e$, let $d_{\mathcal H}(e)$ be the number of partitions in $\mathcal{H}$ which contain some set containing $e$. From the properties from Lemma~\ref{Lemma_Near_Design_Existence}, we have $d_{\mathcal H}(e)= (1\pm 5\varepsilon)s\log^2 n$. For each edge $e$, choose an arbitrary injection $f_e:[d_{\mathcal H}(e)]\to  [ s^2\log^2 n]$ for which $e$ is containing in some set in $\mathcal M_{f_e(m)}$ for all $m\in [d_{\mathcal H}(e)]$.
For each edge $e$, choose a number $m_e$ out of  $1,\dots, (1+ 5\varepsilon)s\log^2 n$ at random. For $i=1, \dots, s^2\log^2 n$, let ${G}_i$ and $J_i$ be subgraphs of ${G}$ and $J$ respectively consisting of edges $e$ with $f_e(m_e)=i$ (here it is possible that $m_e>d_{\mathcal H}(e)$, in which case $f_e(m_e)$ is undefined. When this happens, the edge $e$ is placed in neither of the graphs ${G}_i, J_i$). Notice that edges of $G$ are placed into ${G}_i$ and  $J_i$ independently with probability $1/(1+ 5\varepsilon)s\log^2 n$.

For $i=1, \dots, s^2\log^2 n$  and $j,t=1, \dots, s$, define ${G}_{i,j,t}$ and $J_{i,j,t}$ to be the subgraphs of ${G}$ and $J$ with vertex set $\sigma(M_i^j)$ consisting of edges $xy$ which are simultaneously contained   in $E({G}_i)$, contained in $\sigma(M_i^j)$, and whose colours are in $C_{j+t\pmod s}$.
Notice that  ${G}_{i,j,t}$ and $J_{i,j,t}$ are formed from ${G}^{C_t}[\sigma(M_{i}^j)]$ and $J^{C_t}[\sigma(M_{i}^j)]$ by choosing every edge with probability $1/(1+ 5\varepsilon)s\log^2 n$.
By Lemma~\ref{Lemma_Random_Subgraph_General} (b) with  probability $1-o(n^{-1})$, the graph ${G}_{i,j,t}$ is $(8\gamma, \frac{\delta}{(1+ 5\varepsilon)s^2\log^2n}, |M_i^j|)$-regular and globally $(1+\gamma)^2 (1-p)\frac{\delta |M_i^j|^2}{(1+ 5\varepsilon)2n s\log^2 n}$-bounded, while $J_{i,j,t}$ is $(8\gamma, \frac{p}{(1+ 5\varepsilon)s^2\log^2n}, |M_i^j|)$-typical. Using $|M_i^j|=(1\pm \varepsilon)\frac ns$ and $p\GG\varepsilon, \gamma$, we have that ${G}_{i,j,t}$ is globally $(1-p/4)\frac{\delta |M_i^j|}{(1+ 5\varepsilon)2 s^2\log^2 n}$-bounded. Notice that $100s^4 \log^2 n\leq n$, and so by a union bound the choices of $\sigma, C_1, \dots,  C_s, {G}_i, J_i$ can be done so that for all $i,j,t$, the graphs ${G}_{i,j,t}$ and $J_{i,j,t}$ have all these properties simultaneously.

Apply Lemma~\ref{Lemma_2_Factor_Decomposition_Divisibility} with $\delta'=\frac{\delta}{(1+ 5\varepsilon)s^2\log^2n}$, $p'=\frac{p}{(1+ 5\varepsilon)s^2\log^2n}$, $n'=|M_i^j|$, $\gamma'=12\gamma$, $k'=k_i^j$ in order to find a family $\mathcal F_{i,j,t}$ of $(1-p/2)\frac{\delta |M_i^j|}{(1+ 5\varepsilon)2 s^2 \log^2n}$ of edge-disjoint rainbow $2$-factors with cycles of length $\geq k$ in ${G}_{i,j,t}\cup J_{i,j,t}$ (for this application we are using  $1\geq \delta, p, \log^{-1}n \GG  s^{-1}\GG k^{-1} \GG  \gamma\GG n^{-1}$ to conclude that  $1\geq \delta',p'\GG  k'^{-1}\GG \gamma' \GG n'^{-1}$. The divisibility condition in Lemma~\ref{Lemma_2_Factor_Decomposition_Divisibility} comes from the property of $k_i^j$ in Lemma~\ref{Lemma_Near_Design_Existence}).
Since $|M_i^j|=(1\pm \varepsilon)\frac ns$ and $p\GG \varepsilon$, we can choose a subfamily $\mathcal F'_{i,j,t}$ of size $(1-p)\frac{\delta n}{2 s^3 \log^2n}$.
Let $\mathcal F_{i,t}=\bigcup_{j=1}^s \mathcal F'_{i,j,t}$ to get a family of $(1-p)\frac{\delta n}{2 s^3 \log^2n}$  edge-disjoint rainbow $2$-factors in $G\cup J$ with cycles of length $\geq k$. To see that these are rainbow $2$-factors notice that for $j\neq j'$ the graphs ${G}_{i,j,t}\cup J_{i,j,t}$ and ${G}_{i,j',t}\cup J_{i,j',t}$  are vertex-disjoint (their vertex sets are $\sigma(M_i^j)$ and $\sigma(M_i^{j'})$ respectively) and colour-disjoint (their colours are contained in $C_{j+t\pmod s}$ and $C_{j'+t\pmod s}$ respectively).
Since the $2$-factors in $\{\mathcal F_{i,t}: 1\leq i\leq s^2\log^2 n, 1\leq t\leq s\}$ are all edge-disjoint, we have a total of $(1-p)\delta n/2$ edge-disjoint rainbow 2-factors as required.
\end{proof}

By combining the above with a regularization lemma we can find $2$-factor decompositions in nearly-complete graphs which have few large colours.
\begin{lemma}\label{Lemma_2_Factor_Decomposition_No_Gap}
Let $1\geq \varepsilon, \log^{-1} n\GG  k^{-1}\GG\gamma\GG n^{-1}$.
Let $H$ be a properly coloured, $(\gamma, 1-\varepsilon^2, n)$-typical graph with $\leq (1-60\varepsilon)n$ colours having $\geq (1-60\varepsilon)n/2$ edges. Then $H$  has $(1-3\varepsilon)n/2$ edge-disjoint rainbow spanning $2$-factors with cycles of length $\geq k$.
\end{lemma}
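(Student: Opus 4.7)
The plan is to split $H$ by colours into a small typical graph $J$ and a near-complete graph $G$, regularize $G$ while leaving a slight gap in its global boundedness, and then feed the pair $(G^*,J)$ into Lemma~\ref{Lemma_2_Factor_Decomposition}. First I choose an auxiliary parameter $p$ with $\varepsilon^3\GG p\GG k^{-1}$, which is possible since $\varepsilon\GG k^{-1}$. I independently place each colour of $H$ into a set $C_J$ with probability $p$, let $J\subseteq H$ be the spanning subgraph whose edges have colours in $C_J$, and set $G:=H\setminus J$. Since $H$ is properly coloured it is globally $n/2$-bounded, so Lemma~\ref{Lemma_Random_Subgraph_General}(a) applies: with high probability $J$ is $(2\gamma, p(1-\varepsilon^2), n)$-typical and, running the same lemma with selection probability $1-p$, $G$ is $(2\gamma,(1-p)(1-\varepsilon^2),n)$-typical. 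By construction $G$ and $J$ are edge- and colour-disjoint.

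I then apply the regularization lemma (Lemma~\ref{Lemma_regular_subgraph_few_large_colours}) to $G$ with parameter $2\varepsilon$ in the role of ``$\varepsilon$''. Its colour-count hypothesis holds because any colour of size $\geq(1-40\varepsilon)n/2$ in $G$ has size at least this much in $H$ and is therefore among the at most $(1-60\varepsilon)n\leq(1-40\varepsilon)n$ colours of $H$ of size $\geq(1-60\varepsilon)n/2$. The minimum-degree hypothesis $\delta(G)\geq(1-4\varepsilon^2)n$ follows from $\delta(G)\geq(1-2\gamma)(1-p)(1-\varepsilon^2)n\geq(1-2\gamma-p-\varepsilon^2)n$ together with $p,\gamma\ll\varepsilon$. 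The lemma then produces a spanning subgraph $G^*\subseteq G$ which is $(\gamma,\delta_*,n)$-regular for some $\delta_*\geq 1-2\varepsilon+36\varepsilon^2$ and globally $(1-2\varepsilon)n/2$-bounded.

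Finally, I apply Lemma~\ref{Lemma_2_Factor_Decomposition} with $G^*$ in the role of ``$G$'', $J$ in the role of ``$J$'', $\delta_*$ in the role of ``$\delta$'', $p':=p(1-\varepsilon^2)$ in the role of ``$p$'', and $2\gamma$ in the role of ``$\gamma$''. The typicality of $J$ and its edge- and colour-disjointness from $G^*$ are immediate, and $G^*$ is $(2\gamma,\delta_*,n)$-regular since it is already $(\gamma,\delta_*,n)$-regular. For the global-boundedness hypothesis, a direct computation gives $(1-p')\delta_*\geq(1-\varepsilon^3)(1-2\varepsilon+36\varepsilon^2)\geq 1-2\varepsilon$, so the $(1-2\varepsilon)n/2$ bound on $G^*$ is also a $(1-p')\delta_* n/2$ bound. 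The lemma then yields $(1-p')\delta_* n/2$ edge-disjoint rainbow $2$-factors in $G^*\cup J\subseteq H$ with cycles of length $\geq k$. Since $(1-p')\delta_*\geq(1-p)(1-2\varepsilon)\geq 1-3\varepsilon$, we obtain at least $(1-3\varepsilon)n/2$ such $2$-factors, as required.

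The main obstacle is parameter coordination: in Lemma~\ref{Lemma_2_Factor_Decomposition} the single parameter ``$p$'' simultaneously controls the density of the auxiliary typical graph and the slack in the global boundedness of the regular graph, so these must be made to match. Taking $p$ polynomially small in $\varepsilon$ (below $\varepsilon^2$ but above $k^{-1}$) makes $J$ large enough to be typical at the required density while keeping the combined degree loss $\delta_*-p'\delta_*$ small enough to still guarantee $(1-3\varepsilon)n/2$ many rainbow $2$-factors.
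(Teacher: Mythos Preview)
Your proof is correct and follows essentially the same route as the paper: split $H$ by a random set of colours into a small typical graph $J$ and a near-complete graph $G$, pass $G$ through the regularization lemma (Lemma~\ref{Lemma_regular_subgraph_few_large_colours}) to obtain a regular subgraph with a boundedness gap, and then apply Lemma~\ref{Lemma_2_Factor_Decomposition}. The only differences are cosmetic parameter choices --- the paper normalises so that $J$ is $(2\gamma,p,n)$-typical exactly and regularizes with $3\varepsilon$ rather than $2\varepsilon$ --- but the verification of hypotheses and the final count go through identically.
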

\begin{proof}
Choose $1\geq \varepsilon, \log^{-1}n\GG p \GG k^{-1}\GG\gamma\GG n^{-1}$.
Apply Lemma~\ref{Lemma_Random_Subgraph_General} (a) with $p=p/(1-\varepsilon^2)$, $\delta=(1-\varepsilon^2)$, and  $\gamma=\gamma$ in order to partition $H$ into a $(2\gamma, p,n)$-typical graph $J$ and a colour-disjoint graph $G$ with $\delta(G)\geq \delta(H)-\Delta(J)\geq  (1-3\varepsilon^2)n$.

Apply Lemma~\ref{Lemma_regular_subgraph_few_large_colours} to $G$ with $\varepsilon'=3\varepsilon$, $\gamma=\gamma$ in order to find a subgraph $G'$ which is globally $(1-3\varepsilon)n/2$-bounded and $(\gamma,\delta ,n)$-regular for some $\delta \geq  1-3\varepsilon+81\varepsilon^2$.
Notice that $G'$ is globally $(1-p)\delta n/2$-bounded (since $p\LL \varepsilon$).

By Lemma~\ref{Lemma_2_Factor_Decomposition} applied to $G'$ and $J$ with $\gamma'=2\gamma$, $\delta=\delta$, $p=p$, $k=k$ there are $(1-p)\delta n/2\geq (1-3\varepsilon)n/2$ edge-disjoint rainbow $2$-factors with cycles of length $\geq k$.
\end{proof}

\subsection{Hamiltonian cycles}
Here we  take the $2$-factor decompositions from the previous section and modify them into Hamiltonian decompositions. The proofs and results in this section are very similar to the ones in Section~\ref{Section_Perfect_Matchings} where we took nearly-perfect matchings and modified them into perfect matchings.
The following is the first result we prove about turning a family of $2$-factors into a family of Hamiltonian cycles. It parallels Lemma~\ref{Lemma_decomposition_completion} for turning nearly-perfect matchings into perfect matchings.

As in Lemmas~\ref{Lemma_rotation_small_cycle} and~\ref{Lemma_completion_Hamiltonian} the following lemma has a mix of directed and undirected graphs. As before in the cycles we build, we do not care about the directions of their edges.
\begin{lemma}\label{Lemma_decomposition_completion_Hamiltonian}
Let $1\GG \delta\GG p\GG  \theta \GG k^{-1}\GG n^{-1}$ and $t\leq n$.
Suppose that we have the following edge-disjoint, properly coloured graphs on a set of $n$ vertices.
\begin{itemize}
\item $F_1, \dots, F_t$  rainbow  $2$-factors with cycles of length $\geq k$.
\item  $(3p(\theta n)^2,  \theta n)$-dense graphs $E_1, E_2, E_3$.
\item Digraphs $D_X, D_Y$  with $\delta^+(D_X), \delta^+(D_Y)\geq 3\delta n$.
\end{itemize}
Additionally suppose that $F_1\cup\dots\cup F_t$, $E_1, E_2, E_3$, $D_X$, and $D_Y$ are colour-disjoint.
Then there are edge-disjoint rainbow Hamiltonian cycles $C_1, \dots, C_t$ in $E_1\cup E_2\cup E_3\cup D_X\cup D_Y\cup F_1\cup \dots \cup F_t$.
\end{lemma}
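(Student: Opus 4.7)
The plan is to construct the Hamiltonian cycles $C_1,\dots,C_t$ one at a time, building $C_{s+1}$ by applying Lemma~\ref{Lemma_completion_Hamiltonian} to $F_{s+1}$ together with the unused portions $E_j^s=E_j\setminus E(C_1\cup\dots\cup C_s)$ and analogously defined $D_X^s,D_Y^s$. Since only edges are removed, these subgraphs remain pairwise colour-disjoint and colour-disjoint from $F_{s+1}$, and $C_{s+1}$ will be edge-disjoint from $C_1,\dots,C_s$ by construction. Each cycle of every $F_i$ has length at least $k$, so $F_i$ has at most $n/k$ cycles, and Lemma~\ref{Lemma_completion_Hamiltonian} contributes at most $n/k$ edges to each of $E_1,E_2,E_3,D_X,D_Y$ per iteration; the total number of edges removed from any of these over the whole process is therefore at most $tn/k\leq n^2/k$.

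The main obstacle is maintaining the out-degree condition $\delta^+(D_X^s),\delta^+(D_Y^s)\geq \delta n+m_{s+1}$ across all $t$ iterations. For each vertex $v$, the out-degree of $v$ in $D_X^s$ drops by at most one per iteration in which $v$ was chosen as some $x_j^i$, and analogously for $D_Y$. To control this, I will pre-commit to the choices $(x_j^i,y_j^i)$ before any iteration begins: independently for each $i$ and each cycle of $F_i$, sample an adjacent pair uniformly at random. In each iteration $i$ the vertex $v$ lies in exactly one cycle of $F_i$, whose length is at least $k$, so the probability that $v$ is the $x$-endpoint of the chosen pair is at most $1/k$. Writing $N_X(v)=|\{i\leq t:v\in\{x_0^i,\dots,x_{m_i}^i\}\}|$, we have $\E[N_X(v)]\leq t/k\leq n/k$, and $N_X(v)$ is stochastically dominated by $\mathrm{Binomial}(t,1/k)$. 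Since $k^{-1}\GG n^{-1}$ makes $n/k$ polynomially large in $n$, a Chernoff bound (Lemma~\ref{Lemma_Chernoff}) combined with a union bound over $v$ and over both $N_X,N_Y$ shows that with positive probability $N_X(v),N_Y(v)\leq 2n/k$ for every $v$. Fix such an assignment of pairs.

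It remains to verify the hypotheses of Lemma~\ref{Lemma_completion_Hamiltonian} at each step $s+1$. For the dense graphs, Lemma~\ref{Lemma_Expander_Delete_Edges} implies that $E_j^s$ is $(3p(\theta n)^2-n^2/k,\theta n)$-dense; the inequality $2p\theta^2 k\geq 1+\theta$ (which follows from $p,\theta\GG k^{-1}$) then gives $3p(\theta n)^2-n^2/k\geq p(\theta n)^2+m_{s+1}\theta n$, so $E_j^s$ is $(p(\theta n)^2+m_{s+1}\theta n,\theta n)$-dense as required. For the digraphs, $\delta^+(D_X^s)\geq 3\delta n-\max_v N_X(v)\geq 3\delta n-2n/k\geq \delta n+m_{s+1}$, where the last step uses $k\delta\geq 2$ (which follows from $\delta\GG k^{-1}$), and an identical bound applies to $D_Y^s$. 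The side condition $m_{s+1}\leq p\theta n$ follows from $m_{s+1}<n/k$ together with $k^{-1}\leq p\theta$, itself implied by $p,\theta\GG k^{-1}$. Hence Lemma~\ref{Lemma_completion_Hamiltonian} produces a rainbow Hamiltonian cycle $C_{s+1}$, and iterating for $s=0,1,\dots,t-1$ yields the required edge-disjoint rainbow Hamiltonian cycles.
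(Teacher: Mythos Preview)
Your proof is correct and follows essentially the same approach as the paper's own argument. Both pre-select the distinguished edges $x_j^iy_j^i$ by a random choice on each cycle of each $F_i$, use a Chernoff/union-bound argument to ensure no vertex is selected too often (the paper phrases this as $\Delta(M_1\cup\dots\cup M_t)\leq 4n/k$ for the chosen matchings, while you separately bound $N_X(v),N_Y(v)\leq 2n/k$, which is equivalent), and then iterate Lemma~\ref{Lemma_completion_Hamiltonian}, tracking the density of $E_j^s$ via Lemma~\ref{Lemma_Expander_Delete_Edges} and the out-degrees of $D_X^s,D_Y^s$ via the pre-selection bound.
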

\begin{proof}
For $i=1, \dots, t$ let $m_i$ be the number of cycles in $F_i$, and note that $m_i\leq k^{-1}n\leq p\theta n$.

\begin{claim}
There are matchings $M_1, \dots, M_t$ with $\Delta(M_1\cup \dots \cup M_t)\leq 4n/k$ such that $M_i\subseteq F_i$ is a matching of size $m_i$ containing exactly one edge from each cycle of $F_i$.
\end{claim}
\begin{proof}
  Choose each matching $M_i$  uniformly at random from all matchings containing exactly one edge from each cycle of $F_i$.
If $t\leq 4n/k$ then we trivially have $\Delta(M_1\cup \dots \cup M_t)\leq 4n/k$.
  Otherwise, notice that for any vertex $v$, its degree in  $M_1\cup \dots \cup M_t$  is stochastically dominated by $\mathrm{Binomial}(t, 2k^{-1})$. By Chernoff's Bound with $\varepsilon=1/2$, the union bound, and $t\leq n$ we have $\P(\Delta(M_1\cup \dots \cup M_t)> 4t/k)\leq 4ne^{-2t/12k}<1)$ (using $n^{-1}\LL k^{-1}$ and $4n/k\leq t$). A choice of matchings  satisfying the claim thus exists.
\end{proof}
Let $M_i=\{x_i^1y_i^1, \dots, x_i^{m_i}y_i^{m_i}\}$.
We construct the Hamiltonian cycles $C_1, \dots, C_t$ one-by-one using Lemma~\ref{Lemma_completion_Hamiltonian}.  They will have the following properties.
\begin{enumerate}[(i)]
\item $xy\in D_X\cap C_i \implies x=x_i^j$ for some $j\in \{1, \dots, m_i\}$.
\item $yx\in D_Y\cap C_i \implies y=y_i^j$ for some $j\in \{1, \dots, m_i\}$.
\item $e(E_i \cap C_i)\leq k^{-1} n$ for $i=1,2,3$.
\end{enumerate}
Suppose that we have constructed Hamiltonian cycles $C_1, \dots, C_s$ satisfying the above properties. Let $E_i^s=E_i\setminus (C_1\cup \dots\cup C_s)$ for $i=1,2,3$, $D^s_X=D_X\setminus (C_1\cup \dots\cup C_s)$, $D^s_Y=D_Y\setminus (C_1\cup \dots\cup C_s)$. Using (i), $\delta^+(D_X)\geq 3\delta n$, and  $\Delta(M_1\cup \dots \cup M_t)\leq 4n/k\leq \delta n$,  we have $d^+_{D^s_X}(x)\geq d^+_{D_X}(x)- d^+_{D_X\cap (C_1\cup \dots \cup C_s)}(x)\geq\delta^+(D_X)- 4n/k\geq \delta n + nk^{-1}\geq  \delta n + m_{s+1}$ for any $x\in X$. Similarly  $d^+_{D^s_Y}(y)\geq \delta n+m_{s+1}$ for $y\in Y$. Using Lemma~\ref{Lemma_Expander_Delete_Edges} and (iii), $E_1^s, E_2^s, E_3^s$ are $(3p(\theta n)^2-s(k^{-1}n), \theta n)$-dense. Since $k^{-1} ns,  m_{s+1}\theta n\leq  p(\theta n)^2$, they are also  $(p(\theta n)^2+m_{s+1}\theta n, \theta n)$-dense. By Lemma~\ref{Lemma_completion_Hamiltonian} applied to $F_{s+1}$, $E^s_1, E^s_2, E^s_3, D^s_X, D^s_Y$  with $\theta=\theta$, $p=p$, $m=m_{s+1}$, $\delta=\delta$, $(x_j, y_j)=(x_i^j, y_i^j)$ there is a rainbow Hamiltonian cycle $C_{s+1}$ satisfying (i) -- (iii).
\end{proof}

We will need the following easy lemma.
\begin{lemma}\label{Lemma_High_Degree_Orientation}
Let $\delta\GG n^{-1}$.
Every graph $G$ with $\delta(G)\geq \delta n$ has an orientation $D$ such that $\delta^+(D)\geq \delta n/3$.
\end{lemma}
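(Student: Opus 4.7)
The plan is to produce an orientation in which each vertex has in-degree and out-degree differing by at most one, so that $d^+(v) \geq \lfloor d(v)/2\rfloor \geq \delta n/3$ (where the final inequality uses $\delta \GG n^{-1}$, which makes $\delta n$ large). The classical tool here is an Eulerian orientation.

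First I would reduce to the Eulerian case. Form an auxiliary graph $G'$ from $G$ by adding a single new vertex $v_0$ and joining it to every vertex of $G$ whose degree in $G$ is odd. By the handshake lemma, the number of odd-degree vertices of $G$ is even, so $v_0$ itself has even degree; every other vertex of $G'$ also has even degree by construction. Hence every connected component of $G'$ is Eulerian, and we may fix an Eulerian orientation on each such component. In the resulting oriented graph $D'$, every vertex $v$ satisfies $d^+_{D'}(v) = d^-_{D'}(v) = d_{G'}(v)/2$.

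Now let $D$ be the orientation of $G$ obtained by deleting $v_0$ from $D'$ and keeping the orientations of the remaining edges. Each vertex $v$ of $G$ loses at most one out-edge in this deletion (namely the edge to $v_0$, if present), so
\[
d^+_D(v) \;\geq\; d^+_{D'}(v) - 1 \;=\; \tfrac{1}{2}d_{G'}(v) - 1 \;\geq\; \tfrac{1}{2}d_G(v) - 1 \;\geq\; \tfrac{\delta n}{2} - 1.
\]
Since $\delta \GG n^{-1}$, we have $\delta n \geq 6$, so $\delta n/2 - 1 \geq \delta n/3$, as required. There is really no obstacle here; the only point to watch is that the reduction step handles odd-degree vertices cleanly, which is exactly what appending $v_0$ accomplishes.
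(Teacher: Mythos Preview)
Your proof is correct and takes a genuinely different route from the paper. The paper orients every edge independently at random, observes that $d^+(v)\sim\mathrm{Binomial}(d(v),1/2)$, applies the Chernoff bound to get $\P(d^+(v)<d(v)/3)\leq 2e^{-d(v)/54}=o(n^{-1})$, and finishes by a union bound over vertices. Your Eulerian-orientation argument is deterministic and more elementary: it avoids any concentration inequality and actually yields the stronger conclusion $\delta^+(D)\geq \lfloor\delta(G)/2\rfloor$, using $\delta\GG n^{-1}$ only in the trivial final step $\delta n/2-1\geq \delta n/3$. The paper's approach, on the other hand, fits naturally with the probabilistic toolkit used throughout and requires no auxiliary construction. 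Both are perfectly adequate here.
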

\begin{proof}
Orient the graph at random. Notice that for any vertex $d^+(v)\sim\mathrm{Binomial}(0.5, d(v))$. By Chernoff's Bound we have $\mathbb P(d^+(v)<d(v)/3)\leq
2e^{-{d(v)}/{54}}\leq 2e^{-{\delta n}/{54}} = o(n^{-1})$ (using $\delta\GG n^{-1}$). Taking a union bound over all vertices shows that some suitable orientation exists.
\end{proof}

The following  version of Lemma~\ref{Lemma_decomposition_completion_Hamiltonian} will be easier to apply.
\begin{lemma}\label{Lemma_decomposition_completion_Hamiltonian_Codegrees}
Let $1\GG p\GG\gamma,  k^{-1}\GG n^{-1}.$
For $t\leq n$, let $F_1, \dots, F_t$ be  edge-disjoint rainbow  $2$-factors with cycles of length $\geq k$.
Let $G$ be an edge-disjoint, colour-disjoint  $(\gamma, p, n)$-typical graph.
Then  there are edge-disjoint  rainbow Hamiltonian cycles $C_1, \dots, C_t$ in $F_1\cup \dots\cup F_t\cup G$.
\end{lemma}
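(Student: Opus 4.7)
The plan is to derive this from Lemma~\ref{Lemma_decomposition_completion_Hamiltonian} by randomly splitting the colours of the typical graph $G$ into five pieces which serve as the three dense graphs $E_1, E_2, E_3$ and the two high-out-degree digraphs $D_X, D_Y$ required by that lemma. This closely parallels the derivation of Lemma~\ref{Lemma_decomposition_completion_codegrees} from Lemma~\ref{Lemma_decomposition_completion}, with the additional wrinkle that we need to orient two of the pieces.

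First I would introduce auxiliary constants $1 \GG p \GG \delta_0 \GG p_1 \GG \theta \GG k^{-1} \GG n^{-1}$ with $\theta \GG \gamma$. Partition $C(G)$ at random into five disjoint sets $C_{E_1}, C_{E_2}, C_{E_3}, C_{D_X}, C_{D_Y}$ by placing each colour into $C_{E_i}$ independently with probability $3p_1/(0.99\,p)$ and into $C_{D_X}$ and $C_{D_Y}$ independently with probability $10\delta_0/p$ each. Since $p \GG \delta_0, p_1$ the total probability is at most $1$, so this is well-defined. Let $E_1, E_2, E_3$ and $D_X^{\ast}, D_Y^{\ast}$ be the subgraphs of $G$ spanned by these colour classes.

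By Lemma~\ref{Lemma_Random_Subgraph_General}(a), with high probability each $E_i$ is $(2\gamma, 3p_1/0.99, n)$-typical and each of $D_X^{\ast}, D_Y^{\ast}$ is $(2\gamma, 10\delta_0, n)$-typical. By Lemma~\ref{Lemma_Codegrees_Pseudorandom} (applied with $\mu = \theta$, using $p_1, \theta \GG \gamma$), each $E_i$ is $(3p_1(\theta n)^2, \theta n)$-dense. The typicality of $D_X^{\ast}, D_Y^{\ast}$ gives $\delta(D_X^{\ast}), \delta(D_Y^{\ast}) \geq 9\delta_0 n$, so by Lemma~\ref{Lemma_High_Degree_Orientation} we can orient each into a digraph $D_X, D_Y$ with $\delta^+(D_X), \delta^+(D_Y) \geq 3\delta_0 n$.

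The five graphs $E_1, E_2, E_3, D_X, D_Y$ are pairwise edge- and colour-disjoint by construction, and are edge- and colour-disjoint from $F_1 \cup \dots \cup F_t$ because $G$ has that property. Finally, apply Lemma~\ref{Lemma_decomposition_completion_Hamiltonian} with parameters $\delta = \delta_0$, $p = p_1$, $\theta = \theta$, $k = k$ (whose hierarchy $1 \GG \delta_0 \GG p_1 \GG \theta \GG k^{-1} \GG n^{-1}$ holds by our choice) to obtain the required edge-disjoint rainbow Hamiltonian cycles $C_1, \dots, C_t$. There is no real obstacle here beyond careful bookkeeping of the parameter hierarchy; the content of the lemma is bundled into the earlier completion lemma and the randomization/pseudorandomness machinery already developed.
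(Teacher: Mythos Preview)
Your proposal is correct and follows essentially the same approach as the paper: randomly split the colours of $G$ into five classes to obtain three typical (hence dense, via Lemma~\ref{Lemma_Codegrees_Pseudorandom}) graphs $E_1,E_2,E_3$ and two typical graphs that are oriented via Lemma~\ref{Lemma_High_Degree_Orientation} to serve as $D_X,D_Y$, then apply Lemma~\ref{Lemma_decomposition_completion_Hamiltonian}. The only differences are cosmetic constants in the splitting probabilities and the names of the auxiliary parameters.
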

\begin{proof}
Choose $1\GG p\GG  \delta \GG p_1 \GG \theta  \GG \gamma, k^{-1}\GG n^{-1}.$
Choose   five disjoint sets of colours $C_{E_1},$ $C_{E_2},$ $C_{E_3},$  $C_{D_X},$ $C_{D_Y}$ from $G$, with each colour put independently into  $C_{E_1}, C_{E_2}, C_{E_3},   C_{D_X}, C_{D_Y}$  with probabilites $4p_1p^{-1}$, $4p_1p^{-1}$, $4p_1p^{-1}$,  $10\delta p^{-1}$, $10\delta p^{-1}$ respectively (this is possible since $p\GG  \delta \GG p_1$ implies $4p_1p^{-1}+4p_1p^{-1}+4p_1p^{-1}+10\delta p^{-1} + 10\delta p^{-1}\leq 1$). Let $E_1, E_2, E_3$, $D_X$, $D_Y$ be the subgraphs of $G$ with colours from  $C_{E_1}, C_{E_2}, C_{E_3},  C_{D_X}, C_{D_Y}$ respectively.
By Lemma~\ref{Lemma_Random_Subgraph_General} (a), with positive probability   $E_1, E_2, E_3$ is  $(2\gamma, 4p_1, n)$-typical and $D_X$, $D_Y$ are $(2\gamma, 10\delta, n)$-typical.
By Lemma~\ref{Lemma_High_Degree_Orientation} and  $(2\gamma, 10\delta, n)$-typicality, $D_X$ and  $D_Y$ can be oriented so that $\delta^+(D_X)$, $\delta^+(D_Y)\geq 3\delta n$.

By Lemma~\ref{Lemma_Codegrees_Pseudorandom} applied with $\mu=\theta$, $\gamma'=\gamma/2$, $E_1, E_2, E_3$ are $( 3p_1(\theta  n)^2, \theta  n)$-dense. By Lemma~\ref{Lemma_decomposition_completion_Hamiltonian} applied with  $\theta=\theta,$ $p=p_1$,  we obtain the required Hamiltonian cycles.
\end{proof}

The following lemma should be compared with Lemmas~\ref{Lemma_PerfectMatching_Decomposition_ExtraPseudorandom} and~\ref{Lemma_2_Factor_Decomposition}. It produces a near-decomposition into Hamiltonian cycles under a similar assumption to those lemmas.
\begin{lemma}\label{Lemma_Hamiltonian_Decomposition_gap}
Let $1\geq \delta, p,  \log^{-1}n\GG  \gamma \GG n^{-1}$.
Let $G$ be a properly coloured $(\gamma, \delta, n)$-typical graph which is globally $(1-p)\delta n/2$-bounded. Then $G$  has $(1-p)\delta n/2$ edge-disjoint rainbow Hamiltonian cycles.
\end{lemma}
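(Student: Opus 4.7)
The plan is to partition the colours of $G$ at random into three parts and then successively invoke the $2$-factor decomposition of Lemma~\ref{Lemma_2_Factor_Decomposition} and the Hamiltonian completion of Lemma~\ref{Lemma_decomposition_completion_Hamiltonian_Codegrees}, in the same spirit as the proof of Corollary~\ref{Lemma_PerfectMatching_Decomposition_Typical}. First I would choose auxiliary constants satisfying $1\geq\delta,p,\log^{-1}n\GG p_1\GG p_2\delta\GG k^{-1}\GG\gamma$. Applying Lemma~\ref{Lemma_Random_Subgraph_General}(a) twice, I would split the colour classes of $G$ so that a main subgraph $G'$ receives each colour with probability $1-p_1$, and two reserve subgraphs $J_1,J_2$ receive the remaining colours with probabilities $p_1-p_2$ and $p_2$ respectively. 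With high probability $G'$ is $(4\gamma,(1-p_1)\delta,n)$-typical, $J_1$ is $(4\gamma,(p_1-p_2)\delta,n)$-typical, and $J_2$ is $(4\gamma,p_2\delta,n)$-typical; since $G'\subseteq G$, $G'$ inherits global $(1-p)\delta n/2$-boundedness from $G$.

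Next I would apply Lemma~\ref{Lemma_2_Factor_Decomposition} to the pair $(G',J_1)$, setting that lemma's regularity parameter to $\delta'=(1-p_1)\delta$ and its typicality/boundedness slack parameter to $q=(p_1-p_2)\delta$. A short arithmetic check shows $1-p\leq(1-q)(1-p_1)$ provided $p\geq p_1+q(1-p_1)$, which follows from our choice of constants, so $G'$ is indeed globally $(1-q)\delta' n/2$-bounded. The lemma then produces at least $(1-p)\delta n/2$ edge-disjoint rainbow $2$-factors in $G'\cup J_1$, each of whose cycles has length at least $k$. Feeding these $2$-factors together with the typical reserve graph $J_2$ into Lemma~\ref{Lemma_decomposition_completion_Hamiltonian_Codegrees} rotates a bounded number of edges of each $2$-factor for edges of $J_2$, merging each $2$-factor into a single rainbow Hamiltonian cycle and producing the required $(1-p)\delta n/2$ edge-disjoint rainbow Hamiltonian cycles in $G'\cup J_1\cup J_2\subseteq G$.

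The main obstacle is simply parameter bookkeeping: we must verify that the polynomial chain $1\geq\delta,p,\log^{-1}n\GG p_1\GG p_2\delta\GG k^{-1}\GG 4\gamma\GG n^{-1}$ satisfies every hypothesis needed by the two black-box lemmas (in particular, $(p_2\delta)\GG k^{-1}$ is needed for Lemma~\ref{Lemma_decomposition_completion_Hamiltonian_Codegrees}, and $(p_1-p_2)\delta\GG k^{-1}$ together with $\delta,(1-p_1)\delta\GG k^{-1}$ is needed for Lemma~\ref{Lemma_2_Factor_Decomposition}), and that the arithmetic $(1-q)(1-p_1)\geq 1-p$ survives for our choice of $p_1,p_2$. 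Both are routine once the constants are chosen to respect the polynomial flexibility of the $\GG$-notation.
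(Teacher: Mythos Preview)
Your proposal is correct and follows essentially the same approach as the paper: split the colours of $G$ into a main part $G'$ and two reserves $J_1,J_2$, feed $(G',J_1)$ to Lemma~\ref{Lemma_2_Factor_Decomposition} to get a near-decomposition into rainbow $2$-factors with long cycles, and then use $J_2$ in Lemma~\ref{Lemma_decomposition_completion_Hamiltonian_Codegrees} to merge each $2$-factor into a Hamiltonian cycle. The only cosmetic difference is that the paper splits symmetrically (both $J_1$ and $J_2$ receive each colour with the same probability $p_1$, giving $(2\gamma,p_1\delta,n)$-typical reserves), which slightly simplifies the bookkeeping you flag; in particular it avoids having to track $(p_1-p_2)\delta$ separately.
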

\begin{proof}
Choose $1\GG \delta, p, \log^{-1}n\GG p_1 \GG k^{-1}\GG\gamma\GG n^{-1}$.
Apply Lemma~\ref{Lemma_Random_Subgraph_General} (a) with $p'=p_1$, $\delta=\delta$, and  $\gamma=\gamma$ in order to partition $H$ into three colour-disjoint graphs $G'$, $J_1$ and $J_2$ so that $J_1$ and $J_2$ are $(2\gamma, p_1\delta, n)$-typical and $G'$ is $(2\gamma, (1-2p_1)\delta, n)$-typical. Setting $\delta_1=(1-2p_1)\delta$ and using  $p\GG p_1$, we have that $G'$ is $(2\gamma, \delta_1, n)$-typical and globally $(1-p_1)\delta_1 n/2$-bounded.

 Apply Lemma~\ref{Lemma_2_Factor_Decomposition} to $G'$ and $J_1$ with $\gamma'=2\gamma, \delta'=\delta_1$, $p'=p_1$, $k=k$ in order to find $(1-p_1)\delta_1 n/2$ edge-disjoint rainbow spanning $2$-factors $F_1, \dots, F_{(1-p_1)\delta_1n/2}$ in $G'$ whose cycles have length $\geq k$.

Apply Lemma~\ref{Lemma_decomposition_completion_Hamiltonian_Codegrees} to $F_1, \dots, F_{(1- p_1)n/2}$ and $J_2$ with $p=\delta p_1$, $\gamma'=2\gamma$, $k=k$, $t=(1-p_1)\delta_1 n/2$ in order to find $(1- p_1)\delta_1 n/2$ edge-disjoint rainbow Hamiltonian cycles  in $G$. Since $p\GG p_1$ implies $(1- p_1)\delta_1 n/2\geq (1- p)\delta n/2$,  we have enough cycles for the lemma.
\end{proof}

The following lemma should be compared with Lemma~\ref{Lemma_2_Factor_Decomposition_No_Gap}. Under similar assumptions, it produces a near-decomposition into Hamiltonian cycles rather than $2$-factors.
\begin{lemma}\label{Lemma_Hamiltonian_Decomposition_no_gap}
Let $1\GG \varepsilon, \log^{-1}n\GG  \gamma \GG n^{-1}$.
Let $G$ be a properly coloured, $(\gamma, 1-\varepsilon^2, n)$-typical graph with $\leq (1-180\varepsilon)n$ colours having $\geq (1-180\varepsilon)n/2$ edges. Then $G$  has $(1-6\varepsilon)n/2$ edge-disjoint rainbow Hamiltonian cycles.
\end{lemma}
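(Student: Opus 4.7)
My plan is to closely mirror the proof of Lemma~\ref{Lemma_2_Factor_Decomposition_No_Gap}, then complete the rainbow $2$-factors produced there into rainbow Hamiltonian cycles using Lemma~\ref{Lemma_decomposition_completion_Hamiltonian_Codegrees}. First I will choose auxiliary parameters $1 \GG \varepsilon, \log^{-1} n \GG p_1, p_2 \GG k^{-1} \GG \gamma \GG n^{-1}$. Using Lemma~\ref{Lemma_Random_Subgraph_General}(a) twice (equivalently, randomly partitioning the colour set of $G$ into three parts) I will split $G$ into three colour-disjoint spanning subgraphs $G_0, J_1, J_2$ where $J_1$ is $(2\gamma, p_1(1-\varepsilon^2), n)$-typical and $J_2$ is $(2\gamma, p_2(1-\varepsilon^2), n)$-typical. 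The maximum degrees of $J_1, J_2$ are both $O(\varepsilon^2 n)$, so $\delta(G_0) \geq (1-3\varepsilon^2)n$.

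Next I will apply Lemma~\ref{Lemma_regular_subgraph_few_large_colours} to $G_0$ with $\varepsilon' = 6\varepsilon$ to extract a spanning $(\gamma', \delta, n)$-regular subgraph $G'$ with $\delta \geq 1 - 6\varepsilon + 324\varepsilon^2$ that is globally $(1-6\varepsilon)n/2$-bounded. The colour hypothesis for this application, namely that at most $(1-120\varepsilon)n$ colours of $G_0$ have $\geq (1-120\varepsilon)n/2$ edges, follows from our assumption because every colour with $\geq (1-120\varepsilon)n/2$ edges in $G_0$ also has $\geq (1-180\varepsilon)n/2$ edges in $G$, and by hypothesis there are at most $(1-180\varepsilon)n \leq (1-120\varepsilon)n$ such colours. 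Since $p_1 \LL \varepsilon$, the graph $G'$ is globally $(1-p_1)\delta n/2$-bounded. I then apply Lemma~\ref{Lemma_2_Factor_Decomposition} to the pair $(G', J_1)$ to obtain $(1-p_1)\delta n/2$ edge-disjoint rainbow $2$-factors in $G' \cup J_1$ with every cycle of length $\geq k$, and finally apply Lemma~\ref{Lemma_decomposition_completion_Hamiltonian_Codegrees} to these $2$-factors together with the colour-disjoint typical graph $J_2$ to turn them into the same number of edge-disjoint rainbow Hamiltonian cycles in $G$.

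A short parameter check gives $(1-p_1)\delta n/2 \geq (1-6\varepsilon)n/2$ as required, since $\delta \geq 1 - 6\varepsilon + 324\varepsilon^2$ and $p_1 \LL \varepsilon$ forces $p_1 \leq 300\varepsilon^2$ with a suitable constant. The step requiring the most care is simply the parameter bookkeeping: in particular, verifying that the "$\leq (1-180\varepsilon)n$ colours with $\geq (1-180\varepsilon)n/2$ edges" hypothesis is strong enough to invoke the regularization at the tighter value $\varepsilon' = 6\varepsilon$ (rather than the looser $\varepsilon' = 9\varepsilon$ which would only yield $(1-9\varepsilon)n/2$ Hamiltonian cycles), and that $J_2$'s typicality parameters remain compatible with the completion lemma after $J_1$ and $G_0$ are removed. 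No conceptually new ingredients beyond those already assembled in Lemmas~\ref{Lemma_2_Factor_Decomposition_No_Gap} and~\ref{Lemma_Hamiltonian_Decomposition_gap} are needed.
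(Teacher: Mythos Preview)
Your proposal is correct and follows essentially the same approach as the paper. The only organisational difference is that the paper splits off a single typical graph $J$ and then invokes the already-packaged Lemma~\ref{Lemma_2_Factor_Decomposition_No_Gap} on the remainder (that lemma internally performs the regularization and the $2$-factor decomposition), whereas you split off two typical pieces $J_1,J_2$ and inline those two steps yourself via Lemma~\ref{Lemma_regular_subgraph_few_large_colours} and Lemma~\ref{Lemma_2_Factor_Decomposition}; both routes feed into Lemma~\ref{Lemma_decomposition_completion_Hamiltonian_Codegrees} for the completion and yield the same $(1-6\varepsilon)n/2$ bound.
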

\begin{proof}
Choose $1\GG \varepsilon, \log^{-1}n\GG p \GG k^{-1}\GG\gamma\GG n^{-1}$.
Apply Lemma~\ref{Lemma_Random_Subgraph_General} (a) with $p'=p/(1-\varepsilon^2)$, $\delta=(1-\varepsilon^2)$, and  $\gamma=\gamma$ in order to partition $H$ into a $(2\gamma, p, n)$-typical graph $J$ and a colour-disjoint $(2\gamma, 1-p-\varepsilon^2, n)$-typical graph $G'$.

Let $\varepsilon'=\sqrt{p+\varepsilon^2}$, and notice that  $\varepsilon\GG p$ implies $\varepsilon\leq \varepsilon'\leq 2\varepsilon$.
Hence $G'$ has  $\leq (1-60\varepsilon')n$ colours having $\geq (1-60\varepsilon')n/2$ edges.
Apply Lemma~\ref{Lemma_2_Factor_Decomposition_No_Gap} to $G'$ with $\gamma'=2\gamma, \varepsilon'=\varepsilon'$, $k=k$ in order to find $(1-3\varepsilon')n/2$ edge-disjoint rainbow spanning  $2$-factors $F_1, \dots, F_{(1-3\varepsilon')n/2}$ in $G'$ whose cycles have length $\geq k$.

Apply Lemma~\ref{Lemma_decomposition_completion_Hamiltonian_Codegrees} to $F_1, \dots, F_{(1-3\varepsilon')n/2}$ and $J$ with $p=p$, $\gamma'=2\gamma$, $k=k$, $t=(1-3\varepsilon')n/2$ in order to find $(1-3\varepsilon')n/2$ edge-disjoint rainbow Hamiltonian cycles  in $G$.
Since $(1-3\varepsilon')n/2\geq (1-6\varepsilon)n/2$, we have enough cycles.
\end{proof}

We can show that when a properly coloured $K_n$ has few large colours, then it has a near-decomposition into Hamiltonian cycles. This is ``half'' of our proof of the asymptotic version of the Brualdi-Hollingsworth and Kaneko-Kano-Suzuki Conjectures. The other half will be in the case when there are many large colours, which is performed in Section~\ref{Section_Trees}.
\begin{lemma}\label{Lemma_Hamiltonian_Decomposition_Kn}
Let $1\GG \varepsilon \GG  n^{-1}$.
Let $K_n$ be properly coloured with $\leq (1-\varepsilon)n$ colours having $\geq (1-\varepsilon)n/2$ edges. Then $K_n$  has $(1-\varepsilon)n/2$ edge-disjoint rainbow Hamiltonian cycles.
\end{lemma}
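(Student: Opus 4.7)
The strategy is to mimic the proof of Lemma~\ref{Lemma_PerfectMatching_Decomposition_Few_Large_Colours}, but funnel through Lemma~\ref{Lemma_2_Factor_Decomposition} and Lemma~\ref{Lemma_decomposition_completion_Hamiltonian_Codegrees} in place of Lemma~\ref{Lemma_PerfectMatching_Decomposition_ExtraPseudorandom}. I choose parameters $1\GG \varepsilon\GG p_1\GG p_2\GG k^{-1}\GG \gamma\GG n^{-1}$ together with $\log^{-1}n\GG k^{-1}$, arranging (as permitted by the $\LL$-notation) for $p_1,p_2\LL \varepsilon^2$. By Lemma~\ref{Lemma_Typicality_K_n}, $K_n$ is $(\gamma,1,n)$-typical. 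I partition the colour set of $K_n$ by placing each colour independently into $C_1$, $C_2$, or $C_3$ with probabilities $p_1$, $p_2$, $1-p_1-p_2$; let $J_1, J_2, G$ be the corresponding edge-disjoint, colour-disjoint subgraphs. Two applications of Lemma~\ref{Lemma_Random_Subgraph_General}(a) (with $\delta=1$, $\mu=1/2$) give, with positive probability, that $J_1$ is $(2\gamma, p_1, n)$-typical and $J_2$ is $(2\gamma, p_2, n)$-typical, from which $\delta(G)\geq (n-1)-\Delta(J_1)-\Delta(J_2)\geq (1-\varepsilon^2/400)n$.

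I then apply Lemma~\ref{Lemma_regular_subgraph_few_large_colours} to $G$ with parameter $\varepsilon/20$; its hypothesis ``$\leq (1-\varepsilon)n$ colours having $\geq (1-\varepsilon)n/2$ edges'' is inherited from $K_n$ because passing to a subgraph can only shrink colour classes. This produces a spanning subgraph $G'\subseteq G$ that is $(\gamma,\delta_0,n)$-regular for some $\delta_0\geq 1-\varepsilon/20+9\varepsilon^2/400$ and globally $(1-\varepsilon/20)n/2$-bounded; since $p_1\LL \varepsilon^2$, we have $(1-p_1)\delta_0\geq 1-\varepsilon/20$, so $G'$ is also globally $(1-p_1)\delta_0 n/2$-bounded. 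Applying Lemma~\ref{Lemma_2_Factor_Decomposition} to the colour-disjoint pair $(G', J_1)$ with $\gamma'=2\gamma$, $\delta=\delta_0$, $p=p_1$ yields $t:=(1-p_1)\delta_0 n/2$ edge-disjoint rainbow 2-factors $F_1,\ldots,F_t\subseteq G'\cup J_1$, all of whose cycles have length $\geq k$. Since $J_2$ uses colours in $C_2$ while the $F_i$ use colours in $C_1\cup C_3$, $J_2$ is colour-disjoint from $F_1\cup\cdots\cup F_t$; Lemma~\ref{Lemma_decomposition_completion_Hamiltonian_Codegrees} applied to these 2-factors and $J_2$ then converts them into $t$ edge-disjoint rainbow Hamiltonian cycles of $K_n$. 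Finally, $(1-p_1)\delta_0\geq (1-p_1)(1-\varepsilon/20)\geq 1-\varepsilon$ (using $p_1\LL \varepsilon$) gives $t\geq (1-\varepsilon)n/2$, as required.

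The main obstacle is the parameter bookkeeping. The regularization Lemma~\ref{Lemma_regular_subgraph_few_large_colours} opens only a quadratic-in-$\varepsilon$ slack, $9\varepsilon^2/400$, between the regularity degree $\delta_0 n$ and the global colour bound $(1-\varepsilon/20)n/2$. This forces $p_1$ (the typicality parameter of $J_1$, which serves as the ``$p$'' in Lemma~\ref{Lemma_2_Factor_Decomposition}) to be polynomially smaller than $\varepsilon^2$ so that the global boundedness hypothesis is met; at the same time $p_1+p_2$ must be small enough that excising $J_1\cup J_2$ preserves the min-degree condition $\delta(G)\geq (1-\varepsilon^2/400)n$ demanded by the regularization lemma. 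Any choice consistent with the chain $\varepsilon^2\GG p_1\GG p_2\GG k^{-1}\GG \gamma\GG n^{-1}$ handles both constraints simultaneously.
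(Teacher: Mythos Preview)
Your proof is correct. It follows essentially the same route as the paper, but with different packaging: the paper simply passes to a $(\gamma,1-\varepsilon_1^2,n)$-typical subgraph and invokes the packaged Lemma~\ref{Lemma_Hamiltonian_Decomposition_no_gap} (which in turn calls Lemma~\ref{Lemma_2_Factor_Decomposition_No_Gap}), whereas you unpack those two lemmas and apply their building blocks---Lemma~\ref{Lemma_regular_subgraph_few_large_colours}, Lemma~\ref{Lemma_2_Factor_Decomposition}, and Lemma~\ref{Lemma_decomposition_completion_Hamiltonian_Codegrees}---directly, splitting off both typical graphs $J_1,J_2$ at the outset rather than in nested calls. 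The substance and parameter bookkeeping are the same.
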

\begin{proof}
Choose $1\GG \varepsilon, \log^{-1}n\GG \varepsilon_1 \GG \gamma \GG n^{-1}$. Let $G$ be an arbitrary $(\gamma, 1-\varepsilon_1^2, n)$-typical subgraph of $K_n$ (it exists e.g.\ by Lemmas~\ref{Lemma_Typicality_K_n} and~\ref{Lemma_Random_Subgraph_General} (a) or (b)). Notice that since $\varepsilon\GG \varepsilon_1$, $G$ has $\leq (1-180\varepsilon_1)n$ colours having $\geq (1-180\varepsilon_1)n/2$ edges. By Lemma~\ref{Lemma_Hamiltonian_Decomposition_no_gap}, $G$ has $(1-6\varepsilon_1)n/2\geq (1-\varepsilon)n/2$ edge-disjoint rainbow Hamiltonian cycles.
\end{proof}

\section{Rainbow Trees}\label{Section_Trees}
In this section we show that the Brualdi-Hollingsworth and Kaneko-Kano-Suzuki Conjectures hold asymptotically. Part of this result was already proved in Lemma~\ref{Lemma_Hamiltonian_Decomposition_Kn}, which shows that the asymptotic versions of the Brualdi-Hollingsworth and Kaneko-Kano-Suzuki Conjectures hold in colourings of $K_n$ which have few large colours. In this section we focus on colourings of $K_n$ which have many large colours. Such colourings should be thought of as being close to $1$-factorizations.

The basic idea of the proof is to notice that for \emph{any} properly coloured $K_n$, we know how to find a large set of vertices $S$ so that the induced subgraph $K_n[S]$ has a near-decomposition into Hamiltonian paths. Indeed, a random set $S$ will have this property (by combining Lemmas~\ref{Lemma_Random_Subgraph_General} (c) and~\ref{Lemma_Hamiltonian_Decomposition_gap}). To find a near-decomposition into spanning trees we modify the paths in $K_n[S]$ by extending them one vertex at a time to cover all of $V(K_n)$.

\subsection{Small rainbow trees}
Here we prove a result about near-decompositions of globally bounded graphs into rainbow forests which are sufficiently small.
We remark that this lemma is only needed to deal with properly coloured complete graphs which are not $1$-factorizations---if one only wants to prove an asymptotic version of the Brualdi-Hollingsworth Conjecture, then this section can be omitted.

The result we prove in this section is essentially the following: for $m\GG k$ every properly coloured globally $m$-bounded graph with $\geq (1+o(1))mk$ edges has a near-decomposition into $m$ rainbow $k$-edge forests $F_1, \dots, F_m$. This is relatively straightforward (see Lemma~\ref{smalltree0}), however, we need to find such a near-decomposition that interacts well with a large \emph{vertex cover}. Here, a  \emph{vertex cover} $S$ is a set of vertices which contains at least one vertex in each edge. We develop Lemma~\ref{smalltree0} through Lemma~\ref{smalltree1} to arrive at the result we need, Lemma~\ref{Lemma_Small_Disjoint_Trees}.

\begin{lemma}\label{smalltree0}
Let $1\GG \beta\GG k/n$, $m\geq \beta n$ and $0\leq \ell<m$.
Let $G$ be a properly coloured, globally $m$-bounded, $n$-vertex graph, with $e(G)\geq (1+\beta)(k m+\ell)$.
 Then, $G$ has $m$ edge-disjoint rainbow forests $F_1, \dots, F_m$, so that each $F_i$ has $k+\mathbf{1}_{\{i\leq \ell\}}$ edges.
\end{lemma}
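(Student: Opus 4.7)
The plan is to argue by an exchange/augmentation procedure on a maximum collection. Take pairwise edge-disjoint rainbow forests $(F_1,\ldots,F_m)$ with $|F_i|\leq k+\mathbf{1}_{i\leq\ell}$ maximising $T:=\sum_i|F_i|$, and assume for contradiction that $T<km+\ell$. Then some forest $F_j$ is \emph{deficient} (strictly below its size bound) and the set $E_0:=E(G)\setminus\bigcup_iF_i$ of unused edges has $|E_0|=e(G)-T\geq \beta(km+\ell)+1$. Since $m\geq\beta n$ and $\beta\GG k/n$, we have $\beta m\gg k^2$, so $|E_0|\gg k^2$.

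By maximality of $T$, every free edge $e$ is blocked from $F_j$: either $c(e)\in C(F_j)$ (colour-block) or $F_j+e$ contains a cycle (cycle-block). Since $F_j$ has at most $k+1$ edges and hence at most $2(k+1)$ non-isolated vertices, the number of cycle-blocked free edges is at most $\binom{2(k+1)}{2}=O(k^2)$, which is $\ll \beta km$, so nearly all free edges are colour-blocked. For each such $e$, let $f=f(e)\in F_j$ be an edge of the same colour as $e$; then $F_j':=F_j-f+e$ is again rainbow (only the colour $c(e)$ is swapped) and is a forest as soon as $F_j-f+e$ is acyclic, which fails only for the $O(k^2)$ further edges $e$ that create a cycle in $F_j$ not passing through $f$. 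Hence for at least $|E_0|-O(k^2)\geq \beta km/2$ free edges $e$, the one-edge swap at $F_j$ is valid, freeing $f(e)$ while leaving $|F_j|$ unchanged.

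The main obstacle is to show that, for at least one of these swaps, the freed edge $f(e)$ can then be placed in some other forest $F_i$ with room, producing a net gain in $T$ and contradicting maximality. The edge $f(e)$ fails to extend $F_i$ only if $F_i$ is full, already contains colour $c(e)$, or $F_i+f(e)$ has a cycle. Global $m$-boundedness caps the number of forests already containing the colour $c(e)$ at $m-1$ (since one colour-$c(e)$ edge, namely $e$, is not placed before the swap), while a cycle-block on a fixed $f(e)$ requires $F_i$'s non-isolated vertex set to contain both endpoints of $f(e)$ in one component, and the total vertex-set mass $\sum_i|V(F_i)|\leq 2(k+1)m$ together with $m\gg k$ keeps the total number of such forests small. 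A double-counting over the $\Theta(\beta km)$ valid swap pairs $(e,f(e))$ against the $m$ forests, using the hypotheses $m\geq\beta n$, $\beta\GG k/n$ and $|E_0|\gg k^2$, forces some placement to succeed. This contradicts the maximality of $T$, so $T=km+\ell$ and the required forests $F_1,\ldots,F_m$ exist.
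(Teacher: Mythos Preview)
Your argument has a genuine gap at the final ``place $f(e)$ elsewhere'' step. Consider the situation where exactly one forest $F_j$ is deficient and all other $F_i$ are already at their target size. Your swap $F_j\mapsto F_j-f+e$ (with $c(f)=c(e)$) keeps $|F_j|$ fixed and frees the edge $f$, but now $f$ cannot be placed anywhere: it is colour-blocked from the new $F_j$ (since $c(f)=c(e)$ is still present there), and every other $F_i$ is full. The observation that at most $m-1$ forests contain colour $c(e)$ does not help, because the one forest missing this colour could simply be full. Your closing ``double-counting'' sentence does not address this obstruction, and no single-swap argument can: the colour set of $F_j$ is unchanged by the swap, so iterating swaps at $F_j$ makes no progress either. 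One would need a genuine alternating-chain argument through several forests, which you have not set up.

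The paper avoids this difficulty by a different mechanism. Before maximising, it fixes a colour assignment: sets $C_1,\dots,C_m$ of the target sizes in which each colour $c$ appears $d_c\approx |E_G(c)|/(1+\beta)$ times. A first maximisation, restricted to $C(F'_i)\subseteq C_i$, yields the key per-colour bound $|E_G(c)\setminus\bigcup_i E(F'_i)|\le \beta m/(1+\beta)+2k$; this is exactly what your approach lacks, since without the pre-assignment a single colour in $C(F_j)$ could have up to $m-1$ unused edges, giving only the useless bound $|E_0|\le O(k^2)+km$. A second unrestricted maximisation on top of the $F'_i$ then uses this per-colour bound to show that the total number of unused edges is at most $\binom{2k}{2}+k(\beta m/(1+\beta)+2k)<\beta(km+\ell)$, contradicting $e(G)=(1+\beta)(km+\ell)$. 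The colour pre-assignment is the missing idea.
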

\begin{proof}  Note that if $k=0$ then selecting $\ell$ edges gives the required forests. Assume then that $k\geq 1$. By deleting edges if necessary, assume that $e(G)=(1+\beta)(k m+\ell)$.

Choose integers $d_c$, $c\in C(G)$, such that $\lfloor \frac{|E_G(c)|}{1+\beta}\rfloor\leq d_c\leq \lceil \frac{|E_G(c)|}{1+\beta}\rceil$ and $\sum_{c\in C(G)}d_c=km+\ell$, where we have used that
\[
\sum_{c\in C(G)}\left\lfloor \frac{|E_G(c)|}{1+\beta}\right\rfloor\leq
\sum_{c\in C(G)}\frac{|E_G(c)|}{1+\beta}
=\frac{e(G)}{1+\beta}=km+\ell\leq \sum_{c\in C(G)}\left\lceil \frac{|E_G(c)|}{1+\beta}\right\rceil.
\]
Let $C_1,\ldots,C_m$ be sets in $C(G)$, so that each $C_i$ has size $k+\mathbf{1}_{\{i\leq \ell\}}$, and each colour $c$ appears in $d_c$ sets $C_i$. Note that this is possible as $d_c\leq m$ for each $c\in C(G)$ and $\sum_{i=1}^m(k+\mathbf{1}_{\{i\leq \ell\}})=km+\ell$.
Let $F'_1,\ldots,F'_m$ be edge-disjoint rainbow forests in $G$ with $C(F'_i)\subset C_i$ for each $i$ and so that $|\cup_{i=1}^mE(F'_i)|$ is maximised. Suppose that $|\cup_{i=1}^mE(F'_i)|<km+\ell$, for otherwise $F_1',\ldots,F'_m$ satisfy the lemma.

\begin{claim}\label{sunny}
For each colour $c$, $|E_G(c)\setminus (\cup_{i=1}^mE(F'_i))|\leq \frac{\beta m}{1+\beta}+2k$.
\end{claim}
\begin{proof}
Fixing a colour $c$, let $M$ be the edges with colour $c$ not in $\cup_{i=1}^mE(F'_i)$, and suppose  that $|M|\geq |E_G(c)|-d_c+ k+1$. As $\sum_{i=1}^m|E(F'_i)\cap E_G(c)|<d_c$, there is some $j$ for which $c\in C_j$ but $F'_j$ contains no colour $c$ edge, so that, furthermore, $|V(F'_j)|\leq 2k$. But then, as $|M|\geq k+1$, there is some colour $c$ edge in $M$ which is not contained in $V(F'_j)$, contradicting the maximality of  $|\cup_{i=1}^mE(F'_i)|$.

Thus, we must have $|M|<  |E_G(c)|-d_c+ k+1\leq |E_G(c)|-\lfloor \frac{|E_c(G)|}{1+\beta}\rfloor+2k\leq m-\lfloor \frac{m}{1+\beta}\rfloor+2k= \lceil \frac{\beta m}{1+\beta}\rceil+2k$.
\end{proof}

Next, let $F_1,\ldots,F_m$ be a set of edge-disjoint rainbow forests in $G$ with $F'_i\subset F_i$ and $|E(F_i)|\leq k+\mathbf{1}_{\{i\leq \ell\}}$ for each $i$, so that $|\cup_{i=1}^mE(F_i)|$ is maximised. Suppose there is some $1\leq j\leq m$ for which $|E(F_j)|< k+\mathbf{1}_{\{j\leq \ell\}}$. Any edge outside of $\cup_{i=1}^mE(F_j)$ must be contained in $V(F_j)$ or share a colour with $F_j$. Thus, by Claim~\ref{sunny}, we have
\begin{align*}
e(G)&\leq |\cup_{i=1}^mE(F_i)|+\binom{2k}{2}+k\left(\frac{\beta m}{1+\beta}+2k\right)\\
&\leq km+\ell+4k^2+k\frac{\beta m}{1+\beta}
\\
&\leq km+\ell+4k^2+k\beta m(1-\beta/2)
\\
&\leq (1+\beta)(km+\ell)+k(4k-\beta^2 m/2)
\\
&\leq (1+\beta)(km+\ell)+k(4k-\beta^3 n/2)
\\
&<(1+\beta)(km+\ell),
\end{align*}
where we have used that $m\geq \beta n$ and $1\GG\beta \GG k/n$. This contradicts $e(G)= (1+\beta)(km+\ell)$, and thus there is no such $j$ with $|E(F_j)|<k+\mathbf{1}_{\{i\leq \ell\}}$.
\end{proof}

Given a large vertex cover $S$ in a graph $G$, we wish to find edge-disjoint $k$-edge rainbow forests so that large degree vertices outside $S$ are in every forest while small degree vertices outside $S$ have degree at most 1 in every forest. Lemma~\ref{smalltree0} can almost cover the edges within $S$ with forests. We now expand this to prove a lemma almost covering these edges as well as the edges next to vertices with small degree in $A:=V(G)\setminus S$.

\begin{lemma}\label{smalltree1}
Let $1\GG \beta\GG\varepsilon\GG k/n$ and $m\geq \beta n$.
Let $G$ be a properly coloured $n$-vertex graph, with $e(G)\geq (1+\beta)k m$ and let $A$ be a set of vertices with $|A|\leq \varepsilon n$ which contains no edges in $G$. Furthermore, suppose $d(v)\leq m+2k$ for each $v\in A$.

Then, $G$ has edge-disjoint $k$-edge rainbow forests $F_1, \dots, F_m$, where, additionally, for any $v\in A$ and $1\leq i\leq m$, $d_{F_i}(v)\leq 1$.
\end{lemma}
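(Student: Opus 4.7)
The plan is to prove the lemma in two stages: first, assign the edges of $G$ incident to $A$ to the forests using an edge-colouring argument, and then extend those partial forests using edges from $G[S]$ with $S := V(G)\setminus A$, by adapting the extremal argument in the proof of Lemma~\ref{smalltree0}. (I will assume the implicit global $m$-boundedness hypothesis needed for Lemma~\ref{smalltree0} to apply downstream.) The reason to split edges this way is that extension edges taken from $G[S]$ are automatically vertex-disjoint from $A$, so the hard constraint ``$d_{F_i}(v) \leq 1$ for $v \in A$'' is completely handled in the first stage.

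For Stage 1, I would build a bipartite multigraph $H$ with parts $A$ and $C(G)$: for every $G$-edge $e = vu$ with $v \in A$ I add the $H$-edge $(v, c(e))$. Because the colouring of $G$ is proper and $G$ is globally $m$-bounded, $\Delta(H) \leq \max(m + 2k, \ |A|) \leq m + 2k$ (using $|A| \leq \varepsilon n \leq m$, which follows from $\varepsilon \ll \beta$ and $m \geq \beta n$). König's edge-colouring theorem for bipartite multigraphs produces a proper edge-colouring of $H$ with $m + 2k$ colours; I treat the first $m$ as labels $1, \ldots, m$ and the remaining $2k$ as ``unlabelled.'' Define $\tilde F_i$ to be the set of $G$-edges receiving label $i$. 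By construction each $\tilde F_i$ is rainbow, has $d_{\tilde F_i}(v) \leq 1$ for $v \in A$, and (because $A$ is independent) is a forest inside $G[A,S]$; moreover at most $2k$ edges at any $v \in A$ are unlabelled. If some $|\tilde F_i| > k$ I arbitrarily truncate it to $k$ edges.

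For Stage 2 I maximise $\sum_i |F_i|$ over all tuples $(F_1, \ldots, F_m)$ of edge-disjoint rainbow forests with $\tilde F_i \subseteq F_i \subseteq \tilde F_i \cup E(G[S])$, $|F_i| \leq k$, and $d_{F_i}(v) \leq 1$ for $v \in A$, aiming to derive a contradiction if some $|F_j| < k$. An unused edge $e \in E(G[S]) \setminus \bigcup_i F_i$ fails to be addable to $F_j$ only because (i) it would create a cycle with $F_j$ (at most $\binom{2k}{2}$ edges, augmented by $O(k^2)$ more arising from cycle-via-$\tilde F_j$ through an $A$-vertex whose two neighbours in $S$ are joined by $e$), or (ii) its colour lies in $C(F_j)$. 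Crucially, because extension edges live in $G[S]$, the $A$-constraint never contributes a new block. A Claim~\ref{sunny}-style argument, using the maximality, bounds $|E_G(c) \setminus \bigcup_i F_i|$ by $\tfrac{\beta m}{1+\beta} + O(k)$ for every colour $c$; hence (ii) contributes at most $k \cdot \bigl(\tfrac{\beta m}{1+\beta} + O(k)\bigr)$. Combining these blockers with $\sum_i |F_i| < km$ and comparing to the available pool $e(G[S]) \geq e(G) - |E_A| \geq (1+\beta) k m - |A|(m + 2k)$ (which is at least $(1 + \beta/2)(km - \sum_i |\tilde F_i|)$ once one plugs in $|A| \leq \varepsilon m/\beta$ and the bound $\sum_i |\tilde F_i| \geq |E_A| - 2k|A|$ from Stage 1 in the untruncated regime), the standard arithmetic from the proof of Lemma~\ref{smalltree0} closes out the contradiction.

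The main obstacle is bookkeeping around the two regimes $|A| \leq k$ versus $|A| > k$. When $|A| \leq k$ each $|\tilde F_i| \leq |A| \leq k$ automatically and the slack computation above goes through cleanly. When $|A| > k$ several $|\tilde F_i|$ may need truncation, which discards labelled $A$-edges; the challenge is to verify that even after truncation the $S$-side budget $e(G[S])$ supplies strictly more edges than the total length of the extensions $\sum_i (k - |\tilde F_i|)$ plus all blockers, which requires the full hierarchy $\beta \gg \varepsilon \gg k/n$ and the $m \geq \beta n$ bound to convert $|A|$-losses into $\varepsilon/\beta$-type terms absorbed by the Lemma~\ref{smalltree0} slack of $\beta$. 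The secondary obstacle is establishing the Claim~\ref{sunny}-analog inside the restricted family $\tilde F_i \subseteq F_i \subseteq \tilde F_i \cup E(G[S])$, for which the ``addable colour-$c$ edge in $S$'' produced by the claim must also avoid $V(\tilde F_j)$ through a proper-colouring $+$ independence-of-$A$ count; an additional $O(k)$ slack term is needed here.
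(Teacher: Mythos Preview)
Your overall plan reverses the paper's order: you first distribute the $A$-edges (via K\"onig) and then fill in with $G[S]$-edges, whereas the paper first applies Lemma~\ref{smalltree0} as a black box to $G_1=G[S]$ to obtain forests of size $k'$, and only then extends with $A$-edges via a two-phase extremal argument in which vertex sets $A_i\subseteq A$ play the role of the colour sets $C_i$ from Lemma~\ref{smalltree0}, yielding the vertex-indexed Claim~\ref{sunny2}.

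The genuine gap is your Claim~\ref{sunny}-analog in Stage~2. You write that ``a Claim~\ref{sunny}-style argument, using the maximality, bounds $|E_G(c)\setminus\bigcup_iF_i|$ by $\tfrac{\beta m}{1+\beta}+O(k)$'', but this does \emph{not} follow from the single maximisation you set up. In Lemma~\ref{smalltree0}, Claim~\ref{sunny} comes from a \emph{preliminary} maximisation under the constraint $C(F'_i)\subseteq C_i$, where each colour $c$ is deliberately placed in $d_c\approx |E_G(c)|/(1+\beta)$ of the sets $C_i$; this is precisely what forces some $F'_j$ with $c\in C_j\setminus C(F'_j)$ to exist whenever colour $c$ has many unused edges. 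Your Stage~2 maximisation has no such structure: if $|F_j|<k$ and $c\in C(F_j)$, nothing prevents every other forest $F_i$ with $c\notin C(F_i)$ from already being full, so no per-colour bound is available. The obstacle you identify (that the addable edge must avoid $V(\tilde F_j)$) is not the issue; the missing colour-constrained phase is. Repairing this would require choosing colour sets $C_i$ of the correct sizes $k-|\tilde F_i|$, disjoint from $C(\tilde F_i)$, with each colour in about $|E_{G[S]}(c)|/(1+\beta)$ of them, and then running both phases --- a nontrivial extra step, further complicated by the fact that K\"onig gives no balance on the $|\tilde F_i|$ (so the target extension lengths are uncontrolled, which is exactly your ``main obstacle'' in the $|A|>k$ regime; recall $\varepsilon\gg k/n$ means $e(G[A,S])$ can vastly exceed $km$). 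The paper's order sidesteps all of this: once the $G[S]$-forests are built, the remaining edges lie in $G_2=G[A,S]$, where the trivial bound $|E_{G_2}(c)|\le|A|\le\varepsilon n$ per colour suffices in the final count, and Claim~\ref{sunny2} handles the vertex side.
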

\begin{proof} Note that the lemma is trivial if $k=0$, and follows immediately from Lemma~\ref{smalltree0} when $A=\emptyset$. Suppose then that $k,|A|\geq 1$. Let $G_1$ be the subgraph of edges of $G$ contained within $S$. Let $k'$ and $\ell$ be integers with $0\leq \ell<m$ maximising $k'm+\ell$ subject to $e(G_1)\geq (1+\beta^2)(k'm+\ell)$ and $k'm+\ell\leq km$. By Lemma~\ref{smalltree0}, $G_1$ contains edge-disjoint rainbow forests $F_1',\ldots,F_m'$ so that $F_i'$ has $k'+\mathbf{1}_{\{i\leq \ell\}}$ edges.
If $k'm+\ell=km$, then these forests satisfy the lemma, so suppose that $k'm+\ell<km$, and therefore $e(G_1)\leq (1+\beta^2)(k'm+\ell)+2$.

 Let $G_2=G- G_1$ and pick an integer $\lambda$ so that $\beta km/4\geq \lambda|A|\geq \beta km/8$ (which is possible as $|A|\leq \varepsilon n$, $m\geq \beta n$ and $\beta\GG\varepsilon$). Note that
\begin{align*}\label{hottall}
e(G_2)&\geq (1+\beta)km-(1+\beta^2)(k'm+\ell)-2\geq (1+\beta^2)(km-k'm+\ell)+\beta km/2-2
\\
&\geq (1+\beta^2)(km-k'm+\ell)+\beta km/4\geq (1+\beta^2)(km-k'm-\ell)+\lambda|A|.
\end{align*}
By deleting edges if necessary, assume that $e(G_2)= (1+\beta^2)(km-k'm-\ell)+\lambda|A|$.
Choose integers $d_v$, $v\in A$, such that $\lfloor \frac{d_{G_2}(v)-\lambda}{1+\beta^2}\rfloor\leq d_v\leq \lceil \frac{d_{G_2}(v)-\lambda}{1+\beta^2}\rceil$ and $\sum_{v\in A}(d_v-\lambda)=km-k'm-\ell$. Let $A_1,\ldots,A_m$ be sets in $A$, so that each $A_i$
has size $k-k'-\mathbf{1}_{\{i\leq \ell\}}$ and each vertex $v$ appears in $d_v$ sets $A_i$, noting that this is possible as for each $v$ we have, as $\beta\GG k/n$ and $m\GG \beta n$,
\[
d_v\leq 1+\frac{d(v)-\lambda}{1+\beta^2}\leq 1+d(v)(1-\beta^2/2)\leq 1+(m+2k)(1-\beta^2/2)\leq m,
\]
and $\sum_{i=1}^m(k-k'-\mathbf{1}_{\{i\leq \ell\}})=km-k'm-\ell$.

Let $F''_1,\ldots,F''_m$ be a set of edge-disjoint rainbow forests in $G$ with, for each $i$, $F_i'\subset F''_i$ and $d_{F'_i}(v)\leq 1$ for each $v\in A_i$ and $d_{F'_i}(v)=0$ for each $v\in A\setminus A_i$. Furthermore, suppose $|\cup_{i=1}^mE(F''_i)|$ is maximised subject to these conditions.

\begin{claim}\label{sunny2}
For each $v\in A$, $|E_G(v)\setminus (\cup_{i=1}^mE(F''_i))|\leq \frac{\beta^2 m+\lambda}{1+\beta^2}+2k$.
\end{claim}
\begin{proof}
Fixing a vertex $v\in A$, let $E$ be the edges through $v$ not in $\cup_{i=1}^mE(F''_i)$, and suppose  that $|E|\geq d_{G_2}(v)-d_v+ k$. As $\sum_{i=1}^m|V(F''_i)\cap \{v\}|=d_{G_2}(v)-|E|<d_v$, there is some $j$ for which $v\in A_j$ but $F''_j$ contains no edge adjacent to $v$. But then, as $|E|\geq k$ and $G$ is properly coloured, there is some edge in $E$ with colour outside of $C(F''_j)$, contradicting the maximality of  $|\cup_{i=1}^mE(F''_i)|$.

Thus, we must have $|E|\leq  d_{G_2}(v)-d_c+ k\leq d_{G_2}(v)-\lfloor \frac{d_{G_2}(v)-\lambda}{1+\beta^2}\rfloor+k\leq m+k-\lfloor \frac{m-\lambda}{1+\beta^2}\rfloor+k\leq \lceil \frac{\beta^2 m-\lambda}{1+\beta^2}\rceil+2k$.
\end{proof}
Let $F_1,\ldots,F_m$ be a set of edge-disjoint rainbow forests in $G$ with $F_i''\subset F_i$ and $|E(F_i)|\leq k$
for each $1\leq i\leq m$, and $d_{F_i}(v)\leq 1$ for each $1\leq i\leq m$ and $v\in A$, so that $|\cup_{i=1}^mE(F_i)|$ is maximised. Suppose there is some $1\leq j\leq m$ for which $|E(F_j)|< k$. Any edge in $G_2$ outside of $\cup_{i=1}^mE(F_i)$ must contain a vertex in $V(F_j)\cap A$
or share a colour with $F_j$. Note that
\[
|V(F_j)\cap A|=|E(F_j)\setminus {E(F'_j)}|<k-k'-\mathbf{1}_{\{i\leq \ell\}}\leq k-k',
\]
and, hence, $|V(F_j)\cap A|\leq \min\{|A|,k-k'-1\}$. Thus, by Claim~\ref{sunny2}, and noticing that $G_2$ is globally $|A|$-, and hence $\eps n-$, bounded, we have
\begin{align*}
e(G_2)&\leq |\cup_{i=1}^mE(F_i)\setminus {E(F'_i)}|+\min\{|A|,k-k'-1\}\cdot\left(\frac{\beta{^2} m+\lambda}{1+\beta^2}+k\right)+k\eps n\\
&\leq km-(k'm+\ell)+(k-k'-1)\frac{\beta^2 m}{1+\beta^2}+|A|\frac{\lambda }{1+\beta^2}+k^2+k\eps n
\\
&\leq km-k'm-\ell+\beta^2 m(k-k'-1)+\lambda(1-\beta^2/2)|A|+k^2+k\eps n
\\
&\leq (1+\beta {^2})(km-k'm-\ell)+\lambda|A|+(k^2+k\eps n-\lambda  \beta^2|A|/2) \\
&\leq (1+\beta  {^2})(km-k'm-\ell)+\lambda|A|+(k^2+k\eps n-\beta^4 k n/16) \\
&< (1+\beta  {^2})(km-k'm-\ell)+\lambda|A|,
\end{align*}
where we have used that $1\GG\beta\GG\eps, k/n$ and $\lambda|A|\geq \beta k m/8\geq \beta^2 kn/8$. This contradicts $e(G_2)=(1+\beta {^2})(km-k'm-\ell)+\lambda|A|$, so $F_1,\ldots,F_m$ must satisfy the lemma.
\end{proof}

We can now prove the main result of this section.

\begin{lemma}[Near-decomposition into small rainbow trees]\label{Lemma_Small_Disjoint_Trees}
Let $1\GG \beta\GG\varepsilon\GG k/n$ and $m\geq \beta n$.
Let $G$ be a properly coloured, globally $m$-bounded, $n$-vertex graph, with $e(G)\geq (1+\beta)k m$
 and $S$ a vertex cover of $G$ with $|S|\geq (1-\varepsilon)n\geq 2m$.
 Then $ G$ has edge-disjoint $k$-edge rainbow forests $F_1, \dots, F_m$, where, additionally, for any $v\not\in S$ either $v\in V(F_i)$ for all $i$, or $d_{F_i}(v)\leq 1$ for all $i$.
\end{lemma}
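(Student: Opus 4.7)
Plan. Let $A := V(G)\setminus S$, which is an independent set since $S$ covers all edges of $G$. Partition $A = A_h \cup A_\ell$ with $A_h := \{v\in A : d_G(v)>m+2k\}$, so that vertices in $A_h$ are the ones too high-degree for Lemma~\ref{smalltree1} to handle directly and will be placed in the first disjunct of the conclusion ($v\in V(F_i)$ for all $i$), while vertices in $A_\ell$ will land in the second disjunct ($d_{F_i}(v)\leq 1$ for all $i$) via Lemma~\ref{smalltree1}. A preliminary step is to show $|A_h|\leq k$. To see this, WLOG first delete edges of $G$ (from within $G[S]$) to reduce $e(G)$ down to approximately $(1+\beta)km$; then, since $A$ is independent, $\sum_{v\in A_h}d_G(v) = e(A_h,S)\leq e(G)$, and each summand exceeds $m+2k$, so $|A_h|(m+2k)\leq (1+\beta)km$. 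Raising the threshold slightly from $m+2k$ to $(1+\beta/2)m$ (and adjusting $A_\ell$ accordingly) would give $|A_h|\leq k$ using that $\beta\GG k/n$ provides enough slack.

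For the core of the construction, I would first pre-assign star edges at vertices of $A_h$. For each $v\in A_h$, pick any $m$ edges at $v$; these automatically have distinct colours because $G$ is properly coloured. Consider the bipartite multigraph $H$ whose vertex classes are $A_h$ and the set of selected colours, where $(v,c)$ is an edge whenever $v$ has a selected $c$-edge. Then $\Delta(H)\leq m$: each $v$ has degree $m$ by construction, and each colour has at most $m$ edges in $G$, so at most $m$ vertices in $A_h$. By K\"onig's edge-colouring theorem, $H$ admits a proper $m$-edge-colouring; the $i$-th colour class assigns one star edge per $v\in A_h$ to forest $F_i$, with distinct colours within each $F_i$. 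Set $E^\star := \bigcup_{v\in A_h}\{\text{selected edges at }v\}$. Then apply Lemma~\ref{smalltree1} to the residual $G' := G\setminus E^\star$ (with colour classes clashing with $E^\star$ also removed, see below), taking parameters $k':=k-|A_h|$ and $A':=A_\ell$, to obtain edge-disjoint rainbow forests $F_1',\dots,F_m'$ with $|E(F_i')|=k'$ and $d_{F_i'}(v)\leq 1$ for $v\in A_\ell$. Setting $F_i := (\text{star edges destined for }F_i)\cup F_i'$ yields a $k$-edge rainbow forest: it is a forest because $A_h$ is independent and its only edges in $F_i$ are the star edges, which are pendant to vertices of $F_i'$ lying in $S$; it is rainbow by the colour-exclusion mentioned.

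The main obstacle is handling the colour-disjointness between the star edges and the forests $F_i'$, since each $F_i$ has its own set of star colours--this is a per-$F_i$ constraint that Lemma~\ref{smalltree1} does not natively support. One option is to delete from $G'$ all edges sharing a colour with any star edge, losing at most $m|A_h|$ colour classes and hence at most $m^2|A_h|$ edges; this fits under the $(1+\beta)km$ slack whenever $m|A_h|\leq \beta k/2$, which can be arranged via the initial WLOG-deletion and the sharper threshold used to bound $|A_h|$. An alternative that avoids large edge losses is to apply Lemma~\ref{smalltree1} without the colour restriction and then repair each $F_i'$ individually: each $F_i'$ has at most $|A_h|$ edges whose colour clashes with a star colour designated for $F_i$, and each such edge can be swapped for a fresh edge in $G'$ by a maximality-style argument mirroring Claims~\ref{sunny} and~\ref{sunny2}. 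The remaining verifications (the forest property, bounding $|A_h|$, and checking $e(G')$ remains large enough for Lemma~\ref{smalltree1}) are straightforward extensions of the counting machinery already developed in the proofs of Lemmas~\ref{smalltree0} and~\ref{smalltree1}.
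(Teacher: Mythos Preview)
Your high-level split $A = A_h \cup A_\ell$ into high- and low-degree vertices outside $S$ is exactly the paper's idea, but there is a genuine gap in how you handle the colour interaction between the star edges and the forests produced by Lemma~\ref{smalltree1}.

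Your first option for colour-disjointness cannot work quantitatively. You propose deleting from $G'$ every edge whose colour appears among the star edges, at a cost of at most $m^2|A_h|$ edges, and claim this fits in the $\beta km$ slack when $m|A_h|\le \beta k/2$. But $m\ge \beta n$ while $\beta\GG k/n$ forces $k\ll \beta n\le m$, so already with $|A_h|=1$ one has $m|A_h|\ge m>\beta k/2$. Thus the deletion destroys far more edges than the available slack, regardless of how you tune the threshold. Your second option (post-hoc swapping of the at most $|A_h|$ clashing edges per $F_i'$) is not fleshed out, and the maximality arguments of Claims~\ref{sunny} and~\ref{sunny2} do not directly provide the per-forest colour avoidance you would need.

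The paper sidesteps the whole issue by reversing your order of operations. It first strips each $v\notin S$ with $d(v)\ge m+2k$ of $d_v m+2k$ edges (where $d_v=\lfloor (d(v)-2k)/m\rfloor$), putting these into $G_1$ and leaving every $v\notin S$ with degree $\le m+2k$ in $G_2=G\setminus G_1$. Lemma~\ref{smalltree1} is then applied to $G_2$ with target size $k-k'$ (where $k'=\min\{\sum_v d_v,k\}$) to produce $F_1',\dots,F_m'$. Only afterwards are the star edges added, and this is done \emph{greedily}: for each $i$ and each $v$ with $d_v'\ge 1$, one adds $d_v'$ edges at $v$ to $F_i'$, choosing at each step an edge whose colour and other endpoint are both outside the current forest. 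This is possible because at every step at least $2k+1$ edges at $v$ in $G_1$ remain unused, and the forest being augmented has fewer than $k$ colours and at most $2k$ vertices. Doing the augmentation last makes the colour constraint local to a single forest of size $<k$, so no global colour deletion or K\"onig-type assignment is needed. Note also that the paper allows a high-degree vertex to contribute $d_v'>1$ edges to each forest, which removes the need for your bound $|A_h|\le k$ (and the threshold-raising that would have broken the degree hypothesis of Lemma~\ref{smalltree1} on $A_\ell$).
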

\begin{proof}
For each $v\notin S$, let $d_v=\max\{0,\lfloor \frac{d(v)-2k}{m}\rfloor\}$, and $k'={\min}\{\sum_{v\notin S}d_v,k\}$. Form $G_1$ by selecting, for each $v\notin S$ with $d_v\geq 1$, $d_vm+2k$ arbitrary edges next to $v$. Let $G_2=G-G_1$ and note that, by construction, for each $v\notin S$, $d_{G_2}(v)\leq m+k$. Furthermore, if $k'<k$, then
\[
e(G_2)\geq e(G)-k'm-{2}k\cdot \eps n\geq (k-k')m+ \beta km-{2}k\eps n\geq (1+\beta/2)(k-k')m,
\]
where we have used that $m\geq \beta n$ and $\beta \GG \varepsilon$. As it is trivially true if $k'=k$, we thus always have that $e(G_2)\geq (1+\beta/2)(k-k')m$. Therefore, by Lemma~\ref{smalltree1}, $G_2$ contains $m$ edge-disjoint rainbow $(k-k')$-edge forests $F'_1,\ldots,F'_m$, in which $d_{F'_i}(v)\leq 1$ for each $v\notin S$.

Pick integers $0\leq d_v'\leq d_v$, $v\notin S$, so that $\sum_{v\notin S}d'_v=k'$.
Greedily, for each $1\leq i\leq m$ and $v\notin S$, add $d'_v$ edges in $G_1$ adjacent to $v$ to $F'_i$, so that the resulting subgraphs, $F_1,\ldots,F_m$ say, are still edge-disjoint rainbow forests. This is possible as each vertex $v\notin S$ has at least $md'_v+2k$ adjacent edges not in $E(\cup_{i=1}^mF'_i)$, so that at least $2k+1$ edges are  uncovered and adjacent to $v$, at least one of which will have its colour and other vertex not in the forest we are augmenting. Note that, in this process, if an edge is added to a forest $F_j'$ adjacent to $v\notin S$, then an edge is added adjacent to $v$ in all other forests $F_i$ as well.

Thus, noting that the resulting forests each have $k$ edges, $F_1,\ldots,F_m$ are edge-disjoint rainbow $k$-edge forests, with $\{v\notin S:d'_v\geq 1\}\subset F_i$ for each $1\leq i\leq m$ and $d_{F_i}(v)\leq 1$ for each $1\leq i\leq m$ and $v\in S$ with $d'_v=0$.
\end{proof}

\subsection{Completion}
In this section we show how to modify  nearly-spanning rainbow trees into spanning ones.
The starting point of this section is Lemma~\ref{Lemma_Hamiltonian_Decomposition_gap}. That lemma implies that in any properly coloured $K_n$, there is a set $S$ of size $(1-o(1))n$ such that $K_n[S]$ has a near-decomposition into rainbow Hamiltonian paths $P_1, \dots, P_{(1-o(1))n/2}$.  Indeed a random set $S$ will have this property since it satisfies that assumptions of Lemma~\ref{Lemma_Hamiltonian_Decomposition_gap} by Lemma~\ref{Lemma_Random_Subgraph_General}. Our goal in this section is to take such a family of rainbow paths, and modify them into a near-decomposition of $K_n$ into rainbow spanning trees.

The paths $P_1, \dots, P_{(1-o(1))n/2}$ are modified into spanning trees gradually, i.e.\ we switch edges on them one at a time to get bigger and bigger rainbow trees. During this modification procedure we always have a family of rainbow trees $T_1, \dots, T_{(1-o(1))n/2}$ which satisfy several properties that guarantee that it is possible to keep extending them. We will now informally go through these properties and explain why each is natural. The first property is the following:
\begin{itemize}
\item[(a)] $S\subseteq V(T_i)$ for all $i$.
\end{itemize}
This property simply comes from the fact that the trees $T_i$ are formed by enlarging the paths $P_i$, and the paths $P_i$ had $V(P_i)=S$. Property (a) is useful to have because we will have more control over vertices outside $S$ due to the fact that they were untouched by the starting paths $P_1, \dots, P_{(1-o(1))n/2}$.
\begin{itemize}
\item[(b)] For every $v\not\in S$, the tree $T_i$ has at most one edge through $v$.
\end{itemize}
Since we aim to produce trees which are spanning in $K_n$, every vertex $v\not\in S$ will eventually need to be added to every tree. Condition (b) will ensure that every vertex $v\not\in S$ always has enough free edges to be added to every  tree. Without it, it is possible that all the edges in $K_n$ through $v$ lie in some small subfamily of trees $T_1, \dots, T_m$, preventing the addition of $v$ to the other trees.
\begin{itemize}
\item[(c)] For a tree $T_i$, there are $n-|T_i|$ colours $c$ outside $T_i$ with $|E_{K_n}(c)|\geq (1-o(1))n/2$.
\end{itemize}
For a vertex $v\not\in V(T_i)$ it will not always be possible to add an edge from $v$ to $T_i$ in order to produce a rainbow tree. While properties (a) and (b) ensure that there are free edges from $v$ to $T_i$, it is conceivable that the colours of all these edges are already present on $T_i$, so $v$ cannot simply be added while maintaining a rainbow tree. We get around this by finding some colour $c$ outside of $T_i$ and two edges $e\in E(c)$, $f\in e(T_i)$ so that $T_i-f+e$ is a rainbow tree, i.e.\ we switch an edge on $T_i$ for an edge of some previously unused colour.  This operation frees up the colour $c(f)$, which we might be able to use to attach $v$. Property (c) ensures that there are \emph{many} colours $c(f)$ which can be freed using this operation.
\begin{itemize}
\item[(d)] There is a graph $H$ disjoint from $T_1, \dots, T_{(1-o(1))n/2}$ in which any set of $k$ colours  covers at least $(1-o(1))n$ vertices (where $k$ is a large constant).
\end{itemize}
Property (d) plays a similar role to property (c), i.e.\ it allows us to free up more colours, with the hope that eventually we free a colour which is present at some vertex $v\not\in V(T_i)$ (and then add that vertex to the tree $T_i$). The reason we need both properties (c) and (d) is a bit technical. In general, property (d) is more powerful, except that to invoke it we need $k$ colours outside the tree $T_i$. This will not happen towards the end of our process when there might be only one colour outside the tree. On the other hand (c) can always be invoked to free up a small number of colours. The strategy is to combine the applications of (c) and (d), i.e.\ first we apply (c) to free up $k$ colours, and then we use (d) to free up enough colours to add $v$.

The following lemma is what we use to exchange edges on a tree with edges outside it.
\begin{lemma}\label{Lemma_Free_Edges_On_Tree}
Let $T$ be a tree and $G$ a  graph with no isolated vertices with $V(G)\subseteq V(T)$.
Then for every $v\in V(G)$, there are edges $xv\in E(T)$ and $yv\in E(G)$ with $T-xv+yv$ a tree. In particular, there are $\geq |G|/2$ edges $e\in T$ for which there is an edge $f\in E(G)$ with $T-e+f$ a tree.
\end{lemma}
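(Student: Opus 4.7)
The plan is to prove the pointwise statement constructively by a single case split depending on whether $v$ has a $G$-neighbour outside $T$, and then deduce the counting statement by a handshake argument.

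Fix $v \in V(G)$. Since $G$ has no isolated vertices, $v$ has at least one neighbour $y$ in $G$. The first, trivial subcase is when there exists such a $y$ with $vy \in E(T)$: then take $x = y$, so that $T - xv + yv = T$ is a tree. The interesting subcase is when some $G$-neighbour $y$ of $v$ satisfies $vy \notin E(T)$ (and in fact this is the only case we need, since if every $G$-neighbour of $v$ lies in $E(T)$ we can use the trivial subcase). In this case there is a unique $v$-to-$y$ path in $T$; let $x$ be the vertex following $v$ on this path, so $xv \in E(T)$. Deleting $xv$ splits $T$ into two components, one containing $v$ and the other containing $x$. The remainder of the $v$-to-$y$ path in $T$ lies entirely in the $x$-component, so $y$ is in the same component as $x$. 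Adding back $yv \in E(G)$ therefore reconnects the two components without creating a cycle, producing a spanning tree $T - xv + yv$.

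For the ``in particular'' clause, for each $v \in V(G)$ let $e_v \in E(T)$ denote an edge $xv$ obtained above, so that $T - e_v + f_v$ is a tree for some $f_v \in E(G)$. Each edge $e_v$ is incident to $v$, so each edge of $T$ can equal $e_v$ for at most two different choices of $v$ (its two endpoints). Hence the number of distinct swappable edges is at least $|V(G)|/2 = |G|/2$, as claimed.

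There is no real obstacle here; the only point to be careful about is confirming that in the nontrivial subcase the vertex $y$ genuinely sits in the $x$-component of $T - xv$, which is immediate from uniqueness of paths in $T$. The statement (and its proof) are then essentially a tree-exchange observation paired with a handshake count.
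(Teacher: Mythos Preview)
Your proof is correct and follows essentially the same route as the paper: the paper picks an arbitrary $G$-edge $yv$, notes that $T+yv$ contains a cycle through $v$, and deletes the other cycle edge at $v$ --- this cycle is precisely your $v$-to-$y$ path in $T$ together with $yv$, and the deleted edge is your $xv$. The counting argument is identical; if anything, your explicit handling of the degenerate case $yv\in E(T)$ is slightly more careful than the paper's phrasing.
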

\begin{proof}
Let $yv$ be an arbitrary edge of $G$ containing $v$. Since $T$ is a tree and $\{y,v\}\subseteq V(T)$,  $T+yv$ has a  cycle $C$ containing the edge $yv$. Let $xv\neq yv$ be the other edge of $C$ containing $v$. Now $T+yv-xv$ is the required tree.

Thus to every $v\in V(G)$ we can assign a pair of edges $e_v\in T$, $f_v\in G$ containing $v$ with $T-e_v+f_v$ a tree. Since $v\in e_v$, for an edge $e\in E(T)$ there can be at most two vertices $v\in V(G)$ with $e=e_v$. This gives $|\{e_v:v\in V(G)\}|\geq |G|/2$ as required.
\end{proof}

The following is the basic extension lemma which drives our proof. Under conditions to be compared to (b) -- (d), it shows how to extend a tree by one vertex. The idea of the proof of the lemma is to show that by performing two switches as in Lemma~\ref{Lemma_Free_Edges_On_Tree}, we can free up nearly half of the colours on $T$. At least one of these colours will have an edge going to $T$, which can be added to extend the tree.

\begin{lemma}\label{Lemma_Tree_Extend_By_One_Vertex}
In a properly coloured $n$-vertex graph $G$, suppose that we have:
\begin{itemize}
\item  $T$  a rainbow tree with $|T|=n-1$.
\item $v\not\in V(T)$ with $d(v)\geq \frac12 n+b$.
\item $c\not\in C(T)$ with $e(c)\geq b$.
\item $H$ a graph on $V(T)$ in which any set of $b$ colours of $C(T)$ covers  $\geq n-2b$ vertices.
\end{itemize}
Then, there is a rainbow tree $T'$ in $T\cup H\cup E(c)\cup E(v)$ with $V(T')=V(T)\cup\{v\}$,  $e(T'\setminus  T)\leq 3$, and $d_{T'}(v)=1$.
\end{lemma}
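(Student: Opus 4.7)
The strategy is to extend $T$ by attaching $v$ via exactly one pendant edge $uv$, making at most two edge exchanges on $T$ (using $E_G(c)$ and $H$) to repair rainbowness while preserving tree structure. If some neighbour $u$ of $v$ satisfies $c(uv)\notin C(T)$, set $T':=T+uv$ and we are done with $e(T'\setminus T)=1$. Likewise, if some edge $vz\in E_G(c)$ exists, then $T':=T+vz$ is already a rainbow tree since $c\notin C(T)$, with $d_{T'}(v)=1$. Thus we may assume $C_v:=\{c(uv):u\in N_G(v)\}\subseteq C(T)$ and $E_G(c)\subseteq\binom{V(T)}{2}$; in particular $|C_v|=d_G(v)\geq n/2+b$ because $G$ is properly coloured.

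Since $G$ is properly coloured, $E_G(c)$ is a matching with $\geq b$ edges contained in $\binom{V(T)}{2}$, hence with no isolated vertex on $\geq 2b$ vertices. Apply Lemma~\ref{Lemma_Free_Edges_On_Tree} to $T$ and $E_G(c)$ to produce a set $B_0\subseteq C(T)$ of ``$c$-swappable'' colours with $|B_0|\geq b$: for each $c_1\in B_0$ there is an edge $e_1\in E(T)$ of colour $c_1$ and an edge $f_1\in E_G(c)$ with $T-e_1+f_1$ a tree. If $B_0\cap C_v\neq\emptyset$, pick $c_1$ in this intersection, the corresponding $e_1,f_1$, and $u\in N_G(v)$ with $c(uv)=c_1$; then $T':=T-e_1+f_1+uv$ is a rainbow tree with $e(T'\setminus T)=2$, $d_{T'}(v)=1$, and colour set $C(T)\cup\{c\}$.

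Otherwise $B_0\cap C_v=\emptyset$. Choose $B\subseteq B_0$ with $|B|=b$; the hypothesis on $H$ provides a set $U\subseteq V(T)$ with $|U|\geq n-2b$, each vertex of $U$ incident to some $H$-edge whose colour lies in $B$. Since $|N_G(v)|\geq n/2+b$ and $|V(T)|=n-1$, inclusion-exclusion gives $|N_G(v)\cap U|\geq n/2-b+1>0$. Pick $u\in N_G(v)\cap U$ with a corresponding $f_2=uw\in H$ of colour $c_1\in B$; let $c_2:=c(uv)\in C_v$, let $e_1,e_2\in E(T)$ be the colour-$c_1,c_2$ edges (distinct because $c_1\in B_0\subseteq C(T)\setminus C_v$ forces $c_1\neq c_2$), and let $f_1\in E_G(c)$ be the $c$-swap partner of $e_1$. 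The candidate $T':=T-e_1-e_2+f_1+f_2+uv$ has $n-1$ edges on $V(T)\cup\{v\}$, satisfies $d_{T'}(v)=1$ and $e(T'\setminus T)=3$, and its colour set is $(C(T)\setminus\{c_1,c_2\})\cup\{c,c_1,c_2\}=C(T)\cup\{c\}$, so it is rainbow.

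The main obstacle is verifying that in this last construction $T'$ is in fact connected. Writing $T-e_1-e_2$ as three subtrees $A,B^{\star},C$ arranged along the $e_1$-$e_2$ path in $T$, the edge $f_1$ crosses the $e_1$-cut by construction, the edge $uv$ attaches $v$ to whichever piece contains $u$, and the remaining requirement is that $f_2=uw$ bridges the two pieces still separated. For a fixed choice of $u$ and $c_1$ this cut condition may fail, but there is substantial flexibility: $u$ ranges over the $\geq n/2-b+1$ vertices of $N_G(v)\cap U$, and for each such $u$ there may be several admissible colours $c_1\in B$, each yielding a matching of colour-$c_1$ $H$-edges through $u$. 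A short greedy or counting argument -- or, equivalently, a second invocation of Lemma~\ref{Lemma_Free_Edges_On_Tree} applied to the intermediate tree $T_1:=T-e_1+f_1$ against an appropriate colour-$c_1$ subgraph of $H$ meeting $N_G(v)\cap U$ -- then produces an admissible triple $(u,c_1,w)$ for which $T'$ is connected, completing the proof.
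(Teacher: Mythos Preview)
Your setup and initial reductions match the paper's, and you correctly identify that two swaps (one with a colour-$c$ edge, one with an $H$-edge) are needed before attaching $v$. But the last paragraph is a genuine gap, not a routine detail.

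The problem is the order of choices. You first pick $u\in N_G(v)\cap U$; this fixes $c_2=c(uv)$, and the $H$-edge $f_2=uw$ through $u$ then fixes $c_1\in B$, hence $e_1$, $f_1$ and the intermediate tree $T_1=T-e_1+f_1$. Only now do you ask whether $f_2$ crosses the $e_2$-cut of $T_1$. For a single $u$ there is no reason this holds, and the ``flexibility'' you cite does not obviously help: different $u$'s force different $c_1$'s, hence entirely different trees $T_1$, and for each such $T_1$ you still need the $H$-edge at $u$ to cross a specific cut. Your proposed fix---apply Lemma~\ref{Lemma_Free_Edges_On_Tree} to $T_1$ and the colour-$c_1$ part of $H$---does produce, for each vertex $x$ covered by colour-$c_1$ edges of $H$, a swappable edge $e_x\in E(T_1)$ through $x$; but the freed colour is $c(e_x)$, not the prescribed $c_2=c(uv)$, so you still cannot attach $v$ at $u$.

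The paper avoids this by reversing the quantifiers. It ranges over \emph{all} $j\in J$ (your $c_1\in B_0$), forms $T_j=T-j+e_j$, and applies Lemma~\ref{Lemma_Free_Edges_On_Tree} to $T_j$ against the colour-$c(j)$ subgraph $H_j$ of $H$. This yields a set $J'$ of edges of $T\cup E(c)$ whose colours can be freed by at most two swaps, with $V(J')\supseteq V\bigl(\bigcup_j H_j\bigr)$. Since $|J|\ge b$, the hypothesis on $H$ gives $|V(J')|\ge n-2b$, so $|J'|\ge n/2-b$. Only \emph{after} aggregating these freeable colours does one pigeonhole against $C_v$ (inside $\{c\}\cup C(T)$, of size $n-1$) to find a colour in $C(J')\cap C_v$ and hence the attaching edge. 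The point is that the second swap determines which colour gets freed, and that colour must then be matched against $C_v$; fixing $u$ (and hence $c_2$) first and hoping to find a compatible $c_1$ afterwards is the wrong direction.
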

\begin{proof} If there is a colour $c$ edge next to $v$, then clearly we can add such an edge to $T$ to get the required tree. Assume, then, that every colour in $C(v)$ is on $T$, and thus, in particular, $V(c)\subset V(G)\setminus\{v\}=V(T)$.

Let $J$ be the set of edges $j\in E(T)$ for which there is a colour $c$ edge $e_j$ so that $T_j:=T-j+e_j$ is a (rainbow) tree. By Lemma~\ref{Lemma_Free_Edges_On_Tree} and $e(c)\geq b$, we have $e(J)\geq b$ (for the application of this lemma, we take $G$ to be the set of colour $c$ edges).

For each $j\in J$, let $H_j$ be the graph of colour $c(j)$ edges in $H$ with no isolated vertices. By Lemma~\ref{Lemma_Free_Edges_On_Tree}, we have
\begin{equation}\label{frozen}
V(H_j)\subset V(\{e\in E(T_j):\exists e'\in E(H_j)\text{ s.t.\ } T_j-e+e'\text{ is a tree}\}).
\end{equation}
Notice that the trees $T_j-e+e'$ above are always rainbow (since $T_j$ is a rainbow tree on $V(T)$ missing colour $c(j)$ and $V(H_j)\subseteq V(T)$).
Let
\[
J'=\{e\in E(c)\cup E(T):\text{$\exists$ a rainbow tree $T'_e$ in $T\cup H\cup E(c)$ with $V(T')=V(T)$,  $e(T'\setminus  T)\leq 2$, $c(e)\notin C(T'_e)$}\}.
\]
Then,  for each $j\in J$,  by \eqref{frozen},
we have $V(H_j)\subset V(J')$. Therefore, $V(\bigcup_{j\in J} H_j)\subset V(J')$.

As $|J|\geq b$, we have $|V(\bigcup_{j\in J} H_j)|\geq n-2b$. Thus, $|V(J')|\geq n-2b$, so that $|J'|\geq \frac12 n-b$. As $C(v)\subset C(T)$, $C(J')\subset \{c\}\cup C(T)$, $|T|=n-1$ and $d(v)\geq  \frac12 n +b$, there is some edge $e$ adjacent to $v$ and $f\in J'$ with $c(e)=c(f)$. Then, using the tree $T'_f$ from the definition of $J'$, the tree $T'_f+e$ satisfies the conditions in the lemma.
\end{proof}

Iterating the above lemma, we can turn nearly-spanning trees into spanning trees. The conditions we need are to be compared with (a) -- (d).
\begin{lemma}[Completing rainbow trees]\label{Lemma_Many_Trees_Completion}
Let $1\GG \beta , k^{-1}\GG \varepsilon \GG n^{-1}$. In a properly coloured $K_n$ suppose that we have the following:
\begin{enumerate}[(i)]
\item  $S\subseteq V(K_n)$ with $|S|\geq n- \varepsilon n$.
\item  $T_1, \dots, T_{n(1-8\beta)/2}$ rainbow trees with $V(T_i)\supseteq S$.
\item  For each $T_i$, there is a set $C_L^i$ of $n-|T_i|$ colours outside of $C(T_i)$ with $\geq n(1-\beta)/2$ edges.
\item  For each $v\not\in S$, $d_{T_i}(v)\leq 1$ for all $i$.
\item  $H$ a  subgraph on $S$ disjoint  from $T_1, \dots, T_{n(1-8\beta)/2}$ in which any set of $k$ colours  covers at least $n(1-\beta)$ vertices.
\end{enumerate}
Then, there are $n(1-8\beta)/2$ spanning rainbow trees in $K_n$.
\end{lemma}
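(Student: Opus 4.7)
The plan is to iteratively apply Lemma~\ref{Lemma_Tree_Extend_By_One_Vertex} to each tree $T_i$, extending it by one vertex at a time until $V(T_i)=V(K_n)$. Since $V(T_i)\supseteq S$ and $|S|\geq (1-\varepsilon)n$, each tree needs at most $\varepsilon n$ single-vertex extensions, for a total of at most $\varepsilon n\cdot (1-8\beta)n/2=O(\varepsilon n^2)$ extension steps across all trees. Each application of Lemma~\ref{Lemma_Tree_Extend_By_One_Vertex} replaces $T_i$ by a tree $T_i'$ with one extra vertex and at most three new edges drawn from the chosen colour class $E(c)$, the star $E(v)$, and $H$.

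For each extension I would apply Lemma~\ref{Lemma_Tree_Extend_By_One_Vertex} with parameter $b:=\lceil \beta n\rceil$. The degree condition $d_{K_n}(v)=n-1\geq n/2+b$ is trivial. Assumption (iii) gives a reservoir $C_L^i$ of $n-|T_i|$ colours outside $C(T_i)$, each with $\geq n(1-\beta)/2\geq b$ edges; since over the at most $\varepsilon n$ extensions of $T_i$ the set $C(T_i)$ grows by only $O(\varepsilon n)$ and $\varepsilon\ll\beta$, a usable $c\in C_L^i$ is always available. Since $b\geq k$ (using $k^{-1}\gg\varepsilon$), assumption (v) gives that any $b$ colours of $H$ cover $\geq n(1-\beta)\geq n-2b$ vertices, verifying the remaining hypothesis at the very first extension. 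Condition (iv) is used to check that the extensions made so far inside other trees do not wreck the degree bound of (b) --- since the rotations only ever create at most two new edges at any vertex outside $S$, this is controlled by the slack in (iv).

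The main obstacle is maintaining the covering property of $H$ after many extensions absorb its edges. A total of $O(\varepsilon n^2)$ edges of $H$ may be consumed, which is far too many for the naive bound ``a removed edge uncovers at most two vertices''. The plan is to spread these removals evenly across the colour classes of $H$. Since $H$ has many colours and Lemma~\ref{Lemma_Tree_Extend_By_One_Vertex} leaves some freedom in which edge of $H$ is actually used (the proof of that lemma produces a set of roughly $n/2-b$ valid candidates), one can either (a) randomly pre-split $H$ amongst the trees, with Azuma-type concentration ensuring each piece still satisfies a slightly weakened covering property of the form ``any $k$ colours cover $\geq n(1-2\beta)$ vertices'', or (b) run the extensions greedily while capping the number of times any colour of $H$ is used at $O(\varepsilon n)$. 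Either way each colour of $H$ loses at most $O(\varepsilon n)$ edges, so the coverage of any $k$-colour set in the residual $H$ drops by at most $O(k\varepsilon n)=o(\beta n)$ (using $k^{-1}\gg\varepsilon$). This preserves ``any $b$ colours cover $\geq n-2b$'' for every extension and yields the required $(1-8\beta)n/2$ spanning rainbow trees. The subtlest point of the proof, which is essentially combinatorial bookkeeping, is ensuring that these three quantities --- the supply of fresh colours in $C_L^i$, the rainbowness of each $T_i$, and the pseudorandomness of the residual $H$ --- can be maintained simultaneously.
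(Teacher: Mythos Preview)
Your overall plan --- iterate Lemma~\ref{Lemma_Tree_Extend_By_One_Vertex} with $b\approx\beta n$ --- is exactly right, and you have correctly identified that maintaining the covering property of $H$ after $O(\varepsilon n^2)$ edge-deletions is the heart of the matter. But both of your proposed fixes for this are unnecessarily complicated, and neither is clearly sound as stated. Randomly pre-splitting $H$ among $\Theta(n)$ trees gives each piece far too few edges for the ``any $k$ colours cover $(1-\beta)n$ vertices'' property to survive in each piece. The greedy capping approach would require Lemma~\ref{Lemma_Tree_Extend_By_One_Vertex} to offer a choice over which \emph{colour} of $H$ is used, which that lemma does not provide (its flexibility is over edges, not colours).

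The paper's solution is much cleaner and needs no control at all over which edges of $H$ get consumed. The point is that we are applying Lemma~\ref{Lemma_Tree_Extend_By_One_Vertex} with $b=\beta n$, not with $b=k$, so we only need the residual $H'$ to satisfy ``any $\beta n/2$ colours cover $\geq n(1-2\beta)$ vertices''. Given any $\beta n/2$ colours, partition them into $\Theta(\beta^{-1}\varepsilon n)$ groups of size $\geq k$ each; since at most $4\varepsilon n^2$ edges have been deleted in total, by pigeonhole some group has lost at most $\beta n/2$ edges, and assumption~(v) on the original $H$ then gives the coverage. This one-paragraph claim replaces all your bookkeeping.

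Two smaller points. First, the degree condition $d(v)\geq |G|/2+b$ is \emph{not} trivial: you must apply Lemma~\ref{Lemma_Tree_Extend_By_One_Vertex} inside a graph $G$ on $V(T_j')\cup\{v\}$ from which all other current trees have been deleted, and the reason $v$ retains degree $\geq n/2+\beta n$ is precisely that by~(iv) each of the $(1-8\beta)n/2$ trees uses at most one edge at $v$. Second, to preserve~(iv) for the \emph{other} vertices of $V(K_n)\setminus S$, you must also delete from $G$ all edges with both endpoints outside $S$ before invoking the lemma; otherwise an extension at $v$ could raise some other $d_{T_j'}(u)$ above $1$ and wreck future degree counts. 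The paper packages all of this as a single extremal argument: take a family of trees satisfying (ii)--(iv) plus $e(T_i'\setminus T_i)\leq 3(|T_i'|-|T_i|)$ with $\sum e(T_i')$ maximal, and derive a contradiction from any non-spanning $T_j'$.
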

\begin{proof}
Set $r=n-|S|\leq \varepsilon n$.
\begin{claim}\label{Claim_HSubgraph}
Let $H'$ be a subgraph  of $H$ with $e(H')\geq e(H)-4rn$. Any set of $\beta n/2$ colours in $H'$ covers at least $n(1-2\beta)$ vertices.
\end{claim}
\begin{proof}
Consider a set $Y$ of $\beta n/2$ colours in $H'$. Since $\beta,k^{-1}\GG \varepsilon$ and $r\leq \varepsilon n$, we have $8\beta^{-1}r\cdot k\leq \beta n/2$ and thus $Y$ can be partitioned into disjoint subsets $Y_1, \dots, Y_{8\beta^{-1}r}$ of order $\geq k$. Since $e(H\setminus H')\leq 4rn$, one of these subsets $Y_i$ has $\leq  4rn/(8\beta^{-1}r)=\beta n/2$ edges in $E(H)\setminus E(H')$. Since $|Y_i|\geq k$, by the assumptions of the lemma, $Y_i$ covers at least $n(1-\beta)$ vertices in $H$. At most $\beta n$ of these might be uncovered in $H'$ (any uncovered vertex like this must have a colour $Y_i$ edge of $H\setminus H'$ passing through it. There are $\leq \beta n/2$ such edges).  This shows that $Y_i$ covers at least $n(1-2\beta)$ vertices in $H'$.
\end{proof}
Let  $T_1', \dots, T'_{n(1-8\beta)/2}$ be a set of edge-disjoint rainbow trees in $K_n$ satisfying (ii)--(iv) and also
\begin{itemize}
\item[(vi)]
$ e(T_i'\setminus T_i)\leq 3(|T_i'|-|T_i|).$
\end{itemize}
Additionally, choose this family of trees so that $\sum_{i=1}^{n(1-8\beta)/2} e(T_i')$ is as large as possible. We claim that all the rainbow trees $T_i'$ are spanning.
Suppose for the sake of contradiction there is a vertex $v\not\in V(T_j')$ for some $j$.
By (iii), and as $|T_i'|<n$, there is a colour $c\in C_L^j$ outside $C(T_j')$ with $\geq n(1-\beta)/2$ edges.
Since $T_j'$ satisfies (ii), we have $v\not \in S$.
Let $G^-$ be the subgraph of $K_n$ on $V(T_j)\cup \{v\}$ with the edges of $T_i'$ deleted for all $i$,  the edges not touching $S$ deleted, and edges with colour in $C_L^j\setminus \{c\}$ deleted. Let $G=G^-\cup T_j'$.

Since the trees $T_i'$ satisfy (iv), the number of trees is $n(1-8\beta)/2$,  and $|S|\geq n- \varepsilon n$, we have $d_G(v)\geq \frac12|G|+\beta n$. Since the trees $T_i'$ are rainbow, $|S|\geq n- \varepsilon n$, and $|E_{K_n}(c)|\geq n(1-\beta)/2$  we have $|E_G(c)|\geq \beta n$.
Let $H'=H\cap G$ to get a graph with $e(H')\geq e(H)-\sum_{c'\in C_L^i}|E(c')|- \sum_{j=1}^{n(1-8\beta)/2} e(T_j'\setminus T_j)\geq e(H)-rn/2-3rn$ (using $|C_L^i|\leq r$, (vi), $|T_j|\geq |S|$, and $|T_j'|\leq n$). By Claim~\ref{Claim_HSubgraph}, any set of $\beta n/2$ colours in $H'$ covers at least $n(1-2\beta)$ vertices.

Apply Lemma~\ref{Lemma_Tree_Extend_By_One_Vertex} to  $G$, with the tree $T_j'$, vertex $v$, colour $c$, graph $H'$, $n'=|T_j'|+1$, and $b=\beta n$. This gives a rainbow spanning tree $T_j''$ in $G$ containing at most $3$ edges outside $T_j'$ and having $d_{T_j''}(v)=1$. Notice that the family of trees $\{T_i':i\neq j\}\cup \{T_j''\}$ satisfies (ii) -- (iv) and (vi).
Indeed $S\subseteq V(T_j')\subseteq V(T_j'')$ implies that (ii) holds. For (iii) we have that $C_L^i\setminus \{c\}$ is a set of $n-|T_j'|-1=n-|T_j''|$ colours outside $C(G)\cup C(T_j')\supseteq C(T_j'')$ with $\geq n(1-\beta)/2$ edges. For (iv) we have $d_{T_{i}''}(v)\leq 1$ by the property from Lemma~\ref{Lemma_Tree_Extend_By_One_Vertex}  and $d_{T_{i}''}(u)\leq d_{T_{i}'}(u)\leq 1$ for $u\not\in S\setminus \{v\}$ since there are no edges in $G\setminus T_j'$ through such $u$.
Finally, (vi) comes from the properties from Lemma~\ref{Lemma_Tree_Extend_By_One_Vertex} since $e(T''_j\setminus T'_j)\leq 3$.
Thus we have a larger family of trees satisfying (ii) -- (iv) and (vi), contradicting the maximality of the original family.
\end{proof}

\subsection{Near-decompositions into spanning rainbow trees}
Now we combine everything from this section to prove the asymptotic version of the Brualdi-Hollingsworth and Kaneko-Kano-Suzuki Conjectures.
We will need the following standard lemma.
\begin{lemma}\label{Lemma_Min_Degree_Subgraph_Large}
Every graph $G$ with $e(G)\geq (1-(\varepsilon/2)^2)n^2/2$ has an induced subgraph $H$ with $\delta(H)\geq (1-\varepsilon)n$.
\end{lemma}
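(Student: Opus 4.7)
The plan is to use the standard iterative deletion argument: repeatedly delete a vertex whose degree in the current induced subgraph is less than $(1-\varepsilon)n$, and let $H$ be the final subgraph. By construction, if $H$ is nonempty then every remaining vertex satisfies $d_H(v) \geq (1-\varepsilon)n$, which is exactly the conclusion. So the whole task reduces to showing the deletion process cannot exhaust all of $V(G)$.

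For this, first I would order the deleted vertices as $v_1, v_2, \ldots, v_t$ (in deletion order). When $v_i$ is removed, it lies in an induced subgraph on $n-i+1$ vertices, and its degree there is strictly less than $(1-\varepsilon)n$. Hence the number of edges destroyed by this deletion is at most $\min\{(1-\varepsilon)n - 1,\, n-i\}$ — the first bound coming from the low-degree stopping rule, the second from the trivial fact that there are only $n-i$ other vertices remaining.

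Next I would assume for contradiction that the process empties $V(G)$, so $t = n$ and every edge of $G$ is eventually destroyed. Summing the per-step bound gives
\begin{align*}
e(G) \;\leq\; \sum_{i=1}^{n} \min\bigl\{(1-\varepsilon)n - 1,\, n-i\bigr\}
&\leq \lceil \varepsilon n\rceil \cdot (1-\varepsilon)n \;+\; \binom{n-\lceil\varepsilon n\rceil}{2} \\
&\leq \varepsilon(1-\varepsilon)n^2 + \tfrac12(1-\varepsilon)^2 n^2 \;=\; \tfrac{1}{2}(1-\varepsilon^2)n^2.
\end{align*}
Since $(1-\varepsilon^2)/2 < (1-\varepsilon^2/4)/2$, this contradicts the hypothesis $e(G) \geq (1-(\varepsilon/2)^2)n^2/2$, so the process must terminate while some vertices remain.

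The only subtlety is splitting the sum at $i \approx \varepsilon n$ to use the two different bounds on each step — using only the $(1-\varepsilon)n$ bound would give $e(G) \leq (1-\varepsilon)n^2$, which is far too weak. No other obstacles are anticipated; the argument is short once this splitting is in place.
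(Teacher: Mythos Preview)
Your argument is correct. One small arithmetic slip: the inequality $\lceil \varepsilon n\rceil \cdot (1-\varepsilon)n \leq \varepsilon(1-\varepsilon)n^2$ is not literally true when $\varepsilon n\notin\mathbb{Z}$, but the $O(n)$ error this introduces is swallowed by the $\tfrac{3}{8}\varepsilon^2 n^2$ gap between $\tfrac12(1-\varepsilon^2)n^2$ and $\tfrac12(1-\varepsilon^2/4)n^2$, so the contradiction still goes through once $n$ is large (which it is in the only place the lemma is used).

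The paper takes a different, slightly cleaner route. Rather than iterating deletions and summing the per-step edge loss, it removes in one shot the set $S$ of vertices whose degree in $G$ is at most $(1-\varepsilon/2)n$. A direct double-count
\[
2e(G)\le (n-|S|)n+|S|(1-\varepsilon/2)n
\]
together with the hypothesis forces $|S|\le \varepsilon n/2$; setting $H=G\setminus S$ then gives $\delta(H)\ge (1-\varepsilon/2)n-|S|\ge (1-\varepsilon)n$. The point of the intermediate threshold $(1-\varepsilon/2)n$ is that it simultaneously bounds the number of deleted vertices and the degree loss of each survivor, so no splitting of a sum into two regimes is needed and the argument is exact rather than asymptotic. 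Your iterative-deletion approach is the standard ``keep removing low-degree vertices'' argument and works fine; the paper's one-shot version is just a bit shorter and avoids the rounding bookkeeping.
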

\begin{proof}
Let $S$ be the set of vertices $v$ in $G$ with $d(v)\leq (1-\varepsilon/2)n$. We have $2e(G)\leq (n-|S|)n+|S|(1-\varepsilon/2)n$ which combined with $e(G)\geq (1-(\varepsilon/2)^2)n^2/2$ gives $|S|\leq \varepsilon n/2$. Let $H=G\setminus S$ to get a graph with $\delta(H)\geq (1-\varepsilon/2)n-|S|\geq (1-\varepsilon)n$.
\end{proof}

We will also need the following lemma about switching edges between a tree and a forest.
\begin{lemma}\label{Lemma_Switch_Forest_On_Tree}
Let $T$ be a tree and $F$ a forest all of whose edges touch $V(T)$. Then, there is a tree  $T'$ which contains $F$ and is contained in $T\cup F$.
\end{lemma}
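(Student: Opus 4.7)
The plan is to observe that $T\cup F$ is connected, and then extend $F$ to a spanning tree of $T\cup F$ using the standard matroid/greedy argument.

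First I would verify connectivity of $T\cup F$. Since $T$ is a tree, the vertex set $V(T)$ induces a connected subgraph. Any vertex $v\in V(F)\setminus V(T)$ lies in some edge $e\in E(F)$, and by hypothesis $e$ has an endpoint in $V(T)$. Hence $v$ is joined in $F$ (and therefore in $T\cup F$) to a vertex of $V(T)$. Combined with the connectivity of $T$ on $V(T)$, this shows that $T\cup F$ is a connected graph on vertex set $V(T)\cup V(F)$.

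Next, since $F$ is a forest contained in the connected graph $T\cup F$, I would extend $F$ to a spanning tree $T'$ of $T\cup F$ by the usual greedy procedure: process the edges of $T$ in an arbitrary order and add each one to the current graph whenever doing so does not create a cycle. The result is an acyclic subgraph of $T\cup F$ containing $F$. It is spanning because $T\cup F$ is connected: if some vertex $u$ were not connected to $V(F)$ in $T'$, then any $T$-path from $u$ to $V(F)$ would contain an edge whose addition to $T'$ does not create a cycle, contradicting the maximality produced by the greedy process. Thus $T'$ is a spanning tree of $T\cup F$ with $F\subseteq T'\subseteq T\cup F$, as required.

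There is no real obstacle here; the only subtlety is the connectivity step, which is where the ``all edges of $F$ touch $V(T)$'' hypothesis gets used. Everything else is the matroid-theoretic fact that any independent set (forest) in a connected graph can be extended to a base (spanning tree).
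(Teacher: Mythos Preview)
Your proof is correct. Both you and the paper start from the same observation that $T\cup F$ is connected (using that every edge of $F$ meets $V(T)$), and then finish with dual extremal arguments: the paper takes a \emph{minimal connected} subgraph of $T\cup F$ containing $F$ and shows it must be acyclic (any cycle would contain an edge of $T$, whose deletion preserves connectivity, contradicting minimality), whereas you take a \emph{maximal acyclic} subgraph containing $F$ and show it must be spanning. Both are instances of the same matroid fact and neither buys anything over the other here; your greedy extension is perhaps marginally more explicit, while the paper's minimality argument is a line shorter.
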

\begin{proof}
Notice that $T\cup F$ is connected since $T$ is a tree and all edges of $F$ touch $T$.
Let $T'$ be a connected subgraph of $T\cup F$ which contains $F$ and has $e(T')$ as small as possible.
If $T'$ is acyclic then we are done.
Otherwise, $T'$ contains a cycle $C$. Since $F$ is a forest $C$ must contain at least one edge of $T$. Deleting this edge gives a smaller connected graph contradicting the minimality of $e(T')$.
\end{proof}

By combining everything in this section with our earlier Hamiltonian decompositions we can show that the Brualdi-Hollingsworth and Kaneko-Kano-Suzuki Conjectures hold asymptotically when the colouring on $K_n$ is close to a $1$-factorization.
\begin{lemma}\label{Lemma_Tree_Decomposition_Many_Large_Colours}
Let $1\GG\varepsilon, \log^{-1}n\GG \gamma \GG n^{-1}$.
Let $K_n$ be properly coloured with $\geq (1-\gamma)n$ colours having $\geq (1-\gamma)n/2$ edges. Then, $K_n$ has $(1-8\varepsilon)n/2$ edge-disjoint spanning rainbow trees.
\end{lemma}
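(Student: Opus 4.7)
My plan is to adapt the strategy of Section~2 for the near-$1$-factorization case to the general many-large-colours setting, using Lemma~\ref{Lemma_Small_Disjoint_Trees} to handle the $\gamma n$-slack introduced by the hypothesis.

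Fix parameters $1\gg\varepsilon,\log^{-1}n\gg p\gg k_0^{-1}\gg\gamma\gg n^{-1}$ and let $U\subseteq V(K_n)$ be a random subset of size $(1-\varepsilon)n$; write $W=V(K_n)\setminus U$. By Lemma~\ref{Lemma_Typicality_K_n} and Lemma~\ref{Lemma_Random_Subgraph_General}(c), $K_n[U]$ is typical and globally $\sim(1-\varepsilon)^2n/2$-bounded with high probability. Partition the edges of $K_n[U]$ randomly into $J$ (each edge kept with probability $p$) and $G'$, and the colours of $G'$ randomly into $C_H$ (each colour with probability $p$) and $C_G$, producing subgraphs $H$ and $G''$. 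Lemma~\ref{Lemma_Random_Subgraph_General}(a)--(b) gives that $J$, $G''$ and $H$ are all typical; $G''$ has average degree $\approx(1-\varepsilon-2p)n$ while remaining globally $\leq(1-\varepsilon)^2n/2$-bounded, creating the $\approx\varepsilon$-gap required by Lemma~\ref{Lemma_Hamiltonian_Decomposition_gap}. Applying that lemma to $G''\cup H$ produces $(1-O(\varepsilon))n/2$ edge-disjoint rainbow Hamiltonian cycles on $U$, and deleting one edge from each gives rainbow Hamiltonian paths $P_1,\dots,P_M$ with $M=\lceil(1-8\varepsilon)n/2\rceil$. Lemma~\ref{Lemma_Erdos_Renyi_Subgraph}, applied to the subgraph of $K_n[U]$ spanned by the $\geq(1-\gamma)n$ large-colour classes, additionally ensures that any $k_0$ colours of $J$ cover $\geq(1-\varepsilon)|U|$ vertices.

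I would then apply Lemma~\ref{Lemma_Many_Trees_Completion} with $S=U$, $\beta=\varepsilon$, $k=k_0$, auxiliary graph $J$, and suitable initial trees built from $P_1,\dots,P_M$. Conditions (i), (ii), (iv), (v) are easy; the delicate condition is (iii), which requires $\varepsilon n$ colours outside $C(P_i)$ of size $\geq(1-\varepsilon)n/2$ in $K_n$. The naive count gives only $(\varepsilon-\gamma)n+1$ unused large colours, short by $\gamma n-1$. To close this gap I would pre-attach to each $P_i$ a short rainbow forest of non-large-coloured edges. Let $G_s$ be the subgraph of $K_n$ consisting of edges in colours of size $<(1-\varepsilon)n/2$, so that $e(G_s)\leq n^2/2-(1-\gamma)^2n^2/2\leq\gamma n^2$. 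Restricting $G_s$ to edges with at least one endpoint in $U$, truncating each colour class to at most $M$ edges, and applying Lemma~\ref{Lemma_Small_Disjoint_Trees} with $S=U$, $m=M$, and forest size $\gamma n+1$ yields $M$ edge-disjoint rainbow forests $F_1,\dots,F_M$, each using only non-large colours, colour-disjoint from $\bigcup_j C(P_j)$, and with $d_{F_i}(w)\leq 1$ for each $w\in W$. Setting $T_i=P_i\cup F_i$ (bridged by an edge of $J$ if needed) gives a rainbow tree using at least $\gamma n+1$ non-large colours, so at least $(1-\gamma)n-(|T_i|-\gamma n-2)=n-|T_i|+2$ large colours lie outside $C(T_i)$, verifying (iii). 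A final application of Lemma~\ref{Lemma_Many_Trees_Completion} then produces the desired $(1-8\varepsilon)n/2$ edge-disjoint spanning rainbow trees.

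The main technical obstacle is the augmentation step: the supply--demand inequality $(1+\beta)\cdot(\gamma n+1)\cdot M\leq e(G_s)$ of Lemma~\ref{Lemma_Small_Disjoint_Trees} is nearly tight (supply $\leq\gamma n^2$, demand $\approx\gamma n^2/2$), and simultaneously maintaining rainbow-ness, edge- and colour-disjointness of the $F_i$ from the $P_j$, together with the degree-at-most-one condition on $W$-vertices, requires careful truncation and parameter choice. Degenerate regimes---for instance an exact $1$-factorization, where $G_s=\emptyset$ but (iii) already holds, or one where $G_s$ concentrates on $W^2$ and becomes too sparse after restriction---must be handled either by skipping the augmentation or by exploiting the randomness of $U$, which with high probability disperses the edges of $G_s$ proportionally between $U^2$, $U\text{-}W$ and $W^2$.
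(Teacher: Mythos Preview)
Your outline follows the paper's strategy closely (random vertex set, set-aside pseudorandom subgraph, Hamiltonian decomposition, small-colour forests, then Lemma~\ref{Lemma_Many_Trees_Completion}), but two steps fail as written.

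First, taking $|U|=(1-\varepsilon)n$ and then applying Lemma~\ref{Lemma_Many_Trees_Completion} with $\beta=\varepsilon$ violates that lemma's hierarchy $\beta,k^{-1}\GG\varepsilon'$, since the role of $\varepsilon'$ (controlling $|S|\geq n-\varepsilon' n$) is played by your $\varepsilon=\beta$; the internal claim in the proof of that lemma needs roughly $\varepsilon' k\ll\beta^2$, which is impossible here. The paper introduces an intermediate parameter $\nu$ with $\varepsilon\GG\nu\GG\gamma$ and takes $|S|=(1-\nu)n$: the random restriction already makes $G[S]$ globally $(1-\Theta(\nu))|S|/2$-bounded, so Lemma~\ref{Lemma_Hamiltonian_Decomposition_gap} applies with gap $\Theta(\nu)$ and no $\varepsilon$-scale vertex deletion is needed.

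Second, your supply--demand worry is real and is not solved by fixing the forest size at $\gamma n+1$. The paper sets $C_L=\{c:|E(c)|\geq(1-\varepsilon)n/2\}$ and takes the \emph{adaptive} forest size $k=\max(n-1-|C_L|,0)$. Since every vertex is incident to at least $n-1-|C_L|=k$ non-$C_L$ edges, the graph $G_2$ of non-$C_L$ edges meeting $S$ automatically has $e(G_2)\geq k|S|/2\geq(1+\eta)k m$ once $\varepsilon\gg\eta,\nu$, handling all regimes (including $k=0$) without case analysis. Finally, $P_i\cup F_i$ need not be a tree: edges of $F_i$ lying inside $U$ create cycles, and ``bridging with $J$'' does not address this. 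The paper instead invokes Lemma~\ref{Lemma_Switch_Forest_On_Tree}: since every edge of $F_i$ meets $S=V(P_i)$, one deletes edges of $P_i$ to obtain a tree $T_i$ with $F_i\subseteq T_i\subseteq P_i\cup F_i$, and then $k+|C_L|-e(T_i)\geq(n-1)-e(T_i)=n-|T_i|$ verifies condition~(iii) on the nose.
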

\begin{proof}
Choose $1\GG\varepsilon, \log^{-1}n \GG  \eta \GG \beta,\hat{k}^{-1} \GG \nu\GG \gamma_1\GG \gamma \GG n^{-1}$.

\smallskip

\textbf{Set aside small colours:}
Let $C$ be the set of  colours with $\geq (1-\gamma)n/2$ edges in $K_n$.
 By the assumption of the lemma and $\gamma_1\GG \gamma$ we have $e_{K_n}(C)\geq (1-\gamma)^2n^2/2\geq (1-(\gamma_1/2)^2)n^2/2$.

By Lemma~\ref{Lemma_Min_Degree_Subgraph_Large} applied to $K_n[C]$ with $\varepsilon=\gamma_1$ there is a subgraph $G$ of $K_n$ with $\delta(G)\geq (1- \gamma_1)n$,  having only colours of $C$. Set $n_1=|G|\geq (1- \gamma_1)n$ and notice that $G$ is $(3\gamma_1, 1, n_1)$-typical.

\smallskip

\textbf{Choose set $S$:} Fix $n_2= \lceil(1- \nu)n\rceil$. Apply Lemma~\ref{Lemma_Random_Subgraph_General} (c) with $p=n_2/n_1$, $n'=n_1$, $\mu=1/2$, and  $\gamma'= 3\gamma_1$ in order to find a set of vertices $S\subseteq V(G)$  of order $n_2$ with $G[S]$  globally $(1+3\gamma_1)(n_2/2n_1)n_2$-bounded, and $G[S]$ $(6\gamma_1, 1, n_2)$-typical.
Notice that   $G[S]$ is globally $(1-0.9\nu) n_2/2$-bounded (using  $n_1\geq(1- \gamma_1)n$, $n_2= \lceil(1- \nu)n\rceil$ and $1\GG\nu\GG\gamma_1$). Notice that in $G[S]$ any colour of $C$ covers at least $\geq (1-\gamma)n- (n-n_2)\geq (1-2\nu)n_2$ vertices.

\smallskip

\textbf{Set aside a pseudorandom graph $H$:} Partition $G[S]$ into subgraphs $G_1$ and $H$ with every edge placed in $H$ independently with probability $\eta$.
By Lemma~\ref{Lemma_Random_Subgraph_General} (b), $G_1$ is $(12\gamma_1, 1-\eta, n_2)$-typical and globally $(1+6\gamma_1)(1-\eta)(1-0.9\nu) n_2/2$-bounded (for the application take $p=1-\eta$, $\mu=(1-0.9\nu)/2$, $n'=n_2$, $\delta=1$, $\gamma'=6\gamma_1$). Since $\nu\GG \gamma_1$,   $G_1$ is globally $(1-0.5\nu)(1-\eta) n_2/2$-bounded.
By Lemma~\ref{Lemma_Erdos_Renyi_Subgraph} applied with $p=\eta$, $\varepsilon'=\varepsilon$, $k'=\hat k$, $\nu'=2\nu$, $H$ has the property that any set of $\hat k$ colours of $C$ cover $\geq (1-8\nu)n\geq (1-\varepsilon)n$ vertices.

\smallskip

\textbf{Find near-decomposition of $K_n[S]$ into rainbow paths:} Apply Lemma~\ref{Lemma_Hamiltonian_Decomposition_gap} to $G_1$ with $n'=n_2$, $\gamma'=12\gamma_1$, $p = 0.5\nu$, $\delta=1-\eta$ in order to find $(1-0.5\nu)(1-\eta)n_2$ edge-disjoint rainbow Hamiltonian paths in $G_1$. Using $(1-0.5\nu)(1-\eta)n_2\geq (1-\varepsilon)n/2$ choose a subcollection
 $P_1, \dots, P_{\lfloor(1- \varepsilon)n/2\rfloor}$ of these paths. Since $G_1$ is a subgraph of $G$, these paths only use edges with colour in $C$.

\smallskip

\textbf{Add small colours into trees:}
Let $C_L$ be the set of colours with $\geq (1-\varepsilon)n/2$ edges in $K_n$. Choose $k=\max(n-1-|C_L|,0)$. By assumption we have $k\leq \gamma n$.
Let $G_2$ be the subgraph of $K_n$ consisting of edges with colour outside $C_L$ which touch $S$.
We claim that $e(G_2)\geq  (1+\eta )k \lfloor(1-\varepsilon)n/2\rfloor$. When $k=0$, this is obvious. Otherwise since $\delta(K_n)=n-1$ and $K_n$ is properly coloured,   we have $e(G_2)\geq \frac12\sum_{v\in S} d_{C(K_n)\setminus C_L}(v) \geq |S|(\delta(K_n)-|C_L|)/2= k\lceil(1-\nu)n\rceil/2\geq   (1+\eta )k (1-\varepsilon)n/2$.
By definition of $C_L$, the graph $G_2$ is globally $\lfloor(1-\varepsilon)n/2\rfloor$-bounded. Apply Lemma~\ref{Lemma_Small_Disjoint_Trees} to $G_2$ with $m=\lfloor(1-\varepsilon)n/2\rfloor$, $\varepsilon'=1.01\nu$, $\beta'=\eta$, $S=S$. This gives us edge-disjoint rainbow forests $F_1, \dots, F_{\lfloor(1- \varepsilon)n/2\rfloor}$ of size $k$ in $G_2$.

Apply Lemma~\ref{Lemma_Switch_Forest_On_Tree} for $i=1, \dots, \lfloor(1- \varepsilon)n/2\rfloor$ to $P_i$ and $F_i$ in order to find a rainbow tree $T_i$ containing $F_i$ and contained in $P_i\cup F_i$ ($T_i$ is rainbow since $P_i$ and $F_i$ are colour-disjoint which happens because $C(P_i)\subseteq C\subseteq C_L$ and $C(F_i)\cap C_L=\emptyset$). In particular, each $T_i$ contains $k$ edges outside $C_L$ (the edges of $F_i$).
Since $k\geq n-1-|C_L|$, this implies that each $T_i$ avoids  $k+ |C_L|-e(T_i) \geq n-1-e(T_i)$ colours of $C_L$, each of which has $\geq (1-\varepsilon)n/2$ edges in $K_n$.
Additionally, from Lemma~\ref{Lemma_Small_Disjoint_Trees}, we have that for every vertex $v\not\in S$ either $v\in T_i$ for all $i$ or $d_{T_i}(v)\leq 1$ for all $i$. Let $S'=S\cup \{v\not\in S:  v\in T_i\text{ for each } i\}$ and notice that $|S'|\geq |S|=  \lceil(1-\nu)n\rceil$.
 Now for each $i$ and $v\not\in S'$, we have  $d_{T_i}(v)\leq 1$ and also $S'\subseteq V(T_i)$.

\smallskip

\textbf{Make trees spanning:}
Observe that $H$ is disjoint from $G_1$ and $G_2$. (The former holds by construction of $G_1$. The latter by $C(H)\subseteq C\subseteq C_L$ and $C(G_2)\cap C_L=\emptyset$), and hence $H$ is disjoint from the trees $T_1, \dots, T_{\lfloor(1-\varepsilon)n/2\rfloor}$.
Apply Lemma~\ref{Lemma_Many_Trees_Completion} with $S=S'$, trees $T_1, \dots, T_{(1-8\varepsilon)n/2}$, $H=H$, $\beta'=\varepsilon$, $k'=\hat k$ and $\eps'=\nu$
in order to find $(1-8\varepsilon)n/2$ edge-disjoint spanning rainbow trees in $K_n$, where we have used that $\varepsilon,1/\hat{k}\GG \nu$.
\end{proof}

Combining the above with our earlier result about Hamiltonian decompositions, we prove that the Brualdi-Hollingsworth and Kaneko-Kano-Suzuki Conjectures hold asymptotically.
\begin{theorem}
Let $1\GG \varepsilon\GG n^{-1}$.
Every properly coloured $K_n$ has $(1-\varepsilon)n/2$ edge-disjoint spanning rainbow trees.
\end{theorem}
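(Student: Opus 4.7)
The plan is to combine the two near-decomposition results already established: Lemma~\ref{Lemma_Hamiltonian_Decomposition_Kn} for colourings with few large colour classes, and Lemma~\ref{Lemma_Tree_Decomposition_Many_Large_Colours} for colourings with many large colour classes. A spanning rainbow tree is a weaker structure than a rainbow Hamiltonian cycle (deleting any single edge from a Hamiltonian cycle yields a spanning tree), so the first lemma automatically provides spanning rainbow trees in its regime.

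First I would pick an intermediate parameter $\gamma$ with $\varepsilon\GG\gamma\GG n^{-1}$ and also ${\varepsilon}\GG \log^{-1}n\GG \gamma$ (so that the hypotheses of Lemma~\ref{Lemma_Tree_Decomposition_Many_Large_Colours} are met), and split into two cases according to the number of large colour classes in the given proper edge-colouring of $K_n$.

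\emph{Case 1: at most $(1-\gamma)n$ colours have at least $(1-\gamma)n/2$ edges.} Apply Lemma~\ref{Lemma_Hamiltonian_Decomposition_Kn} with the parameter $\gamma$ (which is allowed since $\gamma\GG n^{-1}$) to obtain $(1-\gamma)n/2$ edge-disjoint rainbow Hamiltonian cycles $H_1,\ldots,H_{(1-\gamma)n/2}$ in $K_n$. From each $H_i$, delete a single edge to obtain a rainbow Hamiltonian path $T_i$, which is a spanning rainbow tree. The trees $T_1,\ldots,T_{(1-\gamma)n/2}$ are edge-disjoint, and since $\varepsilon\GG \gamma$ we have $(1-\gamma)n/2\geq (1-\varepsilon)n/2$, giving the conclusion.

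\emph{Case 2: at least $(1-\gamma)n$ colours have at least $(1-\gamma)n/2$ edges.} Choose $\varepsilon' = \varepsilon/8$, so that $(1-8\varepsilon')n/2 = (1-\varepsilon)n/2$, and note the hierarchy $1\GG \varepsilon'\GG \log^{-1}n\GG \gamma\GG n^{-1}$ holds. Apply Lemma~\ref{Lemma_Tree_Decomposition_Many_Large_Colours} with parameters $\varepsilon'$ and $\gamma$ to obtain $(1-8\varepsilon')n/2 = (1-\varepsilon)n/2$ edge-disjoint spanning rainbow trees in $K_n$.

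In either case we obtain $(1-\varepsilon)n/2$ edge-disjoint spanning rainbow trees, which completes the proof. The hard work is really already done inside Lemmas~\ref{Lemma_Hamiltonian_Decomposition_Kn} and~\ref{Lemma_Tree_Decomposition_Many_Large_Colours}; here the only subtlety is making sure the parameter hierarchy lets the two cases meet in the middle, which is handled by choosing $\gamma$ to lie polynomially between $\log^{-1} n$ and $n^{-1}$.
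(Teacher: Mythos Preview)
Your approach is exactly the paper's: split on whether at least $(1-\gamma)n$ colours are large and apply Lemma~\ref{Lemma_Tree_Decomposition_Many_Large_Colours} or Lemma~\ref{Lemma_Hamiltonian_Decomposition_Kn} accordingly. One small slip: you assert $\varepsilon\GG\log^{-1}n$, but the theorem only gives $1\GG\varepsilon\GG n^{-1}$, so $\varepsilon$ could be far smaller than $\log^{-1}n$; fortunately the hypothesis of Lemma~\ref{Lemma_Tree_Decomposition_Many_Large_Colours} is $1\GG\varepsilon,\log^{-1}n\GG\gamma\GG n^{-1}$ (no ordering between $\varepsilon$ and $\log^{-1}n$), so your choice of $\gamma$ already suffices once you drop that unneeded chain.
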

\begin{proof}
Fix $1 \GG \varepsilon, \log^{-1}n\GG \gamma \GG n^{-1}$.
If $K_n$ has $\geq (1-\gamma)n$ colours having $\geq (1-\gamma)n/2$ edges, then the theorem follows from Lemma~\ref{Lemma_Tree_Decomposition_Many_Large_Colours}.
Otherwise, $K_n$ has $\leq (1-\gamma)n$ colours having $\geq (1-\gamma)n/2$ edges, and the theorem follows from Lemma~\ref{Lemma_Hamiltonian_Decomposition_Kn}.
\end{proof}

\section{Concluding remarks}
There are various other areas in which our results have implications. We mention some of them here.
\begin{itemize}
\item
Constantine made the following generalization of the Brualdi-Hollingsworth Conjecture.
\begin{conjecture}[Constantine \cite{constantine2002multicolored}]\label{Conjecture_Constantine}
Every properly $(2n-1)$-coloured $K_{2n}$ can be decomposed into edge-disjoint rainbow spanning trees which are all isomorphic to each other.
\end{conjecture}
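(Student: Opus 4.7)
The plan is to strengthen the asymptotic decomposition into rainbow spanning trees provided by Theorem~\ref{Theorem_Trees} in two distinct ways: first, to force every tree in the decomposition to share a prescribed isomorphism type; second, to upgrade a near-decomposition into an exact decomposition. Note that a proper $(2n-1)$-colouring of $K_{2n}$ is a $1$-factorization, so in any edge-disjoint family of rainbow spanning trees each tree uses exactly one edge from each of the $2n-1$ colour classes, and there must be exactly $n$ trees in total.

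For the isomorphism ingredient, I would fix a target spanning tree $T$ with a structure that is flexible enough to be built probabilistically but rigid enough to serve as a template. A good candidate is a ``double-broom'': a central path of length roughly $2n-2k$ with $k$ pendant leaves attached to each endpoint, for some slowly growing $k$. The construction of Lemmas~\ref{Lemma_Hamiltonian_Decomposition_Kn} and~\ref{Lemma_Tree_Decomposition_Many_Large_Colours} can then be adapted: use the Hamiltonian-cycle-type decomposition to realize the ``spines'' as $(1-o(1))n$ edge-disjoint rainbow paths in an induced subgraph $K_n[S]$, and replace the generic one-vertex extension of Lemma~\ref{Lemma_Tree_Extend_By_One_Vertex} by a \emph{constrained} extension that always attaches vertices in $V(K_{2n})\setminus S$ as pendant leaves at the appropriate ends of the spine. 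The pseudorandomness of the reserved graph $H$ (playing the role of the graph $H$ in the proof of Theorem~\ref{Theorem_Trees}) should make these structured extensions available; specifically, for each leaf slot one needs only that $H$ has an edge of a prescribed colour between a prescribed small vertex set and the target endpoint, which is guaranteed by $(\gamma,p,n)$-typicality.

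The main obstacle is the gap between $(1-o(1))n$ and $n$. An exact decomposition forces every edge of $K_{2n}$ to be covered, whereas the switching-based arguments in this paper inherently leave an $o(n^2)$ leftover, and indeed even the weaker Brualdi--Hollingsworth Conjecture (without the isomorphism requirement) remains open. The natural remedy is an absorbing strategy: before the main iteration, reserve a small ``absorber'' family of partial copies of $T$ engineered so that any sufficiently small leftover subgraph of $K_{2n}$ can be integrated into the partial absorbers to complete each of them into a full rainbow copy of $T$. Designing such an absorber is the central difficulty: classical absorbers (as in R\"odl--Ruci\'nski--Szemer\'edi) permit arbitrary vertex insertions into spanning structures, but here the absorbed object must simultaneously respect the $1$-factorization (rainbowness), the isomorphism type $T$, and the fact that the remaining edges to be absorbed are dictated by what the earlier iterative phase happens to leave behind. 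I expect that a workable approach is to build $\Theta(\log n)$ ``flexible gadgets'' inside each incomplete $T$-copy, each of which can absorb any single leftover colour class in two structurally equivalent ways, so that a Hall-type matching argument can assign leftovers to gadgets. Establishing the existence and rainbow-enumeration properties of such gadgets within $K_{2n}$ is, in my view, the key step required to move from the asymptotic theorems of this paper to the full strength of Conjecture~\ref{Conjecture_Constantine}.
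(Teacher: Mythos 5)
The statement you were given is Conjecture~\ref{Conjecture_Constantine}, which this paper does \emph{not} prove; it appears only in the concluding remarks as an open problem. The paper's own commentary says merely that the best known result is $10^{-12}n$ edge-disjoint rainbow copies of some fixed tree (from \cite{pokrovskiy2017linearly}), that Corollary~\ref{Corollary_Hamiltonian} gives a weak near-decomposition into near-spanning rainbow paths, and that the authors ``expect'' (but do not prove) that the methods could be pushed to an \emph{asymptotic} version, i.e.\ $(1-o(1))n$ isomorphic spanning rainbow trees. Even the unadorned Brualdi--Hollingsworth Conjecture~\ref{Conjecture_Brualdi_Hollongsworth}, a strictly weaker target, is only proved asymptotically here (Theorem~\ref{Theorem_Trees}). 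So there is no paper proof to compare against.

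Your write-up is honest about this --- it is a research plan, not a proof --- and the two obstacles you single out are exactly the right ones and match the paper's own assessment: (i) forcing all trees onto one isomorphism type, and (ii) upgrading $(1-o(1))n$ to $n$. Your remarks on (i) are plausible: the trees produced in the proof of Theorem~\ref{Theorem_Trees} are each obtained from a length-$(1-o(1))|V|$ path by $o(|V|)$ switches, and controlling those switches to produce a fixed double-broom-like template is in the spirit of what the authors say ``seems likely'' --- but it is not carried out, either by you or by the paper. For (ii), your sketch understates the size of the leftover: after running the paper's machinery on $K_{2n}$ you have roughly $\varepsilon n$ missing trees, i.e.\ $\Theta(\varepsilon n^2)$ uncovered edges that must themselves be packed into $\varepsilon n$ further rainbow spanning trees, each of the template type. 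A $\Theta(\log n)$-gadgets-per-tree absorber with a Hall-type assignment is nowhere near enough to handle that; the absorber would need to eat an entire positive fraction of all colour classes while preserving rainbowness, the $1$-factorization structure, and the isomorphism type across \emph{all} trees simultaneously. That is a genuinely open design problem, and you correctly flag it as the crux, but no argument is given. In short: there is no gap in the sense of an error, but there is also no proof --- the core absorption step and the isomorphism-control step are both left as ``to be established,'' which is precisely why Conjecture~\ref{Conjecture_Constantine} remains a conjecture.
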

The best known result about this is due to the second and third author~\cite{pokrovskiy2017linearly} who showed that it is possible to find $10^{-{12}}n$ edge-disjoint rainbow copies of some particular tree.
While we did not do this, our results still have implications for Constantine's Conjecture. In particular Corollary~\ref{Corollary_Hamiltonian} is relevant --- it shows that under the assumptions of Constantine's Conjecture we can nearly-decompose the graph into nearly-spanning rainbow paths.

Additionally we expect that the methods in this paper can be generalized to prove the true asymptotic version of Constantine's Conjecture, i.e.\ to find $(1-o(1))n$ edge-disjoint isomorphic spanning rainbow trees under the assumption of the theorem. We think this is plausible as the trees we find in the proof of Theorem~\ref{Theorem_Trees} are all quite similar to each other --- they are all built from a length $(1-o(1))n$ path by making $o(n)$ modifications. It seems likely that, with some additional ideas, the modifications can be controlled in order to give a copy of the same tree.

\item
Notice a parallel between Theorem~\ref{Theorem_Trees} and Lemma~\ref{Lemma_Small_Disjoint_Trees} --- both of these results give a near-decomposition of a graph into forests of the same size.  We wonder if there is a common generalization of these results.
\begin{conjecture}
For $\varepsilon>0$, there exists an $m\in \mathbb{N}$ so that the following holds for all $k$.
Every properly coloured, globally $m$-bounded graph $G$ with $km$ edges has $\geq (1-\varepsilon)m$ edge-disjoint rainbow forests of order $k$.
\end{conjecture}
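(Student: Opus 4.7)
My plan is to prove the conjecture by fixing $m=m(\varepsilon)$ large and splitting on the relationship between $k$ and the number $n$ of non-isolated vertices of $G$, which automatically satisfies $n\geq k+1$. The two extreme regimes correspond to the two existing results that this conjecture interpolates between, namely Lemma~\ref{Lemma_Small_Disjoint_Trees} (for short forests in graphs with room to spare) and Theorem~\ref{Theorem_Trees} (for near-spanning trees in properly coloured $K_n$), and my hope is that a suitable threshold $C=C(\varepsilon)$ will allow each regime to be handled by adapting the corresponding machinery.

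\emph{Sparse case: $n\geq Ck$.} I would preprocess $G$ by independently discarding edges from every colour class of size exceeding $m'=(1-\varepsilon/4)m$ down to size $m'$. By a standard Azuma concentration argument as in the proof of Lemma~\ref{Lemma_subgraph_few_large_colours}, the resulting graph $G'$ is globally $m'$-bounded and retains $e(G')\geq (1-\varepsilon/8)km\geq (1+\Theta(\varepsilon))km'$ edges. Since $k/n\leq 1/C$ is suitably small, Lemma~\ref{Lemma_Small_Disjoint_Trees} applied to $G'$ with parameters $m'$, $\beta=\Theta(\varepsilon)$ yields $m'\geq (1-\varepsilon)m$ edge-disjoint rainbow $k$-edge forests, as required.

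\emph{Dense case: $n<Ck$.} Here $G$ has $km$ edges on $\Theta(k)$ vertices and every $k$-edge rainbow forest is essentially a spanning tree on some subset of size $k+1$. I would mimic the strategy of Section~\ref{Section_Trees}: sample a random subset $S\subseteq V(G)$ with $|S|=k+1$ (applying Lemma~\ref{Lemma_Random_Subgraph_General}(c) to transfer global boundedness and typicality to $G[S]$), set aside a small edge-disjoint reservoir $H\subset G[S]$ via Lemma~\ref{Lemma_Random_Subgraph_General}(b), near-decompose $G[S]\setminus H$ into rainbow Hamiltonian paths by invoking Lemma~\ref{Lemma_Hamiltonian_Decomposition_gap} (each such path is itself a $k$-edge rainbow forest), and then use the completion mechanism of Lemma~\ref{Lemma_Many_Trees_Completion}, suitably modified so that the output is a forest of \emph{exactly} $k$ edges rather than a full spanning tree, to patch up the final batch.

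\emph{Main obstacle.} The dense case is the crux, and the difficulty is pseudorandomness. Unlike in Theorem~\ref{Theorem_Trees} we cannot start from $K_n$: we are given only that $G$ is globally $m$-bounded with $km$ edges, so the average degree is $\Theta(m)$, which may be much smaller than $|S|-1=k$. Consequently the $(\gamma,1-\varepsilon^2,n)$-typicality hypothesis of Lemma~\ref{Lemma_Hamiltonian_Decomposition_no_gap} and the pseudorandomness required by Lemma~\ref{Lemma_Hamiltonian_Decomposition_gap} do not come for free. The decisive new step will be a structural/typicalisation statement asserting that, after passing to a suitable random subgraph and/or a suitable random vertex set, an arbitrary globally $m$-bounded graph with $km$ edges contains a large $(\gamma,\Theta(m/k),k)$-typical subgraph, into which the Hamiltonian-path decomposition and tree-completion machinery can be injected. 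Proving this typicalisation result (together with a clean interpolation at the threshold $n\approx Ck$ so that both regimes overlap) is where I expect essentially all of the new work to lie.
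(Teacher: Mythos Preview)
The statement you are attempting to prove is listed in the paper as an \emph{open conjecture} in the concluding remarks; the paper does not supply a proof. The authors explicitly note that only two extreme cases are known (Theorem~\ref{Theorem_Trees} for $|G|=2m$, $k=|G|-1$, and Lemma~\ref{Lemma_Small_Disjoint_Trees} for $k=o(|G|)$) and that the general case is open. So there is no ``paper's own proof'' to compare against, and your proposal should be judged on its own merits as an attack on an open problem.

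As you yourself recognise, the dense case is a genuine gap, and the ``typicalisation'' step you postulate is not merely missing but false in the generality you need. A properly coloured, globally $m$-bounded graph with $km$ edges need not contain any large $(\gamma,\Theta(m/k),k)$-typical subgraph: take $G$ to be a vertex-disjoint union of two properly coloured cliques of equal size, or more generally any graph with macroscopic ``blocks''. Such graphs have the wrong codegree profile globally and restricting to a random vertex set does not repair this. Since your entire dense-case strategy (random $S$, Lemma~\ref{Lemma_Hamiltonian_Decomposition_gap}, then Lemma~\ref{Lemma_Many_Trees_Completion}) is built on pseudorandomness of $G[S]$, the approach does not go through. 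Note also a smaller issue in the sparse case: Lemma~\ref{Lemma_Small_Disjoint_Trees} requires $m\geq \beta n$, i.e.\ $n$ bounded by a constant multiple of $m$, which is not implied by $n\geq Ck$ alone; when $n$ is much larger than $m$ you would need a separate (presumably easier) argument that you have not supplied.

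In short, what you have written is a reasonable outline of why the two known extremes are known, together with an honest identification of the obstacle in between; it is not a proof, and the key missing ingredient cannot be obtained by the route you suggest.
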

Currently there are two extremes of this conjecture which are known to be true. Theorem~\ref{Theorem_Trees} shows that it holds when $|G|=2m$ and $k=|G|-1$.
Lemma~\ref{Lemma_Small_Disjoint_Trees} shows that it holds when $k= o(|G|)$. It would be interesting to prove or disprove it in general.

\item
Recall that the randomized rainbow matching $M$ in Lemma~\ref{Lemma_near_perfect_matching} behaves like a uniformly random perfect matching in a sense that any edge of $G$ ends up in $M$ with (approximately) at least the expected probability $\ad(G)^{-1}$. One can ask whether more can be proven, i.e.\ whether $M$ shares more features with a uniformly random perfect matching. This is indeed the case --- Lemma~\ref{Lemma_near_perfect_matching} can easily be strengthened to say more about the matching $M$. For example, with some work the following can be added to that lemma.
\begin{align*}
\P(e\in E(M))&=(1\pm p)\frac{1}{\delta n}\hspace{1cm}\text{ for each $e\in E(G)$}.\\
\P(e,f\in E(M))&=(1\pm p)\frac{1}{\delta^2 n^2}\hspace{1cm}\text{ for each $e\neq f\in E(G)$}.\\
\P(v\not\in V(H))&=(1\pm p)p\hspace{1cm}\text{ for each $v\in V(G)$}.
\end{align*}
The randomness of the matching produced in Lemma~\ref{Lemma_near_perfect_matching} may have applications in future work.

\item
Notice that some of our results (particularly
Lemma~\ref{Lemma_Nearly_Rainbow_Decomposition}) are about graphs which
may not be properly coloured, but are only locally
$n^{\epsilon}$-bounded. It is natural to ask whether our other
theorems can be proved with ``proper colouring'' replaced by ``local
boundedness'', or perhaps even with the proper colouring assumption
removed entirely. Some results in this direction were recently
obtained by Kim, K\"uhn, Kupavskii, and Osthus in
\cite{kim2018rainbow} (see note below) .

It would be extremely interesting to prove new results about spanning
rainbow structures in graphs with \emph{no local boundedness
assumptions at all}. For example in
\cite{pokrovskiy2017counterexample}, the second and third authors
asked whether every globally $(1-o(1))n$ bounded $K_{n,n}$ has a
perfect rainbow matching. If true, this would be a natural weakening
of the recently disproved Stein's Equi-$n$-Square Conjecture (see \cite{pokrovskiy2017counterexample}).

\end{itemize}

\subsection*{Note added in proof}
The results of Theorem 1.2 and its corollaries were  presented at the
``Workshop on Probabilistic and Extremal Combinatorics'' in Harvard
07/02/2018 (see \cite{AlexeySlides}). After the presentation we
learned from Keevash and Yepremyan that they also found a proof of the
Akbari-Alipour Conjecture (Conjecture 1.3) for large $n$
(see~\cite{KY18}).

Also after hearing our Theorem 1.2 at the workshop, Kim, K\"uhn, Kupavskii,
and Osthus published the preprint \cite{kim2018rainbow} on 22.5.2018. 
In this paper they proved (amongst others) that every coloured $K_{n,n}$ which is
globally $(1-o(1))n$-bounded and locally $o(n/\log^2 n)$-bounded has
$(1-o(1))n$ edge-disjoint rainbow perfect matchings. This is on one hand
stronger than Theorem~\ref{Theorem_LatinSquare} since it also works
for locally bounded colorings, but it is also weaker since it requires
all (rather than just few) colors to have size  less than $(1-o(1))n$.
In particular it does not imply the Akbari-Alipour Conjecture or our
results on multiplication tables of groups. Independently from our
work, Kim, K\"uhn, Kupavskii, and Osthus also proved results similar
to our
Theorem 1.10 about decompositions into rainbow Hamiltonian cycles
(that are both stronger and weaker as we explain above). The main
focus of their work is quite different from ours and they deduce their
result from a general theorem about rainbow $F$-factors for arbitrary
graphs $F$.

\subsection*{Acknowledgement}
The authors would like to thank Matthew Kwan and Vincent Tassion for help with probabilistic arguments.
Parts of this work were carried out when the first author visited the 
Institute for
Mathematical Research (FIM) of ETH Zurich. We would like to thank
FIM for its hospitality and for creating a stimulating research 
environment.

\bibliographystyle{abbrv}
\bibliography{rainbowtrees}

\begin{thebibliography}{10}

\bibitem{akbari2004transversals}
S.~Akbari and A.~Alipour.
\newblock Transversals and multicolored matchings.
\newblock {\em Journal of Combinatorial Designs}, 12(5):325--332, 2004.

\bibitem{akbari2007multicolored}
S.~Akbari and A.~Alipour.
\newblock Multicolored trees in complete graphs.
\newblock {\em Journal of Graph Theory}, 54(3):221--232, 2007.

\bibitem{akbari2007rainbow}
S.~Akbari, O.~Etesami, H.~Mahini, and M.~Mahmoody.
\newblock On rainbow cycles in edge colored complete graphs.
\newblock {\em Australasian Journal of Combinatorics}, 37:33, 2007.

\bibitem{alon2000additive}
N.~Alon.
\newblock Additive {L}atin transversals.
\newblock {\em Israel Journal of Mathematics}, 117(1):125--130, 2000.

\bibitem{alon1999list}
N.~Alon, M.~Krivelevich, and B.~Sudakov.
\newblock List coloring of random and pseudo-random graphs.
\newblock {\em Combinatorica}, 19(4):453--472, 1999.

\bibitem{alon2016random}
N.~Alon, A.~Pokrovskiy, and B.~Sudakov.
\newblock Random subgraphs of properly edge-coloured complete graphs and long
  rainbow cycles.
\newblock {\em Israel Journal of Mathematics}, 222(1):317--331, 2017.

\bibitem{alon2004probabilistic}
N.~Alon and J.~Spencer.
\newblock {\em The probabilistic method, 4th edition}.
\newblock John Wiley \& Sons, 2004.

\bibitem{alon1995covering}
N.~Alon, J.~Spencer, and P.~Tetali.
\newblock Covering with {L}atin transversals.
\newblock {\em Discrete applied mathematics}, 57(1):1--10, 1995.

\bibitem{andersen1989hamilton}
L.~Andersen.
\newblock Hamilton circuits with many colours in properly edge-coloured
  complete graphs.
\newblock {\em Mathematica {S}candinavica}, pages 5--14, 1989.

\bibitem{arsovski2011proof}
B.~Arsovski.
\newblock A proof of {Snevily's} conjecture.
\newblock {\em Israel Journal of Mathematics}, 182(1):505--508, 2011.

\bibitem{baker1996difference}
R.~C. Baker and G.~Harman.
\newblock The difference between consecutive primes.
\newblock {\em Proceedings of the London Mathematical Society}, 3(2):261--280,
  1996.

\bibitem{balogh2017rainbow}
J.~Balogh, H.~Liu, and R.~Montgomery.
\newblock Rainbow spanning trees in properly coloured complete graphs.
\newblock {\em {Discrete Applied Mathematics, to appear}}, 2018.

\bibitem{barat2017transversals}
J.~Bar{\'a}t and Z.~L. Nagy.
\newblock Transversals in generalized {L}atin squares.
\newblock {\em arXiv preprint arXiv:1701.08220}, 2017.

\bibitem{BCP}
R.~Bose, S.~Shrikhande, and E.~Parker.
\newblock Further results on the construction of mutually orthogonal {L}atin
  squares and the falsity of {E}uler's conjecture.
\newblock {\em Canad. J. Math}, 12:189--203, 1960.

\bibitem{BVW78}
A.~Brouwer, A.~de~Vries, and R.~Wieringa.
\newblock A lower bound for the length of partial transversals in a {L}atin
  square.
\newblock {\em Nieuw Archief Voor Wiskunde}, 26(2):330, 1978.

\bibitem{brualdi1996multicolored}
R.~A. Brualdi and S.~Hollingsworth.
\newblock Multicolored trees in complete graphs.
\newblock {\em Journal of Combinatorial Theory, Series B}, 68(2):310--313,
  1996.

\bibitem{Brualdi}
R.~A. Brualdi and H.~J. Ryser.
\newblock {\em Combinatorial matrix theory}.
\newblock Cambridge University Press, 1991.

\bibitem{carraher2016edge}
J.~M. Carraher, S.~G. Hartke, and P.~Horn.
\newblock Edge-disjoint rainbow spanning trees in complete graphs.
\newblock {\em European Journal of Combinatorics}, 57:71--84, 2016.

\bibitem{chen2015long}
H.~Chen and X.~Li.
\newblock Long rainbow path in properly edge-colored complete graphs.
\newblock {\em arXiv preprint arXiv:1503.04516}, 2015.

\bibitem{christofides2012edge}
D.~Christofides, D.~K{\"u}hn, and D.~Osthus.
\newblock Edge-disjoint {H}amilton cycles in graphs.
\newblock {\em Journal of Combinatorial Theory, Series B}, 102(5):1035--1060,
  2012.

\bibitem{constantine2002multicolored}
G.~M. Constantine.
\newblock Multicolored parallelisms of isomorphic spanning trees.
\newblock {\em Discrete Mathematics and Theoretical Computer Science},
  5:121--126, 2002.

\bibitem{dasgupta2001transversals}
S.~Dasgupta, G.~K{\'a}rolyi, O.~Serra, and B.~Szegedy.
\newblock Transversals of additive {L}atin squares.
\newblock {\em Israel Journal of Mathematics}, 126(1):17--28, 2001.

\bibitem{erdHos1991lopsided}
P.~Erd{\H{o}}s and J.~Spencer.
\newblock Lopsided {L}ov{\'a}sz local lemma and {L}atin transversals.
\newblock {\em Discrete Applied Mathematics}, 30(2-3):151--154, 1991.

\bibitem{Euler}
L.~Euler.
\newblock Recherches sur une nouvelle esp\'ece de quarr\'es magiques.
\newblock {\em Verh. Zeeuwsch. Gennot. Weten. Vliss.}, 9:85--239, 1782.

\bibitem{fournier1977sharpness}
J.~Fournier.
\newblock Sharpness in young’s inequality for convolution.
\newblock {\em Pacific Journal of Mathematics}, 72(2):383--397, 1977.

\bibitem{fu2016number}
H.-L. Fu, Y.-H. Lo, K.~Perry, and C.~Rodger.
\newblock On the number of rainbow spanning trees in edge-colored complete
  graphs.
\newblock {\em arXiv preprint arXiv:1605.04501}, 2016.

\bibitem{gebauer2012rainbow}
H.~Gebauer and F.~Mousset.
\newblock On rainbow cycles and paths.
\newblock {\em arXiv preprint arXiv:1207.0840}, 2012.

\bibitem{GreenNotes}
B.~Green and A.~Wigderson.
\newblock Lecture notes for the 22nd {M}c{G}ill invitational workshop on
  computational complexity.
\newblock {\em https://www.cs.mcgill.ca/~denis/additive-lectures-v2.pdf}, 2010.

\bibitem{gyarfas2010rainbow}
A.~Gy\'arf\'as and M.~Mhalla.
\newblock Rainbow and orthogonal paths in factorizations of ${K}_n$.
\newblock {\em Journal of Combinatorial Designs}, 18(3):167--176, 2010.

\bibitem{gyarfas2011long}
A.~Gy{\'a}rf{\'a}s, M.~Ruszink{\'o}, G.~S{\'a}rk{\"o}zy, and R.~Schelp.
\newblock Long rainbow cycles in proper edge-colorings of complete graphs.
\newblock {\em Australasian Journal of Combinatorics}, 50:45--53, 2011.

\bibitem{hahn1986path}
G.~Hahn and C.~Thomassen.
\newblock Path and cycle sub-{R}amsey numbers and an edge-colouring conjecture.
\newblock {\em Discrete Mathematics}, 62(1):29--33, 1986.

\bibitem{hall1955complete}
M.~Hall and L.~Paige.
\newblock Complete mappings of finite groups.
\newblock {\em Pacific Journal of Mathematics}, 5(4):541--549, 1955.

\bibitem{hatami2008lower}
P.~Hatami and P.~W. Shor.
\newblock A lower bound for the length of a partial transversal in a {L}atin
  square.
\newblock {\em Journal of Combinatorial Theory, Series A}, 115(7):1103--1113,
  2008.

\bibitem{horn2018rainbow}
P.~Horn.
\newblock Rainbow spanning trees in complete graphs colored by
  one-factorizations.
\newblock {\em Journal of Graph Theory}, 87(3):333--346, 2018.

\bibitem{kaneko2003three}
A.~Kaneko, M.~Kano, and K.~Suzuki.
\newblock Three edge disjoint multicolored spanning trees in complete graphs.
\newblock {\em preprint}, 2003.

\bibitem{keedwell2015latin}
A.~Keedwell and J.~D{\'e}nes.
\newblock {\em {L}atin Squares and their Applications}.
\newblock Elsevier Science, 2015.

\bibitem{KY18}
P.~Keevash and L.~Yepremyan.
\newblock On the number of symbols that forces a transversal.
\newblock {\em arXiv preprint arXiv:1805.10911}, 2018.

\bibitem{kim2018rainbow}
J.~Kim, D.~K{\"u}hn, A.~Kupavskii, and D.~Osthus.
\newblock Rainbow structures in locally bounded colourings of graphs.
\newblock {\em arXiv:1805.08424}, 2018.

\bibitem{ore1962theory}
O.~Ore.
\newblock {\em Theory of graphs}, volume~38.
\newblock American Mathematical Society, 1962.

\bibitem{AlexeySlides}
A.~Pokrovskiy.
\newblock Latin squares and rainbow subgraphs.
\newblock {\em \newline
  \url{https://people.math.ethz.ch/~palexey/Slides/RainbowTreesPresentation.pdf}},
  2018.

\bibitem{pokrovskiy2017counterexample}
A.~Pokrovskiy and B.~Sudakov.
\newblock A counterexample to {S}tein's {E}qui-n-square {C}onjecture.
\newblock {\em Proc. Amer. Math. Soc., to appear}, 2018.

\bibitem{pokrovskiy2017linearly}
A.~Pokrovskiy and B.~Sudakov.
\newblock Linearly many rainbow trees in properly edge-coloured complete
  graphs.
\newblock {\em J. Combin. Theory. Ser. B., to appear}, 2018.

\bibitem{Ryser}
H.~Ryser.
\newblock Neuere probleme der kombinatorik.
\newblock {\em Vortr\"age \"uber Kombinatorik, Oberwolfach}, pages 69--91,
  1967.

\bibitem{ryser1963combinatorial}
H.~J. Ryser.
\newblock {\em Combinatorial mathematics}.
\newblock Number~14. Mathematical Association of America; distributed by Wiley
  [New York], 1963.

\bibitem{sierpinski1988elementary}
W.~Sierpinski.
\newblock {\em Elementary Theory of Numbers: Second English Edition (edited by
  A. Schinzel)}, volume~31.
\newblock Elsevier, 1988.

\bibitem{snevily1999cayley}
H.~S. Snevily.
\newblock The {C}ayley addition table of {$Z_n$}.
\newblock {\em American Mathematical Monthly}, pages 584--585, 1999.

\bibitem{Stein}
S.~K. Stein.
\newblock Transversals of {L}atin squares and their generalizations.
\newblock {\em Pacific J. Math.}, 59:567--575, 1975.

\bibitem{wilcox2009reduction}
S.~Wilcox.
\newblock Reduction of the {H}all--{P}aige conjecture to sporadic simple
  groups.
\newblock {\em Journal of Algebra}, 321(5):1407--1428, 2009.

\bibitem{woolbright78}
D.~Woolbright.
\newblock An {$n\times n$} {L}atin square has a transversal with at least $n-
  \sqrt{n}$ distinct symbols.
\newblock {\em Journal of Combinatorial Theory, Series A}, 24(2):235--237,
  1978.

\end{thebibliography}

\end{document}